\newtheorem{theorem}{Theorem}[section]
\newtheorem{lemma}[theorem]{Lemma}
\newtheorem{corollary}[theorem]{Corollary}
\theoremstyle{definition}
\newtheorem{definition}[theorem]{Definition}
\newtheorem{remark}[theorem]{Remark}
\numberwithin{equation}{section}
\newtheorem*{tha}{Theorem A}
\newtheorem{conjecture}[theorem]{Conjecture}
\author[G. Hu]{Guoen Hu}
\address{Guoen Hu: School of Applied Mathematics, Beijing Normal University, Zhuhai 519087,
	P. R. China}
\email{guoenxx@163.com}
\author[X. Tao]{Xiangxing Tao}
\address{Xiangxing Tao: Department of Mathematics, School of Science, Zhejiang University of Science and Technology,
	Hangzhou 310023, People's Republic of China}
\email{xxtao@zust.edu.cn}
\author[Z. Wang]{Zhidan Wang}
\address{
	Zhidan Wang
	\\
	School of Mathematical Sciences
	\\
	Beijing Normal University
	\\
	Laboratory of Mathematics and Complex Systems
	\\
	Ministry of Education
	\\
	Beijing 100875
	\\
	People's Republic of China
}
\email{zdwang@mail.bnu.edu.cn}
\author[Q. Xue]{Qingying Xue$^\ast$}
\address{Qingying Xue
	\\
	School of Mathematical Sciences
	\\
	Beijing Normal University
	\\
	Laboratory of Mathematics and Complex Systems
	\\
	Ministry of Education
	\\
	Beijing 100875
	\\
	People's Republic of China
}
\email{qyxue@bnu.edu.cn}
\thanks{The first author was supported by the NNSF of
	China  (Nos. 11871108, 11971295), the  second   author was supported by
	the NNSF of
	China (No. 11771399), the fourth author was partly supported by the National Key Research and Development Program of China (Grant No. 2020YFA0712900) and NSFC (No. 11871101).}
\thanks{ $\ast$ Corresponding author: Qingying Xue }
\keywords{Non-standard singular integral operator, bilinear sparse operator, maximal operator, weighted bound}
\subjclass[2010]{Primary 42B20, Secondary 47G10}
\begin{document}
	
	\title[On the boundedness of non-standard rough singular integral operators]{On the boundedness of non-standard rough singular integral operators}
	
	\begin{abstract} Let $\Omega$ be homogeneous of degree zero, have vanishing moment of order one on the unit sphere $\mathbb {S}^{d-1}$($d\ge 2$). In this paper, our object of investigation is the following rough non-standard singular integral operator
		$$T_{\Omega,\,A}f(x)={\rm p.\,v.}\int_{\mathbb{R}^d}\frac{\Omega(x-y)}{|x-y|^{d+1}}\big(A(x)-A(y)-\nabla A(y)(x-y)\big)f(y){\rm d}y,$$
		where $A$ is a function defined on $\mathbb{R}^d$ with derivatives of order one in ${\rm BMO}(\mathbb{R}^d)$. We show that $T_{\Omega,\,A}$ enjoys the endpoint  $L\log L$ type estimate and is $L^p$ bounded if $\Omega\in L(\log L)^{2}(\mathbb{S}^{d-1})$. These resuts essentially improve the previous known results given by Hofmann \cite{hof} for the $L^p$ boundedness of $T_{\Omega,\,A}$ under the condition $\Omega\in  L^{q}(\mathbb {S}^{d-1})$ $(q>1)$, Hu and Yang \cite{hy} for the endpoint weak $L\log L$ type estimates when $\Omega\in {\rm Lip}_{\alpha}(\mathbb{S}^{d-1})$ for some $\alpha\in (0,\,1]$. Quantitative weighted strong and endpoint weak $L\log L$ type inequalities are proved whenever $\Omega\in L^{\infty}(\mathbb {S}^{d-1})$. The analysis of the weighted results relies heavily on two bilinear sparse dominations of $T_{\Omega,\,A}$ established herein.
	\end{abstract}
	\maketitle
	\section{Introduction}
	This paper will be devoted to study the boundedness of certain  non-standard Calder\'on-Zygmund operators with rough kernels. To begin with, let $d\ge 2$ and $x=(x_1,\cdots,x_n)\in \mathbb{R}^d.$ Let $\Omega$ be a function of homogeneous of degree zero, $\Omega \in L^1({\mathbb{S}^{d-1}})$ and satisfy the
	vanishing condition
	\begin{eqnarray}\label{equa:1.1}{\int_{\mathbb{S}^{d-1}}\Omega(x)x_j{ d}x=0},\quad \hbox{ }\  j=1,..., d.\end{eqnarray}
Define the  non-standard rough Calder\'on-Zygmund operator by
	\begin{eqnarray}\label{equa:1.2}
		\quad \quad T_{\Omega,\,A}f(x)={\rm p. v.}\int_{\mathbb{R}^d}\frac{\Omega(x-y)}{|x-y|^{d+1}}\big(A(x)-A(y)-\nabla A(y)(x-y)\big)f(y)dy,\end{eqnarray}
	where $A$ is a function on $\mathbb{R}^d$ with derivatives of order one in ${\rm BMO}(\mathbb{R}^d)$. The dual operator of  $T_{\Omega,\,A}$ is defined by
	\begin{eqnarray}\label{equation1.dualoperator}
		\qquad \widetilde{T}_{\Omega,A}f(x)={\rm p.\,v.}\int_{\mathbb{R}^d}\frac{\Omega(x-y)}{|x-y|^{d+1}}\big(A(x)-A(y)-\nabla A(x)(x-y)\big)f(y)dy.\end{eqnarray}
		This class of singular integrals is of interest in Harmonic analysis. It was well-known that $T_{\Omega,\,A}$ is closely related to the study of Calder\'on commutators \cite{cal1, BC}.
	Even for smooth kernel $\Omega$, since ${L^\infty(\mathbb{R}^d)}\subset {\rm BMO}(\mathbb{R}^d)$, the kernel of the operator $T_{\Omega,\,A}$  may fail to satisfy the classical standard kernel conditions. This is the main reason why one calls them nonstandard singular integral operators.	\vspace {0.1cm}
	
	Recall that if $\nabla A\in L^\infty(\mathbb{R}^d)$, then the $L^p(\mathbb{R}^d)$ boundedness of $T_{\Omega,\,A}$ follows by using the methods of rotation in the nice work of Cald\'{e}ron \cite{cal1},  Bainshansky  and Coifman \cite{BC}. Since the method of rotations doesn't work in the case of $\nabla A\in {\rm BMO}(\mathbb{R}^d)$,  Cohen \cite{cohen} and  Hu \cite{huy} obtained the  $L^p(\mathbb{R}^d)$ boundedness of  $T_{\Omega,\,A}$ with smooth kernels by means of a good-$\lambda$ inequality. More precisly, if $\Omega\in{\rm Lip}_\alpha(\mathbb{S}^{d-1})$ ($0<\alpha\le 1$), then Cohen \cite{cohen} proved that $T_{\Omega,A}$ is a bounded operator on $L^p(\mathbb{R}^d)$  for $1<p<\infty$. Later on,
	the result of Cohen \cite{cohen} was improved by Hofmann \cite{hof}. It was shown that $\Omega\in \cup_{q>1}L^q(\mathbb{S}^{d-1})$ is a sufficient condition for the $L^p(\mathbb{R}^d)$ boundedness of $T_{\Omega, A}$. If $\Omega\in L^{\infty}(\mathbb{S}^{d-1})$ and $w\in A_p(\mathbb{R}^d)$, Hofmann further demonstrated that $T_{\Omega,A}$ is bounded on $L^p(\mathbb{R}^d,\,w)$.
	\vspace {0.1cm}
	
	It is quite natural to ask if one can establish weak type inequalities for $T_{\Omega,\,A}$ or not. This question was first investigated by Hu and Yang \cite{hy} and they pointed out that $T_{\Omega,A}$ fails to be of weak type $(1,1)$, which differs in this aspect from the property of the classical singular integral operators. As a replacement of weak $(1,1)$ boundedness, it was shown that  $T_{\Omega,A}$ still enjoys the endpoint $L\log L$ type  estimates. Recently, if $\Omega\in{\rm Lip}_\alpha(\mathbb{S}^{d-1})$, Hu \cite{hu} considered the quantitative weighted bounds for $T_{\Omega,\,A}$ and obtain the following results.
	\begin{tha}[\cite{hu}]\label{thm1.1} Let $\Omega$ be homogeneous of degree zero, satisfy the vanishing moment \eqref{equa:1.1}, $A$ be a function  on $\mathbb{R}^d$ with derivatives of order one in ${\rm BMO}(\mathbb{R}^d)$ and ${\rm \Phi(t)=t\log ({e}+t)}$. Suppose that  $\Omega\in {\rm Lip}_{\alpha}(\mathbb{S}^{d-1})$ for some $\alpha\in (0,\,1]$, then
		\begin{enumerate}
			\item [\rm{(i).}] For $p\in (1,\,\infty)$ and  $w\in A_{p}(\mathbb{R}^d)$, it holds that			
			$$\|T_{\Omega,\,A}f\|_{L^p(\mathbb{R}^d,w)}
			\lesssim_{n,\,p}\|\nabla A\|_{{\rm BMO}(\mathbb{R}^d)} [w]_{A_p}^{\frac{1}{p}}\big([\sigma]_{A_{\infty}}^{\frac{1}{p}}+[w]_{A_{\infty}}^{\frac{1}{p'}}\big)[\sigma]_{A_{\infty}}
			\|f\|_{L^p(\mathbb{R}^d,\,w)},$$
			where $\|\nabla A\|_{{\rm BMO}(\mathbb{R}^d)}=\sum_{j=1}^d\|\partial_j A\|_{{\rm BMO}(\mathbb{R}^d)}$.
			
			\item [\rm{(ii).}] For $w\in A_1(\mathbb{R}^d)$ and $\lambda>0$, it holds that
			\begin{eqnarray}\label{equa:1.xx}w\big(\{x\in\mathbb{R}^d:\,|T_{\Omega,\,A}f(x)|>\lambda\}\big)
				\lesssim [w]_{A_1} \log^2 ({\rm e}+[w]_{A_{\infty}})\int_{\mathbb{R}^d}  \Phi\left(\frac{|f(x)|}{\lambda}\right) w(x)dx. \nonumber
			\end{eqnarray}
			\end{enumerate}
	\end{tha}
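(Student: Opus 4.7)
The plan is to deduce both (i) and (ii) from a single sparse domination of $T_{\Omega,A}$ with an $L\log L$ Orlicz average on the input. Since ${\rm Lip}_\alpha(\mathbb{S}^{d-1})\subset\bigcup_{q>1}L^q(\mathbb{S}^{d-1})$, the unweighted $L^p$ boundedness of $T_{\Omega,A}$ is already known from Hofmann \cite{hof} and the endpoint $L\log L$ inequality from Hu--Yang \cite{hy}; these two ingredients will be used as black boxes to drive the sparse argument.

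The heart of the proof is the pointwise sparse estimate
\begin{equation*}
|T_{\Omega,A}f(x)|\lesssim \|\nabla A\|_{{\rm BMO}(\mathbb{R}^d)}\sum_{Q\in\mathcal{S}}\langle f\rangle_{L\log L,\,Q}\chi_Q(x)
\end{equation*}
for some $\tfrac12$-sparse family $\mathcal{S}$. The $L\log L$ average is natural: writing the Taylor remainder as
\begin{equation*}
A(x)-A(y)-\nabla A(y)(x-y)=(x-y)\cdot\int_0^1\bigl(\nabla A(y+t(x-y))-\nabla A(y)\bigr)\,dt,
\end{equation*}
one sees that $T_{\Omega,A}$ behaves like a first-order commutator with $\nabla A\in{\rm BMO}$, and such commutators classically call for a single $L\log L$ factor. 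Following Lerner's abstract scheme, I reduce the sparse bound to verifying the weak $L\log L\to L^{1,\infty}$ boundedness of $T_{\Omega,A}$ itself (given by \cite{hy}) and of the grand truncated maximal operator
\begin{equation*}
\mathcal{M}_{T_{\Omega,A}}f(x)=\sup_{Q\ni x}\sup_{y,z\in Q}\bigl|T_{\Omega,A}(f\chi_{\mathbb{R}^d\setminus 3Q})(y)-T_{\Omega,A}(f\chi_{\mathbb{R}^d\setminus 3Q})(z)\bigr|.
\end{equation*}
To control the latter, I would decompose $T_{\Omega,A}=\sum_k T_k$ into dyadic annular pieces, use the ${\rm Lip}_\alpha$ regularity of $\Omega$ to gain a $2^{-k\alpha}$ H\"older factor on the smooth part of each $T_k$, and absorb the Taylor remainder via the John--Nirenberg inequality applied to $\nabla A$ on dilates of $Q$; this is exactly where the $L\log L$ average is produced.

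Granted the sparse bound, (i) follows from the quantitative weighted estimate for the Orlicz sparse operator $\mathcal{A}_{\mathcal{S}}^{L\log L}f=\sum_Q\langle f\rangle_{L\log L,\,Q}\chi_Q$, whose sharp $L^p(w)$ norm is precisely $[w]_{A_p}^{1/p}\bigl([\sigma]_{A_\infty}^{1/p}+[w]_{A_\infty}^{1/p'}\bigr)[\sigma]_{A_\infty}$; the extra $[\sigma]_{A_\infty}$ over the non-Orlicz sparse operator is the signature of the single logarithm in $\Phi$. For (ii), one performs a Calder\'on--Zygmund decomposition of $f$ at height $\lambda$, estimates the good part via the weighted $L^2$ bound from (i), and reduces the bad part to the weighted endpoint estimate for $\mathcal{A}_{\mathcal{S}}^{L\log L}$ against $A_1$ weights. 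This endpoint inequality produces two logarithmic factors: one from the $L\log L$ nature of the Orlicz average and a second from the rearrangement-type argument against an $A_1$ weight, which combine to give the $\log^2({\rm e}+[w]_{A_\infty})$ in the statement.

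The main obstacle is the sparse domination itself, because the kernel of $T_{\Omega,A}$ is genuinely non-standard: only first-order cancellation is available through the Taylor remainder, and the full kernel fails to be pointwise H\"older continuous. The technical core is thus establishing the weak $L\log L\to L^{1,\infty}$ bound for $\mathcal{M}_{T_{\Omega,A}}$, which requires a careful splitting that marries the annular $\alpha$-smoothness of $\Omega$ with the BMO oscillation of $\nabla A$ on dyadic dilates of $Q$, done so that the resulting geometric-in-$k$ series sums without introducing more than one $L\log L$ average on $f$. This step is what confines the present argument to ${\rm Lip}_\alpha$ kernels and motivates the authors' subsequent work to push the analysis down to $L(\log L)^2$.
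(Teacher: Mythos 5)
Theorem A is quoted from \cite{hu}; the present paper gives no proof of it, so there is strictly nothing in the paper to compare your argument against. Judged on its own merits your outline is sound, and it is also a genuinely different route from the one the paper takes for the analogous results proved here (Theorems \ref{thm1.2} and \ref{thm1.3}).

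Your plan rests on the pointwise sparse bound $|T_{\Omega,A}f|\lesssim\|\nabla A\|_{{\rm BMO}}\sum_{Q\in\mathcal{S}}\langle f\rangle_{L\log L,\,Q}\chi_Q$, obtained through Lerner's scheme from weak $L\log L\to L^{1,\infty}$ bounds for $T_{\Omega,A}$ and for the grand truncated maximal operator $\mathcal{M}_{T_{\Omega,A}}$. When $\Omega\in{\rm Lip}_\alpha(\mathbb{S}^{d-1})$ this is workable: after splitting into dyadic annuli $2^kQ\setminus 2^{k-1}Q$, the ${\rm Lip}_\alpha$ regularity yields a $2^{-k\alpha}$ gain on the kernel differences, while Cohen's lemma (the paper's Lemma \ref{lem2.3}) and the John--Nirenberg inequality bound the Taylor remainder by $|x-y|\bigl(k\|\nabla A\|_{{\rm BMO}}+|\nabla A(y)-\langle\nabla A\rangle_Q|\bigr)$; the second term combines with $|f|$ through generalized H\"older into a single $L\log L$ average on $2^kQ$, and the linear-in-$k$ growth is absorbed by the geometric factor $2^{-k\alpha}$. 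This gives $\mathcal{M}_{T_{\Omega,A}}f\lesssim\|\nabla A\|_{{\rm BMO}}M_{L\log L}f$, which suffices. The constants $[w]_{A_p}^{1/p}\bigl([\sigma]_{A_\infty}^{1/p}+[w]_{A_\infty}^{1/p'}\bigr)[\sigma]_{A_\infty}$ and $[w]_{A_1}\log^2({\rm e}+[w]_{A_\infty})$ are exactly the known sharp bounds for $f\mapsto\sum_Q\langle f\rangle_{L\log L,\,Q}\chi_Q$, so both parts of Theorem A follow.

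The contrast with the paper is instructive. The paper treats $\Omega\in L^\infty(\mathbb{S}^{d-1})$, where the annular $2^{-k\alpha}$ gain is unavailable and $\mathcal{M}_{T_{\Omega,A}}$ cannot be controlled, so a pointwise sparse estimate is out of reach. Instead the authors prove two \emph{bilinear} sparse dominations (Theorems \ref{thm3.1} and \ref{thm3.2}) via the Conde-Alonso--Culiuc--Di Plinio--Ou stopping-collection machinery; the cost is an additional factor $\min\{[\sigma]_{A_\infty},[w]_{A_\infty}\}$ in the strong bound and an extra $[w]_{A_\infty}$ at the endpoint, as one sees by comparing Theorem A with Theorems \ref{thm1.2}--\ref{thm1.3}. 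One small correction to your closing remark: the paper weakens the smoothness hypothesis on $\Omega$ to $L^\infty$ for the \emph{weighted} estimates, not to $L(\log L)^2$; the $L(\log L)^2$ condition appears in the \emph{unweighted} $L^2$ and $L\log L$ results, Theorems \ref{wmmthm1.1} and \ref{thm1.4}.
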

	Now, we recall some known results of classical singular integrals and make a comparative analysis.
	It was first shown by Calder\'{o}n and Zygmund \cite{CZ1} that the singular integrals $T_{\Omega}$ defined by 
	$$T_{\Omega}f(x)=\textrm{p.v.}\int_{\mathbb{R}^d}\frac{\Omega(y/|y|)}{|y|^d}f(x-y)dy$$
	 is  $L^p$ $(1<p<\infty)$ bounded either $\Omega$ is an odd function and $\Omega \in L^1(\mathbb{S}^{d-1})$, or $\Omega$ is an even function with $\int_{\mathbb{S}^{d-1}}\Omega \,d\sigma=0$ and $\Omega\in L\log L(\mathbb{S}^{d-1})$. Later on, the condition $\Omega\in L\log L(\mathbb{S}^{d-1})$ was improved to  $\Omega\in H^1(\mathbb{S}^{d-1})$ by Connett \cite{C1979}, Ricci and Weiss \cite{RW1979}, independently. 	Since then, great achievements have been made in this field.
	Among them are the celebrated works of  the weak type $(1,1)$ bounds given by Christ \cite{{C1988}} ,  Christ and Rubio~de Francia \cite{CR1988},  Hofmann, seeger \cite{se}, and Tao \cite{T1999}. It was shown that   $\Omega\in L\log L(\mathbb{S}^{d-1})$ is sufficient condition for the weak type $(1,1)$ estimate of $T_{\Omega}$. For other related nice contributions, we refer the readers to references \cite{D1993, DR1986, GS1999,V1996,W1990} and the references therein.	\vspace {0.1cm}

Cosider now the non-standard singular integral operator $T_{\Omega,\,A}$, things become more subtle. It is clear that the best known condition for the {$L^p(\mathbb{R}^{d})$} boundedness is $\Omega\in L^q(\mathbb{S}^{d-1}) $ and for the weak $L\log L$ type estimate is $Lip_\alpha(\mathbb{R}^{d})$ condition. Even if $\Omega \in L^{\infty}(\mathbb{S}^{d-1}) $, the weak type $L\log L$ type estimate of $T_{\Omega,A}$ holds or not is unknown. {Note that the following inclusion relationship holds
	\begin{equation}\label{relat}
Lip_\alpha(\mathbb{S}^{d-1}) (0<\alpha\le 1)\subsetneq {L^q(\mathbb{S}^{d-1})(q>1)\subsetneq L(\log L)^2(\mathbb{S}^{d-1}) \subsetneq L\log L(\mathbb{S}^{d-1})\subsetneq L^1(\mathbb{S}^{d-1}).}		
\end{equation} Therefore, it is quite natural to ask the following question:	
	\vspace{0.1cm}
	
	\textbf{Question 1:} What is the minimal condition for the
	weak type estimates of {$T_{\Omega,\,A}$} and $\widetilde{T}_{\Omega,A}f$?
			\vspace {0.2cm}
			
		The first main purpose of this paper is to show that $\Omega \in L(\log L)^2(\mathbb{S}^{d-1}) $ is a sufficient conditiont for the weak and strong boundedness of these two operators. We summarize our results as follows:
		
	\begin{theorem}\label{wmmthm1.1} Let $\Omega$ be homogeneous of degree zero, satisfy the vanishing moment (\ref{equa:1.1}), and $A$ be a function  in $\mathbb{R}^d$ with  derivatives of order one in ${\rm BMO}(\mathbb{R}^d)$. Suppose that  $\Omega\in L(\log L)^{2}(\mathbb{S}^{d-1})$. Then $T_{\Omega,\,A}$ is bounded on $L^2(\mathbb{R}^d)$.
		\end{theorem}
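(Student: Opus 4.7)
The plan is to decompose $\Omega$ by its level sets on $\mathbb{S}^{d-1}$, prove a quantitative $L^2$ bound for each bounded piece whose operator norm grows only polynomially in the level index, and then sum under the $L(\log L)^{2}$ hypothesis.

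First, set $E_0=\{\theta\in\mathbb{S}^{d-1}:|\Omega(\theta)|\leq 2\}$ and $E_k=\{\theta:2^{k}<|\Omega(\theta)|\leq 2^{k+1}\}$ for $k\geq 1$. Let $\Omega_k$ be the restriction $\Omega\chi_{E_k}$ modified by a linear combination of the coordinate functions $\{\theta_j\}_{j=1}^d$ chosen so that $\Omega_k$ satisfies the vanishing moment condition (\ref{equa:1.1}). The correction has size $\lesssim 2^k|E_k|$, so $\|\Omega_k\|_{L^\infty(\mathbb{S}^{d-1})}\lesssim 2^k$ and
\begin{equation*}
\sum_{k\geq 0}(k+1)^2\|\Omega_k\|_{L^1(\mathbb{S}^{d-1})}\lesssim \|\Omega\|_{L(\log L)^{2}(\mathbb{S}^{d-1})}.
\end{equation*}

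Second, the heart of the argument is to establish that for each $k\geq 0$,
\begin{equation*}
\|T_{\Omega_k,A}f\|_{L^2(\mathbb{R}^d)}\lesssim (k+1)^{2}\|\Omega_k\|_{L^1(\mathbb{S}^{d-1})}\|\nabla A\|_{{\rm BMO}(\mathbb{R}^d)}\|f\|_{L^2(\mathbb{R}^d)}.
\end{equation*}
The approach is to localize radially: write $T_{\Omega_k,A}=\sum_{j\in\mathbb{Z}}T^j_{\Omega_k,A}$ with kernels supported in the annulus $\{2^j\leq|x-y|\leq 2^{j+2}\}$, and for each piece use the structural splitting
\begin{equation*}
A(x)-A(y)-\nabla A(y)(x-y)=\bigl(\overline{\nabla A}_{B(x,2^j)}-\nabla A(y)\bigr)(x-y)+\bigl(A(x)-A(y)-\overline{\nabla A}_{B(x,2^j)}(x-y)\bigr).
\end{equation*}
The first summand produces a BMO commutator of the form $[\partial_l A-(\partial_l A)_{B(x,2^j)},\,R^j_{\Omega_k,l}]$ where $R^j_{\Omega_k,l}$ is an annularly truncated rough singular integral with kernel $\Omega_k(z)z_l/|z|^{d+1}$; this is handled by John--Nirenberg in combination with the Duoandikoetxea--Rubio de Francia Fourier-side decay estimates for $R^j_{\Omega_k,l}$. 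The second summand is essentially a Lipschitz-like Taylor remainder at scale $2^j$ and, after integrating by parts against $\nabla A$, reduces to a quantitative Fourier/Plancherel estimate for the classical rough singular integral with kernel $\Omega_k$. A Cotlar--Stein almost-orthogonality argument in $j$ then yields the displayed bound.

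Finally, summing in $k$ via Step 1 gives $\|T_{\Omega,A}f\|_{L^2}\lesssim \|\Omega\|_{L(\log L)^2}\|\nabla A\|_{{\rm BMO}}\|f\|_{L^2}$. The main obstacle is obtaining only $(k+1)^2$ growth in Step 2: one factor of $(k+1)$ comes from John-Nirenberg in the BMO commutator step, the other from the Fourier-decay summation in the Cotlar--Stein argument. Both require careful bookkeeping because $\|\Omega_k\|_{L^\infty}$ is as large as $2^k$, and only the product $\|\Omega_k\|_{L^1(\mathbb{S}^{d-1})}$, weighted by $(k+1)^2$, is controlled by the $L(\log L)^2$ hypothesis; hence the Fourier estimates must be phrased in terms of $\|\Omega_k\|_{L^1}$ with polynomial (not exponential) loss in $k$.
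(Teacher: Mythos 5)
Your overall strategy—decompose $\Omega=\sum_k\Omega_k$ by level sets, aim for a per-level bound $\|T_{\Omega_k,A}\|_{L^2\to L^2}\lesssim (k+1)^2\|\Omega_k\|_{L^1(\mathbb{S}^{d-1})}$, and then sum using $\sum_k(k+1)^2\|\Omega_k\|_{L^1}\lesssim\|\Omega\|_{L(\log L)^2}$—is the correct framework, and the paper's proof does use exactly this level-set decomposition with the target $i^2\|\Omega_i\|_{L^1}$ growth. However, your proposed Step 2 (the heart of the matter) contains genuine gaps and does not mirror the paper's method; the paper's mechanism is a paraproduct decomposition using operators $Q_s$ with $\int_0^\infty Q_s^4\,\tfrac{ds}{s}=I$, a cube decomposition that localizes $A$ to cubes $I_l$ of side length $2^j$, and a careful three-region splitting of the parameter domain $(s,t,j)$, not the annular/Cotlar--Stein scheme you sketch.

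The concrete problems with your Step 2 are as follows. First, your splitting $A(x)-A(y)-\nabla A(y)(x-y)=(\overline{\nabla A}_{B(x,2^j)}-\nabla A(y))(x-y)+(A(x)-A(y)-\overline{\nabla A}_{B(x,2^j)}(x-y))$ produces pieces that are not of the form you claim: the first summand is not a commutator $[\partial_l A,\,R^j_{\Omega_k,l}]$ because $\overline{\partial_l A}_{B(x,2^j)}$ is a function of $x$ (a mollification of $\partial_l A$ at scale $2^j$), so you get a ``$[$mollified BMO function, truncated singular integral$]$'' object whose $L^2$ theory you have not reduced to anything standard. Second, the second summand has a kernel $\phi_j(x-y)\tfrac{\Omega_k(x-y)}{|x-y|^{d+1}}\big(A_{B(x,2^j)}(x)-A_{B(x,2^j)}(y)\big)$, which is decidedly not of convolution type (the ball-localized function $A_{B(x,2^j)}$ depends on both $x$ and the scale), so the claim that it ``reduces to a quantitative Fourier/Plancherel estimate for the classical rough singular integral'' is unsupported—you cannot take a Fourier transform in $x-y$ here. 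Third, Cotlar--Stein almost-orthogonality in $j$ is asserted but not justified; the annular pieces $T^j_{\Omega_k,A}$ do not satisfy an obvious almost-orthogonality relation, and one of the main reasons the paper goes through the $Q_s^4$ reproducing formula is precisely to avoid needing such a relation. Finally, and most critically, your stated source of the $(k+1)^2$ factor (``one from John--Nirenberg, one from Fourier decay'') does not match what is actually needed. John--Nirenberg over an annulus of eccentricity $2^j$ gives factors of $j$, not of $k$. In the paper, the $i^2$ factor arises from a completely different mechanism: the decay estimate $\|Q_sT_{\Omega,A;j}f\|_{L^2}\lesssim\|\Omega\|_{L^\infty}(2^{-j}s)^\varepsilon\|f\|_{L^2}$ (Hofmann's lemma) is usable only when $(2^{-j}s)^\varepsilon$ can overwhelm the $2^i\sim\|\Omega_i\|_{L^\infty}$ loss, i.e.\ for $s\le 2^{j-Ni}$ with $N\sim 1/\varepsilon$; on the complementary ``near'' range $s\in[2^{j-Ni},2^j]$ one instead uses $L^1(\mathbb{S}^{d-1})$-based estimates, and the log-measure of this range is $\sim i$, giving a factor of $i$ per time integral and hence $i^2$ overall. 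Without a concrete replacement for this bookkeeping, the claim that the growth is $(k+1)^2$ rather than $2^k$ is not justified.

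One further technical issue: you modify each $\Omega_k$ by subtracting a linear combination of $\theta_j$'s to restore the vanishing moment (\ref{equa:1.1}). Then $\sum_k\Omega_k\ne\Omega$, and the discarded pieces give rise to operators with kernels $c_l(x-y)_l|x-y|^{-d-1}\big(A(x)-A(y)-\nabla A(y)(x-y)\big)$ whose $L^2$ boundedness also needs proof. The paper avoids this by decomposing $\Omega=\sum_i\Omega_i$ literally (no correction) and invoking the vanishing moment only for the full $\Omega$ inside lemmas such as Lemma~\ref{wmmwmmlem3.2} and Lemma~\ref{wmmwmmlem3.3}.
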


		\begin{theorem}\label{thm1.4} Let $\Omega$ be homogeneous of degree zero, satisfy the vanishing condition (\ref{equa:1.1}), and $A$ be a function  in $\mathbb{R}^d$ with  derivatives of order one in ${\rm BMO}(\mathbb{R}^d)$. Suppose that  $\Omega\in L(\log L)^{2}(\mathbb{S}^{d-1})$. Then
			for any $\lambda>0$ and ${\rm \Phi(t)=t\log ({e}+t)}$, the following inequalities hold
			\begin{eqnarray}\label{equation2.2}&&\big|\{x\in\mathbb{R}^d:\, |T_{\Omega,A}f(x)|>\lambda\}\big|\lesssim \int_{\mathbb{R}^d} \Phi\left(\frac{|f(x)|}{\lambda}\right) dx;\\
				\label{equation2.3}&&\big|\{x\in\mathbb{R}^d:\, |\widetilde{T}_{\Omega,A}f(x)|> \lambda\}\big|\lesssim \lambda^{-1}\|f\|_{L^1(\mathbb{R}^d)}.
			\end{eqnarray}
			\end{theorem}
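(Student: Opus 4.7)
The plan is to derive both endpoint estimates from the $L^{2}$ boundedness of Theorem~\ref{wmmthm1.1}, combined with a Calder\'on--Zygmund (CZ) decomposition of $f$ and H\"ormander-type smoothness estimates for the non-standard kernels that quantify the $L(\log L)^{2}$ hypothesis on $\Omega$ and the $\mathrm{BMO}$ hypothesis on $\nabla A$. The common technical device is a dyadic decomposition $\Omega=\sum_{m\ge 0}\Omega_{m}$ according to $2^{m}\le |\Omega|<2^{m+1}$ on $\mathbb{S}^{d-1}$, which satisfies
\[
\sum_{m\ge 0}(1+m)^{2}\,\|\Omega_{m}\|_{L^{1}(\mathbb{S}^{d-1})}\,\lesssim\,\|\Omega\|_{L(\log L)^{2}(\mathbb{S}^{d-1})}.
\]

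For \eqref{equation2.2}, normalize $\lambda=1$ and split $f=f_{1}+f_{2}$ with $f_{1}=f\mathbf{1}_{\{|f|\le 1\}}$. Since $\|f_{1}\|_{L^{2}}^{2}\le \|f_{1}\|_{L^{1}}\le \int\Phi(|f|)$, Chebyshev together with Theorem~\ref{wmmthm1.1} dispatches $f_{1}$. Apply a CZ decomposition of $f_{2}$ at height $1$: $f_{2}=g+\sum_{Q}b_{Q}$ with $\|g\|_{L^{\infty}}\lesssim 1$, $\|g\|_{L^{1}}\lesssim \int\Phi(|f|)$, $\mathrm{supp}\,b_{Q}\subset Q$, $\int b_{Q}=0$ and $\sum_{Q}|Q|\lesssim \int\Phi(|f|)$. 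The good part is controlled via $\|g\|_{L^{2}}^{2}\le \|g\|_{L^{\infty}}\|g\|_{L^{1}}\lesssim \int\Phi(|f|)$ and the $L^{2}$ bound. Set $E=\bigcup_{Q} c_{d} Q$, so $|E|\lesssim \int\Phi(|f|)$. On each $Q$ we use the gauge invariance $T_{\Omega,A}b_{Q}=T_{\Omega,A^{Q}}b_{Q}$ with $A^{Q}(y)=A(y)-\langle(\nabla A)_{Q},y\rangle$ (so $(\nabla A^{Q})_{Q}=0$), followed by $\int b_{Q}=0$ to subtract the value of each $m$-th piece $K_{m}^{Q}(\,\cdot\,,y_{Q})$ of the kernel. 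The problem reduces to the smoothness bound
\[
\int_{|x-y_{Q}|>c|y-y_{Q}|}\bigl|K_{m}^{Q}(x,y)-K_{m}^{Q}(x,y_{Q})\bigr|\,dx\,\lesssim\,(1+m)^{2}\,\|\Omega_{m}\|_{L^{1}(\mathbb{S}^{d-1})}\,\|\nabla A\|_{\mathrm{BMO}},
\]
uniformly in $y\in Q$; summing this in $m$ (via the $L(\log L)^{2}$ estimate above) and then in $Q$ yields the desired bound on $E^{c}$.

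The proof of \eqref{equation2.3} follows the same scheme with a structural bonus: the kernel of $\widetilde T_{\Omega,A}$ evaluates $\nabla A$ at $x$, which is a parameter when integrating in $y$, so differentiating in $y$ produces only $\nabla A(x)-\nabla A(y)$ rather than second-order derivatives of $A$. The same type of $(1+m)^{2}$-smoothness estimate therefore holds for the corresponding pieces of $\widetilde K$, and an ordinary CZ decomposition of $f$ at height $\lambda$ (no $\Phi$-size splitting is needed) together with the $L^{2}$ bound yields $\sum_{Q}\int_{E^{c}}|\widetilde T_{\Omega,A}b_{Q}|\lesssim \|f\|_{L^{1}}$, which is the weak $(1,1)$ conclusion.

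\textbf{Main obstacle.} The decisive step is the $(1+m)^{2}$-smoothness estimate above and its analogue for $\widetilde K$, with linear dependence on $\|\nabla A\|_{\mathrm{BMO}}$. Since rough $\Omega$ forbids pointwise differentiation, the plan is to regularize $\Omega_{m}$ at the scale $\operatorname{diam}(Q)$, estimate the smooth part by a standard commutator-type argument, and control the error on annuli around $Q$ by an $L^{2}$ bound in the spirit of Duoandikoetxea--Rubio de Francia. One factor $(1+m)$ arises from the $L^{2}$ norm of the rough piece $\Omega_{m}$; the other comes from a John--Nirenberg upgrade of the average of $|\nabla A-(\nabla A)_{Q}|$ on dilates of $Q$ into an $L\log L$-type oscillation estimate. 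Matching these two logarithmic losses against the $L(\log L)^{2}$ summability in $m$, without any additional logarithm sneaking in, is the principal technical difficulty of the proof.
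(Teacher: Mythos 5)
There is a genuine and fatal gap: the proposed uniform H\"ormander-type smoothness estimate
\[
\int_{|x-y_{Q}|>c|y-y_{Q}|}\bigl|K_{m}^{Q}(x,y)-K_{m}^{Q}(x,y_{Q})\bigr|\,dx\,\lesssim\,(1+m)^{2}\,\|\Omega_{m}\|_{L^{1}(\mathbb{S}^{d-1})}\,\|\nabla A\|_{\mathrm{BMO}},\qquad\text{uniformly in } y\in Q,
\]
cannot hold. If it did, the standard Calder\'on--Zygmund argument you describe would deliver the weak $(1,1)$ bound $|\{|T_{\Omega,A}f|>\lambda\}|\lesssim\lambda^{-1}\|f\|_{L^1}$, but Hu and Yang (cited in the introduction) proved that $T_{\Omega,A}$ fails to be of weak type $(1,1)$; that is precisely why the theorem can only assert the $L\log L$ estimate \eqref{equation2.2}. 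Concretely, when you subtract $K_m^Q(\cdot,y_Q)$ the gradient term produces the contribution
\[
\sum_{n=1}^d\frac{\Omega_m(x-y)\,(x_n-y_n)}{|x-y|^{d+1}}\bigl[\partial_n A(y)-\partial_n A(y_Q)\bigr],
\]
which gains no smallness in the scale $|x-y_Q|$; integrating it over the annulus $|x-y_Q|\sim 2^{s_Q+j}$ gives a contribution $\sim|\nabla A(y)-\nabla A(y_Q)|\cdot\|\Omega_m\|_{L^1}$ per dyadic scale $j$, and the sum over $j$ diverges. This term must be extracted \emph{before} any smoothness estimate is attempted and treated as a genuine rough singular integral $T_\Omega^n$ applied to the function $b_Q\,\partial_n A_Q$, which is exactly what the paper does; the weak endpoint is then supplied by Seeger/Ding--Lai, and this is the one place where the $L\log L$ size of $b_Q$ is consumed (via generalized H\"older: $\|b_Q\,\partial_n A_Q\|_{L^1}\lesssim|Q|\,\|b_Q\|_{L\log L,\,Q}$).

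Your Calder\'on--Zygmund setup also fails to produce the $L\log L$ normalization needed for that step: applying CZ to $f_2=f\mathbf{1}_{\{|f|>1\}}$ at height $1$ yields only $\langle|b_Q|\rangle_Q\lesssim 1$, not $\|b_Q\|_{L\log L,\,Q}\lesssim 1$ (take $f_2=N\mathbf{1}_{[0,1/N]^d}$). The paper instead performs CZ on the function $|f|\log(\mathrm{e}+|f|)$ at height $1$, which forces $\|b_Q\|_{L\log L,\,Q}\lesssim 1$ on each stopping cube. Finally, once the gradient term is split off, the remaining piece $\sum_s T_{\Omega,A_Q;s,j}b_Q$ has a size bound carrying an extra logarithm $j$ (from the John--Nirenberg drift of $\langle\nabla A\rangle$ across scales); the paper controls the range $j\le Ni$ crudely by $\sum_{j\le Ni}j\sim i^2$ and then sums $i^2\|\Omega_i\|_{L^1}\lesssim\|\Omega\|_{L(\log L)^2}$, while the range $j>Ni$ requires the genuine Seeger-style machinery (mollification $P_{s-j\kappa}$, directional decomposition $G_\nu^j$) to produce exponential decay in $j$. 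Your ``main obstacle'' paragraph gestures at this but does not confront the divergence of the gradient term, which is the essential point.
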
	\vspace {0.1cm}
		
		As an application of Theorem \ref{thm1.4},  we deduce that
		\begin{corollary} \label{cor2.1}Let $\Omega$ be homogeneous of degree zero, satisfy the vanishing condition (\ref{equa:1.1}), and $A$ be a function  in $\mathbb{R}^d$ with  derivatives of order one in ${\rm BMO}(\mathbb{R}^d)$. Suppose that  $\Omega\in L(\log L)^{2}(\mathbb{S}^{d-1})$. Then
			$$\|T_{\Omega,\,A}f\|_{L^p(\mathbb{R}^d)}\lesssim \Big \{\begin{array}{ll}
			p'^2\|f\|_{L^p(\mathbb{R}^d)},\,&p\in (1,\,2];\\
			p\|f\|_{L^p(\mathbb{R}^d)},\,&p\in (2,\,\infty).\end{array}
			$$
		\end{corollary}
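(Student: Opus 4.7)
The plan is to combine the endpoint estimates of Theorems~\ref{wmmthm1.1} and \ref{thm1.4} by a Marcinkiewicz-type interpolation in the range $(1,2]$ and by duality in $(2,\infty)$.

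For $p\in(1,2]$ I interpolate the weak $L\log L$ bound \eqref{equation2.2} against the $L^2$ bound supplied by Theorem~\ref{wmmthm1.1}. Given $\lambda>0$, split $f=f\chi_{\{|f|>\lambda\}}+f\chi_{\{|f|\le\lambda\}}$, apply \eqref{equation2.2} at height $\lambda/2$ to the first piece, and Chebyshev together with the $L^2$ bound to the second. Substituting into the layer-cake formula $\|T_{\Omega,A}f\|_p^p=p\int_0^\infty\lambda^{p-1}|\{|T_{\Omega,A}f|>\lambda\}|\,d\lambda$, swapping the order of integration, and using the elementary estimate $\int_0^1 u^{p-2}\log(e+2/u)\,du\lesssim(p-1)^{-2}$ produce
\[
\|T_{\Omega,A}f\|_p^p\;\lesssim\;\Big(\frac{p}{(p-1)^{2}}+\frac{p}{2-p}\Big)\|f\|_p^p.
\]
For $p\in(1,3/2]$ the first summand dominates, and taking $p$-th roots gives $\|T_{\Omega,A}f\|_p\lesssim(p-1)^{-2/p}\|f\|_p\lesssim(p')^{2}\|f\|_p$. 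For $p\in(3/2,2]$ the splitting is lossy, so I invoke Riesz--Thorin interpolation between the $L^{3/2}$ bound just obtained and the $L^2$ bound from Theorem~\ref{wmmthm1.1}, both of which are $O(1)$; this yields $\|T_{\Omega,A}\|_{L^p\to L^p}\lesssim 1\lesssim(p')^{2}$ on that subrange.

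For $p\in(2,\infty)$ I argue by duality. A direct change of variables in $\langle T_{\Omega,A}f,\,g\rangle$ identifies the formal transpose as
\[
T_{\Omega,A}^{\,t}=-\widetilde{T}_{\Omega^{\sharp},A},\qquad \Omega^{\sharp}(\xi):=\Omega(-\xi),
\]
and $\Omega^{\sharp}$ inherits the vanishing moment \eqref{equa:1.1} and the $L(\log L)^2$ regularity of $\Omega$. Consequently \eqref{equation2.3} supplies weak $(1,1)$ boundedness for $\widetilde{T}_{\Omega^{\sharp},A}$, while Theorem~\ref{wmmthm1.1} applied with $\Omega^{\sharp}$ supplies its $L^2$ bound. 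Marcinkiewicz interpolation between these two endpoints, supplemented near $q=2$ by a Riesz--Thorin step as in the previous paragraph, yields $\|\widetilde{T}_{\Omega^{\sharp},A}\|_{L^q\to L^q}\lesssim q'$ uniformly for $q\in(1,2]$. Dualising at $q=p'$ then produces $\|T_{\Omega,A}\|_{L^p\to L^p}=\|\widetilde{T}_{\Omega^{\sharp},A}\|_{L^{p'}\to L^{p'}}\lesssim(p')'=p$.

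The chief technical obstacle is the bookkeeping of $p$-dependence near $p=2$: the naive Marcinkiewicz splitting in the first step produces a spurious $(2-p)^{-1}$ factor that must be removed by the auxiliary Riesz--Thorin step using $L^{3/2}$ as a launching pad. The adjoint calculation leading to the identification with $\widetilde{T}_{\Omega^{\sharp},A}$ is a routine change of variables, but it is essential in order to unlock the weak $(1,1)$ estimate \eqref{equation2.3} for use via duality.
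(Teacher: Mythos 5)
Your proposal follows the same overall strategy the paper uses: for $p\in(1,2]$, split $f$ at height $\lambda$, apply the weak $L\log L$ estimate of Theorem~\ref{thm1.4} to the large part and the $L^2$ bound of Theorem~\ref{wmmthm1.1} to the small part, and integrate in $\lambda$; for $p\in(2,\infty)$, dualize and interpolate the weak $(1,1)$ estimate~\eqref{equation2.3} for the transpose against its $L^2$ bound. The paper's own computation produces exactly the two contributions $(p')^2\|f\|_p^p$ and $\frac{p}{2-p}\|f\|_p^p$ that you obtain, but then asserts the conclusion $\lesssim (p')^2\|f\|_p$ without addressing the blow-up of the second term as $p\to 2^-$; the analogous issue recurs in the duality step, where standard Marcinkiewicz interpolation between weak $(1,1)$ and $L^2$ also loses control near the upper endpoint. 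You have correctly identified this gap and supplied the natural remedy: carry out the splitting argument only on $(1,3/2]$, where it yields an honest $(p')^2$ bound, and then cover $(3/2,2]$ by a Riesz--Thorin step between the $L^{3/2}$ estimate just obtained and the $L^2$ bound, both of which carry absolute constants; the same patch closes the dual case. You are also more precise than the paper in identifying the formal transpose as $-\widetilde{T}_{\Omega^\sharp,A}$ with $\Omega^\sharp(\xi)=\Omega(-\xi)$; the paper writes $\widetilde{T}_{\Omega,A}$, which is a harmless abuse (the hypotheses are invariant under $\Omega\mapsto\Omega^\sharp$ and the sign is irrelevant) but your version is cleaner. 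In short, your proof is correct, follows the paper's route, and is in fact more careful than the paper at the two points where the interpolation constant needs control near the exponent $2$.
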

			\begin{remark}
			Theorem \ref{wmmthm1.1}, along with Corollary \ref{cor2.1}, shows that $\Omega\in L(\log L)^{2}(\mathbb{S}^{d-1})$ is a sufficient condition such that $T_{\Omega,\,A}$ is bounded on $L^p(\mathbb{R}^d)$ for all $p\in (1,\,\infty)$. This improves essentially the result obtained in \cite[Theorem 1.1]{hof}, in which, it was shown that if $\Omega\in \cup_{q>1}L^q(\mathbb{S}^{d-1})$, then $T_{\Omega,\,A}$ is bounded on $L^p(\mathbb{R}^d)$ for all $p\in (1,\,\infty)$. By (\ref{relat}), Theorem \ref{thm1.4}
			also improves essentially the result in Therem A (ii) in the unweighted case.
					\end{remark}
	 We believe that the condition 	$\Omega\in L(\log L)^{2}(\mathbb{S}^{d-1})$ is the weakest condition for these weak type results to hold, in the following sense.
	 	\begin{conjecture} 
		$\Omega\in L(\log L)^{2}(\mathbb{S}^{d-1})$ is the minimal condition for the
		weak $L\log L$ type estimate of $T_{\Omega,\,A}$, and 	weak $(1,1)$ estimate of $\widetilde{T}_{\Omega,A}$, in the sense that the power 2 can't be replaced by any real numbers smaller than 2.
		\end{conjecture}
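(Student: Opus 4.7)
The plan is to establish sharpness by exhibiting, for each $\beta\in(0,2)$, an $\Omega\in L(\log L)^{\beta}(\mathbb{S}^{d-1})$ satisfying the vanishing moment (\ref{equa:1.1}), a function $A$ with $\nabla A\in{\rm BMO}(\mathbb{R}^d)$, and a test $f\in L^{1}(\mathbb{R}^d)$ for which either the weak $L\log L$ estimate for $T_{\Omega,A}$ or the weak $(1,1)$ estimate for $\widetilde T_{\Omega,A}$ fails. The heuristic source of the exponent $2$ comes from the standard layer decomposition $\Omega=\sum_{k\geq 1}\Omega_{k}$, where $\Omega_{k}$ lives on $\{|\Omega|\sim 2^{k}\}$. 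For the classical $T_{\Omega}$, each layer contributes one logarithmic factor $k$, and the $L\log L$ condition is known to be sharp by Christ and Rubio de Francia. For $T_{\Omega,A}$, the second-order Taylor remainder $A(x)-A(y)-\nabla A(y)(x-y)$, after a standard telescoping against pieces of $\nabla A$, produces a linear ${\rm BMO}$-commutator structure to which one applies John--Nirenberg, costing a further factor of $k$ per layer; this yields the series $\sum k^{2}\|\Omega_{k}\|_{1}$ whose convergence is precisely $\Omega\in L(\log L)^{2}(\mathbb{S}^{d-1})$. A successful counterexample must therefore saturate the Christ--Rubio de Francia mechanism and the John--Nirenberg mechanism simultaneously, scale by scale.

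Concretely, I would take $\Omega=\sum_{j\geq 1}c_{j}(\chi_{E_{j}^{+}}-\chi_{E_{j}^{-}})$, where $E_{j}^{\pm}\subset\mathbb{S}^{d-1}$ are antipodal spherical caps of measure $\sim 2^{-j}$ centered along a well-separated family of directions $\theta_{j}$, and $c_{j}$ is chosen of order $j^{-3}$, so that $\|\Omega\|_{L(\log L)^{\beta}}<\infty$ for every $\beta<2$ while $\|\Omega\|_{L(\log L)^{2}}=\infty$; the antipodal symmetry ensures (\ref{equa:1.1}). For $A$ I would take a mollified John--Nirenberg extremum, modeled on $\nabla A(x)\sim(\log|x|)\,e_{1}$, aligned so that the ${\rm BMO}$ oscillation cubes of $\nabla A$ at scale $2^{-j}$ are centered along the rays through $\theta_{j}$. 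For $f$ I would take a normalized sum of unit bumps placed on the corresponding cubes. The target estimate is then that the $j$-th layer contributes to $T_{\Omega,A}f$ at least of order $j^{2}c_{j}$ on a set of comparable measure, producing distributional mass $\gtrsim\sum_{j}j^{2}c_{j}=\infty$ at some fixed height $\lambda$, while $\int\Phi(|f|/\lambda)\,dx$ remains finite, contradicting the putative weak $L\log L$ inequality.

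The main obstacle is the delicate three-way interaction between the spherical layers of $\Omega$, the ${\rm BMO}$ oscillations of $\nabla A$, and the bumps of $f$. Unlike in the $T_{\Omega}$ case, the second-order remainder is very sensitive to relative positions: a generic alignment of caps, cubes, and bumps generates cancellation that kills one of the two expected logarithmic factors, collapsing the example back to the classical $T_{\Omega}$ picture and hence failing to rule out $\beta<2$. Designing the geometric configuration so that both logarithmic gains are genuinely retained, without spurious cancellation, is the real difficulty; it likely requires a Kakeya-type alignment of directions and cubes, and is presumably why the result is stated here only as a conjecture.
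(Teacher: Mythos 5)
This statement is posed in the paper as a \emph{conjecture}, and the authors offer no proof of it, so there is no argument of theirs against which to compare yours. You have, in effect, correctly read the situation: you present a heuristic and a candidate counterexample skeleton rather than a proof, and you explicitly acknowledge at the end that the hard part --- arranging the three-way alignment of spherical caps, ${\rm BMO}$ oscillations, and test bumps so that the two logarithmic losses do not cancel --- is unresolved and ``is presumably why the result is stated here only as a conjecture.'' That assessment is accurate.

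Two substantive remarks on the proposal as it stands. First, your heuristic derivation of the exponent $2$ (one logarithmic factor from the layer decomposition of $\Omega$, one from John--Nirenberg applied to the ${\rm BMO}$ commutator) matches the way the factor $i^{2}\|\Omega_{i}\|_{L^{1}}$ actually arises in the paper's proofs of Theorems~\ref{wmmthm1.1} and~\ref{thm1.4} (see, e.g., the estimates for ${\rm R}^{1}_{i}$, ${\rm R}^{3,3}_{i,m}$, and the step leading to \eqref{wmm}), so the heuristic is in tune with the paper's positive results. Second, a factual caution: you assert that for the classical $T_{\Omega}$ the $L\log L$ condition is ``known to be sharp by Christ and Rubio de Francia.'' Christ \cite{C1988} and Christ--Rubio de Francia \cite{CR1988} proved the \emph{sufficiency} of $L\log L$ for the weak $(1,1)$ bound (completed in all dimensions by Seeger \cite{se}); I am not aware of an established \emph{necessity} result, and the question of whether $L\log L$ is minimal for weak $(1,1)$ of $T_{\Omega}$ is, to my knowledge, itself open. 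If that is right, then even the ``one-log'' base case of your iterated heuristic lacks a rigorous sharpness anchor, which deepens, rather than merely parallels, the difficulty you identify for the two-log claim. Apart from that misattribution, your proposal is a reasonable and honest sketch of how one might try to attack the conjecture, but it is not a proof and was not intended to be one.
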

				
		In recent years, considerable attentions have been paid to the quantitative weighted bounds of the standard rough homogeneous singular integral $T_{\Omega}$,
		where $\Omega$ is homogeneous of degree zero, has mean value zero on the unit sphere  and $\Omega\in L^{1}(\mathbb{S}^{d-1})$.
			To state some known results, we begin by introducing the definition of $A_p$ weights class.
		
		\begin{definition}[\bf {$A_p$ weights class,\cite{M},\cite{wil}}]
			Let $p\in [1,\,\infty)$ and $w$ be a nonnegative, locally integrable function on $\mathbb{R}^d$. We say that   $w\in A_{p}(\mathbb{R}^d)$ if the $A_p$ constant $[w]_{A_p}$ is finite, where
			$$[w]_{A_p}:=\sup_{Q}\Big(\frac{1}{|Q|}\int_Qw(x)dx\Big)\Big(\frac{1}{|Q|}\int_{Q}w^{-\frac{1}{p-1}}(x)dx\Big)^{p-1},\,\,\,p\in (1,\,\infty),$$
			the  supremum is taken over all cubes in $\mathbb{R}^d$, and
			$[w]_{A_1}:=\sup_{x\in\mathbb{R}^d}\frac{Mw(x)}{w(x)}.$
			For a weight $u\in A_{\infty}(\mathbb{R}^d)=\cup_{p\geq 1}A_p(\mathbb{R}^d)$, $[u]_{A_{\infty}}$ is the $A_{\infty}$ constant of $u$, defined by
			$$[u]_{A_{\infty}}=\sup_{Q\subset \mathbb{R}^d}\frac{1}{u(Q)}\int_{Q}M(u\chi_Q)(x)dx.$$ 	
		\end{definition}
		
		We now summarize some known results, and divide them into two cases according to the kernels.\vspace{0.1cm}
		
		\noindent\textbf{The case $\Omega\in {\rm Lip}_{\alpha}(S^{d-1})$:} In 2012, Hyt\"onen and Lacey \cite{hyla} first showed that
		if $w\in A_p(\mathbb{R}^d)$, then
		\begin{eqnarray*}\|T_{\Omega}f\|_{L^p(\mathbb{R}^d,\,w)}\lesssim [w]_{A_p}^{\frac{1}{p}} \big([w]_{A_{\infty}}^{\frac{1}{p'}}+[\sigma]_{A_{\infty}}^{\frac{1}{p}}\big)\|f\|_{L^p(\mathbb{R}^d,\,w)}, \quad \hbox{for\ } 1<p<\infty,\end{eqnarray*}
		where and in the following  $\sigma=w^{-\frac{1}{p-1}}$.
		Later on, Hyt\"onen and P\'erez \cite{hp2} proved that if $w\in A_{1}(\mathbb{R}^d)$, then it holds that
		\begin{eqnarray*}
			\|T_{\Omega}f\|_{L^{1,\,\infty}(\mathbb{R}^d,w)}
			\lesssim [w]_{A_1}\log ({\rm e}+[w]_{A_{\infty}})
			\|f\|_{L^1(\mathbb{R}^d,\,w)},\end{eqnarray*}
		\textbf{The case $\Omega\in L^{\infty}(\mathbb{S}^{d-1})$}:
		In 2017, Hyt\"onen, et al. \cite{hrt} proved that if $w\in A_p(\mathbb{R}^d)$, then
		\begin{align}\label{equa:1.4}\|T_{\Omega}f\|_{L^p(\mathbb{R}^d,\,w)}\lesssim \|\Omega\|_{L^{\infty}(\mathbb{S}^{d-1})}[w]^{2\max\{1,\frac{1}{p-1}\}}\|f\|_{L^p(\mathbb{R}^d,\,w)}, \quad \hbox{for\ } 1<p<\infty.\end{align}
		This weighted bound is indeed optimal \cite{lno}.
		Inequality (\ref{equa:1.4}) was demonstrated again in \cite{ccdo} by usign the method of bilinear sparse domination.
		Recently, Li, et al. \cite{lpr} improved (\ref{equa:1.4}) and  showed that
		\begin{eqnarray*}\label{equa:1.6}&&\|T_{\Omega}f\|_{L^p(\mathbb{R}^d,\,w)}\lesssim [w]_{A_p}^{\frac{1}{p}} \big([w]_{A_{\infty}}^{\frac{1}{p'}}+[\sigma]_{A_{\infty}}^{\frac{1}{p}}\big)
			\min\{[\sigma]_{A_{\infty}},\,[w]_{A_{\infty}}\}\|f\|_{L^p(\mathbb{R}^d,\,w)},\quad  w\in A_p(\mathbb{R}^d) \end{eqnarray*}
		and
		\begin{eqnarray}\label{equa:1.7}
			\|T_{\Omega}f\|_{L^{1,\,\infty}(\mathbb{R}^d,w)}
			\lesssim [w]_{A_1}[w]_{A_{\infty}}\log ({\rm e}+[w]_{A_{\infty}})
			\|f\|_{L^1(\mathbb{R}^d,\,w)},\quad  w\in A_1(\mathbb{R}^d).
		\end{eqnarray}
	
		More recent results can be found in the work of Lerner \cite{ler4}, where some weak type estimate was given for certain grand maximal operator of $T_{\Omega}.$	\vspace {0.1cm}
		
		Based on the above analysis and compare Theorem A with the above results of the standard rough Calder\'on-Zygmund operators, a question arises naturally:
		\vspace{0.2cm}
		
			\textbf{Question 2:} {Does the operator $T_{\Omega,\,A}$ enjoy the quantitative weighted norm inequalities whenever the kernel $\Omega\in L^{\infty}(\mathbb{S}^{d-1})$?}	\vspace {0.1cm}
		
	Our second main purpose of this paper is to give a positive answer to Question 2, we obtain
	\begin{theorem}\label{thm1.2}
		Let $\Omega\in L^{\infty}(\mathbb{S}^{d-1})$ be homogeneous of degree zero, satisfy the vanishing condition (\ref{equa:1.1}), and $A$ be a function  on $\mathbb{R}^d$ with derivatives of order one in ${\rm BMO}(\mathbb{R}^d)$. Then for $p\in (1,\,\infty)$ and $w\in A_{p}(\mathbb{R}^d)$, the following weighted norm inequality holds
				\begin{eqnarray*}\|T_{\Omega,\,A}f\|_{L^p(\mathbb{R}^d,\,w)}\lesssim [w]_{A_p}^{\frac{1}{p}} \big([w]_{A_{\infty}}^{\frac{1}{p'}}+[\sigma]_{A_{\infty}}^{\frac{1}{p}}\big)[\sigma]_{A_{\infty}}\min\{[\sigma]_{A_{\infty}},\,
			[w]_{A_{\infty}}\}
			\|f\|_{L^p(\mathbb{R}^d,\,w)}.
		\end{eqnarray*}
	\end{theorem}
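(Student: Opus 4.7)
The plan is to establish Theorem \ref{thm1.2} via two bilinear sparse dominations of $T_{\Omega,A}$, combined with the sharp weighted theory for sparse forms. The strategy is modeled on the approach of Conde-Alonso--Culiuc--Di~Plinio--Ou \cite{ccdo} and Li--P\'erez--Rivera-R\'ios \cite{lpr} for the standard rough operator $T_\Omega$ with $\Omega\in L^\infty$; the extra factor $[\sigma]_{A_\infty}$ in the target bound (compared with the bound for $T_\Omega$) will be produced by the BMO oscillation of $\nabla A$, while the $\min\{[\sigma]_{A_\infty},[w]_{A_\infty}\}$ factor will come from combining the two dual sparse dominations.

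First I would prove two bilinear sparse form estimates of the shape
\begin{align*}
 |\langle T_{\Omega,A}f,g\rangle| &\lesssim \|\nabla A\|_{\mathrm{BMO}}\sum_{Q\in\mathcal{S}_1}|Q|\,\langle f\rangle_{Q,L\log L}\,\langle g\rangle_{Q,r},\\
 |\langle T_{\Omega,A}f,g\rangle| &\lesssim \|\nabla A\|_{\mathrm{BMO}}\sum_{Q\in\mathcal{S}_2}|Q|\,\langle f\rangle_{Q,r}\,\langle g\rangle_{Q,L\log L},
\end{align*}
for some $r>1$ close to $1$ and $\tfrac{1}{2}$-sparse families $\mathcal{S}_1,\mathcal{S}_2$ depending on $f,g$. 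To build the first domination I would use the grand maximal truncation approach: localize to a cube $Q_0$, replace $\nabla A$ by $\nabla A-\langle\nabla A\rangle_{3Q_0}$, and derive a single-scale $L^1\times L^{r'}$ testing inequality by combining the cancellation condition \eqref{equa:1.1}, the $L^\infty$ bound on $\Omega$, and a John--Nirenberg argument on $\nabla A$. The $L\log L$ Orlicz average on the $f$-side is precisely what absorbs the BMO oscillation produced by the commutator-type error terms. Lerner's stopping-time algorithm then iterates this to produce $\mathcal{S}_1$. The second sparse bound is obtained analogously, working with $\widetilde T_{\Omega,A}$ (whose formal adjoint is $T_{\Omega,A}$); the duality swaps the roles of $f$ and $g$ in the averages.

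The next step is to feed each sparse bound into the weighted theory for sparse forms. The base $A_p$--$A_\infty$ structure of Hyt\"onen--P\'erez gives the factor $[w]_{A_p}^{1/p}\big([w]_{A_\infty}^{1/p'}+[\sigma]_{A_\infty}^{1/p}\big)$. An $L\log L$ Orlicz average at a cube costs one extra $A_\infty$ constant of the weight on the opposite side (the sharp quantitative weighted bound for the $L\log L$ dyadic maximal operator); together with an additional $[\sigma]_{A_\infty}$ coming from the BMO symbol itself, the first sparse form yields
$$\|T_{\Omega,A}f\|_{L^p(w)} \lesssim [w]_{A_p}^{1/p}\big([w]_{A_\infty}^{1/p'}+[\sigma]_{A_\infty}^{1/p}\big)[\sigma]_{A_\infty}\,[w]_{A_\infty}\|f\|_{L^p(w)},$$
and the second gives the analogous bound with $[w]_{A_\infty}$ replaced by $[\sigma]_{A_\infty}$ in the last factor. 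Taking the minimum of these two delivers the advertised factor $[\sigma]_{A_\infty}\min\{[\sigma]_{A_\infty},[w]_{A_\infty}\}$.

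The main obstacle is the first step. Unlike the convolution operator $T_\Omega$, the kernel of $T_{\Omega,A}$ is not translation-invariant and does not satisfy the standard Calder\'on--Zygmund size and smoothness conditions, so the bilinear sparse framework of \cite{ccdo,lpr} cannot be applied off the shelf. The decisive technical input will be a \emph{uniform} single-scale $L^2$ bound for the smooth Littlewood--Paley pieces of $T_{\Omega,A}$; since $\Omega\in L^\infty$ this is simpler than the $L(\log L)^{2}$ case of Theorem \ref{wmmthm1.1}, but the BMO dependence on $\nabla A$ must be tracked carefully via the Fefferman--Stein sharp function and John--Nirenberg. Once this single-scale estimate is in hand, running Lerner's stopping-time algorithm on a grand maximal truncation of $T_{\Omega,A}$ produces the two sparse forms, and the rest of the proof reduces to the general weighted theory for sparse operators.
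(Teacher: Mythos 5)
Your overall scheme -- two bilinear sparse dominations for $T_{\Omega,A}$ fed into the sharp $A_p$--$A_\infty$ theory for sparse forms -- is indeed the paper's strategy (Sections \ref{ssp1}, \ref{ssp2}, \ref{sec5}). There are, however, two genuine gaps in your proposal.

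First, the second sparse form you propose has the wrong Orlicz space on the test-function side. You write a domination of the type $\sum_Q |Q|\,\langle f\rangle_{Q,r}\langle g\rangle_{Q,L\log L}$, obtained ``by duality'' from the first form. The paper instead proves (Theorem \ref{thm3.2}) an $(L^r,L^1)$ domination: $\sum_Q |Q|\,\langle f\rangle_{Q,r}\langle g\rangle_{Q,1}$. The $L^1$ on the $g$-side is strictly stronger than $L\log L$, and it is this strength that yields the $[\sigma]_{A_\infty}^2$ factor in the second weighted estimate, allowing the final $\min\{[\sigma]_{A_\infty},[w]_{A_\infty}\}$. The $L^1$ is available because the companion operator $\widetilde T_{\Omega,A}$ (which is not the transpose of $T_{\Omega,A}$, but the operator with $\nabla A(x)$ replacing $\nabla A(y)$ in the kernel) \emph{is} weak $(1,1)$ bounded, which is inequality \eqref{equation2.3} of Theorem \ref{thm1.4} and a nontrivial result in its own right; $T_{\Omega,A}$ itself is not weak $(1,1)$. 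Your duality heuristic would symmetrize the two sparse forms and therefore cannot recover the asymmetric improvement.

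Second, you state both sparse bounds with constant $\|\nabla A\|_{\rm BMO}$ and no dependence on $r$, and then attribute one of the extra $A_\infty$ factors to ``the BMO symbol itself.'' In the paper the two sparse dominations carry explicit bounds $Cr'$ and $Cr'^2$ respectively (Theorems \ref{thm3.1} and \ref{thm3.2}), and these $r$-dependent constants are exactly what turn into $[w]_{A_\infty}$ and $[\sigma]_{A_\infty}^2$ after choosing $r=1+\varepsilon$ with $\varepsilon\approx 1/[w]_{A_\infty}$ or $1/[\sigma]_{A_\infty}$, in combination with Li's $A_p$--$A_\infty$ sparse-form estimate \eqref{equation4.6} and the elementary cost $1/\varepsilon$ of replacing $\langle f\rangle_{L\log L,Q}$ by $\langle f\rangle_{Q,1+\varepsilon}$. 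The $r'$, $r'^2$ in turn come from the quantitative unweighted $L^p$ bounds $\|T_{\Omega,A}\|_{L^p\to L^p}\lesssim (p')^2$, $p\in(1,2]$ (Corollary \ref{cor2.1}), which follow from the endpoint $L\log L$ and weak-$(1,1)$ estimates of Theorem \ref{thm1.4}; without tracking this explicit $p$-dependence the weighted accounting does not close. So the missing inputs are precisely the theorems of Sections \ref{Sect 2}--\ref{Sec3}, and the role of the BMO norm of $\nabla A$ is already hidden in those operator-norm bounds rather than appearing as a separate multiplicative $A_\infty$ factor.
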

	
	\begin{theorem}\label{thm1.3}
		Let $\Omega\in L^{\infty}(\mathbb{S}^{d-1})$ be homogeneous of degree zero, satisfy the vanishing condition (\ref{equa:1.1}), and $A$ be a function  on $\mathbb{R}^d$ with derivatives of order one in ${\rm BMO}(\mathbb{R}^d)$. Then for $w\in A_1(\mathbb{R}^d)$, ${\rm \Phi(t)=t\log ({e}+t)}$ and $\lambda>0$, it holds that
		\begin{equation}\label{thm1.3'}w\big(\{x\in\mathbb{R}^d:\,|T_{\Omega,\,A}f(x)|>\lambda\}\big)
			\lesssim [w]_{A_1}[w]_{A_{\infty}}\log^2 ({\rm e}+[w]_{A_{\infty}})\int_{\mathbb{R}^d}  \Phi\left(\frac{|f(x)|}{\lambda}\right)w(x)dx.\nonumber
		\end{equation}
	\end{theorem}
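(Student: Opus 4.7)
The plan is to reduce Theorem \ref{thm1.3} to one of the two bilinear sparse dominations of $T_{\Omega,A}$ established earlier in the paper. For $\Omega\in L^\infty(\mathbb{S}^{d-1})$ I expect a bilinear sparse form of the type
\[ |\langle T_{\Omega,A} f,g\rangle| \lesssim \|\Omega\|_{L^\infty(\mathbb{S}^{d-1})}\|\nabla A\|_{\mathrm{BMO}(\mathbb{R}^d)} \sum_{Q\in\mathcal{S}} \|f\|_{L\log L,Q}\,\langle g\rangle_Q\,|Q|, \]
with $\mathcal{S}$ a sparse family in $\mathbb{R}^d$. The $L\log L$ Orlicz bump on $f$ is essentially forced by the commutator structure: a mean-value manipulation writes $A(x)-A(y)-\nabla A(y)(x-y)$ as an integral of $\nabla A(z)-\nabla A(y)$ along a segment, placing $\nabla A\in\mathrm{BMO}(\mathbb{R}^d)$ in the role of a first-order commutator symbol, whose endpoint sparse bound naturally carries an $L\log L$ bump via John--Nirenberg.

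Given such a sparse bound, the weighted weak-type endpoint estimate will follow the scheme of Hyt\"onen--P\'erez \cite{hp2}, as later sharpened by Li--P\'erez--Rivera-R\'ios \cite{lpr} to accommodate the $L^\infty$-kernel loss of $[w]_{A_\infty}$. Concretely, I perform a Calder\'on--Zygmund decomposition of $f$ adapted to $w$ at height $\lambda$ through the $\Phi$-stopping condition $w(Q_j)^{-1}\int_{Q_j}\Phi(|f|/\lambda)w\,dx>1$, decompose $f=g+\sum_j b_j$ with $b_j$ supported on $Q_j$ and of zero $w$-mean, and set $E=\bigcup_j 3Q_j$. The $A_1$ condition immediately yields $w(E)\lesssim [w]_{A_1}\int \Phi(|f|/\lambda)w\,dx$, so it suffices to control $w(\{x\notin E:|T_{\Omega,A} f(x)|>\lambda\})$ by the claimed right-hand side.

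For the good part $g$ I apply the strong-type estimate of Theorem \ref{thm1.2} at an exponent $p$ slightly above $1$ (typically $p-1\sim 1/\log(\mathrm{e}+[w]_{A_\infty})$) together with Chebyshev, using the standard consequence of the stopping time $\|g\|_{L^p(w)}^p\lesssim \lambda^p\int \Phi(|f|/\lambda)w\,dx$. For the bad part $\sum_j b_j$ I plug into the bilinear sparse form dualized against a test function supported off $E$; the mean-zero cancellation of each $b_j$, combined with the Orlicz-averaging principle, allows one to replace $\|b_j\|_{L\log L, Q}$ contributions by $\|f\|_{L\log L,Q}$ contributions, and ultimately to bound the resulting sparse sum by the $L\log L$ maximal function of $f$, which integrates to the $\Phi$-integral on the right-hand side.

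The principal difficulty will be the careful bookkeeping of three sources of logarithmic growth in $[w]_{A_\infty}$: one from the BMO factor $\nabla A$, one from the $L\log L$ bump in the sparse form, and one from the sharp $A_\infty$ reverse H\"older inequality used to pass between Lebesgue and $w$-Orlicz averages. The extra factor $[w]_{A_\infty}$ beyond what appeared in Theorem A(ii) is precisely the cost of working with an $L^\infty$ kernel rather than a $\mathrm{Lip}_\alpha$ one, exactly mirroring the gap between \cite{hp2} and \eqref{equa:1.7} in the classical rough singular integral setting. Ensuring that exactly $\log^2(\mathrm{e}+[w]_{A_\infty})$---and no higher power---appears in the final bound will require a careful optimization of the exponent $p$ and of the Orlicz scale in the sparse form, but the argument is otherwise a standard sparse-operator endpoint analysis.
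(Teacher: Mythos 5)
Your route diverges substantially from the paper's, which performs no Calder\'on--Zygmund decomposition at all inside the proof of Theorem~\ref{thm1.3}. The paper simply invokes the $(L\log L,\,L^r)$-bilinear sparse domination of Theorem~\ref{thm3.1} (valid only for $r\in(1,2]$, with constant $\sim r'$) and the abstract black-box Theorem~\ref{thm4.1} (taken from \cite{hulaixue}, built on the two-weight bound in Lemma~\ref{lem4.4}), then tunes the free parameters: $t=1+\tfrac{1}{2^{11+d}[w]_{A_\infty}}$ forces $M_tw\lesssim[w]_{A_1}w$ via reverse H\"older, $r=(1+t)/2$ turns the $r'$ loss into $[w]_{A_\infty}$, and $p_1=1+\tfrac{1}{\log(\mathrm{e}+[w]_{A_\infty})}$ turns $p_1'^{\,1+\beta}=p_1'^{\,2}$ into $\log^2(\mathrm{e}+[w]_{A_\infty})$. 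Both logarithms thus come from the single factor $p_1'^{\,2}$, not from three separate sources as you surmise.

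Your plan has concrete gaps that go beyond bookkeeping. First, you posit a sparse form $\sum_Q\|f\|_{L\log L,Q}\langle g\rangle_Q\,|Q|$ with an $L^1$-average on the dual slot, but the paper only establishes the $(L\log L,\,L^r)$ form for $r>1$, with constant $\sim r'$; this $r'$ loss is precisely the origin of the $[w]_{A_\infty}$ factor, so assuming $r=1$ discards it at the wrong step. Second, feeding the good part into Theorem~\ref{thm1.2} at $p\approx 1$ is not obviously viable: the constant there involves $[\sigma]_{A_\infty}$ and $\min\{[\sigma]_{A_\infty},[w]_{A_\infty}\}$ with $\sigma=w^{1-p'}$, and as $p\downarrow 1$ (so $p'\uparrow\infty$) the quantity $[\sigma]_{A_\infty}$ is not controlled by $[w]_{A_1}$ or $[w]_{A_\infty}$ for a general $A_1$ weight. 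Third, the assertion that the mean-zero cancellation of the $b_j$ lets you trade $\|b_j\|_{L\log L,Q}$ for $\|f\|_{L\log L,Q}$ inside the sparse sum is unsubstantiated: the sparse family in Theorem~\ref{thm3.1} is produced by a stopping-time iteration on the pair $(f_1,f_2)$ and bears no a priori relation to the CZ cubes of your decomposition, and the cancellation of the bad part is consumed \emph{inside} the proof of the sparse domination (Lemmas~\ref{lemma t}--\ref{diff}), not after the fact. The paper's detour through Theorem~\ref{thm4.1} is exactly what bypasses all three of these difficulties.
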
	
			\begin{remark}
		Corollary \ref{cor2.1} makes it possible for us to avoid proving the self-adjoint property as in \cite[Appendix A]{ccdo}).
		Moreover, Let $\eta\in C^{\infty}_0(\mathbb{R}^d)$ be a radial function on $\mathbb{R}^d$,  ${\rm supp}\,\eta\subset \{x:\, |x|\leq 2\}$, and $\eta(x)=1$ when $|x|\leq 1$. Consider the smooth truncated operator $T_{\Omega,\,A;\,\eta}$ by
		$$T_{\Omega,\,A;\,\eta}f(x)={\rm p. v.}\int_{\mathbb{R}^d}\eta(|x-y|)\frac{\Omega(x-y)}{|x-y|^{d+1}}\big(A(x)-A(y)-\nabla A(y)(x-y)\big)f(y)dy.
		$$
		It was proved in \cite[Lemma 3]{chenlu} that, under the hypothesis of Theorem \ref{thm1.4}, $T_{\Omega,\,A;\,\eta}$ is also bounded on $L^2(\mathbb{R}^d)$.  Furthermore, the same reasoning as in the proof of Theorem \ref{thm1.4} gives that the estimate \eqref{equation2.2} is also true for $T_{\Omega,\,A;\,\eta}$.
	\end{remark}
	The article is organized as follows. Section \ref {Sect 2} will be devoted to demonstrate the $L^2$ boundedness of $T_{\Omega,A}.$ In Section \ref{Sec3}, we will prove Theorem \ref{thm1.4} and Corollary \ref{cor2.1}. The proof of Theorem \ref{thm1.4} is not short and will be divided into several cases and steps.  Smoothness trunction method will play an important role and will be used several times.
	We will establish two pointwise sparse dominations for $T_{\Omega,A}$ in Section \ref{ssp1} and Section \ref{ssp2} respectively.  Finally, in Section \ref{sec5}, we would like to demonstrate Theorem \ref{thm1.2}-\ref{thm1.3} and hence obtain the quantitative weighted bounds for the operator $T_{\Omega,\,A}$.	\vspace{0.1cm}

	Let's explain a little bit about the proofs of the main results. In Section \ref{Sect 2}, we will introduce a convolution operator $Q_s$ with the property that \begin{equation*}
	\int^{\infty}_0Q_s^4\frac{ds}{s}=I.
	\end{equation*}
	This makes it possible to commutate with the paraproducts appeared in the proof and thus obtains more freedom in dealing with the estimates of the $L^2$ boundedness. Moreover, the method of dyadic analysis has been applied in the delicated decomposition of $L^2$ norm of   $T_{\Omega,\,A}$. At some key points, we will use some properties of Carleson measure.	\vspace {0.1cm}
	
	The key ingredient in our proof of Theorem \ref{thm1.4}  is to estimate the bad part in the Calder\'on-Zygmund decomposition of $f$.  In the  work of \cite{se}, Seeger showed that  if $\Omega\in L\log L(\mathbb{S}^{d-1})$, then $T_{\Omega}$ is bounded from $L^1(\mathbb{R}^d)$ to $L^{1,\,\infty}(\mathbb{R}^d)$. Recently,  the result of Seeger was generalized by Ding and Lai \cite{dinglai}. They considered the weak type endpoint estimate for more generalized operator $T_{\Omega}^*$ defined by
	$${T}^*_{\Omega}f(x)={\rm p.\,v.}\int_{\mathbb{R}^d}\Omega(x-y)K(x,\,y)f(y)dy.
	$$
	Ding and Lai proved that if $\Omega\in L\log L(\mathbb{S}^{d-1})$ and for some $\delta\in (0,\,1]$, the function
	$K$ satisfies
	\begin{eqnarray}\label{eq1.size}|K(x,\,y)|\lesssim \frac{1}{|x-y|^d};
	\end{eqnarray}
	\begin{eqnarray}\label{eq1.regular1}|K(x_1,\,y)-K(x_2,\,y)|\lesssim \frac{|x_1-x_2|^{\delta}}{|x_1-y|^{d+\delta}},\,\,|x_1-y|\geq 2|x_1-x_2|,
	\end{eqnarray}
	\begin{eqnarray}\label{eq1.regular2}|K(x,\,y_1)-K(x,\,y_2)|\lesssim \frac{|y_1-y_2|^{\delta}}{|x-y_1|^{d+\delta}},\,\,|x-y_1|\geq 2|y_1-y_2|,
	\end{eqnarray}
	and ${T}_{\Omega}^*$ is bounded on $L^2(\mathbb{R}^d)$, then ${T}_{\Omega}^*$ is bounded from $L^1(\mathbb{R}^d)$ to $L^{1,\,\infty}(\mathbb{R}^d)$. However, when $A$ has derivatives of order one in ${\rm BMO}(\mathbb{R}^d)$, the function
	$[{A(x)-A(y)-\nabla A(y)(x-y)}]{|x-y|^{-d-1}}$
	does not satisfy the conditions (\ref{eq1.size})-(\ref{eq1.regular2}). Let $f$ be a bounded function with compact support,  $b=\sum_{L}b_L$ be the bad part in the Calder\'on-Zygmund decomposition of $f$. In order to overcome this essential difficulty, we write
	$$T_{\Omega,\,A}b(x)=\sum_{L}\sum_{s}\int_{\mathbb{R}^d}\frac{\Omega(x-y)}{|x-y|^{d+1}}\phi_s(x-y)\big(A_L(x)-A_L(y)\big)b_{L}(y)dy+\hbox{ error\,\,terms},$$
	where $A_{L}(y)=A(y)-\sum_{n=1}^d\langle\partial _n A\rangle_Ly_n.$
	$\phi_s(x)=\phi(2^{-s}x)$. Here, $\phi$ is a smooth radial nonnegative function on $\mathbb{R}^d$ such that ${\rm supp}\, \phi\subset\{x:\frac{1}{4}\leq |x|\leq 1\}$ and
	$\sum_s\phi_s(x)=1$ for all $x\in \mathbb{R}^d\backslash\{0\}$. Then, our key observation is that, for each $s\in \mathbb{Z}$ and $L$ with side length $\ell(L)=2^{s-j}$, the kernel ${|x-y|^{-d-1}}\phi_s(x-y)\big(A_L(x)-A_L(y)\big)\chi_{L}(y)$ instead satisfies (\ref{eq1.regular1}) and (\ref{eq1.regular2}).	\vspace {0.1cm}

	In what follows, $C$ always denotes a
	positive constant which is independent of the main parameters
	involved but whose value may differ from line to line. We use the
	symbol $A\lesssim B$ to denote that there exists a positive constant
	$C$ such that $A\le CB$.  Specially, we use $A\lesssim_{n,p} B$ to denote that there exists a positive constant
	$C$ depending only on $n,\,p$ such that $A\le CB$. Constant with subscript such as $c_1$,
	does not change in different occurrences. For any set $E\subset\mathbb{R}^d$,
	$\chi_E$ denotes its characteristic function.  For a cube
	$Q\subset\mathbb{R}^d$, $\ell(Q)$ denotes the side length of $Q$, and $s_Q=\log_2\ell(Q)$, and for  $\lambda\in(0,\,\infty)$, we use
	$\lambda Q$ to denote the cube with the same center as $Q$ and whose
	side length is $\lambda$ times that of $Q$.  For a fixed cube $Q$, denote by $\mathcal{D}(Q)$ the set of dyadic cubes with respect to $Q$, that is, the cubes from $\mathcal{D}(Q)$ are formed by repeating subdivision of $Q$ and each of descendants into $2^n$ congruent subcubes.
	For $p\in[1,\,\infty]$, $p'$ denotes the dual exponent of $p$, namely, $1/p'=1-1/p$; for a locally integrable function $f$ and a cube $Q$,
	$\langle |f|\rangle_{Q}$ denotes the mean value of $|f|$ on $Q$ and $\langle |f|\rangle_{Q, r}=\big(\langle|f|^r\rangle_{Q}\big)^{\frac{1}{r}}$; for a suitable function $f$, $\widehat{f} $ denotes the Fourier transform of $f$.
	
		\vspace {0.1cm}
	
	The generalization of H\"older's inequality will be used in the proofs of our theorems. Let $\beta \geq 0$, $f$ be a locally integrable  function and $Q$ be a cube, define $\|f\|_{L(\log L)^{\beta},\,Q}$ by
	$$\|f\|_{L(\log L)^{\beta},\,Q}=\inf\Big\{\lambda>0:\,\frac{1}{|Q|}\int_{Q}\frac{|f(y)|}{\lambda}\log^{\beta}\Big({\rm e}+\frac{|f(y)|}{\lambda}\Big)dy\leq 1\Big\}.$$	
	We also define
	$$\|f\|_{{\rm exp}L,\,Q}=\inf\Big\{\lambda>0:\,\frac{1}{|Q|}\int_Q{\rm exp}\Big(\frac{|f(x)|}{\lambda}\Big)dx\leq 2\Big\}.
	$$
	The generalization of H\"older's inequality tells us that
	\begin{eqnarray}\label{generalholder}
		\frac{1}{|Q|}\int_{Q}|f(x)g(x)|dx\lesssim \|f\|_{L\log L,\,Q}\|g\|_{{\rm exp}L,\,Q}.
	\end{eqnarray}
	see \cite[p. 58]{rr}. Moreover, if $a\in {\rm BMO}(\mathbb{R}^d)$, then for a cube $Q$ and a suitable function $f$, it follows from (\ref{generalholder}) and the John-Nirenberg inequality that 
	\begin{eqnarray}\label{holderandjohnniren}\int_{Q}|a(x)-\langle a\rangle||f(x)|dx\lesssim \|a\|_{{\rm BMO}(\mathbb{R}^d)}|Q|\|f\|_{L\log L,\,Q}.\end{eqnarray}
	
We will need the maximal operators $M_{\beta}$ and $M_{L(\log L)^{\beta}}$ defined by
	$$M_{\beta}f(x)=\big[M(|f|^{\beta})(x)\big]^{\frac{1}{\beta}},\quad \hbox{and\ \ } M_{L(\log L)^{\beta}}g(x)=\sup_{Q\ni x}\|g\|_{L(\log L)^{\beta},\,Q}.$$
	where $M$ is the Hardy-Littlewood maximal operator.
	\vspace{0.3cm}
	\section{{$L^2$ boundedness with $ L(\log L)^{2}(\mathbb{S}^{d-1})$ kernels}}\label{Sect 2}

This section will be devoted to prove Theorem \ref{wmmthm1.1}, the {$L^2(\mathbb{R}^{d})$} boundedness of $T_{\Omega,\,A}$ when $\Omega\in L(\log L)^{2}(\mathbb{S}^{d-1})$.  {We will employ some ideas  from \cite{hof}, together with many more refined estimates.} We begin with some notions and lemmas. Let $\widetilde{\psi}\in C^{\infty}_0(\mathbb{R}^d)$ be a radial function with integral zero, ${\rm supp}\,\widetilde{\psi}\subset B(0,\,1)$, $\widetilde{\psi}_s(x)=s^{-d}\widetilde{\psi}(s^{-1}x)$ and assume that $$\int^{\infty}_0[\widehat{\widetilde{\psi}}(s)]^4\frac{ds}{s}=1.$$ 

Consider the convolution operator  $Q_sf(x)=\widetilde{\psi}_s*f(x).  $
It enjoys the property that
\begin{equation}\label{wmm3.1}
	\int^{\infty}_0Q_s^4\frac{ds}{s}=I.
\end{equation}
Moreover, by the classical Littlewood-Paley theory, it follows that
\begin{eqnarray}\label{lpp}\Big\|\Big(\int^{\infty}_0|Q_sf|^2\frac{ds}{s}\Big)^{1/2}\Big\|_{L^2(\mathbb{R}^d)}\lesssim \|f\|_{L^2(\mathbb{R}^d)}.\end{eqnarray}
	Let $\phi$ be a smooth radial nonnegative function on $\mathbb{R}^d$ with ${\rm supp}\, \phi\subset\{x:\frac{1}{4}\leq |x|\leq 1\}$,
$\sum_s\phi_s(x)=1$ with $\phi_j(x)=2^{-jd}\phi(2^{-j}x)$ for all $x\in \mathbb{R}^d\backslash\{0\}$. 
For each fixed $j\in\mathbb{Z}$, define
\begin{eqnarray}\label{TRAC OP}
	T_{\Omega,\,A;\,j}f(x)=\int_{\mathbb{R}^d}K_{A,\,j}(x,\,y)f(y)dy,\label{wmmhtg}
\end{eqnarray}
where
$$K_{A,\,j}(x,\,y)=\frac{\Omega(x-y)}{|x-y|^{d+1}}(A(x)-A(y)-\nabla A(y)(x-y)\big) \phi_j(|x-y|).
$$
The following lemmas are needed in our analysis.

\begin{lemma}[\cite{hof}]\label{wmmwmmlem3.2} Let $\Omega$ be homogeneous of degree zero, satisfies the vanishing momoent (\ref{equa:1.1}) and $\Omega\in L^1 (\mathbb{S}^{d-1})$. Let $A$ be a function on $\mathbb{R}^d$ with derivitives of order one in ${\rm BMO}(\mathbb{R}^d)$. Then  for any $k_1,\,k_2\in \mathbb{Z}$ with $k_1<k_2$, the following inequality holds
	$$\Big|\sum_{k_1\leq j\leq k_2}\int_{\mathbb{R}^d}K_{A,\,j}(x,\,y)dy\Big|\lesssim \|\Omega\|_{L^1(\mathbb{S}^{d-1})}.$$
\end{lemma}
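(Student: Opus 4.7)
Setting $z = x-y$ and writing $\Psi(z) := \sum_{k_1\le j\le k_2}\phi_j(|z|)$, a smooth radial bump concentrated in the annulus $\{2^{k_1-2}\lesssim|z|\lesssim 2^{k_2}\}$, the sum in question reassembles into a single integral
$$\Sigma(x) := \int_{\mathbb{R}^d}\frac{\Omega(z)\,\Psi(z)}{|z|^{d+1}}\bigl[A(x)-A(x-z)-\nabla A(x-z)\cdot z\bigr]\,dz.$$
The radiality of $\Psi$ together with \eqref{equa:1.1} supplies the basic cancellation
$$\int_{\mathbb{R}^d}\frac{\Omega(z)\Psi(z)}{|z|^{d+1}}\,c\cdot z\,dz \;=\; \Bigl(\int_0^\infty \tfrac{\Psi_0(r)}{r}\,dr\Bigr)\,c\cdot\!\int_{\mathbb{S}^{d-1}}\!\Omega(\theta)\,\theta\,d\sigma(\theta)\;=\;0$$
for every constant vector $c\in\mathbb{R}^d$, so any linear-in-$z$ term $c\cdot z$ may be added to or subtracted from the bracket without changing $\Sigma(x)$.

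I would split the contribution scale by scale: for each $k_1\le j\le k_2$, let $\alpha_j := \langle \nabla A\rangle_{B(x,\,2^j)}$, and decompose the bracket as
$$\bigl[A(x)-A(x-z)-\alpha_j\cdot z\bigr]-\bigl[\nabla A(x-z)-\alpha_j\bigr]\cdot z.$$
The second bracket is treated directly by polar coordinates and Fubini: the $j$-th piece is dominated by $\int_{|z|\sim 2^j}|\Omega(z)||z|^{-d}|\nabla A(x-z)-\alpha_j|\,dz$, which via the John--Nirenberg inequality over $B(x,2^j)$ is $\lesssim \|\Omega\|_{L^1(\mathbb{S}^{d-1})}\|\nabla A\|_{\mathrm{BMO}}$ uniformly in $j$. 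For the first bracket, the Taylor identity $A(x)-A(x-z)=\int_0^1\nabla A(x-tz)\cdot z\,dt$ combined with the moment cancellation (applied with $c=\alpha_j$) converts it to an integrand of the form $\int_0^1[\nabla A(x-tz)-\alpha_j]\cdot z\,dt$, again controlled by $\|\nabla A\|_{\mathrm{BMO}}$ after a BMO averaging argument along rays.

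\textbf{Main obstacle.} Summing these scale-by-scale bounds naively produces a multiplicative factor of $k_2-k_1$, which would be fatal. The delicate point is therefore to absorb the radial integral $\int_0^\infty\Psi_0(r)/r\,dr\sim(k_2-k_1)$. To close the bound uniformly, I would exploit that the re-centring $\alpha_j$ matches the scale $2^j$, so that $\|\nabla A-\alpha_j\|$ remains $\lesssim\|\nabla A\|_{\mathrm{BMO}}$ at every level, and then telescope the successive ball-averages using $|\alpha_j-\alpha_{j+1}|\lesssim \|\nabla A\|_{\mathrm{BMO}}$; after re-aggregating the $\phi_j$-pieces only the two boundary averages $\alpha_{k_1},\alpha_{k_2}$ survive, and these are killed by a final application of the moment cancellation at the \emph{aggregated} level with $c=\alpha_{k_2}$. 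The output is the required bound $|\Sigma(x)|\lesssim \|\Omega\|_{L^1(\mathbb{S}^{d-1})}$, with implicit constant depending only on $d$ and $\|\nabla A\|_{\mathrm{BMO}}$.
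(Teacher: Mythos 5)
Your overall strategy — reassembling the $\phi_j$ into $\Psi$, exploiting the vanishing moment to subtract constant vectors, and recognizing that boundary terms must be extracted — points in the right direction, but two of the key steps do not go through as written.

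\textbf{The second-bracket estimate fails for $\Omega\in L^1$.} You claim $\int_{|z|\sim 2^j}|\Omega(z)||z|^{-d}|\nabla A(x-z)-\alpha_j|\,dz\lesssim \|\Omega\|_{L^1(\mathbb{S}^{d-1})}\|\nabla A\|_{\mathrm{BMO}}$ via John--Nirenberg. But you are pairing an $L^1$ profile $\Omega$ against a BMO function over an annulus; John--Nirenberg combined with the generalized H\"older inequality (as in \eqref{generalholder}--\eqref{holderandjohnniren}) only yields a bound by $\|\Omega\|_{L\log L(\mathbb{S}^{d-1})}\|\nabla A\|_{\mathrm{BMO}}$, and there is simply no $L^1\times\mathrm{BMO}$ pairing available. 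The same objection applies to the first-bracket ``BMO averaging along rays'': a BMO function can be arbitrarily large on a lower-dimensional ray, so a ray-wise average cannot be controlled by the BMO norm; the correct tool is Cohen's pointwise estimate (Lemma~\ref{lem2.3}), which bounds $|A_{\alpha_j}(x)-A_{\alpha_j}(x-z)|$ by $|z|\,\|\nabla A\|_{\mathrm{BMO}}$ \emph{independently of} the direction $z/|z|$, so that $\Omega$ only ever meets a constant and $L^1$ suffices.

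\textbf{The telescoping of the $\alpha_j$ cannot save the $(k_2-k_1)$ factor.} Because the moment condition \eqref{equa:1.1} annihilates $\int \Omega(z)\phi_j(|z|)|z|^{-d-1}\,c\cdot z\,dz$ for \emph{every} constant vector $c$, the $j$-th piece $I_j$ does not depend on the choice of $\alpha_j$ at all. Replacing every $\alpha_j$ by the single vector $\alpha_{k_2}$ does not telescope anything — it merely worsens the Cohen-lemma bound on the small annuli by an extra factor $|k_2-j|$, producing $(k_2-k_1)^2$. The real cancellation lives in the radial variable. For fixed $\theta$, set $g(r)=A(x-r\theta)$; then
\[
\frac{A(x)-A(x-r\theta)-r\theta\cdot\nabla A(x-r\theta)}{r^{2}}=\frac{d}{dr}\frac{A(x-r\theta)-A(x)}{r},
\]
so an integration by parts against $\phi_j(r)$ gives $I_j=\int \Omega(z)\,\phi_j'(|z|)\,|z|^{-d}\bigl[A(x)-A(x-z)\bigr]\,dz$. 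Now $\sum_{\text{all }j}\phi_j'\equiv 0$, so $\Psi'=\sum_{j=k_1}^{k_2}\phi_j'$ is supported only on the two boundary annuli $\{|z|\sim 2^{k_1}\}$ and $\{|z|\sim 2^{k_2}\}$, with $|\Psi'(|z|)|\lesssim |z|^{-1}$. On each boundary annulus, Cohen's lemma (with the average taken at the matching scale) gives $|A_{\alpha_{k_i}}(x)-A_{\alpha_{k_i}}(x-z)|\lesssim |z|\,\|\nabla A\|_{\mathrm{BMO}}$, and the linear correction is again killed by \eqref{equa:1.1} since $\Psi'$ is radial; the remaining integral is $\lesssim\int_{|z|\sim 2^{k_i}}|\Omega(z)||z|^{-d}\,dz\lesssim\|\Omega\|_{L^1(\mathbb{S}^{d-1})}$. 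This is the step your proposal lacks.
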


\begin{lemma}[\cite{hof}]\label{wmmwmmlem3.3}Let $\Omega$ be homogeneous of degree zero,  integrable on $\mathbb{S}^{d-1}$ and satisfy the vanishing moment (\ref{equa:1.1}).   $A$ be a function on $\mathbb{R}^d$ with derivitives of order one in ${\rm BMO}(\mathbb{R}^d)$. Then
	there exists a constant $\epsilon\in (0,\,1)$, such that  for  $s\in (0,\,\infty)$ and $j\in\mathbb{Z}$ with $s2^{-j}\leq 1$,
	$$\|Q_sT_{\Omega,\,A;\,j}1\|_{L^{\infty}(\mathbb{R}^d)}\lesssim \|\Omega\|_{L^1(\mathbb{S}^{d-1})}(2^{-j}s)^{\epsilon}.
	$$
\end{lemma}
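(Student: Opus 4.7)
The plan is to convert $Q_sT_{\Omega,\,A;\,j}1(x)$ into an integral involving the Taylor-type remainder of a smoothed version of $A$, and then to extract the gain $(s2^{-j})^\epsilon$ by combining the mean-zero property of $\widetilde{\psi}_s$ with the vanishing moment of $\Omega$. Setting $B:=\widetilde{\psi}_s*A$ and unpacking the convolution via Fubini with the change of variables $z=u-y$ inside the definition of $T_{\Omega,A;j}1(u)$, a direct computation using $(\widetilde{\psi}_s*A)(x-z)=\widetilde{\psi}_s*[A(\cdot-z)](x)$ and $\nabla B=\widetilde{\psi}_s*\nabla A$ yields the key identity
\begin{equation*}
Q_sT_{\Omega,\,A;\,j}1(x)=\int_{\mathbb{R}^d}\frac{\Omega(z)}{|z|^{d+1}}\phi_j(|z|)\bigl[B(x)-B(x-z)-\nabla B(x-z)\cdot z\bigr]\,dz,
\end{equation*}
so that $Q_sT_{\Omega,\,A;\,j}1$ has exactly the same structural form as $T_{\Omega,\,B;\,j}1$, but with the smooth potential $B$ in place of $A$.

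Next I would extract the size estimates for $B$. Since $\widetilde{\psi}$ is radial with vanishing integral, $\widetilde{\psi}_s$ annihilates every affine function, and combining this with John--Nirenberg applied to $\nabla A\in{\rm BMO}(\mathbb{R}^d)$ gives the bounds
\begin{equation*}
\|B\|_{L^{\infty}}\lesssim s\,\|\nabla A\|_{\rm BMO},\qquad \|\nabla B\|_{L^{\infty}}\lesssim \|\nabla A\|_{\rm BMO},\qquad \|\nabla^{2}B\|_{L^{\infty}}\lesssim s^{-1}\|\nabla A\|_{\rm BMO}.
\end{equation*}
The first estimate is the ultimate source of smallness in $s$: the effective size of $B$ is $O(s)$, which is much smaller than $2^j$ in the regime $s2^{-j}\leq 1$.

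Now I would exploit the vanishing moment $\int_{\mathbb{S}^{d-1}}\Omega(\theta)\,\theta\,d\sigma(\theta)=0$ to subtract an arbitrary constant-times-$z$ from the bracket; taking this constant to be $\nabla B(x)$ replaces the integrand by $B(x)-B(x-z)-\bigl[\nabla B(x-z)-\nabla B(x)\bigr]\cdot z =\int_{0}^{1}t\,z^{T}\nabla^{2}B(x-tz)\,z\,dt$. Inserting the dyadic support $|z|\sim 2^j$ of $\phi_j$, I would split the $z$-integral into scales and balance the Taylor-remainder bound (which controls finer scales via $\|\nabla^{2}B\|_{L^{\infty}}\lesssim s^{-1}$) against the $L^{\infty}$ bound $\|B\|_{L^{\infty}}\lesssim s$ (which controls coarser scales). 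Interpolating between these two bounds, combined with the vanishing moment used to remove the leading linear-in-$z$ term, produces the geometric factor $(s2^{-j})^{\epsilon}$ for some $\epsilon\in(0,1)$.

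The main obstacle is that the rough kernel $\Omega$ provides no radial regularity to work with; consequently, both the scale-$s$ smoothness inherited through $\widetilde{\psi}_s$ and the BMO structure of $\nabla A$ must be invoked simultaneously. The vanishing moment of $\Omega$ is essential for killing the principal linear-in-$z$ term $\nabla B(x)\cdot z$: without this cancellation, one cannot do better than the trivial bound $\|\Omega\|_{L^{1}(\mathbb{S}^{d-1})}$, and the gain $(s2^{-j})^{\epsilon}$ over the trivial estimate would be unavailable.
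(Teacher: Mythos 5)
The identity you derive in your second paragraph is correct: setting $B:=\widetilde{\psi}_s*A$, the substitution $z=u-y$, Fubini, and the relations $\widetilde{\psi}_s*[A(\cdot-z)]=B(\cdot-z)$, $\nabla B=\widetilde{\psi}_s*\nabla A$ do give
\begin{equation*}
Q_sT_{\Omega,\,A;\,j}1(x)=\int_{\mathbb{R}^d}\frac{\Omega(z)}{|z|^{d+1}}\phi_j(|z|)\bigl[B(x)-B(x-z)-\nabla B(x-z)\cdot z\bigr]\,dz,
\end{equation*}
and the bounds $\|B\|_{L^\infty}\lesssim s$, $\|\nabla B\|_{L^\infty}\lesssim 1$, $\|\nabla^2 B\|_{L^\infty}\lesssim s^{-1}$ (all times $\|\nabla A\|_{\rm BMO}$) are also correct, using that $\widetilde{\psi}$ is radial with vanishing integral.

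The gap is in the final step. You claim to split the $z$-integral ``into scales'' and interpolate between the Taylor-remainder bound and the $L^{\infty}$ bound, but the support of $\phi_j$ confines $z$ to the single annulus $|z|\sim 2^{j}$ — there is no multi-scale decomposition available in $z$. After using the vanishing moment of $\Omega$ to replace $\nabla B(x-z)\cdot z$ by $[\nabla B(x-z)-\nabla B(x)]\cdot z$, let us track each piece. The two ``$B$'' pieces are harmless: $|B(x)\int K_j\,dz|\lesssim s\cdot 2^{-j}\|\Omega\|_{L^1}$ and $|\int K_j(z)B(x-z)\,dz|\leq\|K_j\|_{L^1}\|B\|_{L^\infty}\lesssim s2^{-j}\|\Omega\|_{L^1}$, where $K_j(z)=\Omega(z)|z|^{-d-1}\phi_j(|z|)$ and $\|K_j\|_{L^1}\sim 2^{-j}\|\Omega\|_{L^1}$. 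The problematic piece is
\begin{equation*}
\int_{\mathbb{R}^d} K_j(z)\,\bigl[\nabla B(x-z)-\nabla B(x)\bigr]\cdot z\,dz.
\end{equation*}
On $|z|\sim 2^{j}$ the two available estimates for the bracket are $|\nabla B(x-z)-\nabla B(x)|\lesssim 2\|\nabla B\|_{L^\infty}\lesssim 1$ (no decay) and $|\nabla B(x-z)-\nabla B(x)|\lesssim|z|\,\|\nabla^2 B\|_{L^\infty}\lesssim 2^{j}/s$ (a loss, since $s\leq 2^{j}$). The crossover between these two bounds occurs at $|z|\sim s$, which is \emph{below} the scale $2^{j}$ under consideration; for every $z$ in the annulus $|z|\sim 2^j$, the better of the two bounds is the trivial $O(1)$ one. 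Multiplying by $|z|\sim 2^{j}$ and by $\|K_j\|_{L^1}\sim 2^{-j}\|\Omega\|_{L^1}$ gives $O(\|\Omega\|_{L^1})$, with no factor $(s2^{-j})^{\epsilon}$ at all. Interpolating between a no-gain estimate and a loss cannot manufacture a gain, so the argument as sketched does not establish the lemma. The true proof must exploit some additional cancellation in the pairing between the rough kernel $K_j$ and the scale-$s$ Littlewood--Paley piece $\nabla B=\widetilde{\psi}_s*\nabla A$ beyond pointwise size estimates on $\nabla B$ and $\nabla^2 B$; merely invoking smoothness of $B$ at scale $s$ and the single vanishing moment of $\Omega$ is insufficient.
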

\begin{lemma}[\cite{hof}]\label{wmmwmmlem3.4kkk}Let $\Omega$ be homogeneous of degree zero and $\Omega\in L^{\infty}(\mathbb{S}^{d-1})$, $A$ be a function on $\mathbb{R}^d$ with derivitives of order one in ${\rm BMO}(\mathbb{R}^d)$. Then  there exists a constant $\varepsilon\in (0,\,1)$, such that for $s\in (0,\,\infty)$ and $j\in \mathbb{Z}$ with $2^{-j}s\leq 1$,
	$$\|Q_sT_{\Omega,\,A;\,j}f\|_{L^2(\mathbb{R}^d)}\lesssim \|\Omega\|_{L^{\infty}(\mathbb{S}^{d-1})}(2^{-j}s)^{\varepsilon}\|f\|_{L^2(\mathbb{R}^d)}.$$
\end{lemma}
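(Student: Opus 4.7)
The plan is to exploit the scale separation $s \le 2^j$ between the mean-zero wavelet $Q_s$ and the kernel $K_{A,j}$, which is supported in the annulus $|x-y|\sim 2^j$. The starting point is the identity
\begin{equation*}
Q_s T_{\Omega,A;j}f(x) = \iint \widetilde{\psi}_s(x-z)\bigl[K_{A,j}(z,y) - K_{A,j}(x,y)\bigr] f(y)\,dz\,dy,
\end{equation*}
valid because $\widetilde{\psi}$ has mean zero, which reduces the problem to a smoothness/cancellation estimate for $K_{A,j}(z,y)$ as a function of its first argument at scale $|x-z|\le s$.

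To carry this out, I would split $K_{A,j}(z,y) = G_j(z-y)\,R_A(z,y)$, where $G_j(u) = \Omega(u)\phi_j(|u|)/|u|^{d+1}$ is a rough convolution kernel and $R_A(z,y) = A(z) - A(y) - \nabla A(y)(z-y)$ is the Taylor remainder, and then treat the two contributions to $K_{A,j}(z,y) - K_{A,j}(x,y)$ separately. For the contribution of $G_j(z-y) - G_j(x-y)$, I would use the standard trick for $L^\infty$ rough kernels: regularize $\Omega$ on $\mathbb{S}^{d-1}$ by a mollification and use the vanishing moment condition (\ref{equa:1.1}) to obtain a H\"older-type modulus of continuity of $G_j$ at scale $2^j$, then pair it with the BMO-Taylor estimate $\int_{|z-y|\le 2^{j+1}} |R_A(z,y)|\,dz \lesssim 2^{j(d+1)}\|\nabla A\|_{\mathrm{BMO}}$. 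For the contribution of $R_A(z,y) - R_A(x,y) = A(z) - A(x) - \nabla A(y)(z-x)$, I would use the integral representation
\begin{equation*}
R_A(z,y) - R_A(x,y) = \int_0^1 \bigl[\nabla A(x+t(z-x)) - \nabla A(y)\bigr] \cdot (z-x)\,dt,
\end{equation*}
combined with the $\mathrm{BMO}$-oscillation of $\nabla A$ on the ball $B(y, 2^{j+1})$ and the pointwise bound $|z-x| \le s$ to extract a gain of order $s/2^j$. After combining these two contributions, the desired $L^2$-estimate for $Q_s T_{\Omega,A;j}$ follows by Schur's test applied to the resulting kernel.

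The main obstacle is that $\nabla A$ lies only in $\mathrm{BMO}$, so pointwise derivative estimates on $A$ and $\nabla A$ are unavailable and every smoothness gain must be obtained ``on average.'' In particular, the differences $A(z) - A(x)$ and $\nabla A(p) - \nabla A(q)$ that appear cannot be bounded pointwise, only in integral or exponential-BMO sense. The workaround is to replace $\nabla A$ locally by its mean over balls of radius $\sim 2^j$ (treating this mean as a uniformly bounded constant) and absorb the residual oscillation using John-Nirenberg; any logarithmic loss this introduces can be accommodated by taking the final exponent $\varepsilon \in (0,1)$ slightly smaller than the H\"older exponent arising from the mollification of $\Omega$.
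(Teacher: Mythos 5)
There is a genuine gap in the treatment of the first term, the one containing $G_j(z-y)-G_j(x-y)$. The plan is to obtain a "H\"older-type modulus of continuity of $G_j$" by mollifying $\Omega$ and then close the argument with Schur's test, but for $\Omega\in L^\infty(\mathbb{S}^{d-1})$ this cannot work: a generic bounded function on the sphere has no modulus of continuity whatsoever, and mollification does not repair this. If you write $\Omega=\Omega^{(\delta)}+(\Omega-\Omega^{(\delta)})$, the smooth piece gives a pointwise difference of size $(s/2^j)\delta^{-1}$, but the error $\Omega-\Omega^{(\delta)}$ has $L^\infty$ (hence $L^1(\mathbb{S}^{d-1})$) norm bounded only by $\|\Omega\|_{L^\infty}$ with no rate of decay as $\delta\to 0$; there is therefore no choice of $\delta$ that produces the required uniform gain $(2^{-j}s)^{\varepsilon}$. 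Schur's test on the kernel $\int\widetilde\psi_s(x-z)[G_j(z-y)-G_j(x-y)]R_A(z,y)\,dz$ then yields only $O(\|\Omega\|_{L^\infty})$ with no smallness, and the lemma is not recovered. The vanishing moment condition (\ref{equa:1.1}) gives cancellation, not angular smoothness, so it cannot be used to manufacture a pointwise modulus either. Your treatment of the second term, the difference of Taylor remainders $R_A(z,y)-R_A(x,y)$, is fine — the gain $s/2^j$ up to a John--Nirenberg logarithm is correct and standard — but it is moot once the first term fails.

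The source of the smoothing for $\Omega\in L^\infty$ in Hofmann's proof is on the Fourier side, not the physical side. One localizes to cubes $I_l$ of side $2^j$, replaces $A$ by $A_{I_l}(y)=A(y)-\sum_n\langle\partial_nA\rangle_{I_l}y_n$ (so that on $I_l$ the Taylor remainder is unchanged while $\|A_{I_l}\|_{L^\infty}\lesssim 2^j$ by Lemma~\ref{lem2.3}), and expands
$T_{\Omega,A;j}f=A_{I_l}W_{\Omega,j}f-W_{\Omega,j}(A_{I_l}f)-\sum_m U_{\Omega,m;j}(\partial_m A_{I_l}\,f)$
exactly as is done in Section~\ref{Sect 2} of this paper. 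Each piece is now a genuine convolution with the rough kernel $\Omega(u)\phi_j(|u|)|u|^{-d}$ or $\Omega(u)u_m\phi_j(|u|)|u|^{-d-1}$, pre- or post-composed with multiplication by a bounded or BMO function. The decisive estimate is then the Fourier decay $|\widehat{\mu_j}(\xi)|\lesssim\|\Omega\|_{L^\infty}\min\{|2^j\xi|,|2^j\xi|^{-\alpha}\}$ for such convolution kernels, proved by van der Corput / stationary phase on spheres and requiring only $\Omega\in L^\infty$; combined with Plancherel and the fact that $\widehat{Q_s}$ is supported (essentially) at frequencies $\gtrsim s^{-1}\ge 2^{-j}$, one gets the factor $(2^{-j}s)^\varepsilon$. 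This Fourier-side cancellation has no pointwise/Schur-test substitute, which is precisely why the $L^\infty$ case is harder than the $\mathrm{Lip}_\alpha$ case and why the lemma genuinely needs the $L^\infty$ hypothesis.
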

\begin{lemma} [\cite{hu1}] \label{wmmlem2.6}Let $\Omega$ be homogeneous of degree zero, have mean value zero on $\mathbb{S}^{d-1}$ and $\Omega\in L(\log L)^2({S}^{d-1})$.
	Then for $b\in {\rm BMO}(\mathbb{R}^d)$, $[b,\,T_{\Omega}]$, the commutator of $T_{\Omega}$ with symbol $b$, defined by
	$$[b,\,T_{\Omega}]f(x)=b(x)T_{\Omega}f(x)-T_{\Omega}(bf)(x),\,\,\,f\in C^{\infty}_0(\mathbb{R}^d),$$
	is bounded on $L^p(\mathbb{R}^d)$.
\end{lemma}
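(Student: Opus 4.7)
I would decompose $\Omega$ according to its pointwise size on the sphere and establish a per-piece commutator bound with polynomial growth in the size parameter, then sum using the $L(\log L)^2$ hypothesis. Concretely, set $E_0 = \{\theta \in \mathbb{S}^{d-1} : |\Omega(\theta)| \leq 2\}$ and $E_m = \{\theta : 2^m < |\Omega(\theta)| \leq 2^{m+1}\}$ for $m \geq 1$, and define
\[
\Omega_m(\theta) = \Omega(\theta)\chi_{E_m}(\theta) - \frac{1}{|\mathbb{S}^{d-1}|}\int_{\mathbb{S}^{d-1}} \Omega\,\chi_{E_m}\, d\sigma,
\]
so that each $\Omega_m$ has mean zero on $\mathbb{S}^{d-1}$. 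The mean-zero hypothesis on $\Omega$ yields $\sum_m c_m = 0$ for the subtracted constants, hence $\Omega = \sum_{m \geq 0} \Omega_m$ pointwise a.e. A direct estimate of the level-set measures gives the crucial summability
\[
\sum_{m \geq 0} (1+m)^2 \|\Omega_m\|_{L^1(\mathbb{S}^{d-1})} \lesssim \|\Omega\|_{L(\log L)^2(\mathbb{S}^{d-1})}.
\]

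The heart of the argument is a quantitative per-piece estimate
\[
\|[b, T_{\Omega_m}] f\|_{L^p(\mathbb{R}^d)} \lesssim (1+m)^2 \|\Omega_m\|_{L^1(\mathbb{S}^{d-1})} \|b\|_{{\rm BMO}(\mathbb{R}^d)} \|f\|_{L^p(\mathbb{R}^d)}
\]
for $p \in (1, \infty)$, with polynomial (rather than exponential) dependence in $m$. The ingredients are: (i) a Littlewood-Paley annular decomposition $T_{\Omega_m} = \sum_{j \in \mathbb{Z}} T_{\Omega_m, j}$, where the kernel of $T_{\Omega_m, j}$ is localized to the annulus $2^j \leq |x-y| < 2^{j+1}$; (ii) the Duoandikoetxea--Rubio de Francia Fourier estimate $|\widehat{K_{\Omega_m, j}}(\xi)| \lesssim \|\Omega_m\|_{L^2(\mathbb{S}^{d-1})} \min\bigl(|2^j\xi|, |2^j \xi|^{-\alpha}\bigr)$ for some $\alpha > 0$ independent of $m$, giving geometric $L^2$ decay across scales; (iii) a paraproduct decomposition of $[b, T_{\Omega_m, j}] = b \cdot T_{\Omega_m, j} - T_{\Omega_m, j}(b \,\cdot\,)$ that splits the interaction of $b(x) - b(y)$ with the kernel and invokes the John--Nirenberg inequality (\ref{holderandjohnniren}) to control $b$ locally by $\|b\|_{\rm BMO}$; (iv) interpolation with a weak-type endpoint bound at $p = 1$ obtained via a Calder\'on-Zygmund decomposition, analogous in spirit to the Seeger-type argument described in the introduction. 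Summing the per-piece bounds and invoking the size estimate from the previous paragraph yields the desired $L^p$ boundedness of $[b, T_{\Omega}]$.

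\textbf{The main obstacle} is obtaining the polynomial $(1+m)^2$ dependence in step (iii); this requires the Littlewood-Paley scale summation to converge with constants uniform in $m$, which in turn forces the Fourier decay exponent $\alpha$ to be chosen independently of $m$. Each commutation with the BMO symbol $b$ in the paraproduct contributes one logarithmic factor through John--Nirenberg, accounting for the quadratic $(1+m)^2$ growth and explaining why $L(\log L)^2$ is the natural condition. A second technical subtlety is that the kernel $K_{\Omega_m, j}(x, y)$ has only $L^2$-averaged regularity in each variable rather than pointwise H\"older regularity, so the standard sharp maximal function arguments for commutators must be replaced by a rougher variant that tracks the Fourier decay through the commutator structure; here the mean-zero normalization of each $\Omega_m$ plays an essential role in suppressing low-frequency contributions where the Fourier estimate degenerates.
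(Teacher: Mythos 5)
The paper does not prove this lemma; it cites it from the reference [hu1] and uses it as a black box. So there is no internal proof to compare against, and your plan must stand on its own.

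Your overall architecture is the standard and correct one: decompose $\Omega$ by level sets on the sphere (with a mean-zero correction), prove a per-piece commutator bound growing at most polynomially in $m$, and sum using the tautological estimate $\sum_m (1+m)^2\|\Omega_m\|_{L^1(\mathbb{S}^{d-1})}\lesssim\|\Omega\|_{L(\log L)^2(\mathbb{S}^{d-1})}$. The claimed per-piece bound $\|[b,T_{\Omega_m}]\|_{L^p\to L^p}\lesssim (1+m)^2\|\Omega_m\|_{L^1}\|b\|_{\rm BMO}$ is indeed the right target, and your intuition that each commutation with a BMO symbol costs one logarithm is correct.

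There are, however, two genuine gaps. First, step (iv) proposes to extend the per-piece $L^2$ estimate to all $p\in(1,\infty)$ by Marcinkiewicz interpolation with a weak $(1,1)$ endpoint ``analogous in spirit to Seeger's argument.'' Seeger's theorem gives weak $(1,1)$ for $T_\Omega$ itself, not for the commutator; and commutators $[b,T]$ of Calder\'on--Zygmund operators with general $b\in{\rm BMO}$ are provably \emph{not} of weak type $(1,1)$ (the classical counterexample being $b(x)=\log|x|$ with the Hilbert transform). Their correct endpoint is an $L\log L$-type modular inequality. Thus the interpolation you invoke has no endpoint to interpolate with, and the passage from $L^2$ to $L^p$ must go through a different device: a sharp maximal function / good-$\lambda$ argument, duality plus a single-sided range, sparse domination, or the Coifman--Rochberg--Weiss conjugation method via weighted bounds for $T_{\Omega_m}$ with $\Omega_m\in L^\infty$.

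Second, in step (ii) the Duoandikoetxea--Rubio de Francia Fourier decay is stated with the coefficient $\|\Omega_m\|_{L^2(\mathbb{S}^{d-1})}$, but the per-piece conclusion is in terms of $\|\Omega_m\|_{L^1(\mathbb{S}^{d-1})}$, and $\|\Omega_m\|_{L^2}^2\approx 2^m\|\Omega_m\|_{L^1}$ can be far larger. The gap is bridged by the scale-splitting device: with $\|\Omega_m\|_{L^1}=1$ normalized, take a cutoff scale $n_0\approx m/(2\alpha)$; for scale separations $|n|\le n_0$ use the trivial $L^1$ bound (contributing $\lesssim n_0\approx m$), and for $|n|>n_0$ use the Fourier decay $\|\Omega_m\|_{L^2}2^{-\alpha|n|}\lesssim 2^{m/2}2^{-\alpha|n|}$ (whose tail sum is $\lesssim 1$). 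This balance is the actual origin of the polynomial factor, and it is not described in the proposal; as written, the mechanism by which the $L^2$-coefficient Fourier decay yields an $L^1$-coefficient operator bound is missing.
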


\begin{lemma}[\cite{hof}]\label{wmmlem2.7}Let $\Omega$ be homogeneous of degree zero, and integrable on $\mathbb{S}^{d-1}$ and satisfy the vanishing moment (\ref{equa:1.1}), $A$ be a function  in $\mathbb{R}^d$ with  derivatives of order one in ${\rm BMO}(\mathbb{R}^d)$. Then for any $r\in (0,\,\infty)$, functions $\widetilde{\eta}_1,\,\widetilde{\eta}_2\in C^{\infty}_0(\mathbb{R}^d)$ whose supported on balls of radius $r$,
	$$\Big|\int_{\mathbb{R}^d}\widetilde{\eta}_2(x)T_{\Omega,\,A}\widetilde{\eta}_1(x)dx\Big|\lesssim \|\Omega\|_{L^1(\mathbb{S}^{d-1})}
	r^{-d}\prod_{j=1}^2\big(\|\widetilde{\eta}_j\|_{L^{\infty}(\mathbb{R}^d)}+r\|\nabla \widetilde{\eta}_j\|_{L^{\infty}(\mathbb{R}^d)}).
	$$
\end{lemma}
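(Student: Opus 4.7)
The plan is to exploit the dyadic decomposition $T_{\Omega,\,A}=\sum_{j\in\mathbb{Z}}T_{\Omega,\,A;\,j}$ from \eqref{TRAC OP} and estimate each $I_j:=\int\widetilde{\eta}_2(x)\,T_{\Omega,\,A;\,j}\widetilde{\eta}_1(x)\,dx$ separately, splitting the sum according to whether $2^j\le r$ or $2^j>r$. Since the trilinear increment $A(x)-A(y)-\nabla A(y)(x-y)$ is invariant under $A\mapsto A-\ell$ for any affine function $\ell$, I first normalize by subtracting an affine function so that the average $\langle\nabla A\rangle_{B}$ vanishes on a fixed ball $B$ containing both supports; this allows pointwise BMO-type control of $\nabla A$ near the supports via John--Nirenberg.

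For the small-scale regime $2^j\le r$, I would use the splitting $\widetilde{\eta}_1(y)=\widetilde{\eta}_1(x)+[\widetilde{\eta}_1(y)-\widetilde{\eta}_1(x)]$. The ``frozen'' piece, after summing in $j$, reduces to controlling
\[
\int\widetilde{\eta}_2(x)\widetilde{\eta}_1(x)\sum_{j:\,2^j\le r}\int K_{A,\,j}(x,\,y)\,dy\,dx,
\]
and Lemma \ref{wmmwmmlem3.2} bounds the inner $j$-sum uniformly in the truncation by $\|\Omega\|_{L^1(\mathbb{S}^{d-1})}$. The Lipschitz remainder is handled using $|\widetilde{\eta}_1(y)-\widetilde{\eta}_1(x)|\lesssim 2^j\|\nabla\widetilde{\eta}_1\|_{L^\infty}$ paired with the identity
\[
A(x)-A(y)-\nabla A(y)(x-y)=\int_0^1\big[\nabla A(y+t(x-y))-\nabla A(y)\big]\cdot(x-y)\,dt
\]
together with John--Nirenberg to bound the average of the BMO increment over a ball of radius $2^j$; the resulting geometric series in $j$ sums to the desired order.

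For the large-scale regime $2^j>r$, I would combine $|K_{A,\,j}(x,\,y)|\lesssim 2^{-j(d+1)}|\Omega(x-y)|\,|A(x)-A(y)-\nabla A(y)(x-y)|$ with the BMO bound $|A(x)-A(y)-\nabla A(y)(x-y)|\lesssim 2^j\|\nabla A\|_{\rm BMO}\log(2+2^j/r)$ valid after the above normalization, together with the fact that $x,y$ lie in supports of radius $r$; summing over $j$ with $2^j>r$ gives a convergent geometric series.

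The main obstacle is packaging the two regimes symmetrically so that the final bound takes exactly the claimed form $\|\Omega\|_{L^1(\mathbb{S}^{d-1})}\,r^{-d}\prod_{j=1}^{2}(\|\widetilde{\eta}_j\|_{L^\infty}+r\|\nabla\widetilde{\eta}_j\|_{L^\infty})$. This requires carefully tracking the BMO logarithmic losses from $\nabla A$ so that no residual $\log$ factors survive, and extracting the two Lipschitz-type factors symmetrically---one from the Taylor-type splitting of $\widetilde{\eta}_1$ above and a dual one from the analogous treatment of $\widetilde{\eta}_2$, for instance by transferring a derivative off $\widetilde{\eta}_1$ onto $\widetilde{\eta}_2$ using integration against the smoothed kernel $K_{A,\,j}$.
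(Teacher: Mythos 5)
This lemma is cited from Hofmann \cite{hof} and is not proved in the paper, so there is no in-paper proof to compare against. Evaluating your plan on its own merits, the overall strategy---dyadic decomposition $T_{\Omega,A}=\sum_j T_{\Omega,A;\,j}$, subtracting an affine function from $A$ so that $\langle\nabla A\rangle_B=0$ on a ball $B$ of radius $\sim r$ containing both supports, Taylor-splitting $\widetilde\eta_1(y)=\widetilde\eta_1(x)+[\widetilde\eta_1(y)-\widetilde\eta_1(x)]$ and invoking Lemma \ref{wmmwmmlem3.2} for the frozen piece---is the right kind of argument for a weak-boundedness-type estimate.

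However, the ``main obstacle'' you flag at the end is not a packaging problem; it is a sign that the stated exponent $r^{-d}$ cannot be correct as written. A scaling check shows this: set $\widetilde\eta_j^\lambda(x)=\widetilde\eta_j(x/\lambda)$, so $\|\widetilde\eta_j^\lambda\|_\infty+(\lambda r)\|\nabla\widetilde\eta_j^\lambda\|_\infty$ equals $\|\widetilde\eta_j\|_\infty+r\|\nabla\widetilde\eta_j\|_\infty$. Changing variables $x=\lambda u$, $y=\lambda v$ and using the degree-$0$ homogeneity of $\Omega$ gives
\begin{equation*}
\int\widetilde\eta_2^\lambda\,T_{\Omega,A}\widetilde\eta_1^\lambda=\lambda^{d-1}\int\widetilde\eta_2\,T_{\Omega,A_\lambda}\widetilde\eta_1,\qquad A_\lambda(z):=A(\lambda z),\quad \|\nabla A_\lambda\|_{\rm BMO}=\lambda\|\nabla A\|_{\rm BMO},
\end{equation*}
so (with the expected linear dependence of the implicit constant on $\|\nabla A\|_{\rm BMO}$) the bilinear form scales as $\lambda^d$ under this dilation, forcing the $r$-exponent on the right-hand side to be $+d$, not $-d$. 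This is also exactly what your own small-scale estimate produces: the frozen piece is $\lesssim \|\Omega\|_{L^1}\int|\widetilde\eta_1\widetilde\eta_2|\lesssim r^d\|\Omega\|_{L^1}\|\widetilde\eta_1\|_\infty\|\widetilde\eta_2\|_\infty$, and the Lipschitz remainder summed over $2^j\le r$ gives $\lesssim r^d\|\Omega\|_{L^1}\,r\|\nabla\widetilde\eta_1\|_\infty\|\widetilde\eta_2\|_\infty$. You should have recognized that the target $r^{-d}$ is off by a factor $r^{2d}$ rather than leaving it as an open packaging issue; the correct statement (which is what is needed and used later, e.g. in the estimate of $h^{(1)}$ at the end of Section 2) has $r^{d}$.

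Two further points. First, your ``large-scale regime $2^j>r$'' does not actually involve summing a geometric series: since $x$ and $y$ lie in balls of radius $r$, either the supports are within $O(r)$ of each other and the whole large-scale part is empty, or they are at distance $D\gg r$ and only the $O(1)$ scales with $2^j\approx D$ contribute because $\phi_j(x-y)$ is supported in $|x-y|\sim 2^j$. The estimate still closes, but the claim of a convergent infinite series is a misconception about the geometry. Second, ``transferring a derivative off $\widetilde\eta_1$ onto $\widetilde\eta_2$'' to symmetrize the bound is not sketched at all; this is genuinely the delicate step, and with the kernel $\Omega(x-y)/|x-y|^{d+1}$ only $L^1$ on the sphere there is no direct integration by parts available, so the symmetrization has to come from treating the two Lipschitz remainders $\widetilde\eta_1(y)-\widetilde\eta_1(x)$ and $\widetilde\eta_2(x)-\widetilde\eta_2(x')$ in tandem rather than ``moving'' a derivative.
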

We need a lemma from the book of Grafakos.
\begin{lemma}[\cite{gra0}, p.539] \label{graf} Let $\Phi$ be a function on $\mathbb {R}^d$ satisfying for some $0<C,\delta<\infty,$ $|\Phi (x)|\le  C {(1+|x)|)^{-d-\delta}}$. Then a measure $\mu$ on $\mathbb {R}_+^{d+1}$ is a Carleson if and only if for every $p$ with $1<p<\infty$ there is a constant $C_{p,d,\mu}$ such that for all $f\in L^p(\mathbb {R}^d)$ we have
	$$\int_{\mathbb {R}_+^{d+1}} |\Phi_t*f(x)|^p d\mu (x,t)\le C_{p,d,\mu}\int_{\mathbb {R}^d}|f(x)|^pdx.
	$$
\end{lemma}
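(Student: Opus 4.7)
I would prove both implications separately, working with the tent-geometric form of the Carleson condition: $\mu(\widehat\Omega)\le C_\mu|\Omega|$ for every open $\Omega\subset\mathbb{R}^d$, where $\widehat\Omega=\{(x,t)\in\mathbb{R}_+^{d+1}:B(x,t)\subset\Omega\}$. For the sufficiency direction (the $L^p$-inequality implies Carleson), I would test against $f=\chi_{\kappa Q}$ for a cube $Q$ and a suitably large constant $\kappa$: provided $\int\Phi\neq 0$ (otherwise one augments $\Phi$ by a bump so the statement is nontrivial), a direct integration shows $|\Phi_t\ast\chi_{\kappa Q}(x)|\ge c_0>0$ uniformly for $(x,t)$ in the tent over $Q$, and the hypothesis then yields $\mu(\widehat Q)\lesssim\|f\|_{L^p}^p\lesssim|Q|$.

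\textbf{Necessity.} This is the heart of the argument. The decay $|\Phi(x)|\lesssim(1+|x|)^{-d-\delta}$ provides a radial, integrable, decreasing majorant of $\Phi$, so the standard approximate-identity estimate gives $|\Phi_t\ast f(x)|\lesssim Mf(x)$ for every $t>0$; however this $t$-free pointwise bound is useless if integrated over $t\in(0,\infty)$. The refined statement I would establish is the level-set inclusion
\begin{equation*}
\{(x,t)\in\mathbb{R}_+^{d+1}:|\Phi_t\ast f(x)|>\lambda\}\ \subset\ \widehat{\{y\in\mathbb{R}^d:Mf(y)>c\lambda\}},
\end{equation*}
for some small $c>0$ depending only on $\Phi$ and $d$. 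The proof is by contradiction: if $|\Phi_t\ast f(x)|>\lambda$ but some $y\in B(x,t)$ has $Mf(y)\le c\lambda$, I would decompose $\Phi_t\ast f(x)=\int\Phi_t(x-z)f(z)\,dz$ over the annuli $A_k=B(y,2^{k+1}t)\setminus B(y,2^kt)$ for $k\ge 0$, and use $|\Phi_t(x-z)|\lesssim t^{-d}2^{-k(d+\delta)}$ on $A_k$ (because $|x-z|\ge|y-z|-|x-y|\gtrsim 2^kt$ when $k\ge 1$), together with $\int_{B(y,2^{k+1}t)}|f|\lesssim 2^{(k+1)d}t^d Mf(y)$, to produce a geometric series bounded by $C\,Mf(y)\le Cc\lambda$; choosing $c$ small enough yields the desired contradiction.

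\textbf{Conclusion and principal difficulty.} Once the inclusion is proved, the layer-cake formula combined with the Carleson hypothesis gives
\begin{equation*}
\int_{\mathbb{R}_+^{d+1}}|\Phi_t\ast f|^p\,d\mu=p\int_0^\infty\lambda^{p-1}\mu\bigl(\{|\Phi_t\ast f|>\lambda\}\bigr)d\lambda\lesssim C_\mu\int_{\mathbb{R}^d}(Mf(x))^p\,dx,
\end{equation*}
and the Hardy-Littlewood maximal theorem closes the argument since $1<p<\infty$. The main obstacle I anticipate is bookkeeping the annular decomposition cleanly, in particular treating the innermost ball $B(y,t)$ separately (where the bound $|\Phi_t(x-z)|\lesssim t^{-d}$ carries no decay gain, and one must rely on the trivial averaging bound $\int_{B(y,t)}|f|\lesssim t^d\,Mf(y)$). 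A small but essential side check is that $\{Mf>c\lambda\}$ is open, which holds by the lower semicontinuity of $Mf$, so that the tent over it is well-defined and the Carleson bound is directly applicable.
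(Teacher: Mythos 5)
The paper does not prove this lemma but imports it verbatim from Grafakos; your argument reproduces, in essence, the standard proof given there. Both directions are handled correctly: the crucial "Carleson $\Rightarrow L^p$" direction via the level-set inclusion $\{|\Phi_t\ast f|>\lambda\}\subset\widehat{\{Mf>c\lambda\}}$ (established by the annular decomposition around an arbitrary $y\in B(x,t)$, with the geometric tail $2^{-k\delta}$ coming from the $(1+|x|)^{-d-\delta}$ decay), and the converse by testing against $\chi_{\kappa Q}$. You also correctly flag the hidden hypothesis $\int\Phi\neq 0$, which the lemma as quoted in the paper omits but which is necessary for the "$L^p\Rightarrow$ Carleson" direction (take $\Phi\equiv 0$); this omission is harmless for the paper's purposes since only the forward implication is used in Section 2.
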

{\it Proof of Theorem \ref{wmmthm1.1}.} {Invoking \eqref{wmm3.1},}  to prove that  $T_{\Omega,\,A}$ is bounded on $L^2(\mathbb{R}^d)$,  it suffices to show the following inequalities hold for $f,\,g\in C^{\infty}_0(\mathbb{R}^d)$, 
\begin{eqnarray}\label{wmmlittlewood1}
	\Big|\int^{\infty}_0\int_{0}^t\int_{\mathbb{R}^d}Q_s^4T_{\Omega,\,A}Q_t^4f(x)g(x)dx\frac{ds}{s}\frac{dt}{t}\Big|\lesssim \|f\|_{L^2(\mathbb{R}^d)}\|g\|_{L^2(\mathbb{R}^d)};
\end{eqnarray}
\begin{eqnarray}\label{wmmlittlewood2}
	\Big|\int^{\infty}_0\int_{t}^{\infty}\int_{\mathbb{R}^d}Q_s^4T_{\Omega,\,A}Q_t^4f(x)g(x)dx\frac{ds}{s}\frac{dt}{t}\Big|\lesssim \|f\|_{L^2(\mathbb{R}^d)}\|g\|_{L^2(\mathbb{R}^d)}.
\end{eqnarray}
First, we will prove (\ref{wmmlittlewood1}). To this aim, the kernel $\Omega$ will be decomposed into disjoint forms. Let
$$E_0=\{\theta \in \mathbb{S}^{d-1}:\, |\Omega(\theta )|\leq 1\} \ \hbox{and\ } E_i=\{\theta \in \mathbb{S}^{d-1}:\,2^{i-1}<|\Omega(\theta )|\leq 2^i\},\quad i\in\mathbb{N}.$$
Set
$$\Omega_0(\theta )=\Omega(\theta )\chi_{E_0}(\theta ),\quad \,\Omega_i(\theta )=\Omega(\theta )\chi_{E_i}(\theta )\,\,(i\in\mathbb{N}).$$
For $i\in\mathbb{N}\cup\{0\}$, let $T_{\Omega,\,A;\,j}^i$ be the same as in \eqref{TRAC OP} for $T_{\Omega,\,A;\,j}$ with $\Omega$ replaced by $\Omega_i$. Then 
\begin{eqnarray}
	&&\label{wmmlittlewood100}	\int^{\infty}_0\int_{0}^t\int_{\mathbb{R}^d}Q_s^4T_{\Omega,\,A}Q_t^4f(x)g(x)dx\frac{ds}{s}\frac{dt}{t}=\sum_i\sum_j\int^{\infty}_0\int_{0}^t\int_{\mathbb{R}^d}Q_s^4T_{\Omega,\,A;\,j}^iQ_t^4f(x)g(x)dx\frac{ds}{s}\frac{dt}{t}.
\end{eqnarray}

Fix $j\in \mathbb {Z}$, partition the set of those $s,j$ in equation (\ref{wmmlittlewood100}) into three regions:
\begin{align*}
&E_1 (j, s,t)=\{(s,\,t):\,0\leq t\leq 2^j,\,0\leq s\leq t\};\\
&E_2 (j, s,t)=\big\{(s,\,t):\,2^j\leq t<(2^js^{-\alpha})^{\frac{1}{1-\alpha}},\,0\le s\le t\big\};\\
&E_3 (j, s,t)=\big\{(s,\,t):\,\max\{2^j,\, (2^js^{-\alpha})^{\frac{1}{1-\alpha}} \}\leq t<\infty,\,0\leq s\leq t\big\}.
\end{align*}
It remains to discuss the contribution of each $E_{j,s,t}$ on the right ride of  (\ref{wmmlittlewood100}) to inequality (\ref{wmmlittlewood1}) . 
\vspace{0.2cm}

\noindent{\textbf {Contribution of $E_1(j,,s,t)$.}} 

Let $\varepsilon$ be the same  constant appeared in Lemma  \ref{wmmwmmlem3.4kkk} and denote $N=2(\lfloor\varepsilon^{-1}\rfloor+1)$. 
For each fixed $i\in\mathbb{N}$, we introduce the notion $ E_{1,1}^i$ and $E_{1,2}^i$ as follows
\begin{align*}
&E_{1,1}^i(j,s,\,t)=\{(j,s,\,t):\,0\leq t\leq 2^j,\,0\leq s\leq t,\, 2^j\leq s2^{iN}\};\\
&E_{1,2}^i(j,s,\,t)=\{(j,s,\,t):\,0\leq t\leq 2^j,\,0\leq s\leq t,\,2^j>s2^{iN}\}.
\end{align*}
Then, one gets obviously that {$E_1(j,,s,t)=E_{1,1}^i(j,s,\,t)\cup E_{1,2}^i(j,s,\,t):=E_{1,1}^i\cup E_{1,2}^i.$}
Therefore
\begin{eqnarray*}
	&&	\Big|\sum_i\sum_j\int^{\infty}_0\int_{0}^t\int_{\mathbb{R}^d}\chi_{E_1(j,,s,t)}Q_s^4T_{\Omega,\,A;\,j}^iQ_t^4f(x)g(x)dx\frac{ds}{s}\frac{dt}{t}\Big| \\
	&&\quad\leq \sum_{i=1}^{\infty}\sum_j\int^{\infty}_0\int_{0}^{\infty}\chi_{E^i_{1,1}}
	\Big|\int_{\mathbb{R}^d}Q^4_s{T}^i_{\Omega,\,A;\,j}Q_t^4f(x)g(x)dx\Big|\frac{ds}{s}\frac{dt}{t}\\
	&&\quad \quad + \sum_{i=1}^{\infty}\sum_j\int^{\infty}_0\int_{0}^{\infty}\chi_{E^i_{1,2}}\Big|\int_{\mathbb{R}^d}	Q_s^4 {T}^i_{\Omega,\,A;\,j}Q_t^4f(x)g(x)\Big|\frac{ds}{s}\frac{dt}{t}\\&&\quad \quad +
	\sum_j\int^{\infty}_0\int_{0}^{\infty}\chi_{E_1(j,,s,t)}\Big|\int_{\mathbb{R}^d}
	Q_s^4 {T}^0_{\Omega,\,A;\,j}Q_t^4f(x)g(x)dx\Big|\frac{ds}{s}\frac{dt}{t}:={\rm I}+{\rm II}+{\rm III}.
\end{eqnarray*}

We first consider term I.
Let $\{I_l\}_{l}$ be a sequence of cubes having disjoint interiors and side lengths $2^j$, such that \begin{eqnarray}\label{eq3.decomp}
	\mathbb{R}^d=\mathop{\cup}\limits_{l}I_l.\end{eqnarray} 
	
	For each fixed $l$, let $\zeta_l\in C^{\infty}_0(\mathbb{R}^d)$ such that ${\rm supp}\,\zeta_l\subset 48dI_l$, $0\leq \zeta_l\leq 1$ and $\zeta_l(x)\equiv 1$ when $x\in 32dI_l$. Let $x_l$ be a point on the boundary of $50dI_l$ and 
	$$\widetilde{A}_{I_l}(y)=A(y)-\sum_{m=1}^m\langle\partial_m A\rangle_{I_l}y_m,\,\,A_{I_l}(y)=A_{I_l}^*(y)\zeta_l(y),$$
	with
	$A_{I_l}^*(y)=\widetilde{A}_{I_l}(y)-\widetilde{A}_{I_l}(x_l).
	$
	Note that for $x\in 30 dI_l$ and $y\in\mathbb{R}^d$ with $|x-y|\leq 2^{j+2}$, we have
	$$A(x)-A(y)-\nabla A(y)(x-y)=A_{I_l}(x)-A_{I_l}(y)-\nabla A_{I_l}(y)(x-y).$$
	An application of Lemma \ref{lem2.3} then implies that
	$\|A_{I_l}\|_{L^{\infty}(\mathbb{R}^d)}\lesssim 2^j.
	$

	For each fixed $j\in\mathbb{Z}$, consider the operators $W^i_{\Omega,\,j}$ and $U^i_{\Omega,\,m;j}$ defined by
$$W_{\Omega,j}^ih(x)=\int_{\mathbb{R}^d}\frac{\Omega_i(x-y)}{|x-y|^{d+1}}\phi_j(x-y)h(y)dy$$
and
$$U_{\Omega,m;j}^ih(x)=\int_{\mathbb{R}^d}\frac{\Omega_i(x-y)(x_m-y_m)}{|x-y|^{d+1}}\phi_j(x-y)h(y)dy.$$
 It is well known that for $p\in (1,\,\infty)$, they enjoy the following properties:
$$\|W_{\Omega,j}^ih\|_{L^p(\mathbb{R}^d)}\lesssim 2^{-j}\|\Omega_i\|_{L^1(S^{d-1})}\|h\|_{L^p(\mathbb{R}^d)};\quad\|U_{\Omega,\,m,j}^ih\|_{L^p(\mathbb{R}^d)}\lesssim \|\Omega_i\|_{L^1(S^{d-1})}
\|h\|_{L^p(\mathbb{R}^d)}.$$

For each fixed $l$, let $h_{s,l}(x)=Q_sg(x)\chi_{I_l}(x)$ and  $I_l^*=60dI_l$. For $x\in {\rm supp}h_{s,l}$, we have
\begin{eqnarray*}
	{T}_{\Omega,A,j}^iQ_t^4f(x)&=&A_{I_l}(x)W^i_{\Omega,j}Q_t^4f(x)-W^i_{\Omega,j}(A_{I_l}Q_t^4f)(x)-\sum_{m=1}^dU_{\Omega,m,j}^i(\partial_mA_{I_l}Q_t^4f)(x).
\end{eqnarray*}
Hence, to show the estimate for $I$, we need to consider the following three terms.
$${\rm R}_{i}^1=\sum_{j}\int^{2^j}_{2^{j-Ni}}\int^{2^j}_{2^{j-Ni}}
\Big|\sum_l\int_{\mathbb{R}^d}A_{I_l}(x)Q_s^3h_{s,l}(x)W^i_{\Omega,j}Q_t^4f(x)dx\Big|\frac{dt}{t}\frac{ds}{s};
$$
$${\rm R}_{i}^2=\sum_{j}\int^{2^j}_{2^{j-Ni}}\int^{2^j}_{2^{j-Ni}}
\Big|\sum_l\int_{\mathbb{R}^d}Q_s^3h_{s,l}(x)W^i_{\Omega,j}(A_{I_l}Q_t^4f)(x)dx\Big|\frac{dt}{t}\frac{ds}{s};
$$
and
\begin{eqnarray*}{\rm R}_{i}^3&=&\sum_{m=1}^d\sum_{j}\int^{2^j}_{2^{j-Ni}}\int^{2^j}_{2^{j-Ni}}
	\Big|\sum_l\int_{\mathbb{R}^d} Q_s^3h_{s,l}(x)U^i_{\Omega,m,j}(\partial_mA_{I_l}Q^4_tf\big)(x)dx\Big|\frac{ds}{s}\frac{dt}{t}:=\sum_{m=1}^d{\rm R}_{i,m}^3.
\end{eqnarray*}
For $	{\rm R}^1_{i},$ note that 
\begin{eqnarray*}
	\sum_j\sum_l\int^{2^j}_{2^{j-iN}}\|Q_s^3h_{s,l}\|_{L^2(\mathbb{R}^d)}^2\frac{ds}{s}
	&\leq &iN\sum_j\int^{2^j}_{2^{j-1}}\sum_l\|h_{s,l}\|_{L^2(\mathbb{R}^d)}^2\frac{ds}{s}
	\lesssim i\int^{\infty}_0
	\|Q_sg\|_{L^2(\mathbb{R}^d)}^2\frac{ds}{s}.
\end{eqnarray*}
Then, the well-known Littlewood-Paley theory for $g$-function leads to that
\begin{eqnarray*}
	\sum_j\sum_l\int^{2^j}_{2^{j-iN}}\|Q_s^3h_{s,l}\|_{L^2(\mathbb{R}^d)}^2\frac{ds}{s}
	\lesssim   i
{\bigg	\|\bigg(\int^{\infty}_0 |Q_sg( \cdot)|^2\frac{ds}{s}\bigg)^{1/2} \bigg\|_{L^2(\mathbb{R}^d)}}	\lesssim i\|g\|_{L^2(\mathbb{R}^d)}^2.
\end{eqnarray*}
{For $x\in 48dI_l,$ since $\sup \{\phi_j\}\subset [2^{j-2},2^j]$} and note that $\phi_j(x-y)Q_t^4f(y)=\chi_{I_l^*}(y)\phi_j(x-y)Q_t^4(y)$, then, $W_{\Omega,j}^i(Q_t^4f)=W_{\Omega,j}^i(\chi_{I_l^*}Q_t^4f)$.
It then follows from  H\"older's inequality, Cauchy-Schwarz inequality and the boundedness of $W_{\Omega,j}^i$ that
\begin{eqnarray*}
	|{\rm R}^1_{i}|&\leq&\Big(\sum_j\sum_l\int^{2^j}_{2^{j-iN}}\int^{2^j}_{2^{j-iN}}
	\|Q_s^3h_{s,l}\|_{L^2(\mathbb{R}^d)}^2
	\frac{ds}{s}\frac{dt}{t}\Big)^{1/2}\\
	&&\times\Big(\sum_j\sum_l\int^{2^j}_{2^{j-iN}}\int^{2^j}_{2^{j-iN}}
	\|A_{I_l}W_{\Omega,j}^i(\chi_{I_l^*}Q_t^4f)\|_{L^2(\mathbb{R}^d)}^2
	\frac{dt}{t}\frac{ds}{s}\Big)^{1/2}\\
	&\lesssim&\|\Omega_i\|_{L^1(S^{d-1})}\Big(\sum_j\sum_l\int^{2^j}_{2^{j-iN}}
	\int^{2^j}_{2^{j-iN}}
	\|Q_s^3h_{s,l}\|_{L^2(\mathbb{R}^d)}^2\frac{dt}{t}\frac{ds}{s}\Big)^{1/2}\\
	&&\quad\times\Big(\sum_j\sum_l\int^{2^j}_{2^{j-iN}}\int^{2^j}_{2^{j-iN}}\|\chi_{I_l^*}Q_t^4f\|_{L^2(\mathbb{R}^d)}^2\frac{dt}{t}\frac{ds}{s}\Big)^{1/2}\\
	&\lesssim& i^{2}\|\Omega_i\|_{L^1(S^{d-1})}\|f\|_{L^2(\mathbb{R}^d)}\|g\|_{L^2(\mathbb{R}^d)},
\end{eqnarray*}
where in the last inequality we have used the fact that the cubes $\{60 dI_l\}_l$ have bounded overlaps. 

The same reasoning applies to ${\rm R}^2_{i}$  with small and straightforward modifications yields that
\begin{eqnarray*}
	|{\rm R}^2_{i}|
	&\lesssim&i\|\Omega_i\|_{L^1(S^{d-1})}\Big(\sum_j
	\int^{2^j}_{2^{j-iN}}
	\|Q_sg\|_{L^2(\mathbb{R}^d)}^2\frac{ds}{s}\Big)^{1/2}\Big(\sum_j\sum_l\int^{2^j}_{2^{j-iN}}\|\zeta_lQ_t^4f\|_{L^2(\mathbb{R}^d)}^2\frac{dt}{t}\Big)^{1/2}\\
	&\lesssim& i^2\|\Omega_i\|_{L^1(S^{d-1})}\|f\|_{L^2(\mathbb{R}^d)}\|g\|_{L^2(\mathbb{R}^d)}.
\end{eqnarray*}

Now we are in a position to consider each term ${\rm R}_{i,m}^3$. For $x\in 32dI_l$, it is easy to check 
\begin{eqnarray*}
	\partial_mA_{I_l}(x)Q_t^4f(x)&=&\zeta_l(x)[\partial_m{A},\,Q_t]
	Q_t^3f(x)+\zeta_l(x)Q_t([\partial_m{A},\,
	Q_t]Q_t^2f(x)\\
	&&+\zeta_l(x)Q_t^2(\partial_m\widetilde{A}_{I_l}Q^2_t)f(x).
\end{eqnarray*}
Therefore 	${\rm R}_{i,m}^3$ can be controlled by the sum of the following terms:
\begin{eqnarray*}
\quad\quad	{\rm R}_{i,m}^{3,1}=&&\sum_{j}\int^{2^j}_{2^{j-Ni}}\int^{2^j}_{2^{j-iN}}
	\Big|\sum_l\int_{\mathbb{R}^d} Q_s^3h_{s,l}(x)U^i_{\Omega,m,j}\big([\partial_m A,\, Q_t]Q^3_tf\big)(x)dx\Big|\frac{dt}{t}\frac{ds}{s};\end{eqnarray*}
	\begin{eqnarray*}
	&&	{\rm R}_{i,m}^{3,2}=\sum_{j}\int^{2^j}_{2^{j-Ni}}\int^{2^j}_{2^{j-iN}}\Big|\sum_l\int_{\mathbb{R}^d} Q_s^3h_{s,l}(x)U^i_{\Omega,m,j}Q_t\big([\partial_m A,\,Q_t]Q_t^2f\big)(x)dx\Big|\frac{dt}{t}\frac{ds}{s};\end{eqnarray*}
\begin{eqnarray*}
\quad	{\rm R}_{i,m}^{3,3}=&&\sum_{j}\int^{2^j}_{2^{j-Ni}}\int^{2^j}_{2^{j-iN}}\Big|\sum_l\int_{\mathbb{R}^d} Q_s^3h_{s,l}(x)U^i_{\Omega,m,j}Q_t^2(\partial_m\widetilde{A}_{I_l}Q_t^2)f(x)dx\Big|\frac{dt}{t}\frac{ds}{s}.
\end{eqnarray*}
Observe that
$|[\partial_mA,\,Q_t]h(x)|\lesssim M_{\partial_mA}h(x),$
where $M_{\partial_m A}$ is the commutator of the Hardy-Littlewood maximal operator defined by
$$M_{\partial_m A}h(x)=\sup_{r>0}r^{-d}\int_{|x-y|<r}|\partial_m A(x)-\partial_m A(y)||h(y)dy.
$$
By  H\"older's inequality, along with the $L^2(\mathbb{R}^d)$ boundedness of $M_{\partial_m A}$, it yields that
\begin{eqnarray*}
	|{\rm R}_{i,m}^{3,1}|&\leq&\Big(\sum_j\int^{2^j}_{2^{j-Ni}}\int^{2^j}_{2^{j-iN}}\Big\|
	Q_s^3\big(\sum_{l}h_{s,l}\big)\Big\|_{L^2(\mathbb{R}^d)}^2\frac{dt}{t}\frac{ds}{s}\Big)^{1/2}\\
	&&\times\Big(\sum_j\int^{2^j}_{2^{j-Ni}}\int^{2^j}_{2^{j-iN}}\|U_{\Omega,j,m}^i([\partial_mA,\, Q_t]Q_t^3f\|_{L^2(\mathbb{R}^d)}^2
	\frac{dt}{t}\frac{ds}{s}\Big)^{1/2}\\
	&\lesssim&i^{2}\|\Omega_i\|_{L^1(S^{d-1})}\|f\|_{L^2(\mathbb{R}^d)}\|g\|_{L^2(\mathbb{R}^d)}.
\end{eqnarray*}
Exactly the same reasoning applies to $	{\rm R}_{i,m}^{3,2}$, we obtain
\begin{eqnarray*}
	|{\rm R}_{i,m}^{3,2}|\lesssim i^{2}\|\Omega_i\|_{L^1(S^{d-1})}\|f\|_{L^2(\mathbb{R}^d)}\|g\|_{L^2(\mathbb{R}^d)}.
\end{eqnarray*}
As for ${\rm R}_{i,m}^{3,3},$ observing that for fixed $l\in\mathbb{Z}$, $s,\,t\leq 2^j$, one gets
$$Q_t(\partial_m\widetilde{A}_{I_l}Q_t^2f)(x)
=Q_t(\partial_m\widetilde{A}_{I_l}\chi_{I_l^*}Q_t^2f)(x),\quad U^i_{\Omega,\,m,j}Q_s=Q_sU^i_{\Omega,m,j}\ \hbox{and}\  Q_sQ_t=Q_tQ_s.$$
Henceforth we have 
\begin{eqnarray*}
	{\rm R}_{i,m}^{3,3}&=&\sum_j\int^{2^j}_{2^{j-Ni}}\int^{2^j}_{2^{j-iN}}
	\Big|\sum_l\int_{\mathbb{R}^d}Q_tQ_s^2h_{s,l}(x)Q_sU^i_{\Omega,m,j}Q_t(\partial_m\widetilde{A}_{I_l}\chi_{I_l^*}Q_t^2f)(x)dx
	\frac{dt}{t}\frac{ds}{s}\Big|\\
	&\leq &\Big(\sum_j\sum_{l}\int^{2^j}_{2^{j-Ni}}\int^{2^j}_{2^{j-iN}}
	\| Q_tQ^2_sh_{s,l}\|^2_{L^2(\mathbb{R}^d)}\frac{dt}{t}\frac{ds}{s}\Big)^{1/2}\\
	&&\times\Big(\sum_j\sum_l\int^{2^j}_{2^{j-Ni}}\int^{2^j}_{2^{j-iN}}
	\Big\|Q_s\big(U^i_{\Omega,m,j}Q_t(\partial_m\widetilde{A}_{I_l}\chi_{I_l^*}Q_t^2f)\big)\Big\|^2_{L^2(\mathbb{R}^d)}
	\frac{dt}{t}\frac{ds}{s}\Big)^{1/2}.
\end{eqnarray*}
Let $x\in 48dI_l$, $q\in (1,\,2)$ and $s\in (2^{j-1}, 2^j)$. A straightforward computation involving H\"older's inequality and the John-Nirenberg inequality  gives us that
\begin{eqnarray}\label{eq3.qs}
	|Q_s(\partial_mA_{I_l}h)(x)|&\leq&\int_{\mathbb{R}^d}|\widetilde{\psi}_s(x-y)||\partial_mA(y)-\langle
	\partial_mA\rangle_{I(x,\,s)}||h(y)|dy\\
	&&+|\langle \partial_m A\rangle_{I_l}-\langle \partial_mA\rangle_{I(x,\,s)}|\int_{\mathbb{R}^d}|\widetilde{\psi}_s(x-y)||h(y)|dy \nonumber\\
	&\lesssim&M_qh(x)+\log (1+2^j/s)Mh(x)\nonumber \\
	&\lesssim& M_qh(x),\nonumber
\end{eqnarray}
where $I(x,\,s)$ is the cube center at $x$ and having side length $s$. 

This inequality, together with the boundedness of $U^i_{\Omega,m,j}$ and maximal funtion $M_qh$, implies that
\begin{eqnarray*}
	&&\Big(\sum_j\sum_l\int^{2^j}_{2^{j-Ni}}\int^{2^j}_{2^{j-iN}}
	\big\|Q_s\big(U^i_{\Omega,m,j}Q_t(\partial_mA_{I_l}\chi_{I_l^*}Q_t^2f)\big)\big\|^2_{L^2(\mathbb{R}^d)}
	\frac{dt}{t}\frac{ds}{s}\Big)^{1/2}\\
	&&\quad \lesssim i \Big(\sum_j\sum_l\int_{2^{j-1}}^{2^j}
	\big\|U^i_{\Omega,m,j}Q_t(\partial_mA_{I_l}\chi_{I_l^*}Q_t^2f)\big\|^2_{L^2(\mathbb{R}^d)}\frac{dt}{t}\Big)^{1/2}\\
	&&\quad\lesssim i\|\Omega_i\|_{L^1(S^{d-1})}
	\|f\|_{L^2(\mathbb{R}^d)}.
\end{eqnarray*}
On the other hand, by the $L^2$ boundedness of convolution operators and the Littlewood-Paley theory for $g$-function again, we have that
\begin{eqnarray*}
	\sum_j\sum_{l}\int^{2^j}_{2^{j-Ni}}\int^{2^j}_{2^{j-iN}}
	\| Q_tQ^2_sh_{s,l}\|^2_{L^2(\mathbb{R}^d)}\frac{dt}{t}\frac{ds}{s}&&\lesssim 
	i^2
	\int^{\infty}_{0}\|Q_sg\|^2_{L^2(\mathbb{R}^d)}\frac{ds}{s}\lesssim i^2\|g\|_{L^2(\mathbb{R}^d)}^2.
\end{eqnarray*}
Therefore
$${\rm R}_{i,m}^{3,3}\lesssim i^2\|\Omega_i\|_{L^1(S^{d-1})}\|f\|_{L^2(\mathbb{R}^d)}\|g\|_{L^2(\mathbb{R}^d)}.$$
Combining the estimates for ${\rm R}_{i}^1$, ${\rm R}_i^2$ and ${\rm R}_{i,m}^{3,n}$ (with $1\leq m\leq d, \,n=1,\,2,\,3$)  in all yields  that
\begin{eqnarray}\label{equation3.II}
	{\rm I}&\lesssim& 
	\sum_{i=1}^{\infty}i^2\|\Omega_i\|_{L^1(S^{d-1})}\|f\|_{L^2(\mathbb{R}^d)}\|g\|_{L^2(\mathbb{R}^d)}\lesssim \|f\|_{L^2(\mathbb{R}^d)}\|g\|_{L^2(\mathbb{R}^d)}.
\end{eqnarray}
It remains to discuss the contribution of terms II and III. For $i\in\mathbb{N}\cup\{0\}$, by Lemma \ref{wmmwmmlem3.4kkk}, one gets 
\begin{eqnarray}
	&& \sum_j\int^{\infty}_0\int_{0}^{\infty}\chi_{E^i_{1,2}}\|Q_sT^i_{\Omega,\,A;\,j}Q_t^4f\|_{L^2(\mathbb{R}^d)}
	\|Q_s^3g\|_{L^2(\mathbb{R}^d)}\frac{ds}{s}\frac{dt}{t}\\
	&&\quad\lesssim 2^i \Big(\int^{\infty}_0\int_{0}^{\infty}
	\sum_{j}
	\chi_{E^i_{1,2}}(2^{-j}s)^{\varepsilon}\|Q_s^3g\|^2_{L^2(\mathbb{R}^d)}
	\frac{ds}{s}\frac{dt}{t}\Big)^{\frac{1}{2}}\nonumber\\
	&&\qquad\times\Big(\int^{\infty}_0\int_{0}^{\infty}
	\sum_{j}
	\chi_{E^i_{1,2}}(2^{-j}s)^{\varepsilon}\|Q_t^4f\|_{L^2(\mathbb{R}^d)}^2
	\frac{ds}{s}\frac{dt}{t}\Big)^{\frac{1}{2}}.\nonumber
\end{eqnarray}
Note that
$$E_{1,2}^i(j,s,\,t)\subset\big\{(j,s,\,t):\,0\leq t\leq 2^j,\,0\leq s\leq t,\,2^j\geq \max\{t,\,s2^{iN}\}\big\},$$
Thus
\begin{eqnarray*}\sum_{j}
	\chi_{E_{1,2}^i}(2^{-j}s)^{\varepsilon}&\le 2^{-iN\varepsilon/2}\big(\frac{s}{t}\big)^{\varepsilon/2}\chi_{\{(s,t):\,s\leq t\}}(s,\,t), \end{eqnarray*}
which further implies that
$$\Big(\int^{\infty}_0\int_{0}^{\infty}
\sum_{j}
\chi_{E_{1,2}^i}(2^{-j}s)^{\varepsilon}\|Q_s^3g\|^2_{L^2(\mathbb{R}^d)}
\frac{ds}{s}\frac{dt}{t}\Big)^{\frac{1}{2}}\lesssim 2^{-Ni\varepsilon/4}\|g\|_{L^2(\mathbb{R}^d)},
$$
and
$$\Big(\int^{\infty}_0\int_{0}^{\infty}
\sum_{j}
\chi_{E_{1,2}^i}(2^{-j}s)^{\varepsilon}\|Q_t^4f\|^2_{L^2(\mathbb{R}^d)}
\frac{ds}{s}\frac{dt}{t}\Big)^{\frac{1}{2}}\lesssim 2^{-Ni\varepsilon/4}\|f\|_{L^2(\mathbb{R}^d)}.
$$
Therefore, these inequalities, together with the fact that $E_{1,1}^0=\emptyset$ may lead to 
\begin{eqnarray}\label{twoth}
	{\rm II+III}\lesssim \sum_{i=0}^{\infty}2^{i}2^{-Ni\varepsilon/2}
	\|f\|_{L^2(\mathbb{R}^d)}\|g\|_{L^2(\mathbb{R}^d)}\lesssim \|f\|_{L^2(\mathbb{R}^d)}\|g\|_{L^2(\mathbb{R}^d)}.
\end{eqnarray}
Inequlity (\ref{twoth}), together with the estimtate  ( \ref{equation3.II}) for ${\rm I}$, gives that		\begin{eqnarray}\label{wmmequation2.8}\Big|\sum_i\sum_j\int^{\infty}_0\int_{0}^t\int_{\mathbb{R}^d}\chi_{E_1(j,,s,t)}Q_s^4T_{\Omega,\,A;\,j}^iQ_t^4f(x)g(x)dx\frac{ds}{s}\frac{dt}{t}\Big| \lesssim \|f\|_{L^2(\mathbb{R}^d)}\|g\|_{L^2(\mathbb{R}^d)}.
\end{eqnarray}
\noindent{\textbf {Contribution of $E_2(j,,s,t)$.}} 

Let $\alpha\in (\frac{d+1}{d+2},\,1)$, $i\in\mathbb{N}\cup\{0\}$, and write
\begin{eqnarray}\label{E_2}&&\sum_i\sum_{j\in\mathbb{Z}}\int^{\infty}_{2^j}\int^{(2^jt^{\alpha-1})^{\frac{1}{\alpha}}}_0
	\Big|\int_{\mathbb{R}^d}Q_s^4{T}_{\Omega, A,\,j}^iQ_t^4f(x)g(x)dx\Big|\frac{ds}{s}\frac{dt}{t}\\
	&& \leq \sum_{i}\sum_{j\in\mathbb{Z}}\int^{2^j}_{2^{j-Ni}}\int^{(2^js^{-\alpha})^{\frac{1}{1-\alpha}}}_{2^j}
	\Big|\int_{\mathbb{R}^d}Q_s^4 {T}_{\Omega, A,\,j}^iQ_t^4f(x)g(x)dx\Big|\frac{dt}{t}\frac{ds}{s}\nonumber\\
	&&\quad+\sum_{i}\sum_{j\in\mathbb{Z}}\int^{2^{j-Ni}}_0\int^{(2^js^{-\alpha})^{\frac{1}{1-\alpha}}}_{2^j}
	\Big|\int_{\mathbb{R}^d}Q_s^4 {T}_{\Omega, A,\,j}^iQ_t^4f(x)g(x)dx\Big|\frac{dt}{t}\frac{ds}{s}\nonumber:= IV+V.\nonumber
\end{eqnarray}

{Firstly, we consider  the term $ IV$.} {When} $i=0$, the integral $\int^{2^j}_{2^{j-Ni}}\int^{(2^js^{-\alpha})^{1/{(1-\alpha)}}}_{2^j}\frac{dt}{t}\frac{ds}{s}$ vanishes, we only need to consider the case  $i\in\mathbb{N}$.
Since $s>2^{j-Ni}$, then $(2^j s^{-\alpha})^{\frac1 {1-\alpha}}\le 2^j 2^{iN\frac{\alpha}{1-\alpha}}$. Therefore
\begin{eqnarray*}
	IV&&=\sum_{i}\sum_{j\in\mathbb{Z}}\int^{2^j}_{2^{j-Ni}}\int^{(2^js^{-\alpha})^{\frac{1}{\alpha}}}_{2^j}
	\Big|\int_{\mathbb{R}^d}Q_s^4{T}^i_{\Omega, A,\,j}Q_t^4f(x)g(x)dx\Big|\frac{dt}{t}\frac{ds}{s}\\
	&&\leq \sum_{i}\sum_j\int^{2^j}_{2^{j-Ni}}\int^{2^j2^{iN\frac{\alpha}{1-\alpha}}}_{2^j}\Big|\int_{\mathbb{R}^d}
	T_{\Omega,A,j}^iQ_t^4f(x)Q_s^4g(x)dx\Big|\frac{dt}{t}\frac{ds}{s}\\
	&&\leq \sum_{i}\sum_j\int^{2^j}_{2^{j-Ni}}\int^{2^j2^{iN\frac{\alpha}{1-\alpha}}}_{2^j}\Big|\sum_l\int_{\mathbb{R}^d}
	T_{\Omega,A,j}^iQ_t^4f(x)Q_s^3h_{s,l}(x)dx\Big|\frac{dt}{t}\frac{ds}{s},
\end{eqnarray*}
where $h_{s,l}(x)=Q_sg(x)\chi_{I_l}(x)$, and $\{I_l\}_{l}$ be the cubes in (\ref{eq3.decomp}). 

Observe that when $x\in 4dI_l$,
$
T_{\Omega,A,j}^i(Q_t^4f)(x)Q_s^3h_{s,l}(x)=
T_{\Omega,A,j}^i(\zeta_lQ_t^4f)(x)Q_s^3h_{s,l}(x),
$
we rewrite
$$\aligned
{}&{T}_{\Omega,A,j}^i(\zeta_lQ_t^4f)(x)\\&=\Big(A_{I_l}(x)W^i_{\Omega,j}Q_t^4f(x)-W^i_{\Omega,j}(A_{I_l}Q_t^4f)(x)-\sum_{m=1}^dU_{\Omega,m,j}^i(\zeta_{l}[\partial_m A,Q_t]Q_t^3f)(x)\\
	&\quad-\sum_{m=1}^dU_{\Omega,m,j}^i(\zeta_lQ_t[\partial_{m}A,\,Q_t]Q_t^2f)(x)-\sum_{m=1}^dU_{\Omega,\,m,j}^i(\zeta_lQ_tQ_t(\partial_m\widetilde{A_{I_l}}Q_t^2f)(x)\Big)\chi_{4dI_l}(x).
\endaligned$$
Similar to the estimate for ${\rm R}_i^1$ and ${\rm R}_i^2$, we  know that
\begin{eqnarray}\label{eq3.ri1}
	&&\sum_j\int^{2^j}_{2^{j-Ni}}\int^{2^j2^{iN\frac{\alpha}{1-\alpha}}}_{2^j}\Big|\sum_l\int_{\mathbb{R}^d}
	W_{\Omega,j}^i(A_{I_l}Q_t^4f)(x)Q_s^3h_{s,l}(x)dx\Big|\frac{dt}{t}\frac{ds}{s}\\
	&&\quad\lesssim i^2\|\Omega_i\|_{L^1(\mathbb{R}^d)}\|f\|_{L^2(\mathbb{R}^d)}\|g\|_{L^2(\mathbb{R}^d)},\nonumber
\end{eqnarray}
and
\begin{eqnarray}\label{eq3.ri2}
	&&\sum_j\int^{2^j}_{2^{j-Ni}}\int^{2^j2^{iN\frac{\alpha}{1-\alpha}}}_{2^j}\sum_l\Big|\int_{\mathbb{R}^d}
	A_{I_l}(x)W_{\Omega,j}^iQ_t^4f(x)Q_s^3h_{s,l}(x)dx\Big|\frac{dt}{t}\frac{ds}{s}\\
	&&\quad\lesssim i^2\|\Omega_i\|_{L^1(\mathbb{R}^d)}\|f\|_{L^2(\mathbb{R}^d)}\|g\|_{L^2(\mathbb{R}^d)}.\nonumber
\end{eqnarray}
 On the other hand, for each fixed $1\leq m\leq d$, the same reasonging as what we have done for ${\rm R}^1_{i,m}$ and ${\rm R}^2_{i,m}$  yields that 
\begin{eqnarray}\label{eq3.rim1}
	&&\sum_j\sum_l\int^{2^j}_{2^{j-Ni}}\int^{2^j2^{iN\frac{\alpha}{1-\alpha}}}_{2^j}\Big|\int_{\mathbb{R}^d}
	U_{\Omega,m,j}^i(\zeta_l[\partial_mA,Q_t]Q_t^3f)(x)Q_s^3h_{s,l}(x)dx\Big|\frac{dt}{t}\frac{ds}{s}\\
	&&\quad\lesssim i^2\|\Omega_i\|_{L^1(\mathbb{R}^d)}\|f\|_{L^2(\mathbb{R}^d)}\|g\|_{L^2(\mathbb{R}^d)},\nonumber
\end{eqnarray}
and
\begin{eqnarray}\label{eq3.rim2}
	&&\sum_j\sum_l\int^{2^j}_{2^{j-Ni}}\int^{2^j2^{iN\frac{\alpha}{1-\alpha}}}_{2^j}\Big|\int_{\mathbb{R}^d}
	U_{\Omega,m,j}^i(\zeta_lQ_t[\partial_mA,Q_t]Q_t^2f)(x)Q_s^3h_{s,l}(x)dx\Big|\frac{dt}{t}\frac{ds}{s}\\
	&&\quad\lesssim i^2\|\Omega_i\|_{L^1(\mathbb{R}^d)}\|f\|_{L^2(\mathbb{R}^d)}\|g\|_{L^2(\mathbb{R}^d)},\nonumber
\end{eqnarray}
Note that  if $x\in \chi_{4dI_l}(x),$ then
$U_{\Omega,\,m,j}^i(Q_tQ_t(\partial_m\widetilde{A_{I_l}}Q_t^2f))(x)=U_{\Omega,\,m,j}^i(\zeta_lQ_tQ_t(\partial_m\widetilde{A_{I_l}}Q_t^2f))(x).
$ Since the kernel of $Q_t$ is radial and it enjoys the property that $$<U_{\Omega,\,m,j}^i(\zeta_lQ_tf),g>= <U_{\Omega,\,m,j}^i(\zeta_lf), Q_tg>.$$ Hence,
we have
$$\aligned
\int_{\mathbb{R}^d}U_{\Omega,\,m,j}^i(\zeta_lQ_tQ_t(\partial_m\widetilde{A_{I_l}}Q_t^2f))(x)Q_s^3h_{l,s}(x)dx
	&=\int_{\mathbb{R}^d}U_{\Omega,\,m,j}^iQ_s(\partial_m\widetilde{A_{I_l}}Q_sQ_t^2f)(x)
	Q_t^2Q_sh_{s,l}(x)dx\\
	&\quad-\int_{\mathbb{R}^d}U_{\Omega,\,m,j}^iQ_s[\partial_m A,\,Q_s]Q_t^2f(x)Q_t^2Q_sh_{s,l}(x)dx.
\endaligned
$$
A trivial argument then yields that
\begin{eqnarray}
	&&\Big|\sum_{j}\sum_l\int^{2^j}_{2^{j-iN}}\int^{2^j2^{iN\frac{\alpha}{1-\alpha}}}_{2^j}\int_{\mathbb{R}^d}U_{\Omega,\,m,j}^iQ_s[\partial_m A,\,Q_s]Q_t^2f(x)Q_t^2Q_sh_{l,s}(x)dx\frac{dt}{t}\frac{ds}{s}\Big|\\
	&&\quad\lesssim i^2\|\Omega_i\|_{L^1(S^{d-1})}\|f\|_{L^2(\mathbb{R}^d)}\|g\|_{L^2(\mathbb{R}^d)}.\nonumber
\end{eqnarray}
Now we write
$$\aligned
{}&\int_{\mathbb{R}^d}U_{\Omega,\,m,j}^iQ_s(\partial_m\widetilde{A_{I_l}}Q_sQ_t^2f)(x)
	Q_t^2Q_sh_{s,l}(x)dx\\
	&=\int_{\mathbb{R}^d}Q_sQ_t^2f(x)
	[\partial_mA,\,Q_s]U^i_{\Omega,m,j}Q_t^2Q_sh_{s,l}(x)dx+\int_{\mathbb{R}^d}Q_sQ_t^2f(x)
	Q_s[\partial_mA,\,U^i_{\Omega,m,j}]Q_t^2Q_sh_{s,l}(x)dx
	\endaligned
	$$ 
	$$\aligned
	{}&\qquad+\int_{\mathbb{R}^d}Q_sQ_t^2f(x)
	Q_sU^i_{\Omega,m,j}[\partial_mA,Q_t^2]Q_sh_{s,l}(x)dx+\int_{\mathbb{R}^d}Q_sQ_t^2f(x)Q_sU^i_{\Omega,m,j}Q_t^2[\partial_mA,Q_s]h_{s,l}(x)dx\\
	&\qquad+\int_{\mathbb{R}^d}Q_sQ_t^2f(x)Q_sU_{\Omega,m,j}^iQ_t^2Q_s(\partial_m\widetilde{A}_{I_l}h_{s,l})(x)dx:=
	\sum_{k=1}^5{\rm S}_{i,m,l}^k.\endaligned
$$ A standard argument involving H\"older's inequality leads to that
\begin{eqnarray}\label{eq3.viml1}
	&&\sum_j\int^{2^j}_{2^{j-Ni}}\int^{2^j2^{iN\frac{\alpha}{1-\alpha}}}_{2^j}\big|\sum_l{\rm S}_{i,m,l}^1\big|
	\frac{dt}{t}\frac{ds}{s}\\
	&&\quad\lesssim\sum_j\int^{2^j}_{2^{j-Ni}}\int^{2^j2^{iN\frac{\alpha}{1-\alpha}}}_{2^j}
	\|Q_sQ_t^2f\|_{L^2(\mathbb{R}^d)}
	\Big\|[\partial_mA,\,Q_s]U^i_{\Omega,m,j}Q_t^2Q_s^2g\Big\|_{L^2(\mathbb{R}^d)}
	\frac{dt}{t}\frac{ds}{s}\nonumber\\
	&&\quad\lesssim \|\Omega_i\|_{L^1(S^{d-1})}\Big(\sum_j\int^{2^j}_{2^{j-Ni}}\int^{2^j2^{iN\frac{\alpha}{1-\alpha}}}_{2^j}
	\|Q_sQ_t^2f\|_{L^2(\mathbb{R}^d)}^2\frac{dt}{t}\frac{ds}{s}\Big)^{1/2}\nonumber\\
	&&\qquad\times\Big(\sum_j\int^{2^j}_{2^{j-Ni}}\int^{2^j2^{iN\frac{\alpha}{1-\alpha}}}_{2^j}
	\Big\|Q_t^2Q_s^2g\Big\|_{L^2(\mathbb{R}^d)}^2\frac{dt}{t}\frac{ds}{s}\Big)^{1/2}\nonumber\\
	&&\quad\lesssim i^2|\Omega_i\|_{L^1(S^{d-1})}\|f\|_{L^2(\mathbb{R}^d)}\|g\|_{L^2(\mathbb{R}^d)}.\nonumber
\end{eqnarray}
Similarly, one can verify that
\begin{eqnarray}\label{eq3.viml3}
	&&\sum_j\int^{2^j}_{2^{j-Ni}}\int^{2^j2^{iN\frac{\alpha}{1-\alpha}}}_{2^j}\big|\sum_l{\rm S}_{i,m,l}^3\big|
	\frac{dt}{t}\frac{ds}{s}\lesssim i^2\|\Omega_i\|_{L^1(S^{d-1})}\|f\|_{L^2(\mathbb{R}^d)}\|g\|_{L^2(\mathbb{R}^d)}.
\end{eqnarray}
and
\begin{eqnarray}\label{eq3.viml4}
	&&\sum_j\int^{2^j}_{2^{j-Ni}}\int^{2^j2^{iN\frac{\alpha}{1-\alpha}}}_{2^j}\big|\sum_l{\rm S}_{i,m,l}^4\big|
	\frac{dt}{t}\frac{ds}{s}\lesssim i^2\|\Omega_i\|_{L^1(S^{d-1})}\|f\|_{L^2(\mathbb{R}^d)}\|g\|_{L^2(\mathbb{R}^d)}.
\end{eqnarray}
On the other hand, the fact (see \cite[Lemma 4 and Lemma 3]{hu}) 
$$\|[\partial_mA,\,U^i_{\Omega,m,j}]h\|_{L^2(\mathbb{R}^d)}\lesssim \big(2^{-i}+i\|\Omega_i\|_{L^1(S^{d-1})}\big)\|h\|_{L^2(\mathbb{R}^d)},$$ implies that
\begin{eqnarray}\label{eq3.viml2}
	&&\sum_j\int^{2^j}_{2^{j-Ni}}\int^{2^j2^{iN\frac{\alpha}{1-\alpha}}}_{2^j}\big|\sum_l{\rm V}_{i,m,l}^2\big|
	\frac{dt}{t}\frac{ds}{s}\lesssim\big(i2^{-i}+i^2\|\Omega_i\|_{L^1(S^{d-1})}\big)\|f\|_{L^2(\mathbb{R}^d)}
	\|g\|_{L^2(\mathbb{R}^d)}.
\end{eqnarray}
Applying H\"older's inequality and inequality  (\ref{eq3.qs}) in the case $s\in (2^{j-1}, 2^j)$, we obtain
\begin{eqnarray}\label{eq3.viml5}
	&&\sum_j\int^{2^j}_{2^{j-Ni}}\int^{2^j2^{iN\frac{\alpha}{1-\alpha}}}_{2^j}\big|\sum_l{\rm V}_{i,m,l}^5\big|
	\frac{dt}{t}\frac{ds}{s}\\
	&&\quad\lesssim \|\Omega_i\|_{L^1(S^{d-1})}\Big(\sum_j\int^{2^j}_{2^{j-Ni}}\int^{2^j2^{iN\frac{\alpha}{1-\alpha}}}_{2^j}
	\|Q_sQ_t^2f\|_{L^2(\mathbb{R}^d)}^2\frac{dt}{t}\frac{ds}{s}\Big)^{1/2}\nonumber\\
	&&\qquad\times\Big(\sum_j\int^{2^j}_{2^{j-Ni}}\int^{2^j2^{iN\frac{\alpha}{1-\alpha}}}_{2^j}
	\Big\|Q_t^2Q_s\big(\sum_l\partial_m\widetilde{A}_{I_l}h_{s,l}\big)\Big\|_{L^2(\mathbb{R}^d)}^2
	\frac{dt}{t}\frac{ds}{s}\Big)^{1/2}
	\nonumber
\end{eqnarray}
\begin{eqnarray}
	&&\quad\lesssim i^{\frac{3}{2}}\|\Omega_i\|_{L^1(S^{d-1})}\|f\|_{L^2(\mathbb{R}^d)}
	\Big(\sum_j\int^{2^j}_{2^{j-Ni}}
	\Big\|\sum_lQ_s\big(\partial_m\widetilde{A}_{I_l}h_{s,l}\big)\Big\|_{L^2(\mathbb{R}^d)}^2
	\frac{ds}{s}\Big)^{1/2}\nonumber\\
	&&\quad\lesssim i^{2}\|\Omega_i\|_{L^1(S^{d-1})}\|f\|_{L^2(\mathbb{R}^d)}
	\Big(\sum_j\int^{2^j}_{2^{j-1}}
	\|M_qh\|_{L^2(\mathbb{R}^d)}^2\frac{ds}{s}\Big)^{1/2}\nonumber\\
	&&\quad\lesssim i^{2}\|\Omega_i\|_{L^1(S^{d-1})}\|f\|_{L^2(\mathbb{R}^d)}\|g\|_{L^2(\mathbb{R}^d)}.\nonumber
\end{eqnarray}
Collecting the estimates from (\ref{eq3.ri1}) to (\ref{eq3.viml5}) in all, we deduce that
\begin{eqnarray}\label{wmmequation2.9}
	IV	&&=\sum_i\sum_j\int^{2^j}_{2^{j-Ni}}\int^{(2^js^{-\alpha})^{\frac{1}{\alpha}}}_{2^j}\Big|\int_{\mathbb{R}^d}
	Q_t^4f(x)T_{\Omega,A,j}^iQ_s^4g(x)dx\Big|\frac{dt}{t}\frac{ds}{s}\\
	&&\nonumber\lesssim \Big(\sum_{i}i2^{-i}+\sum_ii^2\|\Omega_i\|_{L^1(S^{d-1})}\Big)\|f\|_{L^4(\mathbb{R}^d)}\|g\|_{L^2(\mathbb{R}^d)}\\&&\lesssim \|f\|_{L^2(\mathbb{R}^d)}\|g\|_{L^2(\mathbb{R}^d)}.\nonumber
\end{eqnarray}

{To show the estimate for $\rm V$, note that for each fixed $j$}, it holds that
\begin{eqnarray*}&&\big\{(s,\,t):\,0\leq s\leq 2^{j-Ni},\,2^j\leq t<(2^js^{-\alpha})^{\frac{1}{1-\alpha}}\big\}\\
	&&\quad\subset\big\{(s,\,t):\,2^j\leq t <\infty, 0<s\leq \min\{2^{j-Ni},\,(2^jt^{1-\alpha})^{\frac{1}{\alpha}}\}\big\}.
\end{eqnarray*}
It then follows that
\begin{eqnarray*}&&\sum_j\int^{2^{j-Ni}}_0\int^{(2^js^{-\alpha})^{\frac{1}{1-\alpha}}}_{2^j}
	\|Q_t^4f\|^2_{L^2(\mathbb{R}^d)} \frac{dt}{t}(2^{-j}s)^{\varepsilon}\frac{ds}{s}\\
	&&\quad\leq  2^{-Ni\varepsilon/2}\int^{\infty}_0\sum_{j:\,2^j\leq t}\int_0^{(2^jt^{\alpha-1})^{\frac{1}{\alpha}}}(2^{-j}s)^{\frac{\varepsilon}{2}}
	\frac{ds}{s}\|Q_t^4f\|^2_{L^2(\mathbb{R}^d)} \frac{dt}{t}\\
	&&\quad\lesssim 2^{-Ni\varepsilon/2}\|f\|_{L^2(\mathbb{R}^d)}^2,
\end{eqnarray*}
and
\begin{eqnarray*}&&\sum_j\int^{2^{j-Ni}}_0\int^{(2^js^{-\alpha})^{\frac{1}{1-\alpha}}}_{2^j}
	\frac{dt}{t}\|Q_s^3g\|^2_{L^2(\mathbb{R}^d)} (2^{-j}s)^{\varepsilon}\frac{ds}{s}\\
	&&\quad\leq   2^{-Ni\varepsilon/2}\int^{\infty}_0\Big(\sum_{j:\,2^j\geq s2^{Ni}}\int_{2^j}^{(2^js^{-\alpha})^{\frac{1}{1-\alpha}}}
	\frac{dt}{t}(2^{-j}s)^{\frac{\varepsilon}{2}}\Big)\|Q_s^3g\|^2_{L^2(\mathbb{R}^d)} \frac{ds}{s}\\
	&&\quad\lesssim 2^{-Ni\varepsilon/2}\|g\|_{L^2(\mathbb{R}^d)}^2,
\end{eqnarray*}
Thus, by Lemma \ref{wmmwmmlem3.4kkk},  we obtain 
\begin{eqnarray}\label{wmmequation2.10}
	V&&\le\sum_i\sum_j\int^{2^{j-Ni}}_0\int^{(2^js^{-\alpha})^{\frac{1}{1-\alpha}}}_{2^j}
	\|Q_sT^i_{\Omega,\,A;\,j}Q_t^4f\|_{L^2(\mathbb{R}^d)}\|Q_s^3g\|_{L^2(\mathbb{R}^d)}\frac{ds}{s}\frac{dt}{t} \\
	&&\le \sum_i2^i\Big(\sum_j\int^{2^{j-Ni}}_0\int^{(2^js^{-\alpha})^{\frac{1}{1-\alpha}}}_{2^j}
	\|Q_t^4f\|^2_{L^2(\mathbb{R}^d)} \frac{dt}{t}(2^{-j}s)^{\varepsilon}\frac{ds}{s}\Big)^{\frac{1}{2}}\nonumber\end{eqnarray}
\begin{eqnarray}	&&\qquad\times \Big(\sum_j\int^{2^{j-Ni}}_0\int^{(2^js^{-\alpha})^{\frac{1}{1-\alpha}}}_{2^j}
	\|Q_s^3g\|^2_{L^2(\mathbb{R}^d)} (2^{-j}s)^{\varepsilon}\frac{ds}{s}\frac{dt}{t}\Big)^{1/2}\nonumber\\&&
\quad\lesssim \sum_i2^i2^{-Ni\varepsilon/2} \|f\|_{L^2(\mathbb{R}^d)}\|g\|_{L^2(\mathbb{R}^d)}\nonumber.
\end{eqnarray}
Combining  estimates (\ref{wmmequation2.9})-(\ref{wmmequation2.10}) yields
\begin{eqnarray}\label{wmmequation2.11}
	&&\Big|\sum_i\sum_j\int^{\infty}_{2^j}\int_{0}^{(2^jt^{\alpha-1})^{1/\alpha}}\int_{\mathbb{R}^d}Q_s^4
	T_{\Omega,A,j}^iQ_t^4f(x)g(x)dx
	\frac{ds}{s}\frac{dt}{t}\Big|
	\lesssim \|f\|_{L^2(\mathbb{R}^d)}\|g\|_{L^2(\mathbb{R}^d)}.
\end{eqnarray}
Therefore,  by (\ref{E_2}), (\ref {wmmequation2.9}) and (\ref{wmmequation2.11}), it holds that
	\begin{eqnarray*}\label{wmmequation2.8}\Big|\sum_i\sum_j\int^{\infty}_0\int_{0}^t\int_{\mathbb{R}^d}\chi_{E_2(j,,s,t)}Q_s^4T_{\Omega,\,A;\,j}^iQ_t^4f(x)g(x)dx\frac{ds}{s}\frac{dt}{t}\Big| \lesssim IV+V \lesssim \|f\|_{L^2(\mathbb{R}^d)}\|g\|_{L^2(\mathbb{R}^d)},
\end{eqnarray*}
which gives the contribution of $E_2(j,s,t)$.

To finish the proof of \eqref{wmmlittlewood1}, it remains to show the contribution of the term $E_3^i(j,s,t)$. 

\noindent{\textbf {Contribution of $E_3(j,,s,t)$.}} 

Our aim is to prove 
\begin{eqnarray}\label{wmmequation2.12}
	&&\Big|\sum_i\sum_j\int^{\infty}_0\int_{0}^t\int_{\mathbb{R}^d}E_{3}(j,s,t)Q_s^4T_{\Omega,\,A;\,j}^iQ_t^4f(x)g(x)dx\frac{ds}{s}\frac{dt}{t}\Big|
	\lesssim \|f\|_{L^2(\mathbb{R}^d)}\|g\|_{L^2(\mathbb{R}^d)},\end{eqnarray}
where \begin{eqnarray} \label{2.29}
T_{\Omega _i,\,A;\,j}f(x)=\int_{\mathbb{R}^d}\frac{\Omega_i(x-y)}{|x-y|^{d+1}}(A(x)-A(y)-\nabla A(y)(x-y)\big) \phi_j(|x-y|)f(y)dy,\label{wmmhtg}
\end{eqnarray}
Since the sum of $i$ and the sum of $j$ are independent and the sum of $j$ depends only on the functions $\phi_j$ and $E_{3}(j,s,t), $ one may put  $\phi_j  \cdot E_{3}(j,s,t)$ together in the place of  $\phi_j$ in (\ref{2.29}), and temporary moves the summation over $j$ before $\phi_j  \cdot E_{3}(j,s,t)$, which indicates that it is possible to move the summation over $i$ inside the integral again before $\Omega_i$ to obtain $\Omega$. After that, one may move the sum of $j$ outside the integral. Therefore, to prove (\ref{wmmequation2.12}), it suffices to show that
\begin{eqnarray}\label{wmmequation2.13}
&&\sum_j\int^{\infty}_{2^j}\int^t_{(2^jt^{\alpha-1})^{1/\alpha}}\Big|\int_{\mathbb{R}^d}Q_s^4
{T}_{\Omega,A,j}Q_t^4f(x)g(x)dx\Big|
\frac{ds}{s}\frac{dt}{t}
\lesssim \|f\|_{L^2(\mathbb{R}^d)}\|g\|_{L^2(\mathbb{R}^d)}.\end{eqnarray}
To this purpose, we set
$$h^{(1)}(x,\,y)=\int\int\psi_s(x-y)\sum_{j:2^j\leq s^{\alpha}t^{1-\alpha}}K_{A,\,j}(z,\,u)\big[\psi_t(u-y)-\psi_t(x-\,y)\big]dudz.$$
Let $H^{(1)}$  be the integral operator corresponding to kernel $h^{(1)}$.  It then follows that
\begin{eqnarray}\label{2.227}&&\Big|\sum_j\int^{\infty}_{2^j}\int^t_{(2^jt^{\alpha-1})^{1/\alpha}}
	\int_{\mathbb{R}^d}Q_s^4T_{\Omega,A,j}Q_t^4f(x)g(x)dx
	\frac{ds}{s}\frac{dt}{t}\Big|\\
	&&\leq \int^{\infty}_0\int^t_0\|H^{(1)}Q_t^3f\|_{L^2(\mathbb{R}^d)}
	\|Q_s^3g\|_{L^2(\mathbb{R}^d)}\frac{ds}{s}\frac{dt}{t}\nonumber\end{eqnarray}
\begin{eqnarray}	&&\quad+\sum_j\int^{\infty}_{2^j}\int^t_{(2^jt^{\alpha-1})^{1/\alpha}}
	\int_{\mathbb{R}^d}(Q_sT_{\Omega,A;\,j}1)(x)Q_t^4f(x)Q_s^3g(x)dx\frac{ds}{s}\frac{dt}{t}.\nonumber
\end{eqnarray}
Applying Lemma \ref{wmmlem2.7} and reasoning as the same argument as in \cite[p. 1282]{hof1} give us that
$$h^{(1)}(x,\,y)\lesssim \big(\frac{s}{t}\big)^{\gamma}t^{-d}\chi_{\{(x,\,y):\,|x-y|\leq Ct\}}(x,\,y),$$
where $\gamma=(d+2)\alpha-d-1$. This in turn indicates that
$|H^{(1)}Q_tf(x)|\lesssim \big(\frac{s}{t}\big)^{\gamma}M(Q_tf)(x).
$
Therefore
\begin{eqnarray}\label{wmmequation2.13} \int^{\infty}_0\int^t_0\|H^{(1)}Q_t^3f\|_{L^2(\mathbb{R}^d)}\|Q_s^3g\|_{L^2(\mathbb{R}^d)}
	\frac{ds}{s}\frac{dt}{t}
&&\lesssim \Big( \int^{\infty}_0\int^t_0\big(\frac{s}{t}\big)^{\gamma}\|M(Q_t^3f)\|_{L^2(\mathbb{R}^d)}^2
	\frac{ds}{s}\frac{dt}{t}\Big)^{\frac{1}{2}}\\
	&&\quad\times\Big( \int^{\infty}_0\|Q_s^3g\|_{L^2(\mathbb{R}^d)}^2\int_{s}^{\infty}\big(\frac{s}{t}\big)^{\gamma}
	\frac{dt}{t}\frac{ds}{s}\Big)^{\frac{1}{2}}\nonumber\\ && \lesssim \|f\|_{L^2(\mathbb{R}^d)}\|g\|_{L^2(\mathbb{R}^d)}.\nonumber
\end{eqnarray}
It remains to show the coresponding estimate for the second term on the rightside of (\ref{2.227}).

Let $F_x^j(s,t)=(Q_sT_{\Omega,A;\,j}1)(x)Q_t^4f(x)Q_s^3g(x)$. Then
 \begin{align} \label{split} {}&
\int^{\infty}_{2^j}\int^t_{(2^jt^{\alpha-1})^{1/\alpha}}F_x^j(s,t)\frac{dsdt}{st}	\\&=\int^{\infty}_0\int_0^tF_x^j(s,t)\frac{dsdt}{st}-\int^{2^j}_0\int_0^t~F_x^j(s,t)\frac{dsdt}{st}-\int^{\infty}_{2^j}\int_0^{(2^jt^{\alpha-1})^{1/\alpha}}~F_x^j(s,t)\frac{dsdt}{st}\nonumber
	\end{align}
Therefore, it is sufficient to consider the contributions of each terms in equation  (\ref{split}) to the second term in (\ref{2.227}).

Consider the first term in (\ref{split}).
Let
$P_s=\int^{\infty}_sQ_t^4\frac{dt}{t}.$
Han and Sawyer \cite{hansaw} observed that the kernel $\Phi$ of the convolution operator
$P_s$ is a radial bounded function with bound $cs^{-d}$, supported on a ball of radius $Cs$ and has
integral zero. Therefore, it is easy to see that $\Phi$ is a schwartz function. Since $P_sg=\Phi_s * f$, it then follows from the Littlewood-Paley theory that
$$\int^{\infty}_0\|P_sg\|_{L^2(\mathbb{R}^d)}^2\frac{ds}{s}\lesssim \|g\|_{L^2(\mathbb{R}^d)}^2.$$
On the other hand, whenever $\Omega\in L^1(\mathbb{R}^d)$, it was shown in \cite[p.121, Lemma 4.1] {hof} that  $T_{\Omega,A}1\equiv b\in {\rm BMO}(\mathbb{R}^d)$. Therefore, by \cite[p.114, (3.1)] {hof}, $\int^{\infty}_0Q_s^3(Q_sbP_s)\frac{ds}{s}$ defines an operator 
which is bounded on $L^2(\mathbb{R}^d)$. However, we can't use this boundedness directly in our case, since once using  H\"older's inequality, we have to put the absolute value inside the integral and the {$L^2(\mathbb{R}^d)$} boundedness may fail in this case. To overcome this obstacle, we consider to use the property of Carleson measure, lemma \ref{graf} from the book of Grafakos.

Note that $|Q_s
T_{\Omega,\,A}1(x)|^2\frac{dxds}{s}$ is a Carleson measure since $T_{\Omega,\,A}1\in {\rm BMO}(\mathbb{R}^d)$. By   H\"older's inequality, Lemma \ref{graf}, it yields that
	\begin{eqnarray}&&\Big|\int^{\infty}_0\int_0^t\int_{\mathbb{R}^d}\sum_jQ_s
	T_{\Omega,\,A;j}1(x)Q_t^4f(x)Q^3_sg(x)dx\frac{ds}{s}\frac{dt}{t}\Big|\\
	&&\quad\lesssim {\bigg(\int^{\infty}_0\int_{\mathbb{R}^d}|Q_s^3g(x)|^2dx\frac{ds}{s}\bigg)^{\frac{1}{2}}\bigg(\int_{\mathbb {R}_+^{n+1}}|P_sf(x)|^2|Q_s
	T_{\Omega,\,A}1(x)|^2\frac{dxds}{s}\bigg)^{\frac{1}{2}}}\nonumber\\
	&&
	\quad\lesssim \|f\|_{L^2(\mathbb{R}^d)} \|g\|_{L^2(\mathbb{R}^d)}\nonumber
	\end{eqnarray}

On the other hand, by Lemma \ref{wmmwmmlem3.3}, one gets $$\|Q_sT_{\Omega,\,A;\,j}1\|_{L^{\infty}(\mathbb{R}^d)}\lesssim \|\Omega\|_{L^1(\mathbb{S}^{d-1})}(2^{-j}s)^{\epsilon}.$$
Denote by
$D^1_{j,s,t}=\{(j,s,t): s\leq t\leq 2^j\}$, $D^2_{j,s,t}=\{(j,s,\,t):\,s\leq t,s^{\alpha}t^{1-\alpha}\leq 2^j\leq t\}$.
It then follows that
\begin{eqnarray}\label{wmmequation2.15}&&\sum_j\int^{2^j}_0\int_0^t\int_{\mathbb{R}^d}
	\Big|Q_sT_{\Omega,\,A;j}1(x)Q_t^4f(x)Q_s^3g(x)\Big|dx\frac{ds}{s}\frac{dt}{t}\\
	&&\qquad+\sum_j\int^{\infty}_{2^j}\int_0^{(2^jt^{\alpha-1})^{1/\alpha}}\int_{\mathbb{R}^d}
	\Big|Q_sT_{\Omega,\,A;j}1(x)Q_t^4f(x)Q^3_sg(x)\Big|dx\frac{ds}{s}\frac{dt}{t}\nonumber\\
	&&\quad\lesssim\sum_{i=1}^2\bigg\{\Big(\int_0^\infty\int_0^\infty \sum_j(2^{-j}s)^{\epsilon}\chi_{D_{j,s,t}^i}(j,s,t)\|Q_tf\|_{L^2(\mathbb{R}^d)}^2\frac{ds}{s}\frac{dt}{t}\Big)^{1/2}\nonumber\\
	&&\quad\quad\times
	\Big(\int_0^\infty\int_0^\infty \sum_j(2^{-j}s)^{\epsilon}\chi_{D_{j,s,t}^i}(j,s,t)\|Q_sgf\|_{L^2(\mathbb{R}^d)}^2\frac{ds}{s}\frac{dt}{t}\Big)^{1/2}\bigg\}\nonumber\\
	&&\quad\lesssim \|f\|_{L^2(\mathbb{R}^d)}\|g\|_{L^2(\mathbb{R}^d)},\nonumber
\end{eqnarray}
where in the last inequality, we used the property (\ref{lpp}).

Combining (\ref{split})-(\ref{wmmequation2.15}), we have		$$\Big|\sum_j\int^{\infty}_{2^j}\int^t_{(2^jt^{\alpha-1})^{1/\alpha}}\int_{\mathbb{R}^d}(Q_sT_{\Omega,A;\,j}1)(x)Q_t^4f(x)Q^3_sg(x)dx\frac{ds}{s}\frac{dt}{t}
\Big|\lesssim \|f\|_{L^2(\mathbb{R}^d)}\|g\|_{L^2(\mathbb{R}^d)},
$$
which, together with  (\ref{wmmequation2.13}), leads to  (\ref{wmmequation2.12}). This finishes the proof of $E_3(j,s,t)$, and also completes the proof of inequality (\ref{wmmlittlewood1}).
\vspace{0.1cm}

To finish the proof of Theorem 1.1, it remains to  show the estimate (\ref{wmmlittlewood2}). Oberve that 
\begin{eqnarray*} &&\int^{\infty}_0\int_{t}^{\infty}\int_{\mathbb{R}^d}Q_s^4{T}_{\Omega,\,A}Q_t^4f(x)g(x)dx
	\frac{ds}{s}\frac{dt}{t}=-\int^{\infty}_0\int_{0}^{s}\int_{\mathbb{R}^d}Q_t^4\widetilde{T}_{\widetilde{\Omega},\,A}Q_s^4g(x)f(x)dx	\frac{dt}{t}
	\frac{ds}{s},
\end{eqnarray*}
where $\widetilde{\Omega}(x)=\Omega(-x)$ and $\widetilde{T}_{\widetilde{\Omega},\,A}$ is   the operator defined by (\ref{equation1.dualoperator}), with $\Omega$ replaced by $\widetilde{\Omega}$.  Let $T_{\widetilde{\Omega},\,m}$ be the operator defined by
$$T_{\widetilde{\Omega},\,m}h(x)={\rm p.\,v.}\int_{\mathbb{R}^d}\frac{\widetilde{\Omega}(x-y)(x_m-y_m)}{|x-y|^{d+1}}h(y)dy.
$$
It then follows that		
$$\widetilde{T}_{\widetilde{\Omega},\,A}h(x)=T_{\widetilde{\Omega},\,A}h(x)+\sum_{m=1}^d[\partial_mA,\,
T_{\widetilde{\Omega},\,m}]h(x).
$$
Inequality (\ref{wmmlittlewood1}) tells us that		\begin{eqnarray}\label{wmmequation2.16}\Big|\int^{\infty}_0\int_{0}^t\int_{\mathbb{R}^d}Q_s^4
	{T}_{\widetilde{\Omega},\,A}Q_t^4g(x)f(x)dx\frac{dt}{t}\frac{ds}{s}\Big|\lesssim \|f\|_{L^2(\mathbb{R}^d)}\|g\|_{L^2(\mathbb{R}^d)}.
\end{eqnarray}
For each fixed $m$ with $1\leq m\leq d$, by duality, involving Lemma \ref{wmmlem2.6} and H\"older's inequality may lead to
$$\aligned {}&\Big|\int^{\infty}_0\int_{0}^t\int_{\mathbb{R}^d}Q_s^4[\partial_mA,\,T_{\widetilde{\Omega},\,m}]Q_t^4g(x)f(x)dx
	\frac{ds}{s}\frac{dt}{t}\Big|\\	&\lesssim\int^{\infty}_0\|[\partial_mA,\,T_{\widetilde{\Omega},\,m}]Q_s^4f\|_{L^2(\mathbb{R}^d)}
	\Big\|\int_{s}^{\infty}Q_t^4
	g\frac{dt}{t}\Big\|_{L^2(\mathbb{R}^d)}
	\frac{ds}{s}\\
	&\quad\lesssim \Big(\int^{\infty}_0\|Q_s^4f\|^2_{L^2(\mathbb{R}^d)}\frac{ds}{s}\Big)^{1/2}
	\Big(\int^{\infty}_0\|P_sg\|^2_{L^2(\mathbb{R}^d)}
	\frac{ds}{s}\Big)^{1/2}\\
&\quad\lesssim \|f\|_{L^2(\mathbb{R}^d)}\|g\|_{L^2(\mathbb{R}^d)}.
\endaligned
$$
This estimate, together with (\ref{wmmequation2.16}), leads to (\ref{wmmlittlewood2}) and then completes the proof of Theorem \ref{wmmthm1.1}.

\vspace{0.3cm}

	\section{proof of Theorem \ref{thm1.4}}\label{Sec3}
	
	This section is devoted to prove Theorem \ref{thm1.4}, the weak type endpoint estimates for $T_{\Omega,\,A}$ and $\widetilde{T}_{\Omega,A}$.
	To this end, we first introduce the definition of standard dyadic grid. Recall that  the standard dyadic grid in $\mathbb{R}^d$, denoted  by $\mathcal{D}$, consists of all cubes of the form $$2^{-k}([0,\,1)^d+j),\,k\in  \mathbb{Z},\,\,j\in\mathbb{Z}^d.$$
	For each fixed $j\in\mathbb{Z}$, set
	$\mathcal{D}_j=\{Q\in \mathcal{D}:\, \ell(Q)=2^j\}$.
	
	The following lemma plays an important role in our analysis.
		\begin{lemma}{\rm (}\cite{cohen}{\rm )}\label{lem2.3}
		Let $A$ be a function on $\mathbb{R}^d$ with derivatives of order one in $L^q(\mathbb{R}^d)$ for some $q\in (d,\,\infty]$. Then
		$$|A(x)-A(y)| \lesssim|x-y|\Big(\frac{1}{|I_{(x,|x-y|)}|}\int_{I_{(x,|x-y|)}}|\nabla A(z)|^q{\rm d}z\Big)^{\frac{1}{q}},$$
		where
		$I_{(x,|x-y|)}$ is a cube which is centered at $x$  with length $2|x-y|.$
	\end{lemma}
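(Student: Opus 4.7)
The plan is to reduce the pointwise difference $A(x)-A(y)$ to a Riesz-type integral of $|\nabla A|$ over a cube of radius comparable to $r := |x-y|$, and then apply H\"older's inequality, using the crucial hypothesis $q>d$ to guarantee local integrability of the resulting singular kernel.

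First I would dispose of the case $q=\infty$ separately: since $\nabla A\in L^{\infty}(\mathbb{R}^d)$, absolute continuity of $A$ on lines together with the fundamental theorem of calculus gives $|A(x)-A(y)|\le |x-y|\|\nabla A\|_{L^{\infty}(I_{(x,|x-y|)})}$, which is exactly the desired bound with the ``$L^q$ average'' replaced by the $L^\infty$ norm.

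For the main case $q\in (d,\infty)$, I would write, for each $z$ in a cube $I^*$ of side length comparable to $r$ centred near $x$,
\[
A(x)-A(z)=\int_0^1 \nabla A(x+t(z-x))\cdot (z-x)\,dt,
\]
and an analogous identity with $x$ replaced by $y$. Averaging the elementary identity $A(x)-A(y)=[A(x)-A(z)]+[A(z)-A(y)]$ over $z\in I^*$, then performing the change of variables $w=x+t(z-x)$ (and $w=y+t(z-y)$) and applying Fubini, I would derive the classical pointwise bound
\[
|A(x)-A(y)|\lesssim \int_{\widetilde I}\frac{|\nabla A(w)|}{|w-x|^{d-1}}\,dw+\int_{\widetilde I}\frac{|\nabla A(w)|}{|w-y|^{d-1}}\,dw,
\]
where $\widetilde I$ is a cube of side length $\sim r$ containing both $x$ and $y$, comparable in a dimensional sense to $I_{(x,|x-y|)}$.

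Finally, I would apply H\"older's inequality with exponents $q$ and $q'$ to each integral:
\[
\int_{\widetilde I}\frac{|\nabla A(w)|}{|w-x|^{d-1}}\,dw\le \Big(\int_{\widetilde I}|\nabla A(w)|^q\,dw\Big)^{1/q}\Big(\int_{\widetilde I}|w-x|^{-(d-1)q'}\,dw\Big)^{1/q'}.
\]
Because $q>d$ is equivalent to $(d-1)q'<d$, the second factor converges and evaluates (up to a dimensional constant) to $r^{d/q'-(d-1)}=r^{1-d/q}$. Combining this with $|\widetilde I|\sim r^d$ and rearranging exponents yields
\[
|A(x)-A(y)|\lesssim r\cdot |\widetilde I|^{-1/q}\|\nabla A\|_{L^q(\widetilde I)},
\]
which, after absorbing the constants comparing $\widetilde I$ with $I_{(x,|x-y|)}$, is precisely the claimed inequality. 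The only genuinely nontrivial step is the derivation of the Riesz-potential bound, but this is a classical Poincar\'e/representation argument; once it is in place, the remainder is a direct computation and there is no essential obstacle.
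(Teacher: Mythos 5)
The paper states this lemma without proof, citing Cohen \cite{cohen}; there is therefore no paper proof to compare against, and the content is the classical Morrey--Sobolev pointwise estimate. Your argument follows the standard route — representation formula along segments, averaging over $z$ in a cube, change of variables to obtain the Riesz-potential bound $\int |\nabla A(w)|\,|w-x|^{-(d-1)}\,dw$, then H\"older with exponents $q,q'$ using $(d-1)q'<d$ (equivalent to $q>d$) — and the exponent bookkeeping $r^{1-d/q}\cdot|\widetilde I|^{-1/q}\|\nabla A\|_{L^q(\widetilde I)}\sim r\langle|\nabla A|^q\rangle_{\widetilde I}^{1/q}$ is correct. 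One point worth tightening: rather than working with a vaguely specified cube $\widetilde I$ and then ``absorbing constants,'' take the averaging cube to be $I_{(x,|x-y|)}$ itself. Since it is centred at $x$ with side length $2|x-y|$, it contains $y$, and by convexity every segment from $x$ or from $y$ to a point $z\in I_{(x,|x-y|)}$ stays inside it, so the change of variables lands $w$ in $I_{(x,|x-y|)}$ exactly. This avoids the pitfall that if $\widetilde I$ were to extend outside $I_{(x,|x-y|)}$, then $\|\nabla A\|_{L^q(\widetilde I)}$ is not controlled by $\|\nabla A\|_{L^q(I_{(x,|x-y|)})}$, and the claimed ``absorption'' would fail. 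With that choice made explicit, the proof is complete.
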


	\subsection{ Proof of \eqref{equation2.2} in Theorem \ref{thm1.4}} \quad
	
	The key ingredient of our proof  lies in  the step of dealing with the bad part of the Calder\'on-Zygmund decomposition of $f$. By homogeneity, it suffices to prove  \eqref{equation2.2} for the case $\lambda=1$.
	Applying the Calder\'on-Zygmund decomposition to $|f|\log ({\rm e}+|f|)$ at level $1$, we can obtain a collection
	of non-overlapping closed dyadic cubes $\mathcal{S}=\{\mathbb {L}\}$, such that 
	\vspace{0.2cm}
	
	\begin{enumerate}
		\item [(i)]$\|f\|_{L^{\infty}(\mathbb{R}^d\backslash \cup_{\mathbb {L}\in\mathcal{S}}\mathbb {L})}\lesssim 1;$
		\item [(ii)]$\int_{\mathbb {L}}|f(x)|\log({\rm e}+|f(x)|)dx \lesssim |\mathbb {L}|; $ \item [(iii)]$\sum_{\mathbb {L}\in\mathcal{S}}|\mathbb {L}|\lesssim \int_{\mathbb{R}^d}|f(x)|\log ({\rm e}+|f(x)|)dx$.\end{enumerate}
		\vspace{0.2cm}
	Let $g$ be the good part and $b$ be the bad part of the decomposition of $f$, which are defined by $$g(x)=f(x)\chi_{\mathbb{R}^d\backslash \cup_{\mathbb{L}\in\mathcal{S}}\mathbb{L}}(x)+\sum_{\mathbb{L}\in\mathcal{S}}\langle f\rangle_{\mathbb{L}}\chi_{\mathbb{L}}(x) \quad \hbox{\ and \ } \quad b(x)=\sum_{\mathbb{L}\in\mathcal{S}}(f-\langle f\rangle_{\mathbb{L}})\chi_{\mathbb{L}}(x)=\sum_{\mathbb{L}\in \mathcal{S}}b_{\mathbb{L}}(x).$$
	It is easy to see that $\|g\|_{L^{\infty}(\mathbb{R}^d)}\lesssim 1$, and for $E=\cup_{\mathbb{L}\in\mathcal{S}}100d\mathbb{L}$, it holds that
	$$|E|\lesssim \int_{\mathbb{R}^d}|f(x)|\log ({\rm e}+|f(x)|)dx.$$
	The $L^2(\mathbb{R}^d)$ boundedness of $T_{\Omega,\,A}$ then yields that
	\begin{equation}
		\big|\{x\in\mathbb{R}^d:\, |T_{\Omega,\,A}g(x)|\geq 1/2\}\big|\lesssim \|T_{\Omega,\,A}g\|_{L^2(\mathbb{R}^d)}^2\lesssim \|g\|^2_{L^2(\mathbb{R}^d)}\lesssim\|f\|_{L^1(\mathbb{R}^d)}.\label{good}
	\end{equation}
	Therefore, it is sufficient to show that 
	\begin{equation}
		\big|\{x\in\mathbb{R}^d:\, |T_{\Omega,\,A}b(x)|\geq 1/2\}\big| \lesssim\|f\|_{L^1(\mathbb{R}^d)}.\label{bad}
	\end{equation}
	
	To prove (\ref{bad}), smooth truncation method will play important roles and will be used several times.
	Let $\phi$ be a smooth radial nonnegative function on $\mathbb{R}^d$ with ${\rm supp}\, \phi\subset\{x:\frac{1}{4}\leq |x|\leq 1\}$ and
	$\sum_s\phi_s(x)=1$ with $\phi_s(x)=\phi(2^{-s}x)$ for all $x\in \mathbb{R}^d\backslash\{0\}$. Set $\mathcal{S}_{j}=\{\mathbb{L}\in\mathcal{S}:\, \ell(\mathbb{L})=2^j\}$.
	
	\medskip
	Then, we have $$\aligned
	\int_{\mathbb{R}^d}\frac{\Omega(x-y)}{|x-y|^{d+1}}(A(x)-A(y))b(y)dy&=\int_{\mathbb{R}^d}\frac{\Omega(x-y)}{|x-y|^{d+1}}(A(x)-A(y))\sum_s\phi_{s}(x-y)\sum_j\sum_{\mathbb{L}\in \mathcal{S}_{s-j}}b_\mathbb{L}(y)dy\\&=
	\int_{\mathbb{R}^d}\frac{\Omega(x-y)}{|x-y|^{d+1}}(A(x)-A(y))\sum_j\sum_s\phi_{s}(x-y)\sum_{\mathbb{L}\in \mathcal{S}_{s-j}}b_\mathbb{L}(y)dy\\&=
	\sum_j\sum_s\sum_{\mathbb{L}\in \mathcal{S}_{s-j}}
	T_{\Omega,\,A;\,s,j}b_\mathbb{L}(x),
	\endaligned
	$$
	where \begin{eqnarray}\label{equation2.4}T_{\Omega,\,A;\,s,j}b_\mathbb{L}(x)=\int_{\mathbb{R}^d}\phi_{s}(x-y)\frac{\Omega(x-y)}{|x-y|^{d+1}}(A(x)-A(y))b_\mathbb{L}(y)dy\end{eqnarray}
	
	Let
	$A_{\mathbb{L}}(y)=A(y)-\sum_{n=1}^d\langle\partial _n A\rangle_\mathbb{L}y_n.$
	A trivial computation {leads to} the fact that
	$$A_{\mathbb{L}}(x)-A_\mathbb{L}(y)-\nabla A_\mathbb{L}(y)\cdotp(x-y)=A(x)-A(y)-\nabla A(y)\cdotp(x-y).
	$$
	Now we use the method of smoothness truncation for the first time,
$T_{\Omega,A}$ can be written as
	$$T_{\Omega,A}b(x)=\sum_{j}\sum_{s}\sum_{\mathbb{L}\in \mathcal{S}_{s-j}}T_{\Omega,\,A_\mathbb{L};s,j}b_{\mathbb{L}}(x)-\sum_{n=1}^dT_{\Omega}^n\Big(\sum_{\mathbb{L}\in\mathcal{S}}b_\mathbb{L}\partial_nA_{\mathbb{L}}\Big)(x),$$
	where
	$$T_{\Omega}^nh(x)={\rm p.\, v.}\int_{\mathbb{R}^d}\frac{\Omega(x-y)}{|x-y|^{d+1}}(x_n-y_n)h(y)dy, \quad \hbox{	for $1\leq n\leq d$.}
	$$
	Fixed $1\leq n\leq d$, since the kernel $ {\Omega(x)x_n}{|x|^{-1}}$ is still in $L\log L {(\mathbb {S}^{d-1})}$, homogenous of degree zero and satisfies the vanishing condition on the unit sphere, by the weak endpoint estimate of the  operators $T_{\Omega}^n$ (see \cite{se} or \cite{dinglai}), it follows that
	\begin{eqnarray}\label{2.3}
		\qquad\Big|\Big\{x\in\mathbb{R}^d\backslash E:\Big| T_{\Omega}^n\Big(\sum_{\mathbb{L}\in\mathcal{S}}b_\mathbb{L}\partial_nA_\mathbb{L}\Big)(x)\Big|>\frac{1}{4d}\Big\}\Big|&\lesssim&
		\Big\| \sum_{\mathbb{L}\in\mathcal{S}}b_\mathbb{L}\partial_nA_\mathbb{L}\Big\|_{L^1(\mathbb{R}^d)}\label{errorterm}\\
		&\lesssim&\sum_{\mathbb{L}\in\mathcal{S}}|\mathbb{L}|\|b_\mathbb{L}||_{L\log L,\,\mathbb{L}}\nonumber\\
		&\lesssim&\int_{\mathbb{R}^d}|f(x)|\log ({\rm e}+|f(x)|)dx,\nonumber
	\end{eqnarray}
	where in the last inequality, we have used the fact that $\|b_\mathbb{L}\|_{L\log L,\,\mathbb{L}}\lesssim 1$ for each cube $\mathbb{L}\in\mathcal{S}$.
	
		\smallskip
	Therefore, to prove inequality (\ref{equation2.2}), by (\ref{good}), (\ref{bad}) and (\ref{2.3}), it is sufficient to show that
	\begin{eqnarray}\label{equation2.6}&&\Big|\Big\{x\in\mathbb{R}^d\backslash E:\, \Big|\sum_{j}\sum_{s}\sum_{\mathbb{L}\in \mathcal{S}_{s-j}}T_{\Omega,A_\mathbb{L};s,j}b_\mathbb{L}(x)\Big|>1/4\Big\}\Big|\lesssim {\|f\|}_{L^1(\mathbb{R}^d)}.\end{eqnarray}
	
	In order to prove inequality (\ref{equation2.6}), we first give some estimate for $\sum_s\sum_{\mathbb{L}\in\mathcal{S}_{s-j}}T_{\Omega,\,A_\mathbb{L};s,j}b_{\mathbb{L}}$.
	For this purpose, we need to introduce some notations. 

	For $\mathbb{L}\in\mathcal{S}_{s-j}$, $s$, $j\,\in\mathbb{Z}$ with $j\geq \log_2(100d/2)=:j_0$. Let $L_{j,1}=2^{j+2}d\mathbb{L}$, $L_{j,2}=2^{j+4}d\mathbb{L}$, $L_{j,3}=2^{j+6}d\mathbb{L}$, and  $y^j_{\mathbb{L}}$ be a point on the boundary of $L_{j,3}$. Set
	$$A_{\varphi_{\mathbb{L}}}(y)=\varphi_{\mathbb{L}}(y)\big(A_{\mathbb{L}}(y)-A_{\mathbb{L}}(y^j_{\mathbb{L}})),$$
	where $\varphi_{\mathbb{L}}\in C^{\infty}_c(\mathbb{R}^d)$, ${\rm supp}\,\varphi_{\mathbb{L}}\subset L_{j,1}$, $\varphi_{\mathbb{L}}\equiv 1$ on $3\cdot2^{j}d\mathbb{L}$, and
	$\|\nabla \varphi_{{\mathbb{L}}}\|_{L^{\infty}(\mathbb{R}^d)}\lesssim 2^{-s}.
	$
	\vspace{0.1cm
	}
	Let $y_0$ be the center point of $\mathbb{L}$. Observe that for $x\in\mathbb{R}^d\backslash E$, $j\leq j_0$, $y\in \mathbb{L}$, we have $|x-y|\geq |x-y_0|-|y-y_0|>2^s$. The support condition of $\phi$ then implies that $T_{\Omega,A_{\mathbb{L}};s,j}b_\mathbb{L}(x)=0$ if $j\leq j_0$.
	For $y\in \mathbb{L}\in \mathcal{S}_{s-j}$, $s,\,j\in\mathbb{Z}$ with $j> j_0$, we have $\varphi_{\mathbb{L}}(y)= 1$.	By the support condition of $\phi$, it follows that $|x-y_0|\leq |x-y|+|y-y_0|\leq 1.5d2^s $. Hence $x\in 3\cdot2^{j}d\mathbb{L}$ and $\varphi_{\mathbb{L}}(x)= 1$. Collecting these facts in all, it follows that $$\phi_{s}(x-y)(A_{{\mathbb{L}}}(x)-A_{{\mathbb{L}}}(y))=\phi_{s}(x-y)(A_{\varphi_{\mathbb{L}}}(x)-A_{\varphi_{\mathbb{L}}}(y)).$$
	
	The kernel $\Omega$ will be decomposed into disjoint forms as in Section {\ref{Sect 2}} as follows:
	$$\Omega_0(\theta )=\Omega(\theta )\chi_{E_0}(\theta ),\quad \,\Omega_k(\theta )=\Omega(\theta )\chi_{E_k}(\theta )\,\,(k\in\mathbb{N}),$$
	where 	$E_0=\{\theta \in \mathbb{S}^{d-1}:\, |\Omega(\theta )|\leq 1\} \ \hbox{and\ } E_k=\{\theta \in \mathbb{S}^{d-1}:\,2^{k-1}<|\Omega(\theta )|\leq 2^k\}$ for $ k\in\mathbb{N}.$
	
	Let the operator $ T^i_{\Omega,A_{\mathbb{L}};\,s,j}b_\mathbb{L}$ be defined in the same form as   $ T_{\Omega,A_{\mathbb{L}};\,s,j}b_\mathbb{L}$, with $\Omega$ replaced by $\Omega_i$.
	Then we can  divide the summation of $T_{\Omega,A_{\mathbb{L}};\,s}b_\mathbb{L}$ into two terms as follows
	\begin{eqnarray*}
	\sum_{ j> j_0}\sum_{s}\sum_{\mathbb{L}\in \mathcal{S}_{s-j}} T_{\Omega,A_{\mathbb{L}};\,s,j}b_\mathbb{L}(x) 
		&&=\sum\limits_{i=0}^{\infty}  \sum_{ j> j_0}\sum_{s}\sum_{\mathbb{L}\in \mathcal{S}_{s-j}} T^i_{\Omega,A_{\mathbb{L}};\,s,j}b_\mathbb{L}(x) 	\\
		&&=\sum\limits_{i=0}^{\infty}  \sum_{j_0< j\leq Ni} \sum_{s}\sum_{\mathbb{L}\in\mathcal{S}_{s-j}}  T^i_{\Omega,A_{\mathbb{L}};\,s,j}b_\mathbb{L}(x)\\&& \quad+\sum\limits_{i=0}^{\infty}  \sum_{ j> Ni}\sum_{s}\sum_{\mathbb{L}\in\mathcal{S}_{s-j}}  T^i_{\Omega,A_{\mathbb{L}};\,s,j}b_\mathbb{L}(x) \\
		&&:={\rm D}_1(x)+{\rm D}_2(x),
	\end{eqnarray*}
	where $N$ is some constant which will be used at the end of this subsection.
	Thus, to prove inequality (\ref{equation2.2}), it is enough to consider the contributions of $D_1$ and $D_2$, respectively. 
	
	\vspace{0.3cm
	}
	\noindent{\bf {Contribution of ${\rm D_1}$}}.
	
	\vspace{0.3cm
	}
	
	We first claim that
	if $\mathbb{L}\in \mathcal{S}_{s-j}$, then
	$$\big|T^i_{\Omega,A_{\mathbb{L}};\,s,j}b_\mathbb{L}(x)\big|\lesssim j
	\int_{\{2^{s-2}\leq |y|\leq 2^{s+2}\}}\frac{|\Omega_i(y')|}{|y|^{d}}|b_\mathbb{L}(x-y)|dy.$$
	{This claim is a consequence of the following lemma, which will also be used several times later.}
	
	\begin{lemma}\label{lem2.x} Let $A$ be a function in $\mathbb{R}^d$ with derivatives of order one in  ${\rm BMO}(\mathbb{R}^d)$. Let $s,\,j\in\mathbb{Z}$ and $			\mathbb{L}\in \mathcal{S}_{s-j}$ with $j>j_0$ and let $R_{s,\mathbb{L};j}(x,y)$ be the function on $\mathbb{R}^d\times \mathbb{R}^d$ defined by
		\begin{eqnarray*}R_{s,\mathbb{L};j}(x,y)=\phi_{s}(x-y)\frac{A_{\varphi_{\mathbb{L}}}(x)-A_{\varphi_{\mathbb{L}}}(y)}{|x-y|^{d+1}}.
		\end{eqnarray*} Then,  $R_{s,\mathbb{L};j}$ enjoys the properties that
		\begin{itemize}
			\item[\rm (i)] For any $x,\, y\in\mathbb{R}^d$, $$\big|R_{s,\mathbb{L};j}(x,y)\big|\lesssim \frac{j}{|x-y|^{d}}\chi_{\{2^{s-2}\leq |x-y|\leq 2^{s+2}\}}(x,\,y);$$
			\item[\rm (ii)] For any $x,\,x'\in \mathbb{R}^d$ and $y\in \mathbb{L}$ with $|x-y|>2|x-x'|$,
			$$\big|R_{s,\mathbb{L};j}(x,y)-R_{s,\mathbb{L};j}(x',y)\big|\lesssim\frac{|x-x'|}{|x-y|^{d+1}}\Big(j+\Big|\log \big({2^{s-j}}{|x-x'|^{-1}}\big)\Big|\Big);$$
			\item [\rm (iii)] For any $x,\,y'\in\mathbb{R}^d$ and $y\in \mathbb{L}$ with $|x-y|>2|y-y'|$,
			$$\big|R_{s,\mathbb{L};j}(x,y)-R_{s,\mathbb{L};j}(x,\,y')\big|\lesssim \frac{|y-y'|}{|x-y|^{d+1}}\Big(j+\Big|\log \big({2^{s-j}}{|y-y'|^{-1}}\big)\Big|\Big).$$
		\end{itemize}
	\end{lemma}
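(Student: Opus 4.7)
The proof hinges on the decomposition
\begin{equation*}
A_{\varphi_\mathbb{L}}(z_1) - A_{\varphi_\mathbb{L}}(z_2) = \varphi_\mathbb{L}(z_1)\bigl(A_\mathbb{L}(z_1) - A_\mathbb{L}(z_2)\bigr) + \bigl(\varphi_\mathbb{L}(z_1) - \varphi_\mathbb{L}(z_2)\bigr)\bigl(A_\mathbb{L}(z_2) - A_\mathbb{L}(y^j_\mathbb{L})\bigr),
\end{equation*}
together with Lemma \ref{lem2.3} (applied with some $q\in(d,\infty)$) to pass from differences of $A_\mathbb{L}$ to $L^q$-averages of $\nabla A_\mathbb{L}=\nabla A-\langle\nabla A\rangle_\mathbb{L}$. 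Since $\nabla A\in{\rm BMO}$, the key inputs are John--Nirenberg and the standard logarithmic growth $|\langle\nabla A\rangle_Q-\langle\nabla A\rangle_{\widetilde Q}|\lesssim \|\nabla A\|_{\rm BMO}\log(\ell(\widetilde Q)/\ell(Q))$ for nested cubes $Q\subset\widetilde Q$. The factor $j$ in the conclusion is precisely the cost of comparing averages on $\mathbb{L}$ (of side $2^{s-j}$) with averages on a cube of side $\sim 2^s$.

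For part (i), the support of $\phi_s$ forces $|x-y|\approx 2^s$ and $|\phi_s|\lesssim 1$. Applying the decomposition above with $z_1=x$, $z_2=y$ and invoking Lemma \ref{lem2.3} on a cube of side $\sim 2^s$ containing $x$, $y$ and $\mathbb{L}$ gives $|A_\mathbb{L}(x)-A_\mathbb{L}(y)|\lesssim 2^s j\|\nabla A\|_{\rm BMO}$ and analogously $|A_\mathbb{L}(y)-A_\mathbb{L}(y^j_\mathbb{L})|\lesssim 2^s j\|\nabla A\|_{\rm BMO}$. Using $\|\nabla\varphi_\mathbb{L}\|_\infty\lesssim 2^{-s}$ for the second summand and dividing by $|x-y|^{d+1}\approx 2^{s(d+1)}$ yields $|R_{s,\mathbb{L};j}(x,y)|\lesssim j/|x-y|^d$.

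For parts (ii) and (iii) I carry out the standard telescoping that splits $R_{s,\mathbb{L};j}(x,y)-R_{s,\mathbb{L};j}(x',y)$ into three pieces involving the difference in $\phi_s$, the difference in $|\cdot|^{-d-1}$, and the difference in $A_{\varphi_\mathbb{L}}$. The first two are routine: $\|\nabla\phi_s\|_\infty\lesssim 2^{-s}$, the mean value theorem on $|z|^{-d-1}$, and the size bound $|A_{\varphi_\mathbb{L}}(x)-A_{\varphi_\mathbb{L}}(y)|\lesssim 2^s j$ from (i) combine to give $\lesssim j|x-x'|/|x-y|^{d+1}$ in each case, which is stronger than what is claimed.

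The main obstacle is the third piece, $\phi_s(x'-y)\bigl(A_{\varphi_\mathbb{L}}(x)-A_{\varphi_\mathbb{L}}(x')\bigr)/|x'-y|^{d+1}$, and is the source of the log correction in the statement. Applying the strategy decomposition with $z_1=x$, $z_2=x'$, the $(\varphi_\mathbb{L}(x)-\varphi_\mathbb{L}(x'))$ summand is handled as before. For $|A_\mathbb{L}(x)-A_\mathbb{L}(x')|$, Lemma \ref{lem2.3} reduces matters to $(\langle|\nabla A_\mathbb{L}|^q\rangle_Q)^{1/q}$ on the \emph{small} cube $Q:=I_{(x,|x-x'|)}$; by John--Nirenberg this is majorized by $\|\nabla A\|_{\rm BMO}+|\langle\nabla A\rangle_Q-\langle\nabla A\rangle_\mathbb{L}|$. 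To compare these two averages I interpose a cube $Q^*$ of side $\sim 2^s$ that contains both $Q$ and $\mathbb{L}$ (possible because $x$ lies within $\sim 2^s$ of $\mathbb{L}$), obtaining $|\langle\nabla A\rangle_Q-\langle\nabla A\rangle_{Q^*}|\lesssim \log(2^s/|x-x'|)\|\nabla A\|_{\rm BMO}$ and $|\langle\nabla A\rangle_{Q^*}-\langle\nabla A\rangle_\mathbb{L}|\lesssim j\|\nabla A\|_{\rm BMO}$. Writing $\log(2^s/|x-x'|)=j+\log(2^{s-j}/|x-x'|)$ produces
\begin{equation*}
|A_\mathbb{L}(x)-A_\mathbb{L}(x')|\lesssim |x-x'|\bigl(j+|\log(2^{s-j}|x-x'|^{-1})|\bigr)\|\nabla A\|_{\rm BMO},
\end{equation*}
which, after dividing by $|x-y|^{d+1}$, is exactly the bound claimed in (ii). Part (iii) follows by the symmetric decomposition with $z_1=y$, $z_2=y'$, using $y\in\mathbb{L}$ to anchor the small cube $I_{(y,|y-y'|)}$ near $\mathbb{L}$ in the BMO telescoping step.
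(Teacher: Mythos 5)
Your proof is correct and follows essentially the same route as the paper's: Lemma~\ref{lem2.3} plus John--Nirenberg and logarithmic BMO comparison of averages, with the $j$ arising from the gap between the scale $2^{s-j}$ of $\mathbb{L}$ and the scale $2^s$ of the annulus, and the $|\log(2^{s-j}|x-x'|^{-1})|$ correction from telescoping between $I_{(x,|x-x'|)}$ and a cube of side $2^{s-j}$ near $\mathbb{L}$. The only cosmetic difference is that the paper applies Lemma~\ref{lem2.3} directly to $A_{\varphi_\mathbb{L}}$ after estimating $|\nabla A_{\varphi_\mathbb{L}}|$ by the product rule, whereas you first peel off the cutoff $\varphi_\mathbb{L}$ via the algebraic identity and then apply Lemma~\ref{lem2.3} to $A_\mathbb{L}$; both reduce to the identical BMO calculation.
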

	\begin{proof} We first prove \rm (i).  It is obvious that ${\rm supp}\,R_{s,\mathbb{L};j} \subset L_{j,2}\times L_{j,1}$. Fixed $x\in L_{j,1}$, we know
		that 
		{$2^{s-j}<|x-y_\mathbb{L}^j|$} and
		$$\aligned
		\big|\langle \nabla A\rangle_{\mathbb{L}}-\langle \nabla A\rangle_{I_{(x,|x-y_\mathbb{L}^j|)}}\big|&\le\big|\langle \nabla A\rangle_{\mathbb{L}}-\langle \nabla A\rangle_{I_{(x, 2^{s-j})}}\big|+{\big |\langle \nabla A\rangle_{I_{(x, 2^{s-j})}}-\langle \nabla A\rangle_{I_{(x,|x-y_\mathbb{L}^j|)}}\big|}.\endaligned			
		$$ Note that if $x\in 4\mathbb{L}$, then ${I_{(x, 2^{s-j})}}\subset 8\mathbb{L}$ and it holds that $$\aligned
		\big|\langle \nabla A\rangle_{\mathbb{L}}-\langle \nabla A\rangle_{I_{(x, 2^{s-j})}}\big|&\le
		\big|\langle \nabla A\rangle_{\mathbb{L}}-\langle \nabla A\rangle_{8\mathbb{L}}\big|+\big|\langle \nabla A\rangle_{8\mathbb{L}}-\langle \nabla A\rangle_{I_{(x, 2^{s-j})}}\big| \lesssim	 1.	\endaligned
		$$
		
		If $x\in L_{j,1}\backslash  4\mathbb{L}$, then the center of $\mathbb{L}$ and the center of ${I_{(x, 2^{s-j})}}$ are at a distance of $a2^{s-j}$ with $a>1$. Hence, the results in \cite[Proposition 3.1.5, p. 158 and 3.1.5-3.1.6, p.166.]{gra} gives that
		
		$$\big|\langle \nabla A\rangle_{\mathbb{L}}-\langle \nabla A\rangle_{I_{(x,2^{s-j})}}\big| 
		\lesssim j\quad 
		\hbox{and} \quad \big|\langle \nabla A\rangle_{I_{(x, 2^{s-j})}}-\langle \nabla A\rangle_{I_{(x,|x-y_\mathbb{L}^j|)}}\big| 
		\lesssim j,$$
		since $2^s<|x-y_\mathbb{L}^j|<2^{s+5+d^2}$.
		
		Therefore, for $x\in \mathbb{L}_{j,1}$, it holds that
		\begin{align}
			\big|\langle \nabla A\rangle_{\mathbb{L}}-\langle \nabla A\rangle_{I_{(x,|x-y_\mathbb{L}^j|)}}\big|\lesssim j. \label{john}
		\end{align} 
		Lemma \ref{lem2.3}, together with John-Nirenberg inequality then gives that \begin{align}\label{jocz}
			|A_{\varphi_\mathbb{L}}(x)|&\lesssim |x-y_{\mathbb{L}}^j|\Big(\frac{1}{|I_{ (x,|x-y_\mathbb{L}^j|)}|}\int_{I_{ (x,|x-y_\mathbb{L}^j|)}}|\nabla A(z)-\langle \nabla A\rangle_{\mathbb{L}}|^qdz\Big)^{1/q}\lesssim j2^s,
		\end{align}
		which finishes the proof of (i).
		
		Now we give the proof of \rm (ii).
		For any $x,\,x'\in \mathbb{R}^d$ and $y\in \mathbb{L}$ with $|x-y|>2|x-x'|$, it is easy to see that
		\begin{itemize}
			\item[\rm (1)] if $x\notin L_{j,1}$ and $x'\notin L_{j,1}$, then	$R_{s,\mathbb{L};j}(x,y)=R_{s,\mathbb{L};j}(x',y) =0$;
			\item[\rm (2)]  if $x\notin L_{j,1}$, then $x'\notin 3\cdot2^{j}d\mathbb{L}$, hence $R_{s,\mathbb{L};j}(x,y)=R_{s,\mathbb{L};j}(x',y) =0$;
			\item[\rm (3)]  if $x'\notin L_{j,1}$, then $x\notin 3\cdot2^{j}d\mathbb{L}$, hence  	$R_{s,\mathbb{L};j}(x,y)=R_{s,\mathbb{L};j}(x',y) =0$.
		\end{itemize}

		If $z\in I_{(x,|x-x'|)}$, another  application of Lemma \ref{lem2.3} and John-Nirenberg inequality {indicates}
		\begin{eqnarray}
			|\nabla A_{\varphi_\mathbb{L}}(z)|&\lesssim &2^{-s}|A_{\mathbb{L}}(z)-A_{\mathbb{L}}(y^j_{L})|+|\nabla A(z)-\langle \nabla A\rangle_{\mathbb{L}}|\label{ns1}\\
			&\lesssim&j+|\nabla A(z)-\langle \nabla A\rangle_{\mathbb{L}}|,\nonumber
		\end{eqnarray}
		and the similar method as what was used in the proof of \eqref{john} further implies that \begin{eqnarray}
			|\langle \nabla A\rangle_{\mathbb{L}}-\langle \nabla A\rangle_{I_{(x,|x-x'|)}}|\label{ns2}&&\leq |\langle \nabla A\rangle_{\mathbb{L}}-\langle \nabla A\rangle_{I_{(x,2^{s-j})}}|+|\langle \nabla A\rangle_{I_{(x,|x-x'|)}} - \langle \nabla A\rangle_{I_{(x,2^{s-j})}}|\\
			&&\lesssim  \log 2^j+\Big|\log \big({2^{s-j}}{|x-x'|^{-1}}\big)\Big|.\nonumber
		\end{eqnarray}
		By Lemma \ref{lem2.3}, \eqref{ns1} and  \eqref{ns2}, we have 
		\begin{eqnarray}\label{czsz}
		|A_{\varphi_{\mathbb{L}}}(x)-A_{\varphi_{\mathbb{L}}}(x')|\label{wtw}
			&&\lesssim  |x-x'|\Big(\frac{1}{|I_{(x,|x-x'|)}|}\int_{I_{(x,|x-x'|)}}|\nabla A_{\varphi_{\mathbb{L}}}(z)|^qdz\Big)^{\frac{1}{q}}\nonumber\\
			&&\lesssim  |x-x'|\Big(j+\frac{1}{|I_{(x,|x-x'|)}|}\int_{I_{(x,|x-x'|)}}|\nabla A(z)-\langle \nabla \nonumber
			\end{eqnarray}
		Similarly as in	\eqref{jocz}, we obtain 
		\begin{eqnarray}
|A_{\varphi_{\mathbb{L}}}(x)-A_{\varphi_{\mathbb{L}}}(x')|\label{wtw}&&\lesssim 	|x-x'|\big(j+|\langle \nabla A\rangle_{\mathbb{L}}-\langle \nabla A\rangle_{I_{(x,|x-x'|)}}|\big)\\&&\lesssim|x-x'|\Big[j+\Big|\log \big({2^{s-j}}{|x-x'|^{-1}}\big)\Big|\Big].\nonumber
		\end{eqnarray}
		
	In a similar way, we have
		\begin{eqnarray*}
			|A_{\varphi_{\mathbb{L}}}(x)-A_{\varphi_{\mathbb{L}}}(y)|\lesssim |x-y|\Big[j+\Big|\log \big({2^{s-j}}{|x-y|^{-1}}\big)\Big|\Big];\\
			|A_{\varphi_{\mathbb{L}}}(x')-A_{\varphi_{\mathbb{L}}}(y)|\lesssim |x'-y|\Big[j+\Big|\log \big({2^{s-j}}{|x'-y|^{-1}}\big)\Big|\Big].
		\end{eqnarray*}
	{	Therefore,}
		$$	\aligned
		|R_{s,\mathbb{L};j}(x,\,y)-R_{s,\mathbb{L};j}(x',\,y)|
		&\quad\leq |\phi_s(x-y)|\Big|\frac{A_{\varphi_{\mathbb{L}}}(x)-A_{\varphi_{\mathbb{L}}}(y)}{|x-y|^{d+1}}-
		\frac{A_{\varphi_{\mathbb{L}}}(x')-A_{\varphi_{\mathbb{L}}}(y)}{|x'-y|^{d+1}}\Big|\\
		&\qquad+\frac{|A_{\varphi_{\mathbb{L}}}(x')-A_{\varphi_\mathbb{L}}(y)|}{|x'-y|^{d+1}}\big|\phi_{s}(x-y)-\phi_{s}(x'-y)\big|\\
		&\quad\lesssim \frac{|x-x'|}{|x-y|^{d+1}}\Big(j+\Big|\log \big({2^{s-j}}{|x-x'|^{-1}}\big)\Big|\Big).\endaligned
		$$
	{	This completes the proof of (ii) in Lemma \ref{lem2.x}. (iii) can be proved in the same way as (ii).}
	\end{proof}}

	Let us turn back to the contribution of $D_1.$ It follows from the method of rotation of Calder\'on-Zygmund that
	\begin{eqnarray}
		\|{\rm D}_{1}\|_{L^1(\mathbb{R}^d)}&=&\sum\limits_{i=0}^{\infty}  \sum_{j_0< j\leq Ni} \sum_{s}\sum_{\mathbb{L}\in\mathcal{S}_{s-j}}  \|T^i_{\Omega,A_{\mathbb{L}};\,s,j}b_\mathbb{L}(x)\|_{L^1(\mathbb{R}^d)}\nonumber\\
		&\lesssim &\sum\limits_{i=0}^{\infty}  \sum_{j_0< j\leq Ni} \sum_{s}\sum_{\mathbb{L}\in\mathcal{S}_{s-j}}  	j 	\int_{\mathbb{R}^{d} }\int_{2^{s-2}}^{ 2^{s+2}} \int_{\mathbb{S}^{d-1}}\frac{|\Omega_i(y')|}{|r|}|b_\mathbb{L}(x-ry')|\,dy' \,dr\,dx\nonumber\\
		&\lesssim &\sum_{i=0}^{\infty}\|\Omega_i\|_{L^1(\mathbb{S}^{d-1})}\sum_{j_0< j\leq Ni}j\sum_{s}\sum_{\mathbb{L}\in \mathcal{S}_{s-j}}\|b_\mathbb{L}\|_{L^1(\mathbb{R}^d)}\label{wmm}\\
		&\lesssim &\|\Omega\|_{L(\log L)^2(\mathbb{S}^{d-1})}\|f\|_{L^1(\mathbb{R}^d)}.\nonumber
	\end{eqnarray}
	{\bf {Contribution of ${\rm D_2}$}}.
	
	\vspace{0.3cm
	}
	
	The estimate of ${\rm D_2}$ is not short, we split the proof into three steps.
	
	{\bf {Step 1. Boil down the estimate of $D_2$ to ${\rm D}_{2}^*$}}.
	
	Let  $l_{\tau}(j)=\tau j+3$, where $0<\tau<1$ will be chosen later. Let $\omega$ be a nonnegative, radial $C^{\infty}_c(\mathbb{R}^d)$ function which is supported in $\{x\in \mathbb{R}^d:|x|\leq 1\}$ and has integral $1$. Set
	$\omega_{t}(x)=2^{-td}\omega(2^{-t}x)$.
	For $s\in\mathbb{N}$ and a cube $\mathbb{L}$, we define $R^j_{s,\mathbb{L}}$ as
	\begin{align}\label{rcon}
		R^j_{s,\mathbb{L}}(x,y)=\int_{\mathbb{R}^d}\omega_{s-l_{\tau}( j)}(x-z)\frac{1}{|z-y|^{d+1}}\phi_{s}(z-y)
		\big(A_{\varphi_\mathbb{L}}(z)-A_{\varphi_\mathbb{L}}(y)\big)dz.
	\end{align}
	It is obvious that
	${\rm supp}R_{s,\mathbb{L}}^{j}(x,y)\subset \{(x,\,y):\, 2^{s-3}\leq |x-y|\leq 2^{s+3}\}
	$.  Moreover, if $y\in \mathbb{L}$ with ${\mathbb{L}\in\mathcal{S}_{s-j}} $, then (i) of Lemma \ref{lem2.x} implies that 
	\begin{eqnarray}\label{eq:kernelsize}|R_{s,\mathbb{L}}^{j}(x,y)|\leq j2^{-sd}\chi_{\{2^{s-3}\leq |x-y|\leq 2^{s+3}\}}(x,\,y).
	\end{eqnarray}

	We define the operator $T^{i,j}_{\Omega,\mathbb{L};\,s}$ by
	$$T^{i,j}_{\Omega,\mathbb{L};\,s}h(x)=
	\int_{\mathbb{R}^d}\Omega_i(x-y)R_{s,\mathbb{L}}^{j}(x,y)h(y)dy,$$
	and let ${\rm D}_{2}^*$ be the operator as follows
	$${\rm D}_{2}^*(x)=\sum\limits_{i=0}^{\infty}  \sum_{ j> Ni}\sum_{s}\sum_{\mathbb{L}\in\mathcal{S}_{s-j}} T^{i,j}_{\Omega,\mathbb{L};\,s}b_\mathbb{L}(x).$$
	
	The following lemma indicates the intrinsically close relationship in each  subtract terms between  ${\rm D}_{2}$ and ${\rm D}_{2}^*$. Thus, the corresponding proof is transferred to verify it for each term of ${\rm D}_{2}^*$.
	\begin{lemma}\label{lem2.4}
		Let $\Omega$ be homogeneous of degree zero, $A$ be a function  on $\mathbb{R}^d$ with  derivatives of order one in ${\rm BMO}(\mathbb{R}^d)$.    For $j>j_0$ and $i\ge 0$, it holds that
		$$\|T^i_{\Omega,A_{\mathbb{L}};\,s,j}b_\mathbb{L}- T^{i,j}_{\Omega,\mathbb{L};\,s}b_\mathbb{L}\|_{L^1(\mathbb{R}^d)}\lesssim j 2^{-\tau j}  \|\Omega_i\|_{L^{1}(\mathbb{S}^{d-1})}\|b_\mathbb{L}\|_{L^1(\mathbb{R}^d)}.$$
	\end{lemma}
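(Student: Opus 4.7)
The plan is to exploit that $R^j_{s,\mathbb{L}}(\cdot,y)$ is the first-variable convolution of the kernel $R_{s,\mathbb{L};j}(\cdot,y)$ of $T^i_{\Omega,A_\mathbb{L};s,j}$ (stripped of its $\Omega_i(x-y)$ factor) against the approximation to the identity $\omega_{s-l_\tau(j)}$. Since $\omega$ has integral one, we may write
\begin{equation*}
R_{s,\mathbb{L};j}(x,y)-R^j_{s,\mathbb{L}}(x,y)=\int_{\mathbb{R}^d}\omega_{s-l_\tau(j)}(x-z)\bigl[R_{s,\mathbb{L};j}(x,y)-R_{s,\mathbb{L};j}(z,y)\bigr]\,dz,
\end{equation*}
so the kernel of $T^i_{\Omega,A_\mathbb{L};s,j}-T^{i,j}_{\Omega,\mathbb{L};s}$ equals $\Omega_i(x-y)$ times the above, and the entire problem reduces to quantifying the first-variable H\"older-type regularity of $R_{s,\mathbb{L};j}$ on the scale $2^{s-l_\tau(j)}=2^{s-\tau j-3}$ fixed by the support of $\omega_{s-l_\tau(j)}$.

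Fix $y\in\mathbb{L}$. For $|x-z|\le 2^{s-\tau j-3}\le 2^{s-3}$ either both kernels vanish (when $|x-y|\notin[2^{s-3},2^{s+1}]$), or else $|x-y|\gtrsim 2^s>2|x-z|$, so Lemma \ref{lem2.x}(ii) yields
\begin{equation*}
\bigl|R_{s,\mathbb{L};j}(x,y)-R_{s,\mathbb{L};j}(z,y)\bigr|\lesssim \frac{|x-z|}{|x-y|^{d+1}}\Bigl(j+\bigl|\log(2^{s-j}/|x-z|)\bigr|\Bigr).
\end{equation*}
Rescaling $w=2^{-(s-l_\tau(j))}(x-z)$, a direct computation (using that $\omega$ has integral one and is supported in the unit ball) gives
\begin{equation*}
\int_{\mathbb{R}^d}\omega_{s-l_\tau(j)}(x-z)\,|x-z|\Bigl(j+\bigl|\log(2^{s-j}/|x-z|)\bigr|\Bigr)dz\lesssim 2^{s-\tau j}\bigl(j+(1-\tau)j+O(1)\bigr)\lesssim j\cdot 2^{s-\tau j},
\end{equation*}
since, after the rescaling, the logarithm becomes $|\log(2^{(\tau-1)j+3}/|w|)|\lesssim (1-\tau)j+|\log|w||$, and $\int\omega(w)|w||\log|w||\,dw<\infty$.

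Combining the two displays, for $|x-y|\sim 2^s$ and $y\in\mathbb{L}$,
\begin{equation*}
\bigl|R_{s,\mathbb{L};j}(x,y)-R^j_{s,\mathbb{L}}(x,y)\bigr|\lesssim \frac{j\cdot 2^{-\tau j}}{|x-y|^d},
\end{equation*}
whence, by polar coordinates on the shell $|x-y|\sim 2^s$,
\begin{equation*}
\int_{\mathbb{R}^d}|\Omega_i(x-y)|\,\bigl|R_{s,\mathbb{L};j}(x,y)-R^j_{s,\mathbb{L}}(x,y)\bigr|\,dx\lesssim j\cdot 2^{-\tau j}\|\Omega_i\|_{L^1(\mathbb{S}^{d-1})},
\end{equation*}
uniformly in $y\in\mathbb{L}$. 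Multiplying by $|b_\mathbb{L}(y)|$ and integrating by Fubini delivers the stated bound. The only delicate point, and hence the main obstacle, is to tame the logarithmic factor appearing in Lemma \ref{lem2.x}(ii): it is precisely the choice $l_\tau(j)=\tau j+3$ with $0<\tau<1$ that forces this logarithm to grow at most linearly in $j$, so that it is absorbed into the harmless prefactor rather than consuming the gain $2^{-\tau j}$ produced by the smoothness estimate.
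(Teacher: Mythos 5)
Your proof is correct and follows essentially the same route as the paper: write $R_{s,\mathbb{L};j}-R^j_{s,\mathbb{L}}$ as the average against $\omega_{s-l_\tau(j)}$ of a first-variable difference, invoke the regularity estimate of Lemma \ref{lem2.x}(ii), observe that at scale $|x-z|\lesssim 2^{s-\tau j}$ the logarithmic factor is $O(j)$, and close with polar coordinates and Fubini. The paper performs precisely this computation (its final display contains a typo, $j2^{-\tau}$ in place of $j2^{-\tau j}$, but the exponent you obtain is the correct one).
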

	\begin{proof}
		For each $y\in \mathbb{L}$ and $z\in {\rm supp}\,\omega_{s-l_{\tau}( j)}$, notice that $R_{s,\mathbb{L};j}(x,\,y)-R_{s,\mathbb{L};j{\tiny }}(x-z,\,y)=0$ if $x\in L_{j,1}\backslash {3\cdot2^{j}d\mathbb{L}}$. In fact, since $|z|\leq 2^{s-\tau j-3}$, then we have 
		$2^{s+1}<|x-y|<3\cdot 2^s$ and $2^s<|x-y-z|<2^{s+2}$.
		
		By Lemma \ref{lem2.x}, we have
		$${\big|R_{s,\mathbb{L};j}(x,\,y)-R_{s,\mathbb{L};j}(x-z,\,y)\big|\lesssim \frac{|z|}{2^{s(d+1)}}\big[j+\log \big(\frac{2^{s-j}}{|z|}\big)\big]\chi_{\{2^{s-2}\leq |x-y|\leq 2^{s+2}\}}(x,y).}
		$$
		Therefore		
		\begin{align*}
			&\|T^i_{\Omega,A_{\mathbb{L}};\,s,j}b_\mathbb{L}-T^{i,j}_{\Omega,{\mathbb{L}};\,s}b_\mathbb{L}\|_{L^1(\mathbb{R}^d)}\\
			&\leq  \int_{\mathbb{R}^{d}}\int_{\mathbb{R}^{d}}|\Omega_i(x-y)|\Big|\int_{\mathbb{R}^d}\omega_{s-l_{\tau}( j)}(z)\Big(R_{s,\mathbb{L};j}(x,\,y)-R_{s,\mathbb{L};j}(x-z,\,y)\Big)dz\Big||b_\mathbb{L}(y)|dydx\\
			&\lesssim 2^{-sd-\tau j}\int_{\mathbb{R}^{d}}\int_{\mathbb{R}^{d}}|\Omega_i(y)|\int_{|z|\leq 1}|z|\bigg(j+\big|\log \frac{2^{\tau j-j+3}}{|z|}\big|\bigg)dz\chi_{\{2^{s-2}\leq |y|\leq 2^{s+2}\}}(y)|b_\mathbb{L}(x-y)|dydx\\
			& {\lesssim j 2^{-\tau } \|\Omega_i\|_{L^{1}(\mathbb{S}^{d-1})}\|b_{\mathbb{L}}\|_{L^1(\mathbb{R}^d)}}.
		\end{align*}

		This leads to the desired conclusion of Lemma \ref{lem2.4}.
	\end{proof}
	
	With Lemma \ref{lem2.4} in hand, we only need to estimate ${\rm D}_{2}^*$. This is the content of the second step.
	
	\medskip
	{\bf {Step 2. Estimate for each term of ${\rm D}_{2}^*$}}.
	
	Define $P_tf(x)=\omega _t*f(x)$.	Now  we split $$T^{i,j}_{\Omega,{\mathbb{L}};\,s}=P_{s-j\kappa}T^{i,j}_{\Omega,{\mathbb{L}};\,s}+(I-P_{s-j\kappa})T^{i,j}_{\Omega,{\mathbb{L}};\,s},$$ where $\kappa \in (0,1)$ will be chosen later. In the following, we will estimate this two terms one by one.
	We have the following norm inequality for  $P_{s-j\kappa}T^{i,j}_{\Omega,{\mathbb{L}};\,s}$.
	\begin{lemma}\label{ljj}Let $\Omega$ be homogeneous of degree zero, $A$ be a function  in $\mathbb{R}^d$ with  derivatives of order one in ${\rm BMO}(\mathbb{R}^d)${\color{red},} $b_\mathbb{L}$ satisfies the vanishing moment with $\ell(\mathbb{L})=2^{s-j}$.  For each $j\in\mathbb{N}$ with $j>j_0$,  we have 
		$$\big \| P_{s-j\kappa} T^{i,j}_{\Omega,{\mathbb{L}};\,s}b_\mathbb{L}\big\|_{L^1(\mathbb{R}^d)}\lesssim j\big(2^{-(1-\kappa)j} +2^{-(1-\tau)j} \big) \|\Omega_i\|_{L^{\infty}(\mathbb{S}^{d-1})}\|b_\mathbb{L}\|_{L^1(\mathbb{R}^d)}.$$
	\end{lemma}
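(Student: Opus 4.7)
The plan is to exploit the vanishing moment $\int b_\mathbb{L}=0$ by writing $P_{s-j\kappa}T^{i,j}_{\Omega,\mathbb{L};s}$ as an integral operator with kernel
\[
\mathcal{K}(x,y)=\int_{\mathbb{R}^d}\omega_{s-j\kappa}(x-u)\,\Omega_i(u-y)\,R^{j}_{s,\mathbb{L}}(u,y)\,du,
\]
subtracting at the center $y_\mathbb{L}$ of $\mathbb{L}$, and reducing the proof to the uniform H\"ormander-type bound
\[
\int_{\mathbb{R}^d}\bigl|\mathcal{K}(x,y)-\mathcal{K}(x,y_\mathbb{L})\bigr|\,dx\;\lesssim\;j\bigl(2^{-(1-\kappa)j}+2^{-(1-\tau)j}\bigr)\|\Omega_i\|_{L^{\infty}(\mathbb{S}^{d-1})},\qquad y\in\mathbb{L}.
\]
Multiplying by $|b_\mathbb{L}(y)|$, integrating over $\mathbb{L}$, and applying Fubini would then yield the lemma. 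Setting $F_y(u)=\Omega_i(u-y)R^{j}_{s,\mathbb{L}}(u,y)$, I would split
\[
F_y-F_{y_\mathbb{L}}=\Omega_i(\cdot-y_\mathbb{L})\bigl[R^{j}_{s,\mathbb{L}}(\cdot,y)-R^{j}_{s,\mathbb{L}}(\cdot,y_\mathbb{L})\bigr]+\bigl[\Omega_i(\cdot-y)-\Omega_i(\cdot-y_\mathbb{L})\bigr]R^{j}_{s,\mathbb{L}}(\cdot,y)=:G_1+G_2.
\]

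The contribution of $\omega_{s-j\kappa}\ast G_1$ is the easier piece: writing $R^{j}_{s,\mathbb{L}}$ as the convolution (in the first variable) of $\omega_{s-l_\tau(j)}$ with $R_{s,\mathbb{L};j}$ and invoking Lemma \ref{lem2.x}(iii), together with $|y-y_\mathbb{L}|\lesssim 2^{s-j}$ (so the logarithmic factor is dominated by $j$), gives the pointwise bound $|R^{j}_{s,\mathbb{L}}(\cdot,y)-R^{j}_{s,\mathbb{L}}(\cdot,y_\mathbb{L})|\lesssim j\,2^{-j}2^{-sd}$ on a set of volume $O(2^{sd})$. Young's inequality for convolutions and $\|\Omega_i\|_\infty$ then produce an $L^1(dx)$ estimate of order $j\,2^{-j}\|\Omega_i\|_{L^{\infty}(\mathbb{S}^{d-1})}$, which is absorbed by the target bound.

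The main obstacle is $G_2$, because $\Omega_i$ carries no smoothness. The idea is to pull $\Omega_i$ outside the $u$-convolution by changing variables separately in the two halves ($u=y+v$ and $u=y_\mathbb{L}+v$), obtaining
\[
\omega_{s-j\kappa}\ast G_2(x)=\int \Omega_i(v)\Bigl\{\omega_{s-j\kappa}(x-y-v)R^{j}_{s,\mathbb{L}}(y+v,y)-\omega_{s-j\kappa}(x-y_\mathbb{L}-v)R^{j}_{s,\mathbb{L}}(y_\mathbb{L}+v,y)\Bigr\}dv.
\]
Inserting and subtracting $\omega_{s-j\kappa}(x-y_\mathbb{L}-v)R^{j}_{s,\mathbb{L}}(y+v,y)$ splits this into (a) a piece controlled by the $\omega$-smoothness at scale $2^{s-j\kappa}$, exploiting $\|\nabla\omega_{s-j\kappa}\|_{L^1}\sim 2^{j\kappa-s}$ and $|y-y_\mathbb{L}|\lesssim 2^{s-j}$ to trade scales for the factor $2^{-(1-\kappa)j}$; and (b) a piece controlled by the smoothness of $R^{j}_{s,\mathbb{L}}$ in its first variable, inherited from the internal mollifier $\omega_{s-l_\tau(j)}$ (with $l_\tau(j)\approx\tau j$) via $\|\nabla\omega_{s-l_\tau(j)}\|_{L^1}\sim 2^{\tau j-s}$, producing $2^{-(1-\tau)j}$. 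In both sub-terms the factor $j$ enters through the $L^\infty$ bound $|R^{j}_{s,\mathbb{L}}|\lesssim j\,2^{-sd}$ from \eqref{eq:kernelsize} and the parallel pointwise bound of Lemma \ref{lem2.x}(i), while the $v$-integration over the annulus $|v|\sim 2^s$ absorbs the volume factor $2^{sd}$; summing these contributions delivers the claimed estimate.
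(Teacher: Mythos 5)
Your proposal is correct and uses essentially the same ingredients as the paper's proof: the vanishing moment of $b_\mathbb{L}$, the $L^{\infty}$ bound $|R_{s,\mathbb{L}}^{j}|\lesssim j2^{-sd}$, the H\"older regularity of $R_{s,\mathbb{L};j}$ in its second variable from Lemma \ref{lem2.x}(iii), the first-variable smoothness coming from the internal mollifier $\omega_{s-l_\tau(j)}$, and the $L^1$ norm of $\nabla\omega_{s-j\kappa}$. The only organizational differences are that you subtract at a single point $y_\mathbb{L}$ (which works because $t(j+|\log t|)\lesssim j$ uniformly for $t\in(0,c]$ — be aware the logarithm itself is not dominated by $j$, only the product $|y-y_\mathbb{L}|\bigl(j+|\log(2^{s-j}/|y-y_\mathbb{L}|)|\bigr)\lesssim j2^{s-j}$ is) whereas the paper averages over $y'\in\mathbb{L}$ and packages the first- and second-slot regularity of $R^j_{s,\mathbb{L}}$ together into the double-integral estimate of Lemma \ref{lemmaR}; your $G_1$, (a), (b) correspond exactly to the paper's \eqref{cz1}, term ${\rm I}$, and \eqref{cz2}.
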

	
	Before proving   Lemma \ref{ljj}, we need the following lemma for $R_{s,\mathbb{L}}^{j}$.
	\begin{lemma}\label{lemmaR}
		Let $R^j_{s,\mathbb{L}}$ be defined as \eqref{rcon}, $\theta\in \mathbb{S}^{d-1}$,  $y,\,y'\in \mathbb{L}$ with $\ell(\mathbb{L})=2^{s-j}$. Then
		\begin{eqnarray*}&&\int_{\mathbb{L}}\int_{\mathbb{L}}|R_{s,\mathbb{L}}^{j}(y+r\theta,\,y)-R_{s,\mathbb{L}}^{j}(y'+r\theta,\,y')||b_{\mathbb{L}}(y)|dydy'\lesssim j 2^{-sd}2^{\tau j}2^{-j}|\mathbb{L}|\int_{\mathbb{L}}|b_\mathbb{L}(y)|dy.
		\end{eqnarray*}
	\end{lemma}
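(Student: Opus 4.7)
My plan is to split the difference via an intermediate term,
\begin{align*}
R_{s,\mathbb{L}}^{j}(y+r\theta,y)-R_{s,\mathbb{L}}^{j}(y'+r\theta,y') = \mathrm{I}+\mathrm{II},
\end{align*}
where $\mathrm{I}=R_{s,\mathbb{L}}^{j}(y+r\theta,y)-R_{s,\mathbb{L}}^{j}(y'+r\theta,y)$ isolates the variation in the first argument and $\mathrm{II}=R_{s,\mathbb{L}}^{j}(y'+r\theta,y)-R_{s,\mathbb{L}}^{j}(y'+r\theta,y')$ isolates the variation in the second argument. I would control $\mathrm{I}$ by exploiting the smoothing mollifier $\omega_{s-l_\tau(j)}$ built into the definition \eqref{rcon}, and control $\mathrm{II}$ by the second-variable regularity of $R_{s,\mathbb{L};j}$ furnished by item (iii) of Lemma~\ref{lem2.x}.

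For $\mathrm{I}$, since $R^j_{s,\mathbb{L}}(\cdot,y)$ is the convolution of $R_{s,\mathbb{L};j}(\cdot,y)$ with $\omega_{s-l_\tau(j)}$,
\[\mathrm{I}=\int_{\mathbb{R}^d}\bigl[\omega_{s-l_\tau(j)}(y+r\theta-z)-\omega_{s-l_\tau(j)}(y'+r\theta-z)\bigr]R_{s,\mathbb{L};j}(z,y)\,dz.\]
The main point here is to avoid the crude bound $\|\nabla\omega_t\|_{L^\infty}\lesssim 2^{-t(d+1)}$ multiplied by the volume $\sim 2^{sd}$ of the support shell $\{|z-y|\sim 2^s\}$ of $R_{s,\mathbb{L};j}(\cdot,y)$, which would be far too lossy. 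Instead I would use the $L^1$ translation estimate $\|\omega_t(\cdot)-\omega_t(\cdot-h)\|_{L^1}\lesssim |h|\,2^{-t}$ together with the pointwise bound $|R_{s,\mathbb{L};j}(z,y)|\lesssim j\,2^{-sd}$ from item (i) of Lemma~\ref{lem2.x}; with $t=s-\tau j-3$ and $|h|=|y-y'|\lesssim 2^{s-j}$ this yields the pointwise estimate $|\mathrm{I}|\lesssim j\,2^{-sd}\,2^{\tau j}\,2^{-j}$. Integrating in $y,y'\in \mathbb{L}$ against $|b_\mathbb{L}(y)|$ then produces exactly the right-hand side of the lemma.

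For $\mathrm{II}$, I would rewrite $\mathrm{II}=\int\omega_{s-l_\tau(j)}(y'+r\theta-z)\bigl[R_{s,\mathbb{L};j}(z,y)-R_{s,\mathbb{L};j}(z,y')\bigr]\,dz$. Because $|z-y|\sim 2^s$ dominates $|y-y'|\le\sqrt{d}\,2^{s-j}$, item (iii) of Lemma~\ref{lem2.x} applies and gives
\[|R_{s,\mathbb{L};j}(z,y)-R_{s,\mathbb{L};j}(z,y')|\lesssim \frac{|y-y'|}{2^{s(d+1)}}\bigl(j+|\log(2^{s-j}|y-y'|^{-1})|\bigr).\]
Combined with $\|\omega_{s-l_\tau(j)}\|_{L^1}=1$ and the elementary polar-coordinate estimate $\int_\mathbb{L}|y-y'|\bigl(1+|\log(2^{s-j}|y-y'|^{-1})|\bigr)\,dy'\lesssim 2^{s-j}|\mathbb{L}|$, this gives $\int_\mathbb{L}|\mathrm{II}|\,dy'\lesssim j\,2^{-sd}\,2^{-j}|\mathbb{L}|$, which is strictly better than what we need. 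Summing the two contributions, the $\mathrm{I}$-term (carrying the $2^{\tau j}$ loss) dominates and produces the stated bound. The one technical point I expect to be the main obstacle is recognizing that $L^1$-continuity of translates of $\omega_t$, rather than a pointwise derivative bound, is what allows the volume factor of the $\phi_s$-shell to be absorbed in the estimate of $\mathrm{I}$.
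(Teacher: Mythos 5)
Your proposal is correct and follows essentially the same route as the paper: the paper likewise splits the difference into the first-variable increment (handled by the mollifier's $L^1$-translation smoothness against the size bound of Lemma~\ref{lem2.x}(i), yielding the $2^{\tau j}2^{-j}$ factor) and the second-variable increment (handled by the H\"older-type regularity in the second variable, giving a bound with no $2^{\tau j}$ loss). The only superficial difference is that you invoke Lemma~\ref{lem2.x}(iii) directly for the second piece, whereas the paper re-derives the corresponding inequality inline via a three-term decomposition; this is a cosmetic shortcut, not a different method.
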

	\begin{proof}
		By the triangle inequality, the mean value theorem and the support condition of $\phi$,	we get
				\begin{eqnarray*}&&|R^j_{s,\mathbb{L}}(y'+r\theta,\,y)-R^j_{s,\mathbb{L}}(y'+r\theta,\,y')|\\
			&&\quad \lesssim \int_{\mathbb{R}^d}|\omega_{s-l_{\tau}( j)}(y'+r\theta-z)||\phi_s(z-y')|
			\frac{|A_{\varphi_{\mathbb{L}}}(y)-A_{\varphi_{\mathbb{L}}}(y')|}{|z-y'|^{d+1}}dz\\
			&&\quad+\int_{\mathbb{R}^d}|\omega_{s-l_{\tau}( j)}(y'+r\theta-z)|
			\frac{|A_{\varphi_{\mathbb{L}}}(z)-A_{\varphi_{\mathbb{L}}}(y)|}{|z-y|^{d+1}}|\phi_{s}(z-y)-\phi_{s}(z-y')|dz\\
			&&\quad+\int_{\mathbb{R}^d}|\omega_{s-l_{\tau}( j)}(y'+r\theta-z)||\phi_s(z-y')|
			\frac{|A_{\varphi_\mathbb{L}}(z)-A_{\varphi_{\mathbb{L}}}(y)||y-y'|}{|z-y|^{d+2}}dz\\
			&&\quad	:= I+II+III.
		\end{eqnarray*}
		If $r\notin [2^{s-4}, 2^{s+4}]$, by the support of $R^j_{s,\mathbb{L}}$, it gives that $|R^j_{s,\mathbb{L}}(y'+r\theta,\,y)-R^j_{s,\mathbb{L}}(y'+r\theta,\,y')|=0$. 
		
		\smallskip
		For $y,\,y'\in \mathbb{L}$, \eqref{wtw}  gives us that
		$$|A_{\varphi_{\mathbb{L}}}(y')-A_{\varphi_{\mathbb{L}}}(y)|\lesssim |y-y'|\Big[j+|\log \big(\frac{2^{s-j}}{|y-y'|}\big)|\Big].
		$$
		For $I$, since $|z-y'|\geq 2^{s-2}$, $y,\,y'\in \mathbb{L}$, then \eqref{wtw} gives us that
		\begin{eqnarray*}
			I\lesssim & |y-y'|\Big[j+|\log \big(\frac{2^{s-j}}{|y-y'|}\big)|\Big] 2^{-s(d+1)}.
		\end{eqnarray*}
	Consider now the other two terms. If $y,\,y'\in \mathbb{L}$ and $|z-y'|\leq 2^{s}$, \rm{(i)} of  Lemma \ref{lem2.x} gives us that
		$$|A_{\varphi_{\mathbb{L}}}(y)|\lesssim j2^s,\,\,\,|A_{\varphi_{\mathbb{L}}}(z)|\lesssim j2^s.$$
		On the other hand,  for $j>j_0$, when $y,\,y'\in \mathbb{L}$ and $|z-y'|\geq 2^{s-2}$, it holds that
		$$|z-y|\geq |z-y'|-|y-y'|\geq 2^{s-2}-\sqrt{d}2^{s-j}>2^{s-2}-\sqrt{d}2^{s-\log_2(100d/2)} >2^{s-3}.$$
		{Therefore}
		\begin{align*}
			II+III&=\int_{\mathbb{R}^d}|\omega_{s-l_{\tau}( j)}(y'+r\theta-z)|
			\frac{|A_{\varphi_{\mathbb{L}}}(z)-A_{\varphi_{\mathbb{L}}}(y)|}{|z-y|^{d+1}}|\phi_{s}(z-y)-\phi_{s}(z-y')|dz\\
			&\quad+\int_{\mathbb{R}^d}|\omega_{s-l_{\tau}( j)}(y'+r\theta-z) \phi_s(z-y')|
			\frac{|A_{\varphi_{\mathbb{L}}}(z)-A_{\varphi_{\mathbb{L}}}(y)||y-y'|}{|z-y|^{d+2}}dz\\
			&\lesssim j2^{-s(d+1)}|y-y'|,
		\end{align*}
		which implies that
		\begin{align}
			|R^j_{s,\mathbb{L}}(y'+r\theta,\,y)-R^j_{s,\mathbb{L}}(y'+r\theta,\,y')|\lesssim \frac{ j|y-y'|}{2^{s(d+1)}}\Big[1+|\log \big(\frac{2^{s-j}}{|y-y'|}\big)|\Big].\label{cz1}
		\end{align}
		Similarly, we have
		\begin{align}&|R^j_{s,\mathbb{L}}(y+r\theta,\,y)-R^j_{s,\mathbb{L}}(y'+r\theta,\,y)|\label{cz2}\\
			&\, \leq \int_{\mathbb{R}^d}|\omega_{s-l_{\tau}( j)}(y+r\theta-z)-\omega_{s-l_{\tau}( j)}(y'+r\theta-z)||\phi_{s}(z-y)|\frac{|A_{\varphi_{\mathbb{L}}}(z)-
				A_{\varphi_{\mathbb{L}}}(y)|}{|z-y|^{d+1}}dz\nonumber\\
			&\, \leq j2^{-sd} 2^{-s+l_{\tau}( j)}|y-y'|\int_{\mathbb{R}^d}|\nabla\omega_{s-l_{\tau}( j)}(z)|dz \nonumber\\
			&\,\lesssim j |y-y'|2^{-s+l_{\tau}( j)}2^{-sd}.\nonumber
		\end{align}
		Notice that 
		\begin{align*}
			&	\int_{\mathbb{L}}\int_{\mathbb{L}} |y-y' |\Big[1+|\log \big(\frac{2^{s-j}}{|y-y'|}\big)|\Big]dy' |b_{\mathbb{L}}(y) |dy\leq 2^{s-j}|\mathbb{L}|\int_{\mathbb{L}}|b_\mathbb{L}(y)|dy.
		\end{align*}
		Combining \eqref{cz1} with \eqref{cz2}, it gives that
		
		\begin{eqnarray*}&&\int_{\mathbb{L}}\int_{\mathbb{L}}|R_{s,\mathbb{L}}^{j}(y+r\theta,\,y)-R_{s,\mathbb{L}}^j(y'+r\theta,\,y') | |b_{\mathbb{L}}(y) |dydy'\\
			&&\quad\lesssim j 2^{-s(d+1)}2^{l_{\tau}( j)}\int_{\mathbb{L}}\int_{\mathbb{L}}|y-y'|dy'|b_{\mathbb{L}}(y) |dy\\
			&&\qquad+j 2^{-s(d+1)}\int_{\mathbb{L}}\int_{\mathbb{L}} |y-y' |\Big[1+|\log \big(\frac{2^{s-j}}{|y-y'|}\big)|\Big]dy' |b_{\mathbb{L}}(y) |dy\\
			&&\quad \lesssim j 2^{-sd}2^{l_{\tau}( j)}2^{-j}|\mathbb{L}|\int_{\mathbb{L}}|b_\mathbb{L}(y)|dy.
		\end{eqnarray*}
		{This finishes the proof of Lemma \ref{lemmaR}.}
	\end{proof}
	With Lemma \ref{lemmaR}, we are ready to prove Lemma \ref{ljj} now.
	\begin{proof}[Proof of Lemma  \ref{ljj}]
		Write
		$$	P_{s-j\kappa} T^{i,j}_{\Omega,{\mathbb{L}};\,s}b_\mathbb{L} (x)= \int_{\mathbb{R}^d}\Big(\int_{\mathbb{R}^d} \omega_{s-j\kappa}(x-z)\Omega_i(z-y) R_{s,\mathbb{L}}^{j}(z,y)dz \Big)b_\mathbb{L}(y)dy.$$
		Let $z-y=r\theta$. By Fubini's theorem, $	P_{s-j\kappa} T^{i,j}_{\Omega,{\mathbb{L}};\,s}b_\mathbb{L} (x)$ can be written as
		$$\int_{\mathbb{S}^{d-1}} \int_{\mathbb{R}^d}\int_0^{\infty}\Omega_i(\theta)\omega_{s-j\kappa}(x-y-r\theta) R_{s,\mathbb{L}}^{j}(y+r\theta ,y)r^{d-1} b_\mathbb{L}(y)drdyd\sigma_{\theta}.$$
		Let $y' \in  \mathbb{L}$. By the vanishing moment of $b_\mathbb{L}$, we have
		\begin{align*}
			|P_{s-j\kappa} T^{i,j}_{\Omega_i,{\mathbb{L}};\,s}b_L (x)|&\leq \inf\limits_{y'\in \mathbb{L}} \int_{\mathbb{S}^{d-1}}| \Omega_i(\theta)|\Big|\int_{\mathbb{R}^d}\int_0^{\infty} \Big(\omega_{s-j\kappa}(x-y-r\theta) R_{s,\mathbb{L}}^{j}(y+r\theta ,y)\\&\quad-\omega_{s-j\kappa}(x-y'-r\theta) R_{s,\mathbb{L}}^{j}(y'+r\theta ,y')\Big) r^{d-1}dr b_\mathbb{L}(y)dy\Big|d\sigma_{\theta}\\
			&\leq  \int_{\mathbb{S}^{d-1}}| \Omega_i(\theta)|\frac{1}{|\mathbb{L}|}\int_{\mathbb{L}}\Big|\int_{\mathbb{R}^d}\int_0^{\infty} \Big(\omega_{s-j\kappa}(x-y-r\theta) R_{s,\mathbb{L}}^{j}(y+r\theta ,y)\\&\quad-\omega_{s-j\kappa}(x-y'-r\theta) R_{s,\mathbb{L}}^{j}(y'+r\theta ,y')\Big) r^{d-1}dr b_\mathbb{L}(y)dy\Big|d{y'}d\sigma_{\theta}   \\
			&\lesssim I+II,
		\end{align*}
		where
		\begin{align*}
			I=&\frac{1}{|\mathbb{L}|}\int_{\mathbb{S}^{d-1}}  |\Omega_i(\theta)|\int_{\mathbb{L}} \Big|\int_{\mathbb{R}^d}\int_0^{\infty}\Big(\omega_{s-j\kappa}(x-y-r\theta) -\omega_{s-j\kappa}(x-y'-r\theta)   \Big)\\ &\times R_{s,\mathbb{L}}^{j}(y+r\theta ,y)r^{d-1}dr b_\mathbb{L}(y)dy \Big | d{y'}d\sigma_{\theta},
		\end{align*}
		and
		\begin{align*}
			II=&\frac{1}{|\mathbb{L}|}\int_{\mathbb{S}^{d-1}}  |\Omega_i(\theta)|\int_{\mathbb{L}} \Big | \int_{\mathbb{R}^d}\int_0^{\infty}\omega_{s-j\kappa}(x-y'-r\theta)  \big (R_{s,\mathbb{L}}^{j}(y+r\theta ,y) \\ &-R_{s,\mathbb{L}}^{j}(y'+r\theta ,y')\big)r^{d-1}dr b_\mathbb{L}(y)dy\Big|d{y'}\,d\sigma_{\theta}.
		\end{align*}
		 {Note that $|y-y'|\lesssim 2^{s-j}$, when $y$, $y'\in \mathbb{L}$} . By \eqref{eq:kernelsize} and the mean value formula, it follows that
		\begin{align*}
			\|I\|_{L^1(\mathbb{R}^d)}\lesssim &j\int_{\mathbb{S}^{d-1}} |\Omega_i(\theta)|\int_{\mathbb{R}^d}\int_{2^{s-3}}^{2^{s+3}} 2^{-s+j\kappa} \|\triangledown\omega\|_{L^1(\mathbb{R}^d)} 2^{s-j}  2^{-sd}r^{d-1}dr |b_\mathbb{L}(y)|dyd\sigma({\theta})\\
			& \lesssim j 2^{-(1-\kappa)j} \|\Omega_i\|_{L^{\infty}(\mathbb{S}^{d-1})} \|b_{\mathbb{L}}\|_{L^1(\mathbb{R}^d)}.
		\end{align*}
		By Lemma \ref{lemmaR} and the Fubini's theorem one can get
		
		\begin{align*}
			\|II\|_{L^1(\mathbb{R}^d)}&\lesssim\int_{\mathbb{S}^{d-1}} \int_{2^{s-3}}^{2^{s+3}} |\Omega_i(\theta)|\frac{1}{|\mathbb{L}|}\int_{\mathbb{L}}  \int_{\mathbb{L}}\|\omega_{s-j\kappa}(\cdot-y'-r\theta)\|_{ L^1(\mathbb{R}^d) }  \times |\big (R_{s,\mathbb{L}}^{j}(y+r\theta ,y) \\ &\quad -R_{s,\mathbb{L}}^{j}(y'+r\theta ,y')\big)| |b_\mathbb{L}(y)|dyd{y'}\,r^{d-1}dr d\sigma_{\theta}\\
			&\lesssim j2^{-{(1-\tau)}j} \|\Omega_i\|_{L^{\infty}(\mathbb{S}^{d-1})} \|b_{\mathbb{L}}\|_{L^1(\mathbb{R}^d)}.\end{align*}
		This finishes the proof of Lemma \ref{ljj}.
	\end{proof}
	
	To estimate the term $(I-P_{s-j\kappa})T^{i,j}_{\Omega,{\mathbb{L}};\,s}$, we  introduce a partition of unity on the unit surface $\mathbb{S}^{d-1}$.
	For $j>j_0$, let  $\mathfrak{E}^j=\{e_{\nu}^j\}$ be a collection of unit vectors on $\mathbb{S}^{d-1}$ such that 	\vspace{0.2cm}
	
	\begin{itemize}
		\item[\rm (a)] for $\nu\not =\nu'$, $|e_{\nu}^j-e_{\nu'}^j|>2^{-j\gamma-4}$;
		\item[\rm (b)] for each $\theta\in \mathbb{S}^{d-1}$, there exists an $e^j_{\nu}$ satisfying that $|e^j_{\nu}-\theta|\leq 2^{-j\gamma-4},$ 	where $\gamma\in (0,\,1)$ is a constant. 
	\end{itemize}

		\vspace{0.2cm}
		The set $\mathfrak{E}^j$ can be constructed as in \cite{se}.  Observe that ${\rm card}(\mathfrak{E}^j)\lesssim 2^{j\gamma(d-1)}$.
	
	Below, we will construct an associated partition of unity on the unit surface $\mathbb{S}^{d-1}$.
	Let $\zeta$ be a smooth, nonnegative, radial function with $\zeta(u)\equiv 1$ when $|u|\leq 1/2$ and ${\rm supp}\, \zeta\subset \{|x|\leq 1\}$. Set
	$$\widetilde{\Gamma}_\nu^j(\xi)=\zeta\Big(2^{j\gamma}\big(\frac{\xi}{|\xi|}-e^j_{\nu}\big)\Big),  \ \hbox {and\ \  } \Gamma_{\nu}^j(\xi)=\widetilde{\Gamma}^j_{\nu}(\xi)\Big(\sum_{e^j_{\nu}\in\mathfrak{E}^j}\widetilde{\Gamma}^j_{\nu}(\xi)\Big)^{-1}.$$ It is easy to verify that $\Gamma^j_{\nu}$ is homogeneous of degree zero, and for all $j$ and $\xi\in \mathbb{S}^{d-1}$,
	$\sum_{\nu}\Gamma^j_{\nu}(\xi)=1.
	$
	Let $\psi\in C^{\infty}_c(\mathbb{R})$ such that $0\leq \psi\leq 1$,  ${\rm supp}\, \psi\subset [-4,\,4]$ and $\psi(t)\equiv 1$ when $t\in [-2,\,2]$. Define the multiplier operator $G_{\nu}^j$ by
	$$\widehat{G_{\nu}^jf}(\xi)=\psi\big(2^{j\gamma}\langle \xi/|\xi|, e_{\nu}^j\rangle\big)\widehat{f}(\xi).
	$$
	Denote the operator $T^{i,j}_{\Omega,{\mathbb{L}};\,s,\nu}$ by
	\begin{align}\label{deftlq}
		T^{i,j}_{\Omega,{\mathbb{L}};\,s,\nu}h(x)=
		\int_{\mathbb{R}^d}\Omega_i(x-y)\Gamma^j_{\nu}(x-y)R_{s,\mathbb{L}}^{j}(x,y)h(y)dy.
	\end{align}
	It is obvious that
	$T^{i,j}_{\Omega,\mathbb{L};\,s}h(x)=\sum_{\nu}T^{i,j}_{\Omega,{\mathbb{L}};\,s,\nu}h(x).
	$
	For each fixed $i,\,s,\,j,\,\mathbb{L}$ and $\nu$,  $(I-P_{s-j\kappa})T^{i,j}_{\Omega,{\mathbb{L}};\,s,\nu}$ can be decomposed in the following way
	$$(I-P_{s-j\kappa})T^{i,j}_{\Omega,{\mathbb{L}};\,s,\nu}=G_{\nu}^j(I-P_{s-j\kappa})T^{i,j}_{\Omega,{\mathbb{L}};\,s,\nu}+(1-G_{\nu}^j)(I-P_{s-j\kappa})T^{i,j}_{\Omega,{\mathbb{L}};\,s,\nu}.
	$$
	
	{\bf {Estimate for the term $G_{\nu}^j(I-P_{s-j\kappa})T^{i,j}_{\Omega,{\mathbb{L}};\,s,\nu}$}}.
	
	For the term $G_{\nu}^j(I-P_{s-j\kappa})T^{i,j}_{\Omega,{\mathbb{L}};\,s,\nu}$, we have  the following lemma.
	\begin{lemma}\label{lem2.5}
		Let $\Omega$ be homogeneous of degree zero,  $A$ be a function  in $\mathbb{R}^d$ with  derivatives of order one in ${\rm BMO}(\mathbb{R}^d)$. For each $j\in\mathbb{N}$ with $j> j_0$, we have that,
		\begin{align*}
			&\Big\|\sum_{\nu} \sum_s\sum_{\mathbb{L}\in\mathcal{S}_{s-j}}G_{\nu}^j(I-P_{s-j\kappa})T^{i,j}_{\Omega,{\mathbb{L}};\,s,\nu}b_\mathbb{L}\Big\|_{L^2(\mathbb{R}^d)}^2	\lesssim j^22^{-j\gamma}\|\Omega_i\|_{L^{\infty}(\mathbb{S}^{d-1})}^2\Big\|\sum_s\sum_{\mathbb{L}\in\mathcal{S}_{s-j}}b_\mathbb{L}\Big\|_{L^1(\mathbb{R}^d)}.
		\end{align*}
	\end{lemma}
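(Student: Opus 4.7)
The plan is to combine a three-fold almost-orthogonality decomposition with a direct kernel-based $L^2$ bound on each elementary piece
$U_{\nu,s,\mathbb{L}}:=G_\nu^j(I-P_{s-j\kappa})T^{i,j}_{\Omega,\mathbb{L};s,\nu}b_\mathbb{L}$.
The $2^{-j\gamma}$ gain will come from the fact that the kernel of $T^{i,j}_{\Omega,\mathbb{L};s,\nu}$, thanks to the angular cutoff $\Gamma_\nu^j$, is supported in an angular sector of opening $\sim 2^{-j\gamma}$ around $e_\nu^j$, which shrinks the $L^2$-norm of the kernel relative to its pointwise size.

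First I would establish almost-orthogonality in $\nu$. The symbol $\widehat{G_\nu^j}(\xi)=\psi\bigl(2^{j\gamma}\langle\xi/|\xi|,e_\nu^j\rangle\bigr)$ is supported in a spherical slab of thickness $\sim 2^{-j\gamma}$ perpendicular to $e_\nu^j$; by the $2^{-j\gamma}$-separation of $\mathfrak{E}^j$, at most $\lesssim 2^{j\gamma(d-2)}$ such slabs meet at any direction. Plancherel and Cauchy--Schwarz yield
\begin{equation*}
\Big\|\sum_\nu G_\nu^j h_\nu\Big\|_{L^2(\mathbb{R}^d)}^2 \lesssim 2^{j\gamma(d-2)}\sum_\nu\|h_\nu\|_{L^2(\mathbb{R}^d)}^2.
\end{equation*}
Next, for each fixed $\nu$ I would invoke scale orthogonality in $s$: the symbol of $I-P_{s-j\kappa}$ is essentially supported in $\{|\xi|\gtrsim 2^{j\kappa-s}\}$, whereas the kernel $R^j_{s,\mathbb{L}}$ has Fourier support essentially in $\{|\xi|\lesssim 2^{-s+j\gamma}\}$. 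Choosing $\kappa\in(\gamma,1)$ ensures these bands are essentially disjoint in $s$, with overlap only $\lesssim j$ (a Cotlar--Stein $TT^*$ estimate handles the smoothness of the cutoff). Finally, for each fixed $s$ and $\nu$, the functions $T^{i,j}_{\Omega,\mathbb{L};s,\nu}b_\mathbb{L}$ for different $\mathbb{L}\in\mathcal{S}_{s-j}$ are supported in bounded-overlap enlargements of the disjoint cubes $\mathbb{L}$. Combining these three reductions gives
\begin{equation*}
\Big\|\sum_{\nu,s,\mathbb{L}}U_{\nu,s,\mathbb{L}}\Big\|_{L^2(\mathbb{R}^d)}^2 \lesssim j\,2^{j\gamma(d-2)}\sum_{\nu,s,\mathbb{L}}\|U_{\nu,s,\mathbb{L}}\|_{L^2(\mathbb{R}^d)}^2.
\end{equation*}

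For each elementary piece, $\|G_\nu^j\|_{L^2\to L^2}\lesssim 1$ and $\|I-P_{s-j\kappa}\|_{L^2\to L^2}\lesssim 1$ reduce matters to bounding $\|T^{i,j}_{\Omega,\mathbb{L};s,\nu}b_\mathbb{L}\|_{L^2}$. By Lemma~\ref{lem2.x}(i) together with the angular cutoff $\Gamma_\nu^j$, the kernel of $T^{i,j}_{\Omega,\mathbb{L};s,\nu}$ is pointwise bounded by $\|\Omega_i\|_{L^\infty(\mathbb{S}^{d-1})}\cdot j\cdot 2^{-sd}$ on a set of volume $\lesssim 2^{sd-j\gamma(d-1)}$ (radial thickness $2^s$, angular opening $2^{-j\gamma}$). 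Hence, by a Schur-type estimate,
\begin{equation*}
\|T^{i,j}_{\Omega,\mathbb{L};s,\nu}b_\mathbb{L}\|_{L^2(\mathbb{R}^d)}^2 \lesssim \|\Omega_i\|_{L^\infty(\mathbb{S}^{d-1})}^2\,j^2\,2^{-sd-j\gamma(d-1)}\|b_\mathbb{L}\|_{L^1(\mathbb{R}^d)}^2.
\end{equation*}
Inserting the Calder\'on--Zygmund bound $\|b_\mathbb{L}\|_{L^1}\lesssim|\mathbb{L}|=2^{(s-j)d}$ into one factor of $\|b_\mathbb{L}\|_{L^1}^2$, summing the $\lesssim 2^{j\gamma(d-1)}$ effective directions $\nu$ for each $(s,\mathbb{L})$, and using the disjointness of $\mathcal{S}$ so that $\sum_{s,\mathbb{L}}\|b_\mathbb{L}\|_{L^1}=\big\|\sum_{s,\mathbb{L}}b_\mathbb{L}\big\|_{L^1}$, yields a total bound $\lesssim j^3\,2^{j\gamma(d-2)-jd}\|\Omega_i\|_{L^\infty(\mathbb{S}^{d-1})}^2\big\|\sum_{s,\mathbb{L}}b_\mathbb{L}\big\|_{L^1}$, which is comfortably dominated by the target $j^2\,2^{-j\gamma}\|\Omega_i\|_{L^\infty(\mathbb{S}^{d-1})}^2\big\|\sum_{s,\mathbb{L}}b_\mathbb{L}\big\|_{L^1}$ for any $\gamma\in(0,1)$.

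The main obstacle will be making the scale orthogonality in $s$ rigorous: the multiplier $I-P_{s-j\kappa}$ is a smooth, rather than a sharp, frequency cutoff, so one cannot simply invoke disjoint Fourier supports. A Cotlar--Stein or $TT^*$ scheme, exploiting the separation $\kappa>\gamma$ and the scale structure of the kernels, should handle this at the cost of only the $\lesssim j$ overlap factor recorded above. A secondary technical point is tracking the $\lesssim 2^{j\gamma(d-2)}$ overlap constant in the $\nu$-step via the spherical-slab geometry, but this is ultimately absorbed by the much stronger $2^{-jd}$ decay each individual piece enjoys.
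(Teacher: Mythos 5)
The $\nu$-orthogonality step (Plancherel and Cauchy--Schwarz via $\sup_{\xi\neq 0}\sum_\nu|\psi(2^{j\gamma}\langle e_\nu^j,\xi/|\xi|\rangle)|^2\lesssim 2^{j\gamma(d-2)}$) is exactly what the paper does, and the pointwise domination $|(I-P_{s-j\kappa})T^{i,j}_{\Omega,\mathbb{L};s,\nu}b_\mathbb{L}|\lesssim j\|\Omega_i\|_{L^\infty(\mathbb{S}^{d-1})}H_{s,\nu}^j*|b_\mathbb{L}|$ is also present there, so you are on solid ground through those stages. The trouble is the two ``orthogonality'' claims used to pass from $\|\sum\|^2$ to $\sum\|\cdot\|^2$.

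The assertion that, for fixed $s,\nu$, the functions $T^{i,j}_{\Omega,\mathbb{L};s,\nu}b_\mathbb{L}$ for distinct $\mathbb{L}\in\mathcal{S}_{s-j}$ have ``bounded-overlap'' supports is false. The support of $T^{i,j}_{\Omega,\mathbb{L};s,\nu}b_\mathbb{L}$ is a wedge of length $\sim 2^s$ and angular opening $\sim 2^{-j\gamma}$ (volume $\sim 2^{sd-j\gamma(d-1)}$) emanating from a cube of volume $2^{(s-j)d}$; a single point $x$ can lie in the wedges of up to $\sim 2^{jd-j\gamma(d-1)}$ cubes of $\mathcal{S}_{s-j}$, which is unbounded in $j$. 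If you insert the correct overlap factor the ``comfortable'' margin disappears (you land at $\approx j^3 2^{-j\gamma}$), and more importantly a bare overlap count of this kind never exploits the normalization $\|b_\mathbb{L}\|_{L^1}\lesssim|\mathbb{L}|$ and the pairwise disjointness of the \emph{un-enlarged} cubes, which is what actually drives the estimate. The scale-orthogonality-in-$s$ idea is also on shaky ground: $T^{i,j}_{\Omega,\mathbb{L};s,\nu}$ is not a Fourier multiplier (it depends on $\mathbb{L}$ through $A_{\varphi_\mathbb{L}}$), and the mollification scale in $R^j_{s,\mathbb{L}}$ is $s-l_\tau(j)=s-\tau j-3$, not $s-\gamma j$, so the relevant separation is $\tau<\kappa$, not $\gamma<\kappa$; making a Cotlar--Stein argument rigorous here would essentially force you to redo the work of Lemma~\ref{lem2.6wzd}, which the paper deploys on the complementary piece $(I-G_\nu^j)(\cdots)$ rather than here.

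The paper avoids both issues by simply expanding $\big\|\sum_s\sum_{\mathbb{L}}(I-P_{s-j\kappa})T^{i,j}_{\Omega,\mathbb{L};s,\nu}b_\mathbb{L}\big\|_{L^2}^2$ as a double sum over pairs $(s,\mathbb{L})$, $(i,I)$, reducing every cross-term (same-$s$ or different-$s$) to the convolution quantity $H^j_{s,\nu}*H^j_{i,\nu}*|b_I|$, and invoking the key estimate from Ding--Lai \cite[Lemma 2.3]{dinglai}:
\begin{equation*}
\sum_{i\leq s}\sum_{I\in\mathcal{S}_{i-j}}H_{s,\nu}^j*H_{i,\nu}^j*|b_I|(x)\lesssim 2^{-2j\gamma(d-1)}.
\end{equation*}
This is precisely where the disjointness of the cubes in $\mathcal{S}$ and the bound $\int|b_I|\lesssim|I|$ enter, and it handles the overlapping supports and the coupling between scales in one shot, with no recourse to frequency separation in $s$. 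You need this ingredient; the plan as written has a genuine gap at that point.
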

	\begin{proof}
		The proof is similar to the proof   of Lemma 2.3 in \cite{dinglai}. For the sake of self-ciontained, we present the proof here. Observe that
		$$\sup_{\xi\not=0}\sum_{\nu}|\psi(2^{j\gamma}\langle e^j_{\nu},\xi/|\xi|\rangle)|^2\lesssim 2^{j\gamma(d-2)}.
		$$
		This, together with Plancherel's theorem and  Cauchy-Schwartz inequality, leads to that
		\begin{eqnarray*}
			&&\Big\|\sum_{\nu} \sum_s\sum_{\mathbb{L}\in\mathcal{S}_{s-j}} G_{\nu}^j(I-P_{s-j\kappa})T^{i,j}_{\Omega,{\mathbb{L}};\,s,\nu}b_\mathbb{L}\Big\|_{L^2(\mathbb{R}^d)}^2\\
			&&\quad=\Big\|
			\sum_{\nu}\psi\Big(2^{j\gamma}\langle e^j_{\nu},\,\xi/|\xi|\rangle\Big)\mathcal{F}\Big(\sum_s\sum_{\mathbb{L}\in\mathcal{S}_{s-j}}(I-P_{s-j\kappa})T^{i,j}_{\Omega,{\mathbb{L}};\,s,\nu}b_\mathbb{L}\Big)(\xi)\Big\|^2_{L^2(\mathbb{R}^d)}\\
			&&\quad\lesssim2^{j\gamma(d-2)}\sum_{\nu}\Big\|\sum_s\sum_{\mathbb{L}\in\mathcal{S}_{s-j}}(I-P_{s-j\kappa})T^{i,j}_{\Omega,{\mathbb{L}};\,s,\nu}b_\mathbb{L}\Big\|_{L^2(\mathbb{R}^d)}^2.
		\end{eqnarray*}
		Applying (\ref{eq:kernelsize}), we see that for each fixed $s$, $\mathbb{L}$, and $x\in\mathbb{R}^d$,
		\begin{align}
			&\big|	(I-P_{s-j\kappa})T^{i,j}_{\Omega,{\mathbb{L}};\,s,\nu}b_\mathbb{L}(x)\big|\label{hjk} \\
			&\lesssim  \int_{\mathbb{R}^d}|\Omega_i(x-y)||\Gamma^j_{\nu}(x-y)||R_{s,\mathbb{L}}^{j}(x,y)||b_\mathbb{L}(y)|dy\nonumber\\
			&\quad+ \int_{\mathbb{R}^{d}}\int_{\mathbb{R}^{d}} |\Omega_i(z-y)||\omega_{s-j\kappa}(x-z)| |\Gamma^j_{\nu}(z-y)||R_{s,\mathbb{L}}^{j}(z,y)|dz|b_\mathbb{L}(y)|dy\nonumber\\
			&\lesssim j \|\Omega_i\|_{L^{\infty}(\mathbb{S}^{d-1})}H_{s,\nu}^j*|b_\mathbb{L}|(x),\nonumber
		\end{align}
		where $ H_{s,\nu}^j(x)=2^{-sd}\chi_{\mathcal{R}_{s\nu}^j}(x)$, and
		$\mathcal{R}_{s\nu}^j=\{x\in\mathbb{R}^d:\,|\langle x,e_{\nu}^j \rangle|\leq 2^{s+3},\,|x-\langle x,e_{\nu}^j\rangle e_{\nu}^j|\leq 2^{s+3-j\gamma}\}.
		$
		This means that $\mathcal{R}_{s\nu}^j$ is contained
		in a box  having one long side of length $\lesssim 2^s$
		and $(d-1)$ short sides of
		length $\lesssim 2^{s-j\gamma}$.
		Therefore, we have
		\begin{eqnarray*}
			&&\Big\|\sum_s\sum_{\mathbb{L}\in\mathcal{S}_{s-j}}(I-P_{s-j\kappa})T^{i,j}_{\Omega,{\mathbb{L}};\,s,\nu}b_\mathbb{L}\Big\|_{L^2(\mathbb{R}^d)}^2\\
			&&\lesssim j^2\|\Omega_i\|^2_{L^{\infty}(\mathbb{S}^{d-1})}\sum_s\sum_{\mathbb{L}\in\mathcal{S}_{s-j}}\sum_{I\in\mathcal{S}_{s-j}}
			\int_{\mathbb{R}^d}\big(H_{s,\nu}^j*H_{s,\nu}^j*|b_I|\big)(x)|b_\mathbb{L}(x)|dx\\
			&&\quad+2j^2\|\Omega_i\|^2_{L^{\infty}(\mathbb{S}^{d-1})}\sum_s\sum_{\mathbb{L}\in\mathcal{S}_{s-j}}\sum_{i<s}\sum_{I\in \mathcal{S}_{i-j}}\int_{\mathbb{R}^d}\big(H_{s,\nu}^j*H_{i,\nu}^j*
			|b_I|\big)(x)|b_\mathbb{L}(x)|dx.\\
		\end{eqnarray*}
		Let $\widetilde {\mathcal{R}}_{s\nu}^j={\mathcal{R}}_{s\nu}^j+{\mathcal{R}}_{s\nu}^j$.
		As in the proof of Lemma 2.3 in \cite{dinglai}, for each fixed $\mathbb{L}\in \mathcal{S}_{s-j}$, $x\in \mathbb{L}$, $\nu$ and $s$, we obtain
	$$\sum_{i\leq s}\sum_{I\in \mathcal{S}_{i-j}}H_{s,\nu}^j*H_{i,\nu}^j*|b_I|(x)\lesssim{ 2^{-j\gamma(d-1)}2^{-sd}\sum_{i\leq s}\sum_{I\in \mathcal{S}_{i-j}}\int_{x+\widetilde{\mathcal{R}}_{j\nu}^s}|b_I(y)|dy} \lesssim 2^{-2j\gamma(d-1)},
	$$
		where we have used the fact that $\int_{\mathbb{R}^d} |b_I(y)|dy\lesssim |I|$ and the cubes $I\in \mathcal {S}$   are pairwise disjoint.
		
		This, in turn, implies further that
		$$\Big\|\sum_s\sum_{\mathbb{L}\in\mathcal{S}_{s-j}}(I-P_{s-j\kappa})T^{i,j}_{\Omega,{\mathbb{L}};\,s,\nu}b_\mathbb{L}\Big\|_{L^2(\mathbb{R}^d)}^2\lesssim j^2\|\Omega_i\|_{L^{\infty}(\mathbb{S}^{d-1})}^22^{-2j\gamma(d-1)}\Big\|\sum_s\sum_{\mathbb{L}\in\mathcal{S}_{s-j}} b_\mathbb{L}\Big\|_{L^1(\mathbb{R}^d)},
		$$
	which finishes the proof of Lemma \ref{lem2.5}.
	\end{proof}
	
	{\bf {Estimate for the term $(1-G_{\nu}^j)(I-P_{s-j\kappa})T^{i,j}_{\Omega,{\mathbb{L}};\,s,\nu}.$}}
	
	\medskip
	We need to present a lemma for $(1-G_{\nu}^j)(I-P_{s-j\kappa})T^{i,j}_{\Omega,{\mathbb{L}};\,s,\nu}$.
	\begin{lemma}\label{lem2.6wzd}Let $\Omega$ be homogeneous of degree zero,  $A$ be a function  in $\mathbb{R}^d$ with  derivatives of order one in ${\rm BMO}(\mathbb{R}^d)$.     For each $j\in\mathbb{N}$ with $j> j_0$, $\ell(\mathbb{L})=2^{s-j}$, some  $s_0>0$, we have that
		$$\sum_{\nu}  \| (I-{G_{\nu}^j)}(I-P_{s-j\kappa} )T^{i,j}_{\Omega,{\mathbb{L}};\,s,\nu}b_\mathbb{L}\|_{L^1(\mathbb{R}^d)}\lesssim j 2^{-s_0 j}  \|\Omega_i\|_{L^{\infty}(\mathbb{S}^{d-1})} \|b_\mathbb{L}\|_{L^1(\mathbb{R}^d)}.$$
		
	\end{lemma}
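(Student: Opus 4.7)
My plan is to exploit the directional--frequency separation between the angular Fourier projector $(I-G_\nu^j)$ and the operator $T^{i,j}_{\Omega,\mathbb{L};\,s,\nu}$, whose kernel is supported in a thin cone of angular aperture $\simeq 2^{-j\gamma}$ around $e_\nu^j$. By an uncertainty principle heuristic, the Fourier transform of such a cone-localized kernel is essentially concentrated on the equatorial disk perpendicular to $e_\nu^j$, which is precisely the support of $G_\nu^j$. Consequently $I-G_\nu^j$ should kill the main part of $T^{i,j}_{\Omega,\mathbb{L};\,s,\nu}b_\mathbb{L}$; the additional factor $I-P_{s-j\kappa}$ removes the low-frequency bulk at scales larger than $2^{s-j\kappa}$ and provides extra cancellation.

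Concretely, I would proceed as follows. First, realise $(I-G_\nu^j)(I-P_{s-j\kappa})$ as convolution against a distribution $\Phi_{s,j,\nu}$ whose full Fourier symbol $\bigl(1-\psi(2^{j\gamma}\langle \xi/|\xi|,e_\nu^j\rangle)\bigr)\bigl(1-\widehat{\omega}(2^{s-j\kappa}\xi)\bigr)$ vanishes both near $\xi=0$ and on the equatorial strip $|\langle \xi/|\xi|,e_\nu^j\rangle|\le 2^{1-j\gamma}$; integration by parts yields rapidly-decaying Schwartz-type tails for $\Phi_{s,j,\nu}$ in the direction $e_\nu^j$ at scale $2^{s-j\kappa}$. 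Second, compose with $T^{i,j}_{\Omega,\mathbb{L};\,s,\nu}$ and use the vanishing moment $\int_\mathbb{L} b_\mathbb{L}=0$ to replace the composed kernel $\widetilde{K}_{\nu,s,j}(x,y)$ by $\widetilde{K}_{\nu,s,j}(x,y)-\widetilde{K}_{\nu,s,j}(x,c_\mathbb{L})$, where $c_\mathbb{L}$ is the centre of $\mathbb{L}$. Third, invoke part (iii) of Lemma \ref{lem2.x} together with the smoothness of $\Gamma_\nu^j$ and $\omega_{s-l_\tau(j)}$ to bound this difference by $j\cdot|y-c_\mathbb{L}|/2^s\lesssim j\,2^{-j}$, up to harmless logarithmic factors. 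Fourth, integrate in $x$ using that the angular volume of $\mathrm{supp}\,\Gamma_\nu^j$ on $\mathbb{S}^{d-1}$ is $\simeq 2^{-j\gamma(d-1)}$, and sum over the $\simeq 2^{j\gamma(d-1)}$ indices $\nu$; the angular factors cancel and a residual $j\,2^{-j}$ survives.

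The main obstacle I foresee is controlling $\Phi_{s,j,\nu}$ uniformly in $L^1$: neither $I-G_\nu^j$ nor $I-P_{s-j\kappa}$ is individually bounded on $L^1$, and the convolution kernel of the composition must be shown to have $L^1$-norm growing at most polynomially in $j$. This should follow from the Schwartz-type decay of $\Phi_{s,j,\nu}$ in directions transverse to $e_\nu^j$, combined with the extra cancellation from $I-P_{s-j\kappa}$ at scale $2^{s-j\kappa}$, but the quantitative estimate requires a careful integration-by-parts argument in mixed coordinates adapted to $e_\nu^j$. A secondary subtlety is choosing the three free parameters $\gamma,\kappa,\tau\in(0,1)$ so that the decays furnished by Lemmas \ref{lem2.4}, \ref{ljj}, \ref{lem2.5} and the present lemma all survive compatibly, giving a net geometric gain $2^{-s_0 j}$ with some $s_0>0$; after multiplication by the linear factor $j$ and summation over $j$ and $i$ (against $\|\Omega_i\|_{L^\infty(\mathbb{S}^{d-1})}\lesssim 2^i$) this must still be summable and produce the required $L(\log L)^2$ bound for $\Omega$.
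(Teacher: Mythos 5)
Your high-level intuition about the directional--frequency separation between $I-G_\nu^j$ and the cone-localized kernel of $T^{i,j}_{\Omega,\mathbb{L};\,s,\nu}$ is sound, and your step 4 (the angular volume $\simeq 2^{-j\gamma(d-1)}$ cancelling the count $\simeq 2^{j\gamma(d-1)}$ of directions $\nu$) does appear in the paper's proof. But the proposal has two genuine gaps.

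First, the crucial quantitative step is not carried out, and the mechanism you propose to replace it is not the one that actually works. In step 3 you want to use the vanishing moment $\int_\mathbb{L}b_\mathbb{L}=0$ together with Lemma \ref{lem2.x}(iii). That cancellation has already been spent on the term $P_{s-j\kappa}T^{i,j}_{\Omega,\mathbb{L};\,s,\nu}b_\mathbb{L}$ in Lemma \ref{ljj}; for $(I-G_\nu^j)(I-P_{s-j\kappa})T^{i,j}_{\Omega,\mathbb{L};\,s,\nu}b_\mathbb{L}$ the paper makes no further use of the moment of $b_\mathbb{L}$. Instead, the gain comes from a Littlewood--Paley decomposition in a new radial frequency parameter $k$: one writes $(I-G_\nu^j)T^{i,j}_{\Omega,\mathbb{L};\,s,\nu}=\sum_k(I-G_\nu^j)\Lambda_k T^{i,j}_{\Omega,\mathbb{L};\,s,\nu}$, inserts $\tilde\Lambda_k$, and plays two $L^1$ bounds against each other. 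For small $k$ one uses the kernel estimate $\|M_k(\cdot,y)\|_{L^1}\lesssim j\,2^{\tau j-j\gamma(d-1)-s+k+j\gamma(1+2M)}$ (from the smoothing effected by $(I-G_\nu^j)\Lambda_k$ on the cone-localized kernel, cf.\ Lemma 4.2 of \cite{dinglai}); for large $k$ one uses $\|(I-P_{s-j\kappa})\tilde\Lambda_k\|_{L^1\to L^1}\lesssim 2^{s-j\kappa-k}$, which is the actual quantitative content of the heuristic that $I-P_{s-j\kappa}$ kills low frequencies. Splitting at $k=s-[j\varepsilon_0]$ and choosing $0\ll\tau\ll\gamma\ll\varepsilon_0<\kappa<1$ yields $2^{-s_0j}$ with $s_0=-\max\{\tau-\varepsilon_0+\gamma(1+2M),\,\varepsilon_0-\kappa\}>0$. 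Without this two-sided frequency splitting there is no obvious way to turn your uncertainty-principle heuristic into an $L^1$ estimate, and your fallback ``integration-by-parts in mixed coordinates'' is not concrete enough to fill the gap.

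Second, your stated obstacle ``neither $I-G_\nu^j$ nor $I-P_{s-j\kappa}$ is individually bounded on $L^1$'' is half wrong: $P_{s-j\kappa}$ is convolution with a nonnegative kernel of integral one, so $\|(I-P_{s-j\kappa})f\|_{L^1}\le 2\|f\|_{L^1}$ trivially. The genuine obstruction is $G_\nu^j$ alone, and the paper circumvents it precisely by pairing it with $\Lambda_k$ (so that $\|(I-G_\nu^j)\Lambda_k\|_{L^1\to L^1}\lesssim 1$ by the argument of \cite[Lemma 3.2]{lai2}). Finally, your heuristic conclusion that ``a residual $j\,2^{-j}$ survives'' overstates the gain: the argument only produces $j\,2^{-s_0j}$ for a small $s_0$ determined by the parameter hierarchy, which is all that is needed but is not $2^{-j}$.
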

	Next we give the estimate of ${\rm D}_{2}^*$ and  	postpone the proof of  Lemma \ref{lem2.6wzd} later.
	
	Let $\varepsilon=\min\{  (1-\kappa), (1-\tau), s_0,\gamma \}$.  With Lemma \ref{ljj}, Lemma \ref{lem2.5} and Lemma \ref{lem2.6wzd}, we have
	\begin{eqnarray}
		&&\Big|\Big\{x\in\mathbb{R}^d:\, |{\rm D}_{2}^*|>1/16\Big\}\Big|\label{ddd}\lesssim  \sum\limits_{i=0}^{\infty}  \sum_{ j> Ni} j^22^{-j\varepsilon}\|\Omega_i\|^2_{L^{\infty}(\mathbb{S}^{d-1})}\Big\|\sum_s\sum_{\mathbb{L}\in\mathcal{S}_{s-j}}b_\mathbb{L}\Big\|_{L^1(\mathbb{R}^d)}\lesssim  \|f\|_{L^1(\mathbb{R}^d)}.
	\end{eqnarray}
	The proof of Lemma \ref{lem2.6wzd}
	is similar to the proof   of Lemma 2.4 in \cite{dinglai}.
	For the completeness of this paper, we give the proof for the remaining term $(1-G_{\nu}^j)(I-P_{s-j\kappa})T^{i,j}_{\Omega,{\mathbb{L}};\,s,\nu}$ here.
	Let's introduce the Littlewood-Paley decomposition first. Let $\alpha$ be a radial $C^{\infty}$ function such that $\alpha(\xi)=1$ for $|\xi| \leq 1$, $\alpha(\xi)=0$ for $|\xi| \geq 2$ and $0\leq \alpha(\xi)\leq 1$  for all $\xi \in \mathbb{R}^d$. Define $\beta _k(\xi)=\alpha(2^k\xi)-\alpha(2^{k+1}\xi)$.
	Choose  $\tilde{\beta}$ be a radial $C^{\infty}$ function such that $\tilde{\beta}(\xi)=1$ for $1/2\leq |\xi| \leq 2$, $\rm{supp} \,\tilde{\beta} \in [1/4,4] $ and
	$0\leq \tilde{\beta}\leq 1$ for all $\xi\in \mathbb{R}^d$. Set $ \tilde{\beta}_k(\xi)=\tilde{\beta}(2^k\xi) $, then it is easy to see $\beta_k=\tilde{\beta}_k{\beta}_k$. Define the convolution operators $\Lambda _k$ and $\tilde{\Lambda} _k$ with Fourier multipliers $\beta _k$ and $\tilde{\beta _k}$, respectively.
	\begin{align*}
		\widehat{\Lambda _k f}(\xi)= \beta _k(\xi)\widehat{f}(\xi),\qquad \widehat{ \tilde{ \Lambda} _k f}(\xi)=\tilde{ \beta} _k(\xi)\widehat{f}(\xi).
	\end{align*}
	It is easy to have $\Lambda _k=\tilde{ \Lambda} _k \Lambda _k$.
	
	{\bf  Proof of  Lemma \ref{lem2.6wzd}.}
	We first write $(I-{G_{\nu}^j)}T^{i,j}_{\Omega,{\mathbb{L}};\,s,\nu}= \sum\limits_{k}(I-{G_{\nu}^j)} \Lambda _k T^{i,j}_{\Omega,{\mathbb{L}};\,s,\nu}$. Then
	\begin{align*}
		\|  (I-{G_{\nu}^j)}(I-P_{s-j\kappa} )\Lambda _kT^{i,j}_{\Omega,{\mathbb{L}};\,s,\nu}b_\mathbb{L}\|_{L^1(\mathbb{R}^d)}
		&\leq     \| (I-P_{s-j\kappa} )\tilde{\Lambda} _k  (I-{G_{\nu}^j)}\Lambda _kT^{i,j}_{\Omega,{\mathbb{L}};\,s,\nu}b_\mathbb{L}\|_{L^1(\mathbb{R}^d)}\\
		&\leq  \| (I-P_{s-j\kappa} )\tilde{\Lambda} _k\|_{L^1(\mathbb{R}^d)\rightarrow L^1(\mathbb{R}^d)} \| (I-{G_{\nu}^j)}\Lambda _kT^{i,j}_{\Omega,{\mathbb{L}};\,s,\nu}b_\mathbb{L}\|_{L^1(\mathbb{R}^d)}.
	\end{align*}
	We can write
	\begin{align*}
		(I-{G_{\nu}^j)}\Lambda _kT^{i,j}_{\Omega,{\mathbb{L}};\,s,\nu}b_\mathbb{L}(x)&=\int_\mathbb{L}  (I-{G_{\nu}^j)}\Lambda _k   \Omega_i(x-y)\Gamma^j_{\nu}(x-y)R_{s,\mathbb{L}}^{j}(x,y) b_\mathbb{L}(y)dy:= \int_\mathbb{L} M_k(x,y)b_\mathbb{L}(y)dy,
	\end{align*}
	where $M_k$ is the kernel of the operator $ (I-{G_{\nu}^j)}\Lambda _kT^{i,j}_{\Omega,{\mathbb{L}};\,s,\nu} $.
	Then
	$$\| (I-{G_{\nu}^j)}\Lambda _kT^{i,j}_{\Omega,{\mathbb{L}};\,s,\nu}b_\mathbb{L}\|_{L^1(\mathbb{R}^d)}\leq \int_\mathbb{L} \|M_k(\cdot,y)\|_{L^1(\mathbb{R}^d)} b_\mathbb{L}(y)dy.$$
	
	Applying the method of Lemma 4.2 in \cite{dinglai}, there esists $M>0$ such that
	$$\|M_k(\cdot,y)\|_{L^1(\mathbb{R}^d)}\lesssim j 2^{\tau j-j\gamma(d-1)-s+k+j\gamma(1+2M)} .$$
	Hence, note that $ \| (I-P_{s-j\kappa} )\tilde{\Lambda} _k\|_{L^1(\mathbb{R}^d)\rightarrow L^1(\mathbb{R}^d)} \leq \|\mathcal{F}^{-1}( \tilde{\beta}_k)-\omega_{s-j\kappa }*\mathcal{F}^{-1}( \tilde{\beta}_k) \|_{L^1(\mathbb{R}^d)}\lesssim 1$, we have
	\begin{align}
		\| (I-{G_{\nu}^j)}(I-P_{s-j\kappa} )\Lambda _kT^{i,j}_{\Omega,{\mathbb{L}};\,s,\nu}b_\mathbb{L}\|_{L^1(\mathbb{R}^d)}
		\lesssim j2^{\tau j-j\gamma(d-1)-s+k+j\gamma(1+2M)} \|b_\mathbb{L}\|_{L^1(\mathbb{R}^d)}.\label{last2}
	\end{align}
	On the other hand, we can write
	\begin{align*}
		&	\|(I-{G_{\nu}^j)}(I-P_{s-j\kappa} )\Lambda _kT^{i,j}_{\Omega,{\mathbb{L}};\,s,\nu}b_\mathbb{L}\|_{L^1(\mathbb{R}^d)}\\
		&	\leq  \| (I-P_{s-j\kappa} )\tilde{\Lambda} _k\|_{L^1(\mathbb{R}^d)\rightarrow L^1(\mathbb{R}^d)} \| (I-{G_{\nu}^j)}\Lambda _k\|_{L^1(\mathbb{R}^d)\rightarrow L^1(\mathbb{R}^d)}\|T^{i,j}_{\Omega,{\mathbb{L}};\,s,\nu}b_\mathbb{L}\|_{L^1(\mathbb{R}^d)}.
	\end{align*}
	By \eqref{deftlq},	it is easy to show that
	$$\|T^{i,j}_{\Omega,{\mathbb{L}};\,s,\nu}b_\mathbb{L}\|_{L^1(\mathbb{R}^d)}\lesssim j2^{-j\gamma(d-1)}\|\Omega_i\|_{L^{\infty}(\mathbb{S}^{d-1})}\|b_\mathbb{L}\|_{L^1(\mathbb{R}^d)}.$$
	Let $W_{k,s,\,\kappa}^j$ be the kernel of $(I-P_{s-j\kappa} )\tilde{\Lambda} _k$, then by the mean value formula, we obtain
	\begin{eqnarray}\label{equation2.wks}\int_{\mathbb{R}^d}|W_{k,s,\kappa}^j(y)|dy&\leq & \int_{\mathbb{R}^{d}} \int_{\mathbb{R}^{d}} \big|\mathcal{F}^{-1}{\tilde{\beta}}_k(y) -\mathcal{F}^{-1}{\tilde{\beta}}_k(y-z) \big|\omega_{s-j\kappa}(z)dzdy 
		\lesssim 2^{s-j\kappa-k}. 
	\end{eqnarray}
	By  the proof of \cite[Lemma 3.2]{lai2}, it holds that $\| (I-{G_{\nu}^j)}\Lambda _k\|_{L^1(\mathbb{R}^d)}\lesssim 1 $.
	Hence
	\begin{align}
		\| (I-{G_{\nu}^j)}(I-P_{s-j\kappa} )\Lambda _kT^{i,j}_{\Omega,{\mathbb{L}};\,s,\nu}b_\mathbb{L} \|_{L^1(\mathbb{R}^d)}
		\lesssim j 2^{-j\gamma(d-1)+s-k-j\kappa} \|b_\mathbb{L}\|_{L^1(\mathbb{R}^d)}.\label{last1}
	\end{align}
	Let  $m=s-[j\varepsilon_0]$, with $0<\varepsilon_0<1$. Since ${\rm card}(\mathfrak{E}^j)\lesssim 2^{j\gamma(d-1)}$. Then  \eqref{last2} and  \eqref{last1} lead to
	\begin{align*}
		& \sum_{\nu}  \| (I-{G_{\nu}^j)}(I-P_{s-j\kappa} )T^{i,j}_{\Omega,{\mathbb{L}};\,s,\nu}(b_\mathbb{L})\|_{L^1(\mathbb{R}^d)}\\
		&\leq  \Big (\sum_{\nu}  \sum_{k<m} + \sum_{\nu}  \sum_{k\geq m} \Big)
		\| (I-P_{s-j\kappa} ) (I-{G_{\nu}^j)}\Lambda _kT^{i,j}_{\Omega,{\mathbb{L}};\,s,\nu}b_\mathbb{L}\|_{L^1(\mathbb{R}^d)} \\
		&\lesssim (2^{{s_1}j}+2^{{s_2}j})j\|\Omega_i\|_{L^{\infty}(\mathbb{S}^{d-1})}  \|b_\mathbb{L}\|_{L^1(\mathbb{R}^d)},
	\end{align*}
	where $s_1=\big(\tau-\varepsilon_0+\gamma(1+2M)\big)$ and $s_2=-\kappa+\varepsilon_0$.
	
	We can now choose $0\ll \tau \ll \gamma \ll \varepsilon_0< \kappa < 1$ such that $\max\{s_1,s_2\}<0$. Let $s_0=-\max\{s_1,s_2\}$, then the proof of Lemma \ref{lem2.6wzd} is finished now.
	\vspace{0.6cm}
	
	{\bf {Step 3. Final conclusion.}}
	
	{	We  are ready to combine  \eqref{wmm}, Lemm \ref{lem2.4} and \eqref{ddd} to conclude the estimate for \eqref{equation2.6}.
		\begin{eqnarray*}
			&&\Big|\Big\{x\in\mathbb{R}^d\backslash E:\, \Big|\sum_j\sum_s\sum_{\mathbb{L}\in\mathcal{S}}T_{\Omega,A_\mathbb{L};s,j}b_\mathbb{L}(x)\Big|>1/4\Big\}\Big|\\
			&&\leq 8\|{\rm D}_{1}\|_{L^1(\mathbb{R}^d)}|
			+16\|{\rm D}_{2}-{\rm D}_{2}^*\|_{L^1(\mathbb{R}^d)}+\Big|\Big\{x\in\mathbb{R}^d:\, |{\rm D}_{2}^*|>1/16\Big\}\Big|\\
			&&\lesssim\|f\|_{L^1(\mathbb{R}^d)}.
		\end{eqnarray*}
		
		Then, this estimate, together with \eqref{good} and  \eqref{errorterm}, leads to inequality \eqref{equation2.2} in Theorem \ref{thm1.4}.}

	\subsection{Proof of  \eqref{equation2.3} in Theorem \ref{thm1.4}}
	It suffices to prove (\ref{equation2.3}) for $\lambda=1$. For a bounded function $f$ with compact support, we employ the Calder\'on-Zygmund decomposition to $|f|$ at level $1$ then obtain a collection
	of non-overlapping dyadic cubes $\mathcal{S}=\{Q\}$, such that
	$$\|f\|_{L^{\infty}(\mathbb{R}^d\backslash \cup_{Q\in\mathcal{S}}Q)}\lesssim 1,\quad \int_{Q}|f(x)|dx \lesssim |Q|,\qquad \hbox{and\ } \sum_{Q\in\mathcal{S}}|Q|\lesssim \int_{\mathbb{R}^d}|f(x)|dx.$$
	Let $E=\cup_{Q\in\mathcal{S}}100dQ$.
	With the same notations as in the proof of \eqref{equation2.2}, for $x\in\mathbb{R}^d\backslash E$, we write
	$$\widetilde{T}_{\Omega,A}b(x)=\sum_j\sum_{Q\in\mathcal{S}}T_{\Omega,\,A_Q,j}b_{Q}(x)-\sum_{Q\in\mathcal{S}}
	\sum_{n=1}^d\partial_nA_{Q}(x)T_{\Omega}^nb_Q(x).$$
	By estimate (\ref{equation2.6}), the proof of (\ref{equation2.3}) can be reduced to show that for each $n$ with $1\leq n\leq d$,
	$$\Big|\Big\{x\in\mathbb{R}^d\backslash E:\, \Big|\sum_{Q\in\mathcal{S}}\partial_nA_{Q}(x)T_nb_Q(x)\Big|>1/4\Big\}\Big|\lesssim \int_{\mathbb{R}^d}|f(x)|dx.$$
But this inequality has already been proved in \cite[Section 3]{hutao}. Then the proof of  \eqref{equation2.3} is finished.\qed
	
	\subsection{Proof of Corollary \ref{cor2.1}}
	
	The proof of Corollary \ref{cor2.1} is now standard. We present the proof here mainly to make the constant of the norm inequality clearly.  Consider the case $p\in (1,2]$.		
	Let 	$$f_{\lambda}(x)= \Big \{\begin{array}{ll}
		f(x),\,&|f(x)|>\lambda\\
		0,\,&|f(x)|\leq \lambda;\end{array}
	$$
	and
	$$f^{\lambda}(x)= \Big \{\begin{array}{ll}
		0,\,&|f(x)|>\lambda\\
		f(x),\,&|f(x)|\leq \lambda \end{array}
	$$
	By \eqref{equation2.2},  we have
	\begin{align*}
		p\int_{0}^{\infty}\lambda^{p-1}\{x\in\mathbb{R}^d:|{T}_{\Omega,A}f_{\lambda}(x)|>\lambda/2\}\,d\lambda&\leq p\int_{0}^{\infty}\lambda^{p-1}\int_{\mathbb{R}^d}\frac{|f_{\lambda}(x)|}{\lambda}\log\left(e+\frac{|f_{\lambda}(x)|}{\lambda}\right)dx\,d\lambda\\
				&\leq \left(\frac{p}{p-1}\right)^2\|f\|_{L^p(\mathbb{R}^d)}^p.
	\end{align*}
	$L^2(\mathbb{R}^d)$ boundedness of $T_{\Omega,\,A}$   implies that
	\begin{align*}
		p\int_{0}^{\infty}\lambda^{p-1}\{x\in\mathbb{R}^d:|{T}_{\Omega,A}f^{\lambda}(x)|>\lambda/2\}\,d\lambda\leq p\int_{|f(x)|}^{\infty}\lambda^{p-1}\lambda ^{-2}\|f^{\lambda}\|_{L^2(\mathbb{R}^d)}\,d\lambda
		\leq\frac{p}{2-p}\|f\|_{L^p(\mathbb{R}^d)}^p.
	\end{align*}
	Since $p\in (1,2]$, we have
		$$\|T_{\Omega,\,A}f\|_{L^p(\mathbb{R}^d)}=\bigg(p\int_{0}^{\infty}\lambda^{p-1}\{x\in\mathbb{R}^d:|{T}_{\Omega,A}f(x)|>\lambda\}\,d\lambda\bigg)^{1/p}\leq (p')^2\|f\|_{L^p(\mathbb{R}^d)}.$$
	When  $p\in (2,\infty)$,   by \eqref{equation2.3}, we know $\widetilde{T}_{\Omega,A}f(x)$ is of weak type $(1,1)$. Combining the $L^2(\mathbb{R}^d)$ boundedness of $\widetilde{T}_{\Omega,A}f(x)$ and the Marcinkiewica interpolation theorem, we have
	$$\|\widetilde{T}_{\Omega,\,A}f\|_{L^{p'}(\mathbb{R}^d)}\leq p'\|f\|_{L^{p'}(\mathbb{R}^d)}.$$
	By duality, it holds that
	$$\|{T}_{\Omega,\,A}f\|_{L^{p}(\mathbb{R}^d)}\leq p\|f\|_{L^{p}(\mathbb{R}^d)}.$$
	This completes the proof of  Corollary \ref{cor2.1}.\qed

	\section{$(L(\log L)^{\beta},\,L^{r})$ bilinear sparse domination of $T_{\Omega,\,A}$ }\label{ssp1}
	In this section, we will establish a bilinear sparse domination of $T_{\Omega,\,A}$. It is worth pointing out that we do not need to prove the self-adjoint property as was shown in the previous articles (see \cite[Appendix A]{ccdo} or \cite{Barron2017}). Throughout this section, we always assume that $\Omega\in L^{\infty}(\mathbb{S}^{d-1})$ and satisfies the vanishing condition \eqref{equa:1.1}.
	
	Suppose that $\mathcal{S}$
	is a sparse family of cubes. Associated with  $\mathcal{S}$ and constants $\beta,\,r\in (0,\,\infty)$, we define the bilinear sparse operator
	$\mathcal{A}_{\mathcal{S},\,L(\log L)^\beta,L^r}$  by
	$$\mathcal{A}_{\mathcal{S},\,L(\log L)^\beta,\,L^r}(f_1,\,f_2)=\sum_{Q\in\mathcal{S}}\langle |f_1|\rangle_{L(\log L)^\beta,\,Q}\langle |f_2|\rangle_{Q,\,r}|Q|.$$
	Let us introduce the definition of $(L(\log L)^{\beta},\,L^{r})$ bilinear sparse domination.
	\begin{definition}[\bf{$(L(\log L)^{\beta},\,L^{r})$ bilinear sparse domination}]\label{defn3.1}
		Let $T$ be a sublinear operator acting on $\cup_{p\geq 1}L^p(\mathbb{R}^d)$ and $\beta,\,r\in (0,\,\infty)$. We say that $T$ enjoys a
		$(L(\log L)^{\beta},\,L^{r})$ bilinear sparse domination with a bound $A$, if
		\begin{eqnarray*}\bigg|\int_{\mathbb{R}^d}f_2(x)Tf_1(x)dx\bigg|\le A \sup_{\mathcal{S}}\mathcal{A}_{\mathcal{S},\,L(\log L)^{\beta},\,L^{r}}(f_1,\,f_2)
		\end{eqnarray*}
		holds for  bounded functions {$f_1$ and $f_2$ with compact supports},
		where the supremum is taken over all sparse families of cubes.
	\end{definition}
	The main purpose of this section is to demonstrate the following Theorem.
	\begin{theorem}\label{thm3.1}
	Under the hypothesis of Theorem \ref{thm1.2}, for any $r\in (1,\,2]$, the operator $T_{\Omega,\,A}$ enjoys a bilinear
	$(L\log L,\,L^r)$ sparse domination with bound $Cr'\|\Omega\|_{L^{\infty}(\mathbb{S}^{d-1})}$.
\end{theorem}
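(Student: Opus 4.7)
My plan is to adopt the grand maximal truncation operator framework of Lerner (refined for rough operators by Conde-Alonso--Culiuc--Di~Plinio--Ou and Li--P\'erez--Rivera-R\'ios), but reworked so that the ``bad'' slot is measured in $L\log L$ rather than $L^{1}$, in accordance with the genuine endpoint behavior of $T_{\Omega,A}$ provided by Theorem~\ref{thm1.4}. To this end, I first define, for fixed $r\in(1,2]$, the grand maximal truncation operator
\[
\mathcal{M}_{T_{\Omega,A},r}f(x)=\sup_{Q\ni x}\Bigl\langle\bigl|T_{\Omega,A}(f\chi_{\mathbb{R}^{d}\setminus 3Q})-c_{Q}\bigr|\Bigr\rangle_{Q,r},
\]
where $c_{Q}$ is a suitable constant (typically the median, or an average of $T_{\Omega,A}(f\chi_{\mathbb{R}^{d}\setminus 3Q})$ over $Q$). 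The goal is to prove the \emph{local estimate}: for every cube $Q_{0}$ and every bounded $f$,
\[
\bigl|\{x\in Q_{0}:|T_{\Omega,A}(f\chi_{3Q_{0}})(x)|>C\|\Omega\|_{L^{\infty}}\langle f\rangle_{L\log L,\,3Q_{0}}\}\bigr|+\bigl|\{x\in Q_{0}:\mathcal{M}_{T_{\Omega,A},r}(f\chi_{3Q_{0}})(x)>Cr'\|\Omega\|_{L^{\infty}}\langle f\rangle_{L\log L,\,3Q_{0}}\}\bigr|\le \tfrac{1}{2}|Q_{0}|.
\]
Once this is in hand, the standard iteration (choose the subcubes where the above fails, and iterate on the complement inside each $Q_{0}$) yields a $\tfrac{1}{2}$-sparse family $\mathcal{S}$ together with the pointwise/weak bound needed to convert the pairing $\langle T_{\Omega,A}f_{1},f_{2}\rangle$ into a sum over $\mathcal{S}$ of $\langle f_{1}\rangle_{L\log L,Q}\langle f_{2}\rangle_{Q,r}|Q|$.

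Next I would verify the two ingredients of the local estimate. The first term is controlled by Theorem~\ref{thm1.4}: since $T_{\Omega,A}$ satisfies the $L\log L$ endpoint estimate $|\{|T_{\Omega,A}h|>\lambda\}|\lesssim \int \Phi(|h|/\lambda)$, applied with $h=f\chi_{3Q_{0}}$ and $\lambda$ of the order of $\|\Omega\|_{L^{\infty}}\langle f\rangle_{L\log L,3Q_{0}}$, gives half of the estimate after using $\int_{3Q_{0}}\Phi(|f|/\lambda)\le |3Q_{0}|$. The scale factor $r'$ does not enter here; it enters through the maximal term, where I use Corollary~\ref{cor2.1}: for $r\in(1,2]$ one has $\|T_{\Omega,A}h\|_{L^{r}}\lesssim r'^{2}\|h\|_{L^{r}}$. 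Combined with Chebyshev and a covering argument, this yields the $L^{r}$-type bound for $\mathcal{M}_{T_{\Omega,A},r}$ and contributes the linear factor $r'$ in the final constant (the square $r'^{2}$ from Corollary~\ref{cor2.1} is absorbed because one slot is estimated in $L\log L$; the sharp linear dependence comes from interpolating the maximal term between the weak-type $L\log L$ bound and the $L^{r}$ bound).

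The heart of the argument, and the main obstacle, is showing that replacing $T_{\Omega,A}(f\chi_{\mathbb{R}^{d}\setminus 3Q_{0}})$ by a single constant $c_{Q_{0}}$ causes only a controlled error, namely $\|T_{\Omega,A}(f\chi_{\mathbb{R}^{d}\setminus 3Q_{0}})-c_{Q_{0}}\|_{L^{r}(Q_{0})}\lesssim r'\|\Omega\|_{L^{\infty}}|Q_{0}|^{1/r}\langle f\rangle_{L\log L,\,3Q_{0}}$. This is subtle because the kernel of $T_{\Omega,A}$ does not enjoy standard H\"older regularity. The plan is to use the smooth dyadic decomposition $T_{\Omega,A}=\sum_{j}T_{\Omega,A;j}$ from Section~\ref{Sect 2}, replace $A$ locally by $A_{L}$ (subtracting the BMO average of $\nabla A$ over a suitable scale, as was done successfully in Section~\ref{Sec3}), and then invoke the quantitative smoothness of the resulting kernels established in Lemma~\ref{lem2.x}(ii),(iii). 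The logarithmic loss $j+|\log(2^{s-j}|x-x'|^{-1})|$ appearing there sums geometrically against the scale separation between $Q_{0}$ and annuli $2^{k+1}Q_{0}\setminus 2^{k}Q_{0}$, provided one also uses the generalized H\"older inequality \eqref{generalholder} and \eqref{holderandjohnniren} to handle the $\mathrm{BMO}$-oscillation of $\nabla A$ on each annulus; the upshot is that the contribution of each annulus is bounded by $2^{-k\epsilon}\|\Omega\|_{L^{\infty}}\langle f\rangle_{L\log L,3Q_{0}}$ with some $\epsilon>0$. Summing in $k$ closes the argument, at which point assembling the local estimate and running the standard iteration produces the sparse form with constant $Cr'\|\Omega\|_{L^{\infty}}$, completing the proof of Theorem~\ref{thm3.1}.
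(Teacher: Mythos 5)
Your proposal takes a genuinely different route from the paper — a pointwise/grand maximal truncation scheme in the spirit of Lerner, versus the paper's bilinear-form iteration over stopping collections (following Conde-Alonso--Culiuc--Di~Plinio--Ou) — but the proposal has a real gap at its core, and the mechanism you give for the constant $r'$ is not right.

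The step you correctly identify as ``the heart of the argument,'' namely
\[
\|T_{\Omega,A}(f\chi_{\mathbb{R}^{d}\setminus 3Q_{0}})-c_{Q_{0}}\|_{L^{r}(Q_{0})}\lesssim r'\|\Omega\|_{L^{\infty}}|Q_{0}|^{1/r}\langle f\rangle_{L\log L,\,3Q_{0}},
\]
is asserted but not proved, and the sketch you give does not readily close it. Lemma~\ref{lem2.x} controls the regularity of the \emph{modified} kernel $R_{s,\mathbb{L};j}$, built from a compactly supported modification $A_{\varphi_{\mathbb{L}}}$ at a \emph{single} scale $\ell(\mathbb{L})=2^{s-j}$; in the paper this is applied after a Calder\'on--Zygmund decomposition so that the bad part $b$ is localized to cubes $\mathbb{L}$ at matched scales. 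For the tail $f\chi_{\mathbb{R}^{d}\setminus 3Q_{0}}$ there is no such localization: $y$ ranges over annuli $2^{k+1}Q_{0}\setminus 2^{k}Q_{0}$ for all $k\geq 1$, so a single cutoff $\varphi_{Q_{0}}$ cannot be used uniformly, and the John--Nirenberg growth $|\langle\nabla A\rangle_{Q_0}-\langle\nabla A\rangle_{I(x,2^k\ell(Q_0))}|\lesssim k$ must be beaten back at every scale $k$. Moreover, the decomposition $T_{\Omega,A}=\sum T_{\Omega,A_{\mathbb{L}};s,j}-\sum_n T_{\Omega}^{n}(\,\cdot\,\partial_{n}A_{\mathbb{L}})$ that makes Lemma~\ref{lem2.x} applicable produces the extra term $\sum_n T_{\Omega}^{n}(f\,\partial_{n}A_{Q_0})$, which your outline never accounts for; in the paper this term is handled separately by Lemma~\ref{easy}, appealing to the sparse domination of the \emph{standard} rough operators $T_{\Omega}^{n}$. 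You would need an analogous ingredient for the grand maximal truncation of each $T_{\Omega}^{n}$ composed with multiplication by the BMO gradient, and that is nontrivial.

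Your explanation of where the factor $r'$ comes from is also inconsistent with what actually happens. You attribute it to ``interpolating the maximal term between the weak-type $L\log L$ bound and the $L^{r}$ bound,'' but that interpolation does not yield a linear $r'$; if anything Corollary~\ref{cor2.1} contributes $r'^{2}$. In the paper the linear $r'$ arises from interpolating \emph{in the scale-separation parameter} $j$ between Lemma~\ref{lemma t} (an $L^{1}$-type bound growing like $j$) and Lemma~\ref{e} (an $L^{1}\to L^{\infty}$-type bound decaying like $j2^{-\varepsilon j}$), and then summing $\sum_{j\geq 1}2^{-\varepsilon j(r-1)/(2r)}\sim r'$; see Lemma~\ref{diff}. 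To make your approach work you would have to reproduce this $j$-interpolation within the estimate for the oscillation of the tail over each annulus, which is precisely the content your sketch leaves out. As written the argument is incomplete.
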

In order to show Theorem \ref{thm3.1}, we first give some notions and several lemmas.
	For $s\in \mathbb{Z}$, let
	$$K_s(x,\,y)=\frac{\Omega(x-y)}{|x-y|^{d+1}}(A(x)-A(y)-\nabla A(y)(x-y))\phi_{s}(x-y)$$
	and define the bilinear operator $\Lambda ^A$ by
	$$\Lambda ^A(f_1,f_2)=\sum_{s}\int_{\mathbb{R}^{d}} \int_{\mathbb{R}^{d}}  K_s(x,\,y)f_1(y)f_2(x)dydx.$$
	Then {H\"{o}lder's inequality}, together with the boundedness of $T_{\Omega,\,A}$,  Corollary \ref{cor2.1}, tells us that for $r\in (1,\,2]$, the following inequality holds
	$${\Big|\int_{\mathbb{R}^d}f_2(x)T_{\Omega,\,A}f_1(x)dx\Big|\lesssim r'\|f_1\|_{L^{r'}(\mathbb{R}^d)}\|f_2\|_{L^{r}(\mathbb{R}^d)}.}
	$$
	Therefore, if $\|f_1\|_{L^{r'}}\|f_2\|_{L^{r}}\neq 0,$ we have
	\begin{eqnarray}\label{equation3.1}
		C_T=\frac{\big|\Lambda ^A(f_1,\,f_2)\big|}{\| f_1\|_{L^{r'}(\mathbb{R}^d)} \| f_2\|_{ L^{r}(\mathbb{R}^d)}}\lesssim r'  ,\,\,\,r\in (1,\,2].
	\end{eqnarray}
	
	\begin{definition}[\bf{Localized spaces over stopping collections}, \cite{ccdo}]\label{defn3.2}
		
		Let $Q\in\mathcal{D}$ be a fixed dyadic cube in $\mathbb{R}^d$. A collection $\mathcal{Q}\subset\mathcal{D}$ of dyadic cubes is a stopping collection with top $Q$ if the elements of $\mathcal{Q}$ are pairwise disjoint, contained in $3Q$ and enjoy the following properties:
		\begin{eqnarray*}
			&&{\rm(1).} \quad  \mathbb{L}_1,\,\mathbb{L}_2\in\mathcal{Q},\,\,\mathbb{L}_1\cap \mathbb{L}_2\not=\emptyset\Rightarrow \mathbb{L}_1=\mathbb{L}_2,\,\,\,\mathbb{L}\in \mathcal{Q}\Rightarrow \mathbb{L}\in 3Q;\\
			&&{\rm(2).} \quad \mathbb{L}_1,\,\mathbb{L}_2\in\mathcal{Q},\,|s_{\mathbb{L}_1}-s_{\mathbb{L}_2}|\geq 8\Rightarrow 7\mathbb{L}_1\cap 7\mathbb{L}_2=\emptyset,
		 \bigcup_{\mathbb{L}\in \mathcal{Q},\,3\mathbb{L}\cap 2Q\not =\emptyset}9\mathbb{L}\subset \bigcup_{\mathbb{L}\in \mathcal{Q}}\mathbb{L}:={\rm sh}\mathcal{Q}.\nonumber
		\end{eqnarray*}
	\end{definition}
	\begin{definition}[\bf{Norm $\mathcal{Y}_p(\mathcal{Q})$}, \cite{ccdo}]
		For $p\in [1,\,\infty]$, define $\mathcal{Y}_p(\mathcal{Q})$ as the subspace of $L^p(\mathbb{R}^d)$ of functions satisfying ${\rm supp}\,h\subset 3Q$ and $\|h\|_{\mathcal{Y}_p(\mathcal{Q})}<\infty$, where
		$$ \|h\|_{\mathcal{Y}_p(\mathcal{Q})}=\Bigg\{\begin{array}{ll}
			\max\big\{\|h\chi_{\mathbb{R}^d\backslash {\rm sh}\mathcal{Q}}\|_{L^{\infty}(\mathbb{R}^d)},\,\,\sup_{\mathbb{L}\in\mathcal{Q}}\inf_{x\in \mathbb{L}^*}M_ph(x)\big\},\,\,&p\in [1,\,\infty)\\
			\|h\|_{L^{\infty}(\mathbb{R}^d)},\,&p=\infty,\end{array}$$
		and ${\mathbb{L}}^*=2^5\mathbb{L}$. 
		
		We also define
		$$ \|h\|_{\mathcal{Y}_{L\log L}(\mathcal{Q})}=
		\max\big\{\|h\chi_{\mathbb{R}^d\backslash {\rm sh}\mathcal{Q}}\|_{L^{\infty}(\mathbb{R}^d)},\,\,\sup_{\mathbb{L}\in\mathcal{Q}}\inf_{x\in \mathbb{L}^*}M_{L\log L}h(x)\big\}.$$
	Moreover, $\mathcal{X}_p(\mathcal{Q})$ ($\mathcal{X}_{L\log L}(\mathcal{Q})$) is denoted to be the subspace of $\mathcal{Y}_p(\mathcal{Q})$ ($\mathcal{Y}_{L\log L}(\mathcal{Q})$) of functions satisfying
		$$b(x)=\sum_{\mathbb{L}\in\mathcal{Q}}b_\mathbb{L}(x),\,\,\,{\rm supp}\,b_\mathbb{L}\subset \mathbb{L}.$$
		Furthermore, we write $b\in \dot{\mathcal{X}}_p(\mathcal Q)$ if $b=\sum_{\mathbb{L}\in\mathcal{Q}}b_\mathbb{L}(x)\in \mathcal{X}_p(\mathcal{Q})$ and for all $\mathbb{L}\in\mathcal{Q}$, $\int_{\mathbb{L}}b_\mathbb{L}(x)dx=0$.
		For $b\in\mathcal{X}_p(\mathcal{Q})$ ($\mathcal{X}_{L\log L}(\mathcal{Q})$), we use $\|b\|_{\mathcal{X}_p(\mathcal{Q})}$ ($\mathcal{X}_{L\log L}(\mathcal{Q})$) to denote $\|b\|_{\mathcal{Y}_p(\mathcal{Q})}$ ($\mathcal{Y}_{L\log L}(\mathcal{Q})$), and
		similarly for $b\in\dot{\mathcal{X}}_p(\mathcal{Q})$ ($\dot{\mathcal{X}}_{L\log L}(\mathcal{Q})$).
	\end{definition}
	
	For a dyadic cube $Q$ with $s_Q=\log_2\ell(Q)$, define the operators $T^Q_{\Omega,\,A}f$ and $\Lambda^{A,s_Q}_Q$ by
	$$T^Q_{\Omega,\,A}f(x)=\sum_{s\leq s_Q}\int_{\mathbb{R}^d}K_s(x,y)f_1(y)dy;
	$$
	$$\Lambda ^{A,s_Q}_Q(f_1,f_2)=\sum_{s\leq s_Q}\int_{\mathbb{R}^d}\int_{\mathbb{R}^d}K_s(x,y)f_1(y)dy\, f_2(x)\,dx.
	$$
Then we have $$\Lambda^{A,s_Q}_Q(f_1\chi_Q, f_2)=\Lambda^{A,s_Q}_Q(f_1\chi_Q, f_2\chi_{3Q}):=\Lambda_Q^A(f_1,f_2).$$

	Given a stopping collection $\mathcal{Q}$ with top $Q$, we define
	$$\Lambda_{\mathcal{Q}}^A(f_1,f_2):=\Lambda_Q^A(f_1,f_2)-\sum\limits_{\mathbb{L}\in \mathcal{Q},\mathbb{L}\subset Q}\Lambda_\mathbb{L}^A(f_1,f_2).$$
	We need the following lemma.
	\begin{lemma} \label{lem41}Let $\Omega$ be homogeneous of degree zero, satisfy the vanishing condition (\ref{equa:1.1}) and $\Omega\in L(\log L)^{2}(\mathbb{S}^{d-1})$. Let $A$ be a function  in $\mathbb{R}^d$ with  derivatives of order one in ${\rm BMO}(\mathbb{R}^d)$.  Then
		$$\|T^{Q}_{\Omega,\,A}f\|_{L^p(\mathbb{R}^d)}\lesssim \Big \{\begin{array}{ll}
		p'^2\|f\|_{L^p(\mathbb{R}^d)},\,&p\in (1,\,2];\\
		p\|f\|_{L^p(\mathbb{R}^d)},\,&p\in (2,\,\infty).\end{array}
		$$
	\end{lemma}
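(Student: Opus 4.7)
The plan is to exploit the built-in dilation symmetry of the problem in order to reduce Lemma \ref{lem41} to Corollary \ref{cor2.1} applied to the smoothly truncated operator discussed in the Remark following Theorem \ref{thm1.4}. First I would observe that the function $\eta_Q(u) := \sum_{s \leq s_Q}\phi_s(u)$ is a smooth radial cutoff which is supported in $\{|u|\leq 2^{s_Q}\}$ and identically $1$ on $\{|u|\leq 2^{s_Q-2}\}$; in particular $\eta_Q(u)=\eta_0(2^{-s_Q}u)$ for a fixed unit-scale cutoff $\eta_0$, so that $T^Q_{\Omega,A}$ is precisely the smoothly truncated operator with kernel $K_A(x,y)\eta_Q(x-y)$, up to the irrelevant scale $s_Q$.

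Next I would perform the natural rescaling $\tilde f(u):=f(2^{s_Q}u)$, $\tilde A(u):=2^{-s_Q}A(2^{s_Q}u)$. Since $\Omega$ is homogeneous of degree zero and the BMO norm is scale-invariant (so $\|\nabla\tilde A\|_{{\rm BMO}}=\|\nabla A\|_{{\rm BMO}}$), a direct change of variables gives the identity $(T^Q_{\Omega,A}f)(2^{s_Q}x)=(T_{\Omega,\tilde A;\eta_0}\tilde f)(x)$. Taking $L^p$ norms, the Jacobian factors from both sides cancel against those relating $\|\tilde f\|_{L^p}$ to $\|f\|_{L^p}$, and it remains to prove the same $L^p$ bound, with the same quantitative dependence on $p$, for the fixed-scale operator $T_{\Omega,\tilde A;\eta_0}$.

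For this final step I would repeat the argument of Corollary \ref{cor2.1} verbatim. Both inputs to that argument carry over to the smoothly truncated setting: the $L^2$ boundedness of $T_{\Omega,\tilde A;\eta_0}$ is provided by Lemma 3 of \cite{chenlu}, while the weak $L\log L$ endpoint \eqref{equation2.2} for $T_{\Omega,\tilde A;\eta_0}$ is precisely what is asserted in the Remark following Theorem \ref{thm1.4}. Marcinkiewicz interpolation then produces the $p'^2$ bound for $p\in(1,2]$. For $p\in(2,\infty)$ one dualizes and needs a weak $(1,1)$ estimate for the truncated analogue of $\widetilde T_{\Omega,\tilde A}$, which follows along the lines of Section \ref{Sec3} via the decomposition into $T_{\Omega,\tilde A;\eta_0}$ plus commutators $[\partial_n\tilde A,\,T^n_{\Omega;\eta_0}]$. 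The main point to be verified will be that the smooth radial truncation $\eta_0$ does not spoil the weak $(1,1)$ bounds for the truncated Riesz-type rough singular integrals $T^n_{\Omega;\eta_0}$ or the commutator bound of Lemma \ref{wmmlem2.6}; this should be essentially routine because the difference $T^n_\Omega - T^n_{\Omega;\eta_0}$ is controlled pointwise by a Hardy-Littlewood-type maximal function, and all the decompositions used in Sections \ref{Sect 2}--\ref{Sec3} are stable under fixed smooth cutoffs.
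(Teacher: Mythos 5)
Your rescaling reduction is correct and supplies exactly what the paper leaves implicit when it defers the proof of Lemma~\ref{lem41} to ``inferring from \cite{chenlu} with small modifications.'' Writing $T^Q_{\Omega,A}=T_{\Omega,A;\eta_Q}$ with $\eta_Q(z)=\sum_{s\leq s_Q}\phi_s(z)=\eta_0(2^{-s_Q}z)$ and then dilating by $\tilde f(u)=f(2^{s_Q}u)$, $\tilde A(u)=2^{-s_Q}A(2^{s_Q}u)$ does transform $T^Q_{\Omega,A}$ into the fixed-scale truncation $T_{\Omega,\tilde A;\eta_0}$: the degree-zero homogeneity of $\Omega$, the cancellation $\lambda^{-d-1}\cdot\lambda\cdot\lambda^{d}=1$ of the Jacobian powers, and the scale invariance $\|\nabla\tilde A\|_{{\rm BMO}}=\|\nabla A\|_{{\rm BMO}}$ all go through as you describe, so the resulting constant is genuinely independent of $Q$. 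Feeding in the $L^2$ bound of \cite[Lemma 3]{chenlu} and the weak $L\log L$ estimate for the truncated operator (asserted in the Remark following Theorem~\ref{thm1.3}), the argument of Corollary~\ref{cor2.1} yields the case $p\in(1,2]$.

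Your treatment of $p\in(2,\infty)$ contains one misstatement. You justify the ``routine'' verification of weak $(1,1)$ for the truncated Riesz-type operators by asserting that the difference $T^n_\Omega-T^n_{\Omega;\eta_0}$ is controlled pointwise by a Hardy--Littlewood type maximal function. This is false: the convolution kernel of the difference is $\Omega(z)z_n|z|^{-d-1}\big(1-\eta_0(z)\big)$, supported on $\{|z|\geq c\}$ for some fixed $c>0$, and there it still has the non-integrable size $|\Omega(z/|z|)|\,|z|^{-d}$. It is a genuine singular convolution kernel (now singular at infinity rather than at the origin) and is not dominated by any radially decreasing integrable profile, so no maximal-function majorant exists. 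The weak $(1,1)$ estimate for $T^n_{\Omega;\eta_0}$, and hence for the truncated analogue of $\widetilde T_{\Omega,A}$, is nevertheless true, but for a different reason: the proofs of \eqref{equation2.3} in Section~\ref{Sec3} and of the underlying rough weak $(1,1)$ bounds in \cite{se,dinglai} decompose the operator over dyadic scales $\sum_s$, and passing to the truncation simply restricts that sum to $s\leq s_Q$, which only removes nonnegative contributions from an already-controlled series. Replacing the maximal-function rationale by this observation closes the argument; the rest of your proof stands.
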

	The proof of Lemma \ref{lem41} can be infered from \cite{chenlu} with small modifications. We omit the details here.
	Then, the application of Lemma \ref{lem41} leads to that
	$$|\Lambda_{{Q}}^A(f_1,f_2)|\lesssim r'\| f_1\|_{L^{r'}(\mathbb{R}^d)} \| f_2\|_{ L^{r}(\mathbb{R}^d)}\lesssim r'|Q|
	\| f_1\|_{\mathcal{Y}_{r'}(\mathcal{Q})}\| f_2\|_{ \mathcal{Y}_{r}(\mathcal{Q})},
	$$
	and
	\begin{eqnarray*}
		\sum\limits_{\mathbb{L}\in \mathcal{Q},\mathbb{L}\subset Q}\Lambda_\mathbb{L}^A(f_1,f_2)
		\lesssim r'|Q|
		\| f_1\|_{\mathcal{Y}_{r'}(\mathcal{Q})}\| f_2\|_{ \mathcal{Y}_{r}(\mathcal{Q})}.
	\end{eqnarray*}
	Therefore, it is easy to see that
	\begin{eqnarray*}
		|\Lambda_{\mathcal{\mathcal{Q}}}^A(f_1,f_2)|\lesssim   r'|Q|
		\| f_1\|_{\mathcal{Y}_{r'}(\mathcal{Q})}\| f_2\|_{ \mathcal{Y}_{r}(\mathcal{Q})}.
	\end{eqnarray*}

	\begin{remark}
		Given a stopping collection $\mathcal{Q}$ with top $Q$. Suppose $b=\sum_{\mathbb{L}\in\mathcal{Q}}b_\mathbb{L}$ with $b_\mathbb{L}$ supported in $\mathbb{L}$  and $g$ is supported in $3Q$. Fixing $\mathbb{L}\in \mathcal{Q}$ with scale $s_\mathbb{L}=\log_2\ell(\mathbb{L})$, we can get
		$$\Lambda_{\mathcal{Q}}^A(b_\mathbb{L},g)=\Lambda_Q^{A,s_Q}(b_\mathbb{L},g)-\Lambda_\mathbb{L}^{A,s_\mathbb{L}}(b_\mathbb{L},g{\chi_{3\mathbb{L}}});$$
		$$\Lambda_{\mathcal{Q}}^A(b,g)= \int_{\mathbb{R}^{2d}} \sum\limits_{s} \sum\limits_{j\geq 1}K_{s}(x,y)b_{s-j}(y)g(x) dy\,dx,$$
		where $b_{s-j}=\sum\limits_{\mathbb{L}\in \mathcal{Q},s_\mathbb{L}=s-j}b_\mathbb{L}$.
	\end{remark}

	It is easy to have that
	$$\Lambda_{\mathcal{Q} }^A(b_\mathbb{L},\,h)= 	\Lambda_{\mathcal{Q} }^{A_\mathbb{L}} (b_\mathbb{L},\,h) -\sum_{n=1}^d	\Lambda_{n,\mathcal{Q} } (\partial_nA_{\mathbb{L}}b_\mathbb{L},\,h) ,$$
	where
	$$ \Lambda_{\mathcal{Q} }^{A_\mathbb{L}} (b_\mathbb{L},\,h)=\int_{\mathbb{R}^d}\int_{\mathbb{R}^d}\sum_{s\in\mathbb{Z}} \frac{\Omega(x-y)}{|x-y|^{d}}\frac{\big(A_\mathbb{L}(x)-A_\mathbb{L}(y)\big)}{|x-y|}\phi_s(x-y)b_\mathbb{L}(y) h(x)dydx;$$
	$$	\Lambda_{n,\mathcal{Q} } (\partial_nA_{\mathbb{L}}b_\mathbb{L},\,h) =\int_{\mathbb{R}^d}\int_{\mathbb{R}^{d}}\sum\limits_{s\in\mathbb{Z}}\frac{\Omega(x-y)}{|x-y|^{d}}\frac{\partial_nA_{\mathbb{L}}(y)(x_n-y_n)}{|x-y|}\phi_s(x-y)b_\mathbb{L}(y)h(x)dydx.$$
	The following lemma is also needed in our proof.
	\begin{lemma} \label{easy}
		Let $r\in (1,\,2]$, $Q\subset \mathcal{D}$ and $\mathcal{Q}$ be a stopping collection with top cube $Q$. Then for $b\in\dot{\mathcal{X}}_1(\mathcal{Q})$, $f_2$ with ${\rm supp}\, f_2 \subset 3Q$, we have
		\begin{eqnarray}\label{equation4.5lem}
			\Big	|\Lambda_{n,\mathcal{Q}}(\sum\limits_{\mathbb{L}\in  \mathcal{Q}   }\partial_nA_{\mathbb{L}}b_\mathbb{L},\,f_2) \Big| \leq r'|Q|\|\Omega\|_{L^{\infty}(\mathbb{S}^{d-1})}\|b\|_{\dot{\mathcal{X}}_{L\log L}(\mathcal{Q})}\|f_2\|_{\mathcal{Y}_{r}(\mathcal{Q})}.
		\end{eqnarray}
	\end{lemma}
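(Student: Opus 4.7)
Since $\sum_{s\in\mathbb{Z}}\phi_s(x-y)\equiv 1$ for $x\neq y$, the bilinear form telescopes into
\[
\Lambda_{n,\mathcal{Q}}\Big(\sum_{\mathbb{L}\in\mathcal{Q}}\partial_n A_\mathbb{L}b_\mathbb{L},\,f_2\Big)
=\sum_{\mathbb{L}\in\mathcal{Q}}\int_{\mathbb{R}^d}T_\Omega^n(\partial_n A_\mathbb{L}b_\mathbb{L})(x)\,f_2(x)\,dx,
\]
where $T_\Omega^n$ is the classical Calder\'on--Zygmund operator with kernel $\Omega(x-y)(x_n-y_n)/|x-y|^{d+1}$. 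When $\Omega\in L^\infty(\mathbb{S}^{d-1})$ this kernel satisfies the standard size and regularity conditions, and $T_\Omega^n$ is bounded on $L^r(\mathbb{R}^d)$ with operator norm $\lesssim r'\|\Omega\|_{L^\infty(\mathbb{S}^{d-1})}$ for $r\in(1,2]$. For each $\mathbb{L}\in\mathcal{Q}$ I would split the $x$-integral according to whether $x\in\mathbb{L}^*=2^5\mathbb{L}$ or $x\notin\mathbb{L}^*$ and bound the two pieces separately.

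For the far contribution, if $x\notin\mathbb{L}^*$ and $y\in\mathbb{L}$ then $|x-y|\simeq|x-c_\mathbb{L}|$, so the pointwise size bound of the kernel together with \eqref{holderandjohnniren} applied to $(\partial_n A-\langle\partial_n A\rangle_\mathbb{L})\cdot b_\mathbb{L}$ gives
\[
|T_\Omega^n(\partial_n A_\mathbb{L}b_\mathbb{L})(x)|\lesssim \|\Omega\|_{L^\infty}\|\partial_n A\|_{\mathrm{BMO}}\|b_\mathbb{L}\|_{L\log L,\,\mathbb{L}}\,\frac{|\mathbb{L}|}{|x-c_\mathbb{L}|^d}.
\]
Pairing with $|f_2(x)|$, using the standard tail bound $\int_{\mathbb{R}^d\setminus\mathbb{L}^*}|f_2(x)||x-c_\mathbb{L}|^{-d}dx\lesssim\inf_{x\in\mathbb{L}^*}Mf_2(x)\le\|f_2\|_{\mathcal{Y}_r(\mathcal{Q})}$, and summing over $\mathbb{L}\in\mathcal{Q}$ with the disjointness $\sum_{\mathbb{L}\in\mathcal{Q}}|\mathbb{L}|\lesssim|Q|$ produces a bound of the required form, in fact with constant independent of $r$.

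For the near contribution, I would pass to the adjoint,
\[
\int_{\mathbb{L}^*}T_\Omega^n(\partial_n A_\mathbb{L}b_\mathbb{L})f_2\,dx=\int_\mathbb{L}(\partial_n A-\langle\partial_n A\rangle_\mathbb{L})(y)\,b_\mathbb{L}(y)\,F_\mathbb{L}(y)\,dy, \quad F_\mathbb{L}:=(T_\Omega^n)^*(f_2\chi_{\mathbb{L}^*}),
\]
and decompose $F_\mathbb{L}=\langle F_\mathbb{L}\rangle_\mathbb{L}+(F_\mathbb{L}-\langle F_\mathbb{L}\rangle_\mathbb{L})$. For the mean term, the $L^r$ boundedness of $(T_\Omega^n)^*$ combined with $\|f_2\chi_{\mathbb{L}^*}\|_{L^r}\lesssim|\mathbb{L}|^{1/r}\|f_2\|_{\mathcal{Y}_r}$ yields $|\langle F_\mathbb{L}\rangle_\mathbb{L}|\lesssim r'\|\Omega\|_{L^\infty}\|f_2\|_{\mathcal{Y}_r}$; multiplying by the $\exp L$--$L\log L$ bound \eqref{holderandjohnniren} for $\int_\mathbb{L}(\partial_n A-\langle\rangle)b_\mathbb{L}\,dy$ gives the admissible factor $r'|\mathbb{L}|\|\Omega\|_{L^\infty}\|\partial_n A\|_{\mathrm{BMO}}\|b_\mathbb{L}\|_{L\log L,\mathbb{L}}\|f_2\|_{\mathcal{Y}_r}$. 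For the oscillation term I would invoke the Fefferman--Stein sharp maximal estimate $M^{\#}((T_\Omega^n)^*g)(y)\lesssim\|\Omega\|_{L^\infty}M_s g(y)$ (valid for any $s>1$) to upgrade $F_\mathbb{L}-\langle F_\mathbb{L}\rangle_\mathbb{L}$ into $\|\Omega\|_{L^\infty}\|f_2\|_{\mathcal{Y}_r}$ in a suitable local Orlicz norm, and then conclude by the three-function $L\log L$--$\exp L$--$L^\infty$ H\"older inequality on $\mathbb{L}$. Summing over $\mathbb{L}\in\mathcal{Q}$ and collecting the two contributions gives the stated bound.

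The main obstacle is obtaining the sharp linear $r'$ dependence in the near part: the naive $L^{r'}\times L^r$ H\"older pairing forces control of $\|\partial_n A_\mathbb{L}b_\mathbb{L}\|_{L^{r'}(\mathbb{L})}$, which is not available from the $L\log L$ hypothesis on $b_\mathbb{L}$ alone without paying extra powers of $r'$ through John--Nirenberg. The workaround is to keep $b_\mathbb{L}$ in $L\log L$ throughout and instead absorb the CZ--operator into the oscillation of $\partial_n A-\langle\partial_n A\rangle_\mathbb{L}$ via the sharp maximal function bound; this is the delicate step where the hypotheses $\Omega\in L^\infty$ and $\partial_n A\in\mathrm{BMO}$ must be used in a perfectly balanced way.
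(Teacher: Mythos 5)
Your proposal diverges fundamentally from the paper's argument, and the near-diagonal step contains a genuine gap.

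First, the starting reduction is not quite right: $\Lambda_{n,\mathcal{Q}}$ is \emph{not} the full operator $T_\Omega^n$. When it acts on $\partial_nA_\mathbb{L}b_\mathbb{L}$ the scale sum $\sum_s$ is inherited from $\Lambda_\mathcal{Q}^A = \Lambda_Q^A - \sum_{\mathbb{L}}\Lambda_\mathbb{L}^A$, so it runs only over $s>s_\mathbb{L}$ (and is effectively cut at $s\lesssim s_Q$ by the supports). The identity $\sum_s\phi_s\equiv 1$ does not apply and the telescoping to $T_\Omega^n$ fails. This happens to make the form smaller, but it means your ``pass to the full CZ operator and its $L^r$ norm'' picture needs to be reformulated for the truncated object.

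The more serious issue is the near-part oscillation term. You propose to bound $\int_\mathbb{L}(\partial_nA-\langle\partial_nA\rangle_\mathbb{L})\,b_\mathbb{L}\,(F_\mathbb{L}-\langle F_\mathbb{L}\rangle_\mathbb{L})$ using Fefferman--Stein $M^\#$ to ``upgrade'' $F_\mathbb{L}-\langle F_\mathbb{L}\rangle_\mathbb{L}$ and then apply a three-function $L\log L\times\exp L\times L^\infty$ H\"older on $\mathbb{L}$. The sharp maximal bound $M^\#((T_\Omega^n)^*g)\lesssim M_s g$ controls the local $L^1$ (or BMO) oscillation of $F_\mathbb{L}$, not its $L^\infty$ norm, so the $L^\infty$ slot is unavailable. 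If instead you downgrade to $F_\mathbb{L}-\langle F_\mathbb{L}\rangle_\mathbb{L}\in\exp L$ via John--Nirenberg, the triple pairing becomes $L\log L\times\exp L\times\exp L$, which requires $b_\mathbb{L}\in L(\log L)^2$ rather than the hypothesized $L\log L$. So the balancing you hope for does not close, and the mechanism you call ``the delicate step'' is precisely where the proof breaks down.

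The paper avoids this entirely by \emph{not} estimating cube-by-cube. It notes that $f_1:=\sum_\mathbb{L}\partial_nA_\mathbb{L}b_\mathbb{L}$ is supported in $Q$ with $\|f_1\|_{L^1}\lesssim|Q|$, thanks to (\ref{holderandjohnniren}), the mean-zero property of $b_\mathbb{L}$, and $\|b_\mathbb{L}\|_{L\log L,\mathbb{L}}\leq 1$. Having absorbed the BMO factor into an $L^1$ normalization, it then proves and applies a single claim: for any $f_1$ with $\mathrm{supp}\,f_1\subset Q$ and $\langle|f_1|\rangle_Q\leq 1$, one has $|\Lambda_{n,\mathcal{Q}}(f_1,f_2)|\lesssim r'|Q|\|f_2\|_{\mathcal{Y}_r(\mathcal{Q})}$. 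That claim is proved by a \emph{fresh} Calder\'on--Zygmund decomposition of $f_1$ with respect to $\mathcal{Q}$ and the two sparse-domination lemmas for rough convolution CZ operators from \cite{ccdo} (Lemma 2.7 for the good part, Lemma 4.1 for the bad part); the factor $r'$ comes from those lemmas, not from any fine Orlicz balancing. The point you miss is that the $\partial_nA_\mathbb{L}$-weighted $b_\mathbb{L}$ structure need not be preserved — only its $L^1$ size matters before a fresh decomposition is taken.
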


	\begin{proof}We may assume $\|\Omega\|_{L^{\infty}(\mathbb{S}^{d-1})}=1$ and claim that for each $n$ $(1\leq n\leq d)$,
		the inequality 
		$$|\Lambda_{n,\mathcal{Q}}(f_1,\,f_2)|\lesssim r'|Q|\|f_2\|_{\mathcal{Y}_{r}(\mathcal{Q})}$$
	holds provided that ${\rm supp} f_1\subset Q$ and $\langle |f_1|\rangle_{Q}\leq 1$. 
	
	In fact, applying the Calder\'{o}n-Zygmund decomposition of $f_1$ at level $1$
		with respect to $\mathcal{Q}$, we may decompose $f_1$ as $f_1=b_1+g_1$,
		with the properties that		
		$$\|g_1\|_{L^{\infty}(\mathbb{R}^d)} \lesssim 1, \qquad  {\rm supp}\,g_1\subset Q, \qquad \|b_1\|_{\dot{\mathcal{X}}_{1}(\mathcal{Q})}\lesssim 1.$$
		An application of \cite[Lemma 2.7]{ccdo} may lead to that,
		\begin{eqnarray}
			|\Lambda_{n,\mathcal{Q}}(g_1,\,f_2)|
			&\lesssim &r'\|g_1\|_{L^{r'}(\mathbb{R}^{d})} \|f_2\|_{L^{r}(\mathbb{R}^{d})}\nonumber	\lesssim r'|Q| \|f_2\|_{\mathcal{Y}_{r}(\mathcal{Q})}.\nonumber
		\end{eqnarray}
		On the other hand, by \cite[Lemma 4.1]{ccdo} and the fact that  $\|b_1\|_{\dot{\mathcal{X}}_{1}(\mathcal{Q})}\lesssim 1$, we obtain 
		\begin{align}
		|	\Lambda_{n, \mathcal{Q}}(b_1,\,f_2)|\lesssim  r'|Q| \|{f_2}\|_{\mathcal{Y}_{r}(\mathcal{Q})}, \label{inzd2}
		\end{align}
	which verifies this claim.
				
		Now we return to the proof of (\ref{equation4.5lem}). By homogeneity, we may assume $\|b\|_{\dot{\mathcal{X}}_{L\log L}(\mathcal{Q})}=1$, which means that for each  $\mathbb{L}\in\mathcal{Q}$,
		$\|b_\mathbb{L}\|_{L\log L,\,L}\leq 1.$
		Observe that ${\rm supp}\,\sum\limits_{\mathbb{L}\in  \mathcal{Q}   }b_\mathbb{L}\partial_nA_{\mathbb{L}}\subset Q$, and by (\ref{holderandjohnniren}),
		$$\Big\|\sum\limits_{\mathbb{L}\in  \mathcal{Q}   }b_\mathbb{L}\partial_nA_{\mathbb{L}}\Big\|_{ L^{1}(\mathbb{R}^{d})}\lesssim \sum\limits_{\mathbb{L}\in  \mathcal{Q}   }|\mathbb{L}|\|b_\mathbb{L}\|_{ {L \log L, L}}\lesssim |Q|.$$
		Our claim now leads to
		$$\Big	|\Lambda_{n,\mathcal{Q}}(\sum\limits_{\mathbb{L}\in  \mathcal{Q}   }\partial_nA_{\mathbb{L}}b_\mathbb{L},\,f_2) \Big| \leq r'|Q|\|{f_2}\|_{\mathcal{Y}_{r}(\mathcal{Q})},$$
which completes the proof of Lemma \ref{easy}.			
	\end{proof}
	
	We need the following lemma, which is  a variation of Lemma 3.1 in \cite{ccdo}.
	\begin{lemma}\label{lemma t}
		Let $\Omega \in L^{\infty}(\mathbb{S}^{d-1})$.    Then for all $j\geq 1$, it holds that
		\begin{align} \label{4.11}
			\sum\limits_s \sum\limits_{\mathbb{L}\in \mathcal{Q},s_\mathbb{L}=s-j}	&\int_{\mathbb{R}^d}\int_{\mathbb{R}^d}\phi_s(x-y)\frac{|\Omega(x-y)|}{|x-y|^{d+1}}|A_\mathbb{L}(x)-A_\mathbb{L}(y)||b_{\mathbb{L}}(y)
			|\,dy|\,h(x)|\,dx\\
			&\leq j \|\Omega \|_{ L^{\infty}(\mathbb{S}^{d-1})} |Q|\|b\|_{  \dot{\mathcal X} _{L\log L}(\mathcal{Q})} \|h\|_{\mathcal{Y}_{1}(\mathcal{Q})}.\nonumber
		\end{align}
		
	\end{lemma}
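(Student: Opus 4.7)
The plan is to reduce the display on the left-hand side of \eqref{4.11} to a controllable expression in $Mh$, and then use the definition of the $\mathcal{Y}_1(\mathcal{Q})$ norm via a standard near/far splitting of $h$ around the enlargement $\mathbb{L}^{*}=2^{5}\mathbb{L}$. The key pointwise ingredient is the linear-in-$j$ oscillation bound
$$|A_\mathbb{L}(x)-A_\mathbb{L}(y)|\lesssim j\,\|\nabla A\|_{{\rm BMO}(\mathbb{R}^d)}|x-y|,\qquad y\in\mathbb{L},\ 2^{s-2}\le |x-y|\le 2^{s},$$
which I would establish exactly as in the proof of Lemma \ref{lem2.x}(i): writing $A_\mathbb{L}(x)-A_\mathbb{L}(y)=A(x)-A(y)-\langle\nabla A\rangle_\mathbb{L}(x-y)$, applying Cohen's Lemma \ref{lem2.3} on the cube $I_{(x,|x-y|)}$, and then using John--Nirenberg together with the dyadic telescoping $|\langle\nabla A\rangle_\mathbb{L}-\langle\nabla A\rangle_{I_{(x,|x-y|)}}|\lesssim j\,\|\nabla A\|_{{\rm BMO}}$, which comes from the fact that the two cubes differ in scale by a factor of order $2^{j}$.

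Inserting this bound and using $\phi_s(x-y)|\Omega(x-y)|\le \|\Omega\|_{L^\infty}\chi_{\{2^{s-2}\le |x-y|\le 2^s\}}$, the left-hand side of \eqref{4.11} is dominated by
$$Cj\,\|\Omega\|_{L^\infty(\mathbb{S}^{d-1})}\sum_{\mathbb{L}\in\mathcal{Q}}\int_{\mathbb{L}}|b_\mathbb{L}(y)|\,F_\mathbb{L}(y)\,dy,\qquad F_\mathbb{L}(y):=\int_{|x-y|\le 2^{s_\mathbb{L}+j+2}}\frac{|h(x)|}{2^{(s_\mathbb{L}+j)d}}\,dx.$$
To control $F_\mathbb{L}$ by $\|h\|_{\mathcal{Y}_1(\mathcal{Q})}$, I split $h=h\chi_{\mathbb{L}^{*}}+h\chi_{\mathbb{R}^d\setminus\mathbb{L}^{*}}$. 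For the far piece, for every $y\in\mathbb{L}$ every ball $B(y,r)$ that meets $\mathbb{R}^d\setminus\mathbb{L}^{*}$ must have radius $r\gtrsim\ell(\mathbb{L})$, and then $B(y,r)\subset B(x,Cr)$ for every $x\in\mathbb{L}^{*}$, so $M(h\chi_{\mathbb{R}^d\setminus\mathbb{L}^{*}})(y)\lesssim \inf_{x\in\mathbb{L}^{*}}Mh(x)\le\|h\|_{\mathcal{Y}_1(\mathcal{Q})}$. For the near piece I estimate crudely by $|\mathbb{L}^{*}|\langle|h|\rangle_{\mathbb{L}^{*}}\cdot 2^{-(s_\mathbb{L}+j)d}\lesssim 2^{-jd}\|h\|_{\mathcal{Y}_1(\mathcal{Q})}$, which is strictly smaller than the far contribution once $j\ge 1$.

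Putting the two pieces together gives $F_\mathbb{L}(y)\lesssim\|h\|_{\mathcal{Y}_1(\mathcal{Q})}$ uniformly in $y\in\mathbb{L}$, whence
$$\int_{\mathbb{L}}|b_\mathbb{L}(y)|F_\mathbb{L}(y)\,dy\lesssim\|h\|_{\mathcal{Y}_1(\mathcal{Q})}\,\|b_\mathbb{L}\|_{L^1(\mathbb{L})}\le|\mathbb{L}|\,\|h\|_{\mathcal{Y}_1(\mathcal{Q})}\|b_\mathbb{L}\|_{L\log L,\,\mathbb{L}}.$$
Because the supports of the $b_\mathbb{L}$'s are pairwise disjoint, $\|b_\mathbb{L}\|_{L\log L,\,\mathbb{L}}=\|b\|_{L\log L,\,\mathbb{L}}\le\inf_{x\in\mathbb{L}^{*}}M_{L\log L}b(x)\le\|b\|_{\dot{\mathcal{X}}_{L\log L}(\mathcal{Q})}$. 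Summing $\sum_{\mathbb{L}\in\mathcal{Q}}|\mathbb{L}|\le|\operatorname{sh}\mathcal{Q}|\le |3Q|\lesssim|Q|$ then yields \eqref{4.11}.

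The only point that required genuine work is the linear-in-$j$ oscillation estimate on $|A_\mathbb{L}(x)-A_\mathbb{L}(y)|$ when $x$ sits in an enlargement of $\mathbb{L}$ at scale $2^{j}\ell(\mathbb{L})$; everything else is the standard sparse-domination bookkeeping of trading a maximal function on $\mathbb{L}$ for the $\mathcal{Y}_1$ norm on $\mathbb{L}^{*}$.
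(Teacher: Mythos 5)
Your proof is correct and takes essentially the same route as the paper: prove the single-cube estimate by establishing the oscillation bound $|A_{\mathbb{L}}(x)-A_{\mathbb{L}}(y)|\lesssim j|x-y|$ on the annulus $2^{s-2}\le|x-y|\le 2^{s}$ via Lemma~\ref{lem2.3} and John--Nirenberg telescoping, trade the resulting local $h$-average for $\inf_{z\in\mathbb{L}^*}Mh(z)\le\|h\|_{\mathcal{Y}_1(\mathcal{Q})}$, use disjointness to identify $\|b_{\mathbb{L}}\|_{L\log L,\mathbb{L}}$ with $\|b\|_{\dot{\mathcal{X}}_{L\log L}(\mathcal{Q})}$, and sum $\sum_{\mathbb{L}\in\mathcal{Q}}|\mathbb{L}|\lesssim|Q|$. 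The near/far splitting of $h$ around $\mathbb{L}^*$ is an unnecessary detour: for any $y\in\mathbb{L}$ and $z\in\mathbb{L}^*$ one has $|y-z|\lesssim\ell(\mathbb{L})=2^{s-j}\le 2^{s}$, so $B(y,2^{s+2})\subset B(z,C2^{s+2})$ and $F_{\mathbb{L}}(y)\lesssim Mh(z)$ directly, exactly as the paper observes.
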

	\begin{proof}
		Since  {$\|b_\mathbb{L}\|_{L\log L, \mathbb{L}}\lesssim \|b\|_{\dot{\mathcal X} _{L\log L}(\mathcal{Q})}$}, it suffices to show that
		if $\mathbb{L}\in \mathcal{Q}$ and $s=s_\mathbb{L}+j$, then
		\begin{align*}
			&\int_{\mathbb{R}^{d}}\int_{\mathbb{R}^{d}}\phi_s(x-y)\frac{|\Omega(x-y)|}{|x-y|^{d+1}}|A_\mathbb{L}(x)-A_\mathbb{L}(y)| |b_{\mathbb{L}}(y)|\,|h(x)|\,dydx\\
			&\qquad\leq j \|\Omega \|_{ L^{\infty}(\mathbb{S}^{d-1})}  |\mathbb{L}|\|b_\mathbb{L}\|_{L \log L, \mathbb{L}} \|h\|_{\mathcal{Y}_{1}(\mathcal{Q})}.
		\end{align*}
		A straightforward computation involving	Lemma \ref{lem2.3} yields that
		\begin{align*}
			|A_{\varphi_{\mathbb{L}}}(x)-A_{\varphi_{\mathbb{L}}}(y)|
						\lesssim j|x-y|.
		\end{align*}
		Then  it follows that
		\begin{align*} &\int_{\mathbb{R}^{d}}\int_{\mathbb{R}^d}\phi_s(x-y)\frac{|\Omega(x-y)|}{|x-y|^{d+1}}|A_\mathbb{L}(x)-A_\mathbb{L}(y)
			||b_{\mathbb{L}}(y)\,dy|h(x)|\,dx\\
			&\lesssim\| \Omega\|_{ L^{\infty}(\mathbb{S}^{d-1})}\int_{\mathbb{R}^d}\int_{\mathbb{R}^d}\frac{|A_{\varphi_{\mathbb{L}}}(x)-A_{\varphi_{\mathbb{L}}}(y)|}{|x-y|^{d+1}} |\phi_s(x-y) b_{\mathbb{L}}(y)|dy h(x)|\,dx\\
			&\leq \| \Omega\|_{ L^{\infty}(\mathbb{S}^{d-1})} j\|b_{\mathbb{L}}\|_{L^1(\mathbb{R}^d)} \inf\limits_{z\in \mathbb{L}^*}Mh(z)\\
			&\lesssim j | \Omega\|_{ L^{\infty}(\mathbb{S}^{d-1})}  |\mathbb{L}|\|b\|_{\dot{\mathcal{X}}_{L\log L}(\mathcal{Q})}\|h\|_{\mathcal{Y}_1(\mathcal{Q})}.
		\end{align*}
	Note that the sums on the left side  of (\ref{4.11})  are  equivlent with the sum over $\mathbb{L}$ which is contained in $Q$.	This completes the proof of Lemma \ref{lemma t}.	
	\end{proof}
Let $T^{j}_{\Omega,{\mathbb{L}};\,s,\nu}b_{s-j}$ be the same as in \eqref{deftlq} with $\Omega_i$ replaced by $\Omega$, we give a stronger version of	 Lemma \ref{lem2.5}.
	\begin{lemma}\label{lemma2.5}
		Let $\beta>1$, $\gamma\in (0,1)$, then
		$$\Big|\langle  \sum_{\nu} \sum_sG_{\nu}^j(I-P_{s-j\kappa})T^{j}_{\Omega,{\mathbb{L}};\,s,\nu}b_{s-j},  h\rangle\Big|\lesssim  j 2^{-j\gamma/2 }  \|\Omega\|_{L^{\infty}(\mathbb{S}^{d-1})}|Q| \|b\|_{\dot{\mathcal{X}}_1(\mathcal{Q})} \|h\|_{\mathcal{Y}_{\beta}(\mathcal{Q})}.$$
	\end{lemma}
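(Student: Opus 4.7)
The plan is to decompose the bilinear pairing into frequency-localized pieces using the Fourier multipliers $G_\nu^j$, apply Cauchy--Schwarz in $\nu$, and estimate each factor by reusing ingredients from the proof of Lemma~\ref{lem2.5}. Since $G_\nu^j$ is a real-valued, even Fourier multiplier, hence self-adjoint on $L^2$, I write
\[
\langle F, h\rangle = \sum_\nu \langle F_\nu, G_\nu^j h\rangle, \qquad F_\nu := \sum_s (I-P_{s-j\kappa})T^{j}_{\Omega,\mathbb{L};\,s,\nu}b_{s-j},
\]
and then Cauchy--Schwarz in $\nu$ gives
\[
|\langle F, h\rangle| \le \Big(\sum_\nu \|F_\nu\|_{L^2}^2\Big)^{1/2}\Big(\sum_\nu \|G_\nu^j h\|_{L^2}^2\Big)^{1/2}.
\]

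For the first factor, I would repeat the pointwise-convolution argument from the proof of Lemma~\ref{lem2.5}, but replace the Calder\'on--Zygmund normalization ``$\|b_I\|_{L^\infty}\lesssim 1$'' (which fails for general $b\in\dot{\mathcal{X}}_1(\mathcal{Q})$) by the weaker estimate $\|b_I\|_{L^1}\le |I|\,\|b\|_{\dot{\mathcal{X}}_1(\mathcal{Q})}$. Combined with the $L^1$- and $L^\infty$-size bounds on $H^j_{s,\nu}$ and a volume count over disjoint cubes $I$ meeting $x+\widetilde{\mathcal{R}}^s_{j\nu}$, this yields the pointwise estimate $\sum_I H^j_{s,\nu}\!\ast\! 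H^j_{i,\nu}\!\ast\!|b_I|(x)\lesssim 2^{-2j\gamma(d-1)}\|b\|_{\dot{\mathcal{X}}_1(\mathcal{Q})}$ for $x\in\mathbb{L}$; after pairing against $|b_\mathbb{L}|$ and using $\sum_\mathbb{L}\|b_\mathbb{L}\|_{L^1}\lesssim |Q|\,\|b\|_{\dot{\mathcal{X}}_1(\mathcal{Q})}$, I expect to obtain
\[
\sum_\nu \|F_\nu\|_{L^2}^2 \lesssim j^2\|\Omega\|_{L^\infty}^2 \, 2^{-2j\gamma(d-1)}\,|Q|\,\|b\|_{\dot{\mathcal{X}}_1(\mathcal{Q})}^2.
\]
For the second factor, the bounded pointwise overlap of the cones $\{\xi:\psi(2^{j\gamma}\langle\xi/|\xi|,e^j_\nu\rangle)\ne 0\}$ together with Plancherel gives $\sum_\nu \|G_\nu^j h\|_{L^2}^2\lesssim \|h\|_{L^2}^2$. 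Since $h$ is supported in $3Q$, I then use $\inf_{x\in\mathbb{L}^*}M_\beta h(x)\le \|h\|_{\mathcal{Y}_\beta(\mathcal{Q})}$, H\"older's inequality on each $\mathbb{L}^*$ (for $\beta\ge 2$), and the $L^\infty$-bound outside $\mathrm{sh}\,\mathcal{Q}$ to deduce $\|h\|_{L^2}^2\lesssim |Q|\,\|h\|_{\mathcal{Y}_\beta(\mathcal{Q})}^2$. Multiplying and using $d\ge 2$ so that $2^{-j\gamma(d-1)}\le 2^{-j\gamma/2}$ gives the desired bound.

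The main obstacle I anticipate is the range $\beta\in(1,2)$, where the H\"older passage from $L^\beta$-averages on $\mathbb{L}^*$ to $L^2$-averages is not available. I would bypass this by interpolating the $L^{\beta'}$-norm of $F$ between the $L^2$-bound above and the elementary estimate $\|F\|_{L^1}\lesssim j\|\Omega\|_{L^\infty}|Q|\|b\|_{\dot{\mathcal{X}}_1(\mathcal{Q})}$, which follows from the $L^1\to L^1$-boundedness of $G_\nu^j$ and $I-P_{s-j\kappa}$ combined with $\sum_\nu \|T^j_{\Omega,\mathbb{L};s,\nu}b_\mathbb{L}\|_{L^1}\lesssim j\|\Omega\|_{L^\infty}\|b_\mathbb{L}\|_{L^1}$, and then pairing via H\"older against $\|h\|_{L^\beta}\lesssim |Q|^{1/\beta}\|h\|_{\mathcal{Y}_\beta(\mathcal{Q})}$. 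Choosing the interpolation parameter $\theta=2/\beta$ preserves the linear scaling $|Q|\,\|b\|_{\dot{\mathcal{X}}_1(\mathcal{Q})}\|h\|_{\mathcal{Y}_\beta(\mathcal{Q})}$, and leaves an exponential decay in $j$ comfortably better than $2^{-j\gamma/2}$.
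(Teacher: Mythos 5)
Your approach is genuinely different from the paper's. The paper establishes the $L^t$ bound for $t=\beta'=2r$ directly, via the $r$-fold-product trick: it takes the $r$-th power of $F=\sum_\nu M_\nu$, applies Plancherel to $\sum_{\nu_1,\ldots,\nu_r}\prod_k M_{\nu_k}$, and uses the geometric rectangle estimates to close the argument in $L^{2r}$. You instead pass $G_\nu^j$ to the dual side by self-adjointness and apply Cauchy--Schwarz in $\nu$, reducing everything to $L^2$. That is a clean idea and it does handle $\beta\geq 2$.

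Two of your intermediate estimates are off by exactly compensating factors, which should be corrected. The cones $\{\xi:\psi(2^{j\gamma}\langle\xi/|\xi|,e_\nu^j\rangle)\ne0\}$ do \emph{not} have bounded overlap: a fixed direction $\xi/|\xi|$ lies in an equatorial band of angular width $\sim 2^{-j\gamma}$, which contains $\sim 2^{j\gamma(d-2)}$ of the vectors $e_\nu^j$, so $\sum_\nu\|G_\nu^jh\|_{L^2}^2\lesssim 2^{j\gamma(d-2)}\|h\|_{L^2}^2$, not $\lesssim\|h\|_{L^2}^2$. Conversely, the pointwise bound $\sum_I H^j_{s,\nu}*H^j_{i,\nu}*|b_I|\lesssim 2^{-2j\gamma(d-1)}\|b\|_{\dot{\mathcal{X}}_1}$ gives $\|F_\nu\|_{L^2}^2\lesssim j^2 2^{-2j\gamma(d-1)}|Q|\|b\|^2_{\dot{\mathcal{X}}_1}$ for \emph{a single} $\nu$; summing over the $\sim2^{j\gamma(d-1)}$ values of $\nu$ produces $\sum_\nu\|F_\nu\|_{L^2}^2\lesssim j^2 2^{-j\gamma(d-1)}|Q|\|b\|^2_{\dot{\mathcal{X}}_1}$, not the factor $2^{-2j\gamma(d-1)}$ you wrote. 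Multiplying the corrected bounds gives exactly $j2^{-j\gamma/2}$, with no slack and no need to invoke $d\geq 2$; your version happens to give a stronger-looking answer because the two mistakes nearly cancel.

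The real gap is the range $\beta\in(1,2)$. You want $\|F\|_{L^{\beta'}}$ with $\beta'>2$, but Riesz--Thorin interpolation between $L^1$ and $L^2$ only reaches $L^p$ for $p\in[1,2]$; the exponent $\theta=2/\beta$ you would need is greater than $1$. The alternative reading (interpolate $\|F\|_{L^\beta}$ and pair against $\|h\|_{L^{\beta'}}$) also fails, since $\|h\|_{\mathcal{Y}_\beta(\mathcal{Q})}$ only controls $L^\beta$-averages, not $L^{\beta'}$-averages when $\beta<\beta'$. This is precisely why the paper resorts to the $r$-fold product argument: it is a device to reach $L^{2r}$ with $2r$ arbitrarily large while still using Plancherel, which is unavailable from the na\"ive Cauchy--Schwarz route. (In addition, the claimed $\|F\|_{L^1}$ bound assumes $G_\nu^j$ is $L^1\to L^1$ bounded, which is not clear for a zero-homogeneous multiplier without first inserting a Littlewood--Paley piece as the paper does in Lemma~\ref{lem2.6wzd}.) One mitigating remark: the lemma is applied in the paper only through Lemma~\ref{e}, where the conclusion carries $\|h\|_{\mathcal{Y}_\infty(\mathcal{Q})}$, so for the downstream argument it would suffice to prove the statement at $\beta=2$; your approach, once the overlap counts are corrected, does yield that case.
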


	\begin{proof}

		We may assume $\|\Omega\|_{L^{\infty}(\mathbb{S}^{d-1})}=1$ and  denote by $t$  the dual exponent of $\beta$.
		It is enough to show for each $r\geq 1$ and $t=2r$,
		\begin{align}
			\frac{1}{|Q|^{\frac{1}{t}}}	\Big\| \sum_{\nu} \sum_sG_{\nu}^j(I-P_{s-j\kappa})T^{j}_{\Omega,{\mathbb{L}};\,s,\nu}b_{s-j} \Big\|_{L^{t}(\mathbb{R}^d)}
			\lesssim 2^{-j\gamma/2}  \|b\|_{\dot{\mathcal{X}}_1(\mathcal{Q})}.\label{hG}
		\end{align}	
		Denote $M_\nu=  \sum\limits_s {G_{\nu}^j}(I-P_{s-j\kappa} )T^{j}_{\Omega,{\mathbb{L}};\,s,\nu}b_{s-j}  $. Applying  Plancherel's theorem, we get
		\begin{align} \label{strong}
		\Big\|\sum\limits_{\nu_1,\cdots,\nu_r} \prod\limits_{k=1}^{r} M_{\nu_k}\Big\|^2_{L^{2}(\mathbb{R}^d)} 
			&\lesssim  2^{j\gamma(d-2)r}\sum\limits_{\nu_1,\cdots,\nu_r} \Big \| \prod\limits_{k=1}^{r}  \sum\limits_{s}(I-P_{s-j\kappa} )T^{j}_{\Omega,{L};\,s,\nu}b_{s-j}  \Big\|_{L^{2}(\mathbb{R}^d)} ^2\\
			&\lesssim  2^{j\gamma(d-2)r} 2^{j\gamma(d-1)r}\Big\| \prod\limits_{k=1}^{r}  \sum\limits_{s}(I-P_{s-j\kappa} )T^{j}_{\Omega,{\mathbb{L}};\,s,\nu}b_{s-j} \Big\|_{L^{2}(\mathbb{R}^d)} ^2\nonumber\\
			&	\lesssim 2^{-j\gamma r}2^{j\gamma(d-1)t} \sup\limits_{\nu} \Big\|  \sum\limits_{s}(I-P_{s-j\kappa} )T^{j}_{\Omega,{\mathbb{L}};\,s,\nu}b_{s-j}  \Big\|_{L^{t}(\mathbb{R}^d)} ^t.\nonumber 
		\end{align}	
		On the other hand, inequality \eqref{hjk} tells us that
		\begin{align*}
			\big|	(I-P_{s-j\kappa} )T^{j}_{\Omega,{\mathbb{L}};\,s,\nu}b_{s-j} (x)\big|
			\lesssim jH_{s,\nu}^j*|b_{s-j}|(x),
		\end{align*}
		where $ H_{s,\nu}^j(x)=2^{-sd}\chi_{\mathcal{R}_{s\nu}^j}(x)$ is defined in Section 3.
		Following the proof of (4.6) in \cite{ccdo}, for each fixed $\nu$, one may easily verify that
		\begin{align}
			\sum\limits_{s_1\geq\cdots\geq s_t}\int \bigg(\prod_{k=1}^{t}H_{s_k,\nu}^j (x-y_k) |b_{s_k-j}(y_k) |\bigg) dy_1\cdots dy_t dx
			\lesssim 2^{-j\gamma(d-1)t} |Q| \|b\|^t_{\dot{\mathcal{X}}_1(\mathcal{Q})}. \label{strong2}
		\end{align}

		Combining the above estimate  \eqref{strong2} with \eqref{strong}, one may obtain
		\begin{align*}
			&	\Big\| \sum_{\nu} \sum_sG_{\nu}^j(I-P_{s-j\kappa})T^{j}_{\Omega,{\mathbb{L}};\,s,\nu}b_{s-j} \Big \|^t_{L^{t}(\mathbb{R}^d)} \lesssim 2^{-j\gamma r}j |Q| \|b\|^t_{\dot{\mathcal{X}}_1(\mathcal{Q})}.
		\end{align*}
		
		This finishes the proof of Lemma \ref{lemma2.5}.
	\end{proof}
	
	The following lemma concerns on a decay estimate, which will be used later.
	\begin{lemma}\label{e}
		Under the hypothesis of Theorem \ref{thm1.2},  for all $j\geq 1$ and some $\varepsilon\in(0,1)$, we have
		\begin{align*}
			\Big |\langle {\sum_s\sum_{\mathbb{L}\in\mathcal{Q},s_\mathbb{L}=s-j} T_{\Omega,A_{\mathbb{L}};\,s,j}b_\mathbb{L}},  h\rangle\Big| \lesssim   j2^{-\varepsilon j} \|\Omega\|_{L^{\infty}(\mathbb{S}^{d-1})}|Q|\|b\|_{\dot{\mathcal{X}}_{1}(\mathcal{Q})}\|{h}\|_{\mathcal{Y}_{\infty}(\mathcal{Q})}.
		\end{align*}
	\end{lemma}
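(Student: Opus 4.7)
The plan is to mirror, at the localized level of the stopping collection $\mathcal{Q}$, the same four-fold decomposition carried out in Step 2 of the proof of Theorem \ref{thm1.4}, but now pairing each piece against $h\in\mathcal{Y}_\infty(\mathcal{Q})$ rather than controlling a distribution function. Writing $b=\sum_{\mathbb{L}\in\mathcal{Q}}b_\mathbb{L}$, I would decompose, for each $\mathbb{L}\in\mathcal{Q}$ with $s_\mathbb{L}=s-j$,
\begin{align*}
T_{\Omega,A_\mathbb{L};s,j}b_\mathbb{L} &= \bigl(T_{\Omega,A_\mathbb{L};s,j}b_\mathbb{L}-T^{j}_{\Omega,\mathbb{L};s}b_\mathbb{L}\bigr)+P_{s-j\kappa}T^{j}_{\Omega,\mathbb{L};s}b_\mathbb{L}\\
&\quad +\sum_\nu(I-G_\nu^j)(I-P_{s-j\kappa})T^{j}_{\Omega,\mathbb{L};s,\nu}b_\mathbb{L}+\sum_\nu G_\nu^j(I-P_{s-j\kappa})T^{j}_{\Omega,\mathbb{L};s,\nu}b_\mathbb{L},
\end{align*}
where $\Omega$ plays the role of $\Omega_i$ since $\Omega\in L^\infty(\mathbb{S}^{d-1})$.

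For the first three terms I would use the $L^1$ estimates already available. Precisely, Lemma \ref{lem2.4} gives the first with decay $j\,2^{-\tau j}$; Lemma \ref{ljj} gives the second with decay $j(2^{-(1-\kappa)j}+2^{-(1-\tau)j})$; and Lemma \ref{lem2.6wzd} gives the third with decay $j\,2^{-s_0 j}$. Summing over $\mathbb{L}\in\mathcal{Q}$ and using $\sum_{\mathbb{L}\in\mathcal{Q}}\|b_\mathbb{L}\|_{L^1(\mathbb{R}^d)}\lesssim |Q|\,\|b\|_{\dot{\mathcal{X}}_1(\mathcal{Q})}$ (which follows from the definition of $\dot{\mathcal{X}}_1(\mathcal{Q})$ through $\langle|b_\mathbb{L}|\rangle_\mathbb{L}\lesssim\inf_{x\in\mathbb{L}^*}Mb(x)\le\|b\|_{\dot{\mathcal{X}}_1(\mathcal{Q})}$ and $\sum_\mathbb{L}|\mathbb{L}|\le|3Q|$), and then pairing against $h$ via $|\langle\cdot,h\rangle|\le\|\cdot\|_{L^1}\|h\|_{L^\infty}\le\|\cdot\|_{L^1}\|h\|_{\mathcal{Y}_\infty(\mathcal{Q})}$, delivers the required bound $j\,2^{-\varepsilon j}\|\Omega\|_{L^\infty(\mathbb{S}^{d-1})}|Q|\,\|b\|_{\dot{\mathcal{X}}_1(\mathcal{Q})}\|h\|_{\mathcal{Y}_\infty(\mathcal{Q})}$ for a suitable positive $\varepsilon$ depending on $\tau,\kappa,s_0$.

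The main obstacle is the fourth piece $G=\sum_\nu G_\nu^j(I-P_{s-j\kappa})T^{j}_{\Omega,\mathbb{L};s,\nu}b_{s-j}$: the Fourier multiplier $G_\nu^j$ destroys compact support, so only $L^t$ control (Lemma \ref{lemma2.5}), not $L^1$, is at hand. To finesse this I would exploit the crucial fact that $h\in\mathcal{Y}_\infty(\mathcal{Q})$ is supported in $3Q$. Then by H\"older's inequality,
\begin{equation*}
|\langle G,h\rangle|=\Big|\int_{3Q}G(x)h(x)\,dx\Big|\le\|G\|_{L^t(\mathbb{R}^d)}|3Q|^{1/t'}\|h\|_{L^\infty(3Q)},
\end{equation*}
and applying Lemma \ref{lemma2.5} with any fixed $t>1$ (say $t=2$, corresponding to $\beta=2$) gives $\|G\|_{L^t}\lesssim|Q|^{1/t}2^{-j\gamma/2}\|\Omega\|_{L^\infty(\mathbb{S}^{d-1})}\|b\|_{\dot{\mathcal{X}}_1(\mathcal{Q})}$, so the product telescopes to $|Q|\,2^{-j\gamma/2}\|\Omega\|_{L^\infty(\mathbb{S}^{d-1})}\|b\|_{\dot{\mathcal{X}}_1(\mathcal{Q})}\|h\|_{\mathcal{Y}_\infty(\mathcal{Q})}$.

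Collecting the four estimates and setting $\varepsilon=\min\{\tau,\,1-\kappa,\,1-\tau,\,s_0,\,\gamma/2\}>0$ under the parameter choices $0\ll\tau\ll\gamma\ll\varepsilon_0<\kappa<1$ already used in Section 3, I conclude. Summation over $s$ (for each fixed $\mathbb{L}$ the scale $s$ is uniquely determined by $s=s_\mathbb{L}+j$) introduces no additional factor, so the whole argument reduces to verifying, termwise, the estimates above.
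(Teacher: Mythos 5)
Your proposal mirrors the paper's own proof: the paper also splits into the same four pieces and invokes Lemmas \ref{lem2.4}, \ref{ljj}, \ref{lem2.6wzd}, and \ref{lemma2.5}, then sets $\varepsilon=\min\{\tau,1-\kappa,1-\tau,s_0,\gamma/2\}$. The only omission is that the four auxiliary lemmas require $j>j_0$, so the range $1\le j\le j_0$ must be treated separately by the crude method-of-rotations bound (as for $D_1$ in Section 3), which is harmless since $j_0$ is a fixed constant; also note that Lemma \ref{lemma2.5} already delivers the pairing against $h\in\mathcal{Y}_\beta(\mathcal{Q})$ directly, so your extra H\"older step on the fourth piece, while correct, is superfluous.
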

	\begin{proof}
		For $1\leq j\leq j_0$, as in the estimate for ${D_1}$ in 	\eqref{wmm}, it is easy to deduce that
		\begin{align*}
			\Big	|\langle \sum_s\sum_{\mathbb{L}\in\mathcal{Q},s_\mathbb{L}=s-j}  T_{\Omega,A_{\mathbb{L}};\,s,j}b_\mathbb{L}(x),  h\rangle\Big| \lesssim   j  |Q|\|\Omega\|_{L^{\infty} (\mathbb{S}^{d-1})} \|b\|_{\dot{\mathcal{X}}_1(\mathcal{Q})}\|h\|_{\mathcal{Y}_{\infty}(\mathcal{Q})}.
		\end{align*}
		
		For $ j> j_0$, let $\varepsilon=\min\{ \tau, (1-\kappa), (1-\tau), s_0,\gamma/2 \}$. By Lemma \ref{lem2.4},  Lemma \ref{ljj},  Lemma \ref{lem2.6wzd}, and Lemma \ref{lemma2.5},
		we have
		\begin{align*}
			&	\Big|\big\langle \sum_s\sum_{\mathbb{L}\in\mathcal{Q},s_\mathbb{L}=s-j}  T_{\Omega,A_{\mathbb{L}};\,s,j}b_\mathbb{L},  h\big\rangle\Big| \\
			&\lesssim \Big |\big\langle \sum_s\sum_{\mathbb{L}\in\mathcal{Q},s_\mathbb{L}=s-j} T_{\Omega,A_{\mathbb{L}};\,s,j}b_\mathbb{L}(x)- T^{j}_{\Omega,A_\mathbb{L};\,s}b_\mathbb{L},  h\big\rangle\Big| +\Big |\big\langle \sum_s\sum_{\mathbb{L}\in\mathcal{Q},s_\mathbb{L}=s-j} T^{j}_{\Omega,A_\mathbb{L};\,s}b_\mathbb{L},  h\big\rangle\Big|\\
			&\lesssim  j2^{-j\varepsilon}\|\Omega\|_{L^{\infty}(\mathbb{S}^{d-1})}\sum_s\sum_{\mathbb{L}\in\mathcal{Q},s_\mathbb{L}=s-j}\|b_\mathbb{L}\|_{L^1(\mathbb{R}^d)}\|h\|_{\mathcal{Y}_{\infty}(\mathcal{Q})}
			\\
			&	\lesssim j2^{-\varepsilon j}  \|\Omega\|_{L^{\infty}(\mathbb{S}^{d-1})}|Q| \|b\|_{\dot{\mathcal{X}}_1(\mathcal{Q})} \|h\|_{\mathcal{Y}_{\infty}(\mathcal{Q})}.
		\end{align*}
		
	\end{proof}

	We are now in a position to consider the proof of the bad part.
	\begin{lemma}\label{diff}
		For all dyadic lattices $\mathcal{D}$, all cube $Q\in\mathcal{D}$  and all stopping collections $\mathcal{Q}\subset \mathcal{D}$, it holds that
		\begin{eqnarray*}
			\Big|\langle 		\sum_{j\geq 1}\sum_s\sum_{\mathbb{L}\in\mathcal{Q},s_\mathbb{L}=s-j} T_{\Omega,A_{\mathbb{L}};\,s,j}b_\mathbb{L},  h\rangle\Big| \lesssim r'|Q|\|\Omega\|_{L^{\infty}(\mathbb{S}^{d-1})}\|b\|_{\dot{\mathcal{X}}_{L\log L}(\mathcal{Q})}\|h\|_{\mathcal{Y}_r(\mathcal{Q})}.
		\end{eqnarray*}
		
	\end{lemma}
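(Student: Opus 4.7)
\noindent\textbf{Proof plan for Lemma \ref{diff}.}

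The plan is to combine the two pointwise-in-$j$ bounds established earlier: Lemma \ref{lemma t}, which gives $|\Lambda_j(b,h)|\lesssim j\,\|\Omega\|_{L^{\infty}}|Q|\,\|b\|_{\dot{\mathcal X}_{L\log L}(\mathcal Q)}\|h\|_{\mathcal Y_{1}(\mathcal Q)}$ (slow growth in $j$ but weaker norm on $h$), and Lemma \ref{e}, which gives $|\Lambda_j(b,h)|\lesssim j\,2^{-\varepsilon j}\|\Omega\|_{L^{\infty}}|Q|\,\|b\|_{\dot{\mathcal X}_{1}(\mathcal Q)}\|h\|_{\mathcal Y_{\infty}(\mathcal Q)}$ (geometric decay in $j$ but stronger norm on $h$). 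Writing $\Lambda_{j}(b,h):=\sum_s\sum_{\mathbb L\in\mathcal Q,\ s_{\mathbb L}=s-j}\langle T_{\Omega,A_{\mathbb L};s,j}b_{\mathbb L},h\rangle$, I will interpolate these two endpoint bounds through a level-set truncation of $h$, balanced against the scale $2^{-\varepsilon j}$.

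By homogeneity, normalize $\|b\|_{\dot{\mathcal X}_{L\log L}(\mathcal Q)}=\|h\|_{\mathcal Y_{r}(\mathcal Q)}=1$; the target becomes $\sum_{j\ge 1}|\Lambda_j(b,h)|\lesssim r'|Q|\|\Omega\|_{L^{\infty}}$. For each parameter $\lambda_j>0$ to be chosen, split $h=h_{\lambda_j}+h^{\lambda_j}$ where $h_{\lambda_j}=h\chi_{\{|h|\le\lambda_j\}}$ and $h^{\lambda_j}=h\chi_{\{|h|>\lambda_j\}}$. Trivially $\|h_{\lambda_j}\|_{\mathcal Y_{\infty}}\le\lambda_j$, while the $\mathcal Y_r(\mathcal Q)$ hypothesis, via Chebyshev applied to the local $L^r$-averages controlling $M_rh$ on each stopping cube and to $\|h\chi_{\mathbb R^d\setminus\mathrm{sh}\mathcal Q}\|_{L^{\infty}}$, produces the bound $\|h^{\lambda_j}\|_{\mathcal Y_{1}(\mathcal Q)}\lesssim\lambda_j^{1-r}$. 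Using $\|b\|_{\dot{\mathcal X}_1}\le\|b\|_{\dot{\mathcal X}_{L\log L}}=1$, applying Lemma \ref{e} to $h_{\lambda_j}$ and Lemma \ref{lemma t} to $h^{\lambda_j}$ yields, for each $j\ge 1$,
\begin{equation*}
|\Lambda_j(b,h)|\ \lesssim\ \|\Omega\|_{L^{\infty}}|Q|\,j\bigl(2^{-\varepsilon j}\lambda_j+\lambda_j^{1-r}\bigr).
\end{equation*}
Optimizing by $\lambda_j=2^{\varepsilon j/r}$ equalises both terms and gives $|\Lambda_j(b,h)|\lesssim \|\Omega\|_{L^{\infty}}|Q|\,j\,2^{-\varepsilon j/r'}$. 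Summing over $j\ge 1$ produces a convergent geometric-type series, and a careful counting (together with invoking the sharpened operator norm from Lemma \ref{lem41} on the moderate values of $j$ to shave a logarithmic factor) upgrades the naive quadratic bound to the claimed $r'$.

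The principal technical obstacle is justifying the estimate $\|h^{\lambda}\|_{\mathcal Y_{1}(\mathcal Q)}\lesssim\lambda^{1-r}\|h\|_{\mathcal Y_{r}(\mathcal Q)}^{r}$ inside the space $\mathcal Y_{1}(\mathcal Q)$: this requires controlling simultaneously the $L^{\infty}$-norm of $h^{\lambda}$ off the shadow $\mathrm{sh}\mathcal Q$ (which collapses once $\lambda$ exceeds the outside-shadow sup of $h$) and the stopping-cube infimum of $Mh^{\lambda}$, both in a way that tracks the $r'$ dependence faithfully. A secondary but important point is that the linear (rather than quadratic) power of $r'$ in the final bound cannot come merely from geometric summation; it requires marrying the truncation argument above with the $L^{r'}$-boundedness from Corollary \ref{cor2.1} and the distributional input used in the proof of \eqref{equation2.3}, in the spirit of \cite{ccdo}, exactly as Lemma \ref{easy} already furnishes for the "commutator" error terms arising from the decomposition $\Lambda_{\mathcal Q}^{A}(b_{\mathbb L},h)=\Lambda_{\mathcal Q}^{A_{\mathbb L}}(b_{\mathbb L},h)-\sum_n\Lambda_{n,\mathcal Q}(\partial_n A_{\mathbb L}b_{\mathbb L},h)$.
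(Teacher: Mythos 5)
Your plan follows the paper's proof essentially step for step: the core of both arguments is to combine the bound of Lemma~\ref{lemma t} (factor $j$, against $\|h\|_{\mathcal Y_1}$) with the bound of Lemma~\ref{e} (factor $j2^{-\varepsilon j}$, against $\|h\|_{\mathcal Y_\infty}$) to obtain, for each fixed $j$, a bound $\lesssim j\,2^{-\varepsilon j/r'}\|\Omega\|_{L^\infty}|Q|\,\|b\|_{\dot{\mathcal X}_{L\log L}}\|h\|_{\mathcal Y_r}$, and then sum in $j$. Where the paper says ``interpolating'' and leaves it at that, you implement the interpolation explicitly through the level-set split $h=h_{\lambda_j}+h^{\lambda_j}$ with $\lambda_j=2^{\varepsilon j/r}$; that is the right way to realize an interpolation inequality inside the $\mathcal Y_p(\mathcal Q)$ scale, and the auxiliary estimate $\|h^{\lambda}\|_{\mathcal Y_1(\mathcal Q)}\lesssim\lambda^{1-r}\|h\|_{\mathcal Y_r(\mathcal Q)}^r$ is correct (for each stopping cube, $\tfrac1{|I|}\int_I|h|\chi_{\{|h|>\lambda\}}\le\lambda^{1-r}\tfrac1{|I|}\int_I|h|^r$, and the off-shadow piece vanishes once $\lambda\ge\|h\|_{\mathcal Y_r}$). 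So your intermediate estimate agrees exactly with the first display of the paper's proof.

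However, the point you flag as a ``secondary but important'' obstacle is a genuine one, and your proposed resolution does not settle it. With $\delta=\varepsilon/r'$, the sum $\sum_{j\ge1}j\,2^{-\delta j}\sim\delta^{-2}\sim r'^2$, not $r'$. The paper handles this by writing $j\,2^{-\varepsilon j(r-1)/r}\lesssim 2^{-\varepsilon j(r-1)/(2r)}$ and then summing a pure geometric series, but the implied constant in the absorption step is $\max_{j\ge1}j\,2^{-\varepsilon j(r-1)/(2r)}\sim r'$, so a careful accounting along those lines also yields $r'^2$; in other words, the paper's own argument is no more conclusive than yours on this point. Your suggestion that ``invoking the sharpened operator norm from Lemma~\ref{lem41} on the moderate values of $j$'' (or marrying with Corollary~\ref{cor2.1} and the distributional input of \eqref{equation2.3}) will upgrade $r'^2$ to $r'$ is not a concrete argument: Lemma~\ref{lem41} is an $L^p$ operator bound for the good part, and it does not interact with the $j$-sum over the bad part, which rests entirely on the mean-zero structure of $b_{\mathbb L}$ and the decay $2^{-\varepsilon j}$. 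Without a different single-$j$ estimate removing (or weakening) the $j$ prefactor in either Lemma~\ref{lemma t} or Lemma~\ref{e}, the truncation/interpolation scheme cannot do better than $r'^2$. So your plan reproduces the paper's strategy accurately and is more transparent at the interpolation step, but the last step, which you correctly identify as delicate, is left as a conjecture rather than a proof.
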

	\begin{proof}
		For some $0<\varepsilon <1$ and any $r>1$, interpolating between the estimates in Lemma \ref{lemma t} and Lemma \ref{e} gives that
		\begin{align*}
			\Big	|\big\langle \sum_s\sum_{\mathbb{L}\in\mathcal{Q},s_\mathbb{L}=s-j}  T_{\Omega,\,A_\mathbb{L};\,s,j}b_\mathbb{L},  h\big\rangle\Big|& \lesssim   j 2^{-\varepsilon j\frac{r-1}{r}} \|\Omega\|_{L^{\infty}(\mathbb{S}^{d-1})}|Q|\|b\|_{\dot{\mathcal{X}}_{L\log L}(\mathcal{Q})}\|{h}\|_{\mathcal{Y}_{r}(\mathcal{Q})}\\
			& \lesssim 2^{-\varepsilon j\frac{r-1}{2r}} \|\Omega\|_{L^{\infty}(\mathbb{S}^{d-1})}|Q|\|b\|_{\dot{\mathcal{X}}_{L\log L}(\mathcal{Q})}\|{h}\|_{\mathcal{Y}_{r}(\mathcal{Q})}.
		\end{align*}
		Summing over the last inequality for $j\geq 1$ yields that
		\begin{align*}
			\Big |\big\langle \sum_{j\geq 1}\sum_s\sum_{\mathbb{L}\in\mathcal{Q},s_\mathbb{L}=s-j} T_{\Omega,A_{\mathbb{L}};\,s,j}b_\mathbb{L},  h\big\rangle\Big|\lesssim  \frac{r}{r-1}\|\Omega\|_{L^{\infty}(\mathbb{S}^{d-1})}|Q|\|b\|_{\dot{\mathcal{X}}_{L\log L}(\mathcal{Q})}\|{h}\|_{\mathcal{Y}_{r}(\mathcal{Q})},
		\end{align*}
	which finishes the proof of Lemma \ref{diff}.
	\end{proof}
	We are ready to prove Theorem \ref{thm3.1}, the  sparse domination of $T_{\Omega,\,A}$.

	\begin{proof}[Proof of Theorem \ref{thm3.1}]
		We may assume $\|\Omega\|_{L^{\infty}(\mathbb{S}^{d-1})}=1$.
		Obviously, it suffices  to prove  the estimate
		\begin{align}
			|\Lambda^{A}(f_1,\,f_2)|\lesssim r'\sup_{\mathcal{S}}\mathcal{A}_{\mathcal{S},\,L\log L,\,L^r}(f_1,\,f_2)\label{wmmmm}
		\end{align}
		holds for all bounded functions $f_1$ and $f_2$ with compact supports.
		
		To prove (\ref{wmmmm}), we follow the main steps of Theorem C in \cite{ccdo}. But we don't need to  prove the self-adjoint property as in \cite[Appendix A]{ccdo}.
		In addition, we only  need to decompose $ f_1$. These factors make our proof  more elegant and clear. We divide the proof into two steps.
		
		{\bf	Step 1. Auxiliary estimate.}
		Using the Calder\'{o}n-Zygmund decomposition
		with respect to $\mathcal{Q}$. We may decompose $f_1=b_1+g_1$
		with $b_1=\sum\limits_{\mathbb{L}\in \mathcal{Q}}(f_1-\langle f_1\rangle _\mathbb{L}){\chi}_\mathbb{L}$ such that  for any $p\geq1$,
		
		$$\|g_1\|_{\mathcal{Y}_{\infty}(\mathcal{Q})}\lesssim \|f_1\|_{\mathcal{Y}_{p}(\mathcal{Q})}, \qquad \|b_1\|_{\dot{\mathcal{X}}_{p}(\mathcal{Q})}\lesssim \|f_1\|_{\mathcal{Y}_{p}(\mathcal{Q})}.$$ Then			
		\begin{align*}
			\Lambda^A_{\mathcal{Q}}(f_1,\,f_2)
			=	\Lambda^A_{\mathcal{Q}}(g_1,\,f_2)+		\Lambda^A_{\mathcal{Q}}(b_1,\,f_2).
		\end{align*}
		By inequality \eqref{equation3.1}, it is obvious that
		\begin{align*}
			|	\Lambda^A_{\mathcal{Q}}(g_1,\,f_2)| \leq r'|Q|\|g_1\|_{\mathcal{Y}_{r'}(\mathcal{Q})} \|f_2\|_{\mathcal{Y}_{r}(\mathcal{Q})}
			&\leq r' |Q|\|f_1\|_{\mathcal{Y}_{1}(\mathcal{Q})} \|f_2\|_{\mathcal{Y}_{r}(\mathcal{Q})}\\
			&	\leq r'|Q| \|f_1\|_{\mathcal{Y}_{L\log L}(\mathcal{Q})} \|f_2\|_{\mathcal{Y}_{r}(\mathcal{Q})}.
		\end{align*}
		On the other hand, the fact that {$ \|b_1\|_{\dot {\mathcal{X}}_{L\log L}(\mathcal{Q})}\lesssim \|f_1\|_{\mathcal{Y}_{L\log L}(\mathcal{Q})}$},  Lemma \ref{easy}, together with Lemma \ref{diff} yield that
		\begin{align*}
			&|\Lambda^A_{\mathcal{Q}}(b_1,\,f_2)|\leq r'|Q|\|b_1\|_{\dot{\mathcal{X}}_{L\log L}(\mathcal{Q})}\|f_2\|_{\mathcal{Y}_{r}(\mathcal{Q})}\lesssim r'|Q| \|f_1\|_{\mathcal{Y}_{L\log L}(\mathcal{Q})} \|f_2\|_{\mathcal{Y}_{r}(\mathcal{Q})}.
		\end{align*}
	Therefore
		\begin{align*}
			| \Lambda_Q^A(f_1,f_2)|& \leq |\Lambda_{\mathcal{Q}}^A(f_1,f_2)|+\sum\limits_{\mathbb{L}\in \mathcal{Q},\mathbb{L}\subset Q}|\Lambda_L^A(f_1,f_2)|\\
			&\leq  r'|Q| \langle f_1\rangle _{{L\log L},3Q} \langle f_2\rangle_{{r},3Q}  +\sum\limits_{\mathbb{L}\in \mathcal{Q},\mathbb{L}\subset Q}|\Lambda_\mathbb{L}^A(f_1\chi_\mathbb{L},f_2)|.
		\end{align*}
		{\bf Step 2. Iterative process.}
		Fix $f_j\in L^{r_j}(\mathbb{R}^d)$, $j=1,2$ with compact support, we choose the dyadic lattice $\mathcal{D}$ such that we can  find a large enough dyadic cube  $Q_0\in \mathcal{D}$ with supp $f_1\in Q_0$,  supp $f_2\in 3Q_0$.
		
		Set $\mathcal{S}_0=\{Q_0\}$ and let
		$\mathcal{S}_{k+1}$  be the maximal cubes $\mathbb{L}\in \mathcal{Q}$ such that $9 \mathbb{L}\subset \cup_{Q\in\mathcal{S}_k} E_{Q}$
		$E_Q:=\{x\in3Q: \max\{ \frac{M_{L(\log L)}f_1 {\chi}_{3Q}(x)}{\|f_1\|_{L(\log L),\,3Q}} , \frac{M_{p}{(f_2 {\chi}_{3Q})}{(x)}}{{\langle f_2\rangle}_{r,3Q}}\}>C_d\}$ for some
		chosen constant $C_d$.
		For all $Q\in \mathcal{S}_0$,  one gets
		\begin{align*}
			|\Lambda^{A}(f_1,\,f_2)|=	| \Lambda_{Q_0}^A(f_1{\chi}_{Q_0},f_2{\chi}_{3Q_0})|
			\leq  r'|Q_0| \|f_1\|_{\mathcal{Y}_{L\log L}(\mathcal{Q }_0)} \|f_2\|_{\mathcal{Y}_{r}(\mathcal{Q}_0)}  +\sum\limits_{\mathbb{L}\in \mathcal{S}_{1}({Q_0}),\mathbb{L}\subset Q_0}|\Lambda_\mathbb{L}^A(f_1,f_2)|.
		\end{align*}
		For all $k>0$ and $Q\in \mathcal{S}_k$, we have
		\begin{align*}
			|\Lambda^{A}(f_1,\,f_2)|
			\leq  r'|Q| \|f_1\|_{\mathcal{Y}_{L\log L}(\mathcal{Q})} \|f_2\|_{\mathcal{Y}_{r}(\mathcal{Q})}  +\sum\limits_{\mathbb{L}\in \mathcal{S}_{k+1}({Q}),\mathbb{L}\subset Q}|\Lambda_\mathbb{L}^A(f_1,f_2)|.
		\end{align*}
		When $\inf\{s_Q:Q\in \mathcal{S}_k\}<s_{Q_0}$,  the iteration stops and there holds that
		$$|\Lambda^{A}(f_1,\,f_2)|\lesssim r'\sum_{Q\in {\cup_{\kappa=0}^k \mathcal{S}_\kappa}}\langle |f|\rangle_{L\log L,\,3Q}\langle |g|\rangle_{3Q,\,r}|Q|.
		$$
		This finishes the proof of \eqref{wmmmm}.		
	\end{proof}
	
	\section{The $(L^r,\,L^1)$  sparse domination of $T_{\Omega,\,A}$}\label{ssp2}
	In this section, we aim to establish the $(L^r,\,L^1)$-bilinear sparse domination of $T_{\Omega,\,A}$ which follows essentially from the properties of $\widetilde{T}_{\Omega,\,A}$.
		\begin{theorem}\label{thm3.2}
		Under the hypothesis of Theorem \ref{thm1.2}, for any $r\in (1,\,2]$, the operator $T_{\Omega,\,A}$ enjoys the $(L^r,\,L^1)$ bilinear
		sparse domination with a bound $Cr'^2$.
	\end{theorem}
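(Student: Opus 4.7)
The plan is to reduce the proof to a sparse domination statement for the dual operator $\widetilde{T}_{\widetilde{\Omega},A}$, whose weak $(1,1)$ endpoint behavior (established in \eqref{equation2.3} of Theorem \ref{thm1.4}) is strictly stronger than the $L\log L$ endpoint used for $T_{\Omega,A}$ in Section \ref{ssp1}. Concretely, writing $\widetilde{\Omega}(x)=\Omega(-x)$, a direct kernel computation shows $T_{\Omega,A}^*=-\widetilde{T}_{\widetilde{\Omega},A}$, so
\begin{equation*}
\int_{\mathbb{R}^d} f_2(x)T_{\Omega,A}f_1(x)\,dx
=-\int_{\mathbb{R}^d} f_1(x)\widetilde{T}_{\widetilde{\Omega},A}f_2(x)\,dx.
\end{equation*}
Since $\widetilde{\Omega}$ inherits all the assumptions on $\Omega$, it will suffice to establish, for every stopping collection $\mathcal{Q}$ with top $Q$ and every $r\in(1,2]$, the localized bilinear estimate
\begin{equation*}
\big|\widetilde{\Lambda}^{A,\widetilde{\Omega}}_{\mathcal{Q}}(f_2,f_1)\big|
\lesssim r'^2\,|Q|\,\|f_2\|_{\mathcal{Y}_1(\mathcal{Q})}\|f_1\|_{\mathcal{Y}_r(\mathcal{Q})},
\end{equation*}
where $\widetilde{\Lambda}^{A,\widetilde{\Omega}}_{\mathcal{Q}}$ denotes the analog of $\Lambda_{\mathcal{Q}}^{A}$ built from $\widetilde{T}_{\widetilde{\Omega},A}$; iterating the stopping-time construction from the proof of Theorem \ref{thm3.1} then yields the desired $(L^r,L^1)$-sparse bound with constant $Cr'^2$.

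The overall architecture mirrors Section \ref{ssp1}, with the roles of the two arguments exchanged. First, apply the Calder\'on--Zygmund decomposition at height one to the $L^1$-argument $f_2$ (relative to $\mathcal{Q}$), obtaining $f_2=g_2+b_2$ with $\|g_2\|_{\mathcal{Y}_\infty(\mathcal{Q})}\lesssim \|f_2\|_{\mathcal{Y}_1(\mathcal{Q})}$ and $b_2\in\dot{\mathcal{X}}_1(\mathcal{Q})$ with norm controlled by $\|f_2\|_{\mathcal{Y}_1(\mathcal{Q})}$. For the good part, combine the $L^r$ boundedness $\|T_{\widetilde{\Omega},A}\|_{L^r\to L^r}\lesssim r'^2$ from Corollary \ref{cor2.1} with Lemma \ref{wmmlem2.6} (the $L^r$ bound for the commutator $[\partial_m A,T_{\widetilde{\Omega},m}]$), using the identity $\widetilde{T}_{\widetilde{\Omega},A}=T_{\widetilde{\Omega},A}+\sum_{m=1}^d[\partial_m A,T_{\widetilde{\Omega},m}]$, to obtain the factor $r'^2$ for the good-part bilinear form. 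The commutator-times-$\partial_nA$ pieces can be absorbed exactly as in Lemma \ref{easy}, since the $\dot{\mathcal{X}}_1(\mathcal{Q})$ norm is stronger than $\dot{\mathcal{X}}_{L\log L}(\mathcal{Q})$.

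The main work is the bad-part estimate, an analog of Lemma \ref{diff} of the form
\begin{equation*}
\bigg|\Big\langle\sum_{j\ge 1}\sum_s\sum_{\mathbb{L}\in\mathcal{Q},\,s_\mathbb{L}=s-j}\widetilde{T}_{\widetilde{\Omega},A_\mathbb{L};s,j}b_{2,\mathbb{L}},\,h\Big\rangle\bigg|
\lesssim r'\,\|\Omega\|_{L^\infty(\mathbb{S}^{d-1})}\,|Q|\,\|b_2\|_{\dot{\mathcal{X}}_1(\mathcal{Q})}\,\|h\|_{\mathcal{Y}_r(\mathcal{Q})}.
\end{equation*}
Following the $L^1$ endpoint proof of \eqref{equation2.3}, decompose $\widetilde{T}_{\widetilde{\Omega},A_\mathbb{L};s,j}b_{2,\mathbb{L}}$ into the $T_{\widetilde{\Omega},A_\mathbb{L};s,j}$ part (handled by the smooth-truncation machinery of Section \ref{Sec3}: Lemmas \ref{lem2.4}, \ref{ljj}, \ref{lem2.5}, \ref{lem2.6wzd}, adapted to the localized bilinear form) plus the multiplication pieces $\sum_{n=1}^{d}\partial_nA_{\mathbb{L}}\,T_{\widetilde{\Omega}}^{n}b_{2,\mathbb{L}}$. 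For the first piece, one establishes analogs of Lemma \ref{lemma t} and Lemma \ref{e} with $\dot{\mathcal{X}}_{L\log L}$ replaced by $\dot{\mathcal{X}}_1$, which is permissible precisely because at the $L^1$ endpoint one does not lose the extra logarithm from \eqref{holderandjohnniren}; the geometric decay in $j$ is then interpolated against the $\mathcal{Y}_r$-norm of $h$, paying the factor $r'$. The multiplication piece requires a localized stopping-collection version of the estimate of \cite{hutao} invoked in the proof of \eqref{equation2.3}, giving the same $L^1$-type control.

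The principal obstacle is the last point: the term $\sum_{n,\mathbb{L}}\partial_nA_\mathbb{L}\,T_{\widetilde{\Omega}}^n b_{2,\mathbb{L}}$ is \emph{not} itself controlled by a sparse operator with $L^1$ averages unless one exploits the cancellation with the smoothness-truncated operator and the BMO information on $\partial_nA_{\mathbb{L}}$ inside the localizing cubes $\mathbb{L}^*$. Once this localized $L^1$-endpoint bound is in place, the stopping-time iteration is identical to Step 2 in the proof of Theorem \ref{thm3.1}, and reversing the duality returns the sparse domination for $T_{\Omega,A}$ with the claimed constant $Cr'^2$.
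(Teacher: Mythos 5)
Your overall architecture matches the paper's proof closely: both reduce by duality to establishing the $(L^1,L^r)$ sparse domination for the dual operator, apply the Calder\'on--Zygmund decomposition to the $L^1$-argument inside each stopping collection, dispatch the good part with the $L^p$ bound $\lesssim r'^2$ inherited from Corollary~\ref{cor2.1}, and reduce the bad part to a localized decay-in-$j$ estimate with $\dot{\mathcal{X}}_1$ in place of $\dot{\mathcal{X}}_{L\log L}$. You also correctly identify the ``principal obstacle'': the term $\sum_n \partial_n A_{\mathbb{L}}\, T^n_{\widetilde\Omega}b_{\mathbb{L}}$ cannot simply be moved onto $b_{\mathbb{L}}$ without losing a logarithm, and one must exploit cancellation together with the ${\rm BMO}$ control of $\partial_n A - \langle \partial_n A\rangle_{\mathbb{L}}$ on the localizing annuli.

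Where your proposal stops short of a proof, however, is precisely at that obstacle. The paper's Lemma~\ref{dui} handles it concretely by writing $K^*_j(b,h)$ as $G^{1,*}_j+G^{2,*}_j+G^{3,*}_j+G^{4,*}_j$ and then appealing to Lemma~\ref{lem7.1} (vanishing moment against the smoothing $P_{s-j\kappa}$), Lemma~\ref{lem7.2} (the $L^2$ orthogonality estimate from \cite{hutao}), Lemma~\ref{lem7.3} (the commutator $G^j_{\nu,a}$ square-function bound), plus a further split of $G^{4,*}_j$ into ${\rm U}_1$ and ${\rm U}_2$ using the Littlewood--Paley pieces $\Lambda_k,\widetilde\Lambda_k$. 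This is the bulk of the technical work and is not reducible to a black-box invocation of \cite{hutao}. Also, your displayed bad-part estimate with factor $r'$ is only the contribution of the $G^{1,*}_j$ piece (the analogue of Lemma~\ref{diff}); once the $\partial_n A$ piece is included, the non-decaying $j$-slice estimate (Lemma~\ref{lem3.5}) carries an extra factor $u'\sim r'$ from the John--Nirenberg inequality at exponent $u'$, so the bad part in the paper actually costs $r'^2$, not $r'$ --- consistent with, and in fact already saturating, the final constant $Cr'^2$. In short: correct route, correct good-part and stopping-time scaffolding, but the quantitative core of the bad-part estimate (the $r'^2$-cost analogue of Lemma~\ref{diff}) is asserted rather than established.
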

	To show this Theorem is true, we first prepare some lemmas which will be used later.
	Let $\Omega$ be a function of homogeneous of degree zero and $\Omega\in L^{\infty}(\mathbb{S}^{d-1})$. Let $\mathfrak{E}^j$ ($j\in\mathbb{N}$), $P_{s-j\kappa}$, $\Lambda_k$,\,$\widetilde{\Lambda}_k, \Gamma_{\nu}^j$ be the same operators as in Section 3. For $e_\nu^j\in\mathfrak{E}^j$, set
	$$K_{s\nu}^{n,j}(x)=\frac{\Omega(x)x_n}{|x|^{d+1}}\phi_s(x)\Gamma_{\nu}^j(x').$$
	Denote by $T_{s\nu}^{n,j}$ the convolution operator with the kernel $K_{s\nu}^{n,j}$, and $\widetilde{T}_{s\nu}^{n,j}$ be the dual operator of $T_{s\nu}^{n,j}$. The following lemmas are needed in our analysis.
	\begin{lemma}\label{lem7.1}
		Let $a\in {\rm BMO}(\mathbb{R}^d)$, $b_\mathbb{L}$ be an integrable function with integral zero, supported on a cube $\mathbb{L}\in \mathcal{S}_{s-j}$. Then for each $q\in (1,\,\infty)$, it holds that
		$$\big\|(a-\langle a\rangle_\mathbb{L})P_{s-j\kappa}T_{s\nu}^{n,j}b_\mathbb{L}\big\|_{L^1(\mathbb{R}^d)}\lesssim_{q} j2^{j\kappa d/q'}2^{-(1-\kappa)j}\|\Omega\|_{L^{\infty}(\mathbb{S}^{d-1})}\|b_\mathbb{L}\|_{L^1(\mathbb{R}^d)}.
		$$
	\end{lemma}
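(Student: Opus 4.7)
The plan is to estimate the $L^1$ norm by H\"older's inequality with dual exponents $q,\,q'$, separating the ${\rm BMO}$ contribution from the operator. First, let $Q^*$ be a cube concentric with $\mathbb{L}$ of side length $\sim 2^{s+1}$; since $K_{s\nu}^{n,j}$ is supported in $\{2^{s-2}\le|x|\le 2^s\}$, $\omega_{s-j\kappa}$ is supported in $B(0,2^{s-j\kappa})\subset B(0,2^s)$, and ${\rm supp}\,b_\mathbb{L}\subset\mathbb{L}\subset Q^*$, one checks that $P_{s-j\kappa}T_{s\nu}^{n,j}b_\mathbb{L}$ is supported in $Q^*$. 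Then H\"older's inequality yields
$$\big\|(a-\langle a\rangle_\mathbb{L})P_{s-j\kappa}T_{s\nu}^{n,j}b_\mathbb{L}\big\|_{L^1(\mathbb{R}^d)}\le \|a-\langle a\rangle_\mathbb{L}\|_{L^{q'}(Q^*)}\,\big\|P_{s-j\kappa}T_{s\nu}^{n,j}b_\mathbb{L}\big\|_{L^q(\mathbb{R}^d)}.$$

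For the ${\rm BMO}$ factor, I would split $(a-\langle a\rangle_\mathbb{L})=(a-\langle a\rangle_{Q^*})+(\langle a\rangle_{Q^*}-\langle a\rangle_\mathbb{L})$. The John-Nirenberg inequality yields $\|a-\langle a\rangle_{Q^*}\|_{L^{q'}(Q^*)}\lesssim_q |Q^*|^{1/q'}\|a\|_{{\rm BMO}}$, and since $\mathbb{L}\subset Q^*$ with $\ell(Q^*)/\ell(\mathbb{L})\sim 2^j$, the classical ${\rm BMO}$ average comparison gives $|\langle a\rangle_{Q^*}-\langle a\rangle_\mathbb{L}|\lesssim j\|a\|_{{\rm BMO}}$. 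Combining,
$$\|a-\langle a\rangle_\mathbb{L}\|_{L^{q'}(Q^*)}\lesssim j\,2^{sd/q'}\|a\|_{{\rm BMO}}.$$

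For the operator factor, the key is to exploit the vanishing moment $\int b_\mathbb{L}=0$. Write $\widetilde K:=K_{s\nu}^{n,j}*\omega_{s-j\kappa}$. Fixing any $y_0\in\mathbb{L}$ and using the mean-zero property,
$$P_{s-j\kappa}T_{s\nu}^{n,j}b_\mathbb{L}(x)=\int_\mathbb{L}\bigl[\widetilde K(x-y)-\widetilde K(x-y_0)\bigr]b_\mathbb{L}(y)\,dy,$$
so Minkowski's integral inequality combined with the mean value theorem gives
$$\big\|P_{s-j\kappa}T_{s\nu}^{n,j}b_\mathbb{L}\big\|_{L^q(\mathbb{R}^d)}\lesssim \ell(\mathbb{L})\,\|\nabla\widetilde K\|_{L^q(\mathbb{R}^d)}\|b_\mathbb{L}\|_{L^1(\mathbb{R}^d)}=2^{s-j}\|\nabla\widetilde K\|_{L^q}\|b_\mathbb{L}\|_{L^1}.$$
Since $\nabla\widetilde K=K_{s\nu}^{n,j}*\nabla\omega_{s-j\kappa}$, Young's inequality yields $\|\nabla\widetilde K\|_{L^q}\le \|K_{s\nu}^{n,j}\|_{L^1}\|\nabla\omega_{s-j\kappa}\|_{L^q}$; a crude size estimate bounds $\|K_{s\nu}^{n,j}\|_{L^1}\lesssim \|\Omega\|_{L^\infty(\mathbb{S}^{d-1})}$, while the scaling identity $\omega_{s-j\kappa}(x)=2^{-(s-j\kappa)d}\omega(2^{-(s-j\kappa)}x)$ gives $\|\nabla\omega_{s-j\kappa}\|_{L^q}\lesssim 2^{-(s-j\kappa)(1+d/q')}$.

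Collecting all factors, the powers of $2$ simplify as
$$2^{sd/q'}\cdot 2^{s-j}\cdot 2^{-(s-j\kappa)(1+d/q')}=2^{j\kappa d/q'}\,2^{-(1-\kappa)j},$$
which yields the claimed bound. The main technical step will be the gradient estimate on $\widetilde K$ via Young's inequality, where one must be careful in choosing which factor carries the $L^1$ norm and which the $L^q$; the ${\rm BMO}$ steps are standard. Note that $q\in(1,\infty)$ is required for the John-Nirenberg step to apply, matching the hypothesis of the lemma.
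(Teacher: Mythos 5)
Your proposal is correct and follows essentially the same line as the paper's proof: exploit $\int b_\mathbb{L}=0$ via the mean-value theorem to gain $\ell(\mathbb{L})\,\|\nabla\omega_{s-j\kappa}\|_{L^q}\sim 2^{-(1-\kappa)j}2^{-(s-j\kappa)d/q'}$, and pair the BMO factor against it through H\"older and John--Nirenberg on a cube of scale $2^s$ to pick up $j\,2^{sd/q'}$. The only cosmetic differences are that you separate the two factors with H\"older at the outset and invoke Young's inequality for $\nabla\widetilde K=K_{s\nu}^{n,j}*\nabla\omega_{s-j\kappa}$, whereas the paper keeps the polar integral explicit and dualizes against $h\in L^\infty$ at the end; the resulting power count is identical.
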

	\begin{proof}
	By the vanishing condition of $b_\mathbb{L}$, the same reasoning as in the proof of Lemma \ref{ljj} may lead to
		\begin{eqnarray*}\aligned
		{}&|P_{s-j\kappa} {T}_{s\nu}^{n,j}b_\mathbb{L}(x)|\\&
			\leq \int_{\mathbb{S}^{d-1}}\frac{|\Omega(\theta)|}{|\mathbb{L}|}\int_{\mathbb{L}}\Big|\int_{\mathbb{R}^d}\int_0^{\infty} \Big(\omega_{s-j\kappa}(x-y-r\theta)-\omega_{s-j\kappa}(x-y'-r\theta)\Big) \phi_s(r)r^{d-1}\frac{dr}{r} b_\mathbb{L}(y)dy\Big|d{y'}d\sigma_{\theta}   \\
			&\leq 2^{-(s-j\kappa)d}2^{-(1-\kappa)j}\int_{\mathbb{S}^{d-1}}\big| \Omega(\theta)\big|\frac{1}{|\mathbb{L}|}\int_{\mathbb{R}^d}\int_{\mathbb{L}}\int_0^{\infty}
		\int^1_0\big|\nabla\omega(2^{-(s-j\kappa)}\big(x-(ty+(1-t)y')-r\theta\big)\big|dt\\	&\quad \times  \phi_s(r)\frac{dr}{r}|b_\mathbb{L}(y)|dyd{y'}d\sigma_{\theta}.\endaligned
		\end{eqnarray*}
		For $a\in {\rm BMO}(\mathbb{R}^d)$, $t\in[0,\,1]$, $y,\,y'\in \mathbb{L}$, it follows from  H\"older's inequality and John-Nrenberg inequality that
		\begin{eqnarray*}
			&&\big\|\nabla\omega(2^{-(s-j\kappa)}\big(\cdot-(ty+(1-t)y')-r\theta\big)(a-\langle a\rangle_\mathbb{L})\big\|_{L^1(\mathbb{R}^d)}\\
			&&\leq \big\|\nabla\omega(2^{-(s-j\kappa)}\big(\cdot-(ty+(1-t)y')-r\theta\big)\|_{L^q(\mathbb{R}^d)}\|(a-\langle a\rangle_\mathbb{L})\chi_{\mathbb{L}_{j,3}}\big\|_{L^{q'}(\mathbb{R}^d)}\\
			&&\leq j2^{(s-j\kappa)d/q}2^{sd/q'}.
		\end{eqnarray*}
		Let $h\in L^{\infty}(\mathbb{R}^d)$. An arguement involving duality then yields
		\begin{eqnarray*}
			\Big|\int_{\mathbb{R}^d}\big(a(x)-\langle a\rangle_\mathbb{L}\big)P_{s-j\kappa}T_{s\nu}^{n,j}b_\mathbb{L}(x)h(x)dx\Big|
			&\leq &\Big|\int_{\mathbb{R}^d}P_{s-j\kappa}\widetilde{T}_{s\nu}^{n,j}\big((a-\langle a\rangle_\mathbb{L})h\big)(x)b_\mathbb{L}(x)dx\Big|\\
			&\lesssim &j2^{j\kappa d/q'}2^{-(1-\kappa)j}\|\Omega\|_{L^{\infty}(\mathbb{S}^{d-1})}\|b_\mathbb{L}\|_{L^1(\mathbb{R}^d)}\|h \|_{L^{\infty}(\mathbb{R}^d)}.
		\end{eqnarray*}
		This completes the proof of Lemma \ref{lem7.1}.
	\end{proof}
	\begin{lemma}\label{lem7.2}
		Let $\Omega$ be homogeneous of degree zero and $\Omega\in L^{\infty}(\mathbb{S}^{d-1})$, $\mathcal{S}$ be a collection of dyadic   cubes with disjoint interiors. For
		each cube $Q\in\mathcal{S}$, let $b_Q$ be an integrable function supported in $Q$ satisfying $\|b_Q\|_{L^1(\mathbb{R}^d)}\leq |Q|$.
		Then  for $a\in{\rm BMO}(\mathbb{R}^d)$ and $j\geq 100$,
		we have		$$\Big\|\sum_{s}\sum_{Q\in\mathcal{S}_{s-j}}\sum_{\nu}G_{\nu}^j\big((a-\langle a\rangle_Q)(I-P_{s-j\kappa})T_{s\nu}^{n,j}b_Q\big)\Big\|^2_{L^2(\mathbb{R}^d)}\lesssim \|\Omega\|_{L^{\infty}(\mathbb{S}^{d-1})}^22^{-j\gamma/2}\sum_{Q\in\mathcal{S}}\|b_{Q}\|_{L^1(\mathbb{R}^d)}.$$
	\end{lemma}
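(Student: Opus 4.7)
The strategy is to mirror the orthogonality argument from the proof of Lemma~\ref{lem2.5}, inserting an extra pair of BMO weights. First, I apply Plancherel's theorem together with the bounded-overlap estimate
$$\sup_{\xi\neq 0}\sum_{\nu}\big|\psi\big(2^{j\gamma}\langle \xi/|\xi|, e_{\nu}^j\rangle\big)\big|^{2}\lesssim 2^{j\gamma(d-2)}$$
for the frequency cut-offs underlying $G_{\nu}^{j}$. This reduces the target estimate to controlling
$$2^{j\gamma(d-2)}\sum_{\nu}\Big\|\sum_{s}\sum_{Q\in\mathcal{S}_{s-j}}(a-\langle a\rangle_{Q})(I-P_{s-j\kappa})T_{s\nu}^{n,j}b_{Q}\Big\|_{L^{2}(\mathbb{R}^d)}^{2}.$$

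Next, for each fixed $\nu$, I expand the squared $L^{2}$ norm as a double sum over pairs $Q\in\mathcal{S}_{s-j}$ and $Q'\in\mathcal{S}_{s'-j}$ and symmetrize to $s\geq s'$. The geometric ingredient is the pointwise bound
$$\big|(I-P_{s-j\kappa})T_{s\nu}^{n,j}b_{Q}(x)\big|\lesssim \|\Omega\|_{L^{\infty}(\mathbb{S}^{d-1})}\,H_{s,\nu}^{j}*|b_{Q}|(x),$$
where $H_{s,\nu}^{j}=2^{-sd}\chi_{\mathcal{R}_{s,\nu}^{j}}$ with $\mathcal{R}_{s,\nu}^{j}$ a rectangle whose long axis of length $\sim 2^{s}$ lies along $e_{\nu}^{j}$ and whose $(d-1)$ short axes have length $\sim 2^{s-j\gamma}$. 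Applying $|pq|\le(p^{2}+q^{2})/2$ to the BMO factors, every pairing reduces to estimating
$$\sum_{s\geq s'}\sum_{Q,Q'}\int (a-\langle a\rangle_{Q})^{2}\,(H_{s,\nu}^{j}*|b_{Q}|)(x)\,(H_{s',\nu}^{j}*|b_{Q'}|)(x)\,dx.$$

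Finally, on the enlarged rectangle that supports the inner product, the John--Nirenberg inequality bounds the $L^{1}$ average of $(a-\langle a\rangle_{Q})^{2}$ by $\lesssim j^{2}$; together with the geometric gain $2^{-2j\gamma(d-1)}$ coming from the convolution of two thin rectangles along a common direction (exactly as exploited in Lemma~\ref{lem2.5}), the disjointness of $\{Q\}\subset\mathcal{S}$, and the fact that $\sharp\mathfrak{E}^{j}\lesssim 2^{j\gamma(d-1)}$, I obtain
$$\sum_{\nu}\Big\|\!\cdots\!\Big\|_{L^{2}(\mathbb{R}^d)}^{2}\lesssim j^{2}\|\Omega\|_{L^{\infty}(\mathbb{S}^{d-1})}^{2}\,2^{-j\gamma(d-3/2)}\sum_{Q\in\mathcal{S}}\|b_{Q}\|_{L^{1}(\mathbb{R}^d)}.$$
Combining with the Plancherel factor $2^{j\gamma(d-2)}$ yields the claimed $2^{-j\gamma/2}$ decay (the polynomial loss $j^{2}$ can be absorbed by slightly shrinking the exponent, since $j^{2}\ll 2^{j\gamma/4}$ for large $j$, with the small-$j$ case handled trivially by the standing hypothesis $j\ge 100$).

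\textbf{Main obstacle.} The delicate point is handling the product $(a-\langle a\rangle_{Q})(a-\langle a\rangle_{Q'})$ of two BMO weights attached to cubes of different scales: a careless use of John--Nirenberg introduces a logarithmic loss in $|s-s'|$ that, if left unchecked, swamps the $2^{-j\gamma/2}$ decay. I would circumvent this by writing $(a-\langle a\rangle_{Q'})=(a-\langle a\rangle_{Q^{\ast}})+(\langle a\rangle_{Q^{\ast}}-\langle a\rangle_{Q'})$ for an enlargement cube $Q^{\ast}\supset \mathcal{R}_{s,\nu}^{j}\cup \mathcal{R}_{s',\nu}^{j}$ at scale $2^{s}$, handling the first piece by an $L^{q}$-John--Nirenberg bound with $q$ close to $1$, and absorbing the logarithmic difference $|\langle a\rangle_{Q^{\ast}}-\langle a\rangle_{Q'}|\lesssim j+|s-s'|$ into the geometric convolution decay. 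An alternative route, which I would also consider, is to pass the BMO weight onto the dual side through the adjoint operator and invoke Lemma~\ref{lem7.1} directly, exploiting the vanishing-moment structure of $\omega$ to extract the $2^{-(1-\kappa)j}$ smoothness gain with a controlled $L^{q'}$ loss from the weight.
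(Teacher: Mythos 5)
Your proposal follows essentially the same orthogonality scheme as the paper: Plancherel plus the bounded overlap of the multipliers $\psi(2^{j\gamma}\langle \xi/|\xi|,e_\nu^j\rangle)$ to extract $2^{j\gamma(d-2)}$, the pointwise domination $|(I-P_{s-j\kappa})T_{s\nu}^{n,j}b_Q|\lesssim H_{s,\nu}^j*|b_Q|$, the $\mathrm{card}(\mathfrak{E}^j)\lesssim 2^{j\gamma(d-1)}$ count, and then a thin-rectangle $L^2$ estimate with the BMO weight $(a-\langle a\rangle_Q)$. The difference is that the paper discharges the weighted $L^2$ estimate entirely by citing Hu and Tao (and asserting the citation transfers verbatim from $T_{s\nu}^{n,j}$ to $H_{s,\nu}^j$), whereas you sketch a direct argument. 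Two remarks on your sketch. First, your John--Nirenberg step over the enlarged rectangle of scale $2^s\gg \ell(Q)=2^{s-j}$ necessarily costs a factor $j^2$, so you end up with $j^2 2^{-j\gamma/2}$; the proposed absorption ``$j^2\ll 2^{j\gamma/4}$'' only gives $2^{-cj\gamma}$ for some $c<1/2$ depending on $\gamma$, i.e.\ a strictly weaker exponent than the stated $2^{-j\gamma/2}$ (harmless for the use of the lemma in $G_j^{3,*}$ where any geometric decay in $j$ suffices, but not literally the statement). Second, the ``main obstacle'' paragraph correctly identifies the genuine difficulty — the product $(a-\langle a\rangle_Q)(a-\langle a\rangle_{Q'})$ at mismatched scales, with the attendant $|s-s'|$ loss from comparing averages — and your two proposed patches (introducing an intermediate average $\langle a\rangle_{Q^*}$, or dualizing through $\widetilde T^{n,j}_{s\nu}$ and invoking Lemma~\ref{lem7.1}) are plausible, but neither is carried out; this is exactly the content the paper delegates to the Hu--Tao reference, so as written your argument is a reasonable blueprint rather than a self-contained replacement for the citation.
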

	\begin{proof}Let $H_{s,\nu}^j$ be the same as in Section 3. Hu and Tao \cite{hutao} proved that
		\begin{eqnarray}\label{equation7.x}
		&&\Big\|\sum_{s}\sum_{Q\in\mathcal{S}_{s-j}}(a-\langle a\rangle_Q)T_{s\nu}^{n,j}b_Q\Big\|^2_{L^2(\mathbb{R}^d)}\lesssim  2^{-j\gamma(2d-\frac{5}{2})}\|\Omega\|^2_{L^{\infty}(\mathbb{S}^{d-1})}\sum_{Q\in\mathcal{S}}\|b_Q\|_{L^1(\mathbb{R}^d)}.
		\end{eqnarray}
		Repeating the proof of (\ref{equation7.x}) as in  \cite{hutao}, we can verify that this inequality still holds if $T_{s\nu}^j$ is replaced by $H_{s,\nu}^j$.
		\begin{eqnarray*}
			&&\Big\|\sum_{s}\sum_{Q\in\mathcal{S}_{s-j}}(a-\langle a\rangle_Q)H_{s,\nu}^j*b_Q\Big\|^2_{L^2(\mathbb{R}^d)}\lesssim  2^{-j\gamma(2d-\frac{5}{2})}\|\Omega\|^2_{L^{\infty}(\mathbb{S}^{d-1})}\sum_{Q\in\mathcal{S}}\|b_Q\|_{L^1(\mathbb{R}^d)}.
		\end{eqnarray*}
		This, along with  the facts that
		$$|(I-P_{s-j\kappa})T_{s\nu}^{n,j}b_Q(x)|\lesssim\|\Omega\|_{L^{\infty}(\mathbb{S}^{d-1})}H_{s,\,\nu}^j*|b_Q|(x),
		$$
		and
		\begin{eqnarray*}
			&&\sum_{\nu}\Big\|\sum_{s}\sum_{Q\in\mathcal{S}_{s-j}}G_{\nu}^j(a-\langle a\rangle_Q)(I-P_{s-j\kappa})T_{s\nu}^{n,j}b_Q\Big\|^2_{L^2(\mathbb{R}^d)}\\&&\lesssim 2^{j\gamma(d-2)} 2^{j\gamma(d-1)} \Big\|\sum_{s}\sum_{Q\in\mathcal{S}_{s-j}}(I-P_{s-j\kappa})T_{s\nu}^{n,j}b_Q\Big\|^2_{L^2(\mathbb{R}^d)}
		\end{eqnarray*}
		yields the desired conclusion.
	\end{proof}
	\begin{lemma}[\cite{hutao}, Lemma 5.2]\label{lem7.3}Let $j\in\mathbb{N}$ and $e_{\nu}^j\in \mathfrak{E}^j$. For $a\in{\rm BMO}(\mathbb{R}^d)$, let $G_{\nu, a}^j$ be the commutator of $G_{\nu}^j$ with symbol $a$. Then
		\begin{eqnarray*}\Big\|\Big(\sum_{\nu}|G_{\nu,\,a}^jf|^2\Big)^{1/2}\Big\|^2_{L^2(\mathbb{R}^d)}\lesssim 2^{j\gamma(d-1)-j\gamma/2}\|f\|_{L^2(\mathbb{R}^d)}^2,
		\end{eqnarray*}
	\end{lemma}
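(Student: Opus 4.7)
The plan is to exploit the narrow frequency support of the multipliers $\psi(2^{j\gamma}\langle \xi/|\xi|,e_{\nu}^{j}\rangle)$ and reduce the estimate to a vector-valued paraproduct inequality. As a warm-up, one first observes that by Plancherel, for each $\xi\in\mathbb{R}^{d}\setminus\{0\}$ the number of $\nu$ with $|\langle \xi/|\xi|,e_{\nu}^{j}\rangle|\le 4\cdot 2^{-j\gamma}$ is at most $O(2^{j\gamma(d-2)})$, so the non-commutator square function satisfies
\[
\Big\|\Big(\sum_{\nu}|G_{\nu}^{j}f|^{2}\Big)^{1/2}\Big\|_{L^{2}(\mathbb{R}^{d})}^{2}\lesssim 2^{j\gamma(d-2)}\|f\|_{L^{2}(\mathbb{R}^{d})}^{2}.
\]
Compared to the naive $\ell^{\infty}\to\ell^{2}$ bound $\sum_{\nu}\|G_{\nu}^{j}f\|_{L^{2}}^{2}\lesssim 2^{j\gamma(d-1)}\|f\|_{L^{2}}^{2}$ obtained from counting $|\mathfrak{E}^{j}|\lesssim 2^{j\gamma(d-1)}$, this already saves a factor $2^{-j\gamma}$. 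The target bound asks only for half of this gain, $2^{-j\gamma/2}$, which should reflect that in passing to the commutator one trades one factor of angular concentration for one factor of BMO oscillation of $a$.

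Next, I would perform a Littlewood-Paley decomposition $a=\sum_{k}\Delta_{k}a$ of the symbol and use a Bony paraproduct to split
\[
[a,G_{\nu}^{j}]f=\Pi_{a}(G_{\nu}^{j}f)-G_{\nu}^{j}\Pi_{a}(f)+R_{\nu}(a,f),
\]
where $\Pi_{a}$ denotes the paraproduct against $a$ and $R_{\nu}$ gathers the diagonal (high-high) remainder. For the first two pieces, the output frequency is supported inside the narrow angular band defining $G_{\nu}^{j}$ (or of one of its low-frequency shifts), so Plancherel plus the Carleson-measure characterization of BMO (namely, $|\Delta_{k}a(x)|^{2}\,dx\,dk$ is a Carleson measure of norm $\lesssim\|a\|_{BMO}^{2}$) combined with the warm-up bound above already delivers the factor $2^{j\gamma(d-2)}\|a\|_{BMO}^{2}\|f\|_{L^{2}}^{2}$, which is stronger than what is required.

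The main obstacle will be the diagonal remainder $R_{\nu}(a,f)$, where $\Delta_{k}a$ and $\Delta_{k}f$ live at comparable scales and the output frequency is no longer pinned to the cone supporting the multiplier of $G_{\nu}^{j}$. Here I plan to invoke a Cotlar-Stein almost-orthogonality argument between different $\nu$: the angular separation $|e_{\nu}^{j}-e_{\nu'}^{j}|>2^{-j\gamma-4}$ forces the kernel of $G_{\nu}^{j}(G_{\nu'}^{j})^{*}$ to decay rapidly outside a thin tube, and estimating the resulting bilinear sum against the John--Nirenberg oscillation of $a$ should produce the missing factor of $2^{-j\gamma/2}$. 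This final step, tracking the interplay between angular almost-orthogonality on the $\nu$-side and BMO-$H^{1}$ duality on the symbol side, is where the subtlety of the argument in \cite{hutao} lies.
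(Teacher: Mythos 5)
The paper does not prove this lemma; it is cited verbatim from Hu--Tao \cite{hutao} (still listed as ``to appear''), so there is no in-text argument against which to compare your route. Assessing the proposal on its own terms, the skeleton is sensible but the proof has a genuine gap at the decisive step.

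Your warm-up count is correct: each $\xi\neq 0$ lies in the band $\{|\langle\xi/|\xi|,e_\nu^j\rangle|\le 4\cdot 2^{-j\gamma}\}$ for at most $O(2^{j\gamma(d-2)})$ of the $2^{-j\gamma}$-separated directions $e_\nu^j$, so Plancherel gives $\sum_\nu\|G_\nu^jf\|_{L^2}^2\lesssim 2^{j\gamma(d-2)}\|f\|_{L^2}^2$, and the target is indeed the geometric mean of this and the trivial $2^{j\gamma(d-1)}$ count. The paraproduct pieces are also controllable, although not quite for the reason you state: for $\Pi_a(G_\nu^jf)$ the output frequency is \emph{not} pinned to the angular band of $G_\nu^j$ --- multiplying the band-localized function $\Delta_kG_\nu^jf$ by $S_{k-3}a$ spreads the angular width from $2^{-j\gamma}$ to $O(1)$, destroying the overlap argument. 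What does rescue this term is the vector-valued $L^2(\ell^2)$-boundedness of the Calder\'on--Zygmund operator $\Pi_a$ (with norm $\lesssim\|a\|_{\rm BMO}$), applied to the sequence $\{G_\nu^jf\}_\nu$, combined with your warm-up bound. This still produces $2^{j\gamma(d-2)}\|a\|_{\rm BMO}^2\|f\|_{L^2}^2$ for the paraproduct contributions, which is even better than the target.

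The real problem is the remainder. You correctly identify it as the obstruction, but the proposal offers only an intent, not an estimate: ``a Cotlar--Stein almost-orthogonality argument \ldots should produce the missing factor of $2^{-j\gamma/2}$,'' and then ``this final step \ldots is where the subtlety of the argument lies.'' That is a plan, not a proof. Nothing verifies the almost-orthogonality hypothesis for the commutators $[a,G_\nu^j]$ or their high-high pieces: conjugating $G_\nu^j(G_{\nu'}^j)^*$ by multiplication against $a$ de-localizes both the angular support in frequency and the slab concentration of the kernels in space, and it is precisely this de-localization one must quantify to get a decaying bound in the angular separation $|e_\nu^j-e_{\nu'}^j|$. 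In addition, the high-high remainder $R(a,G_\nu^jf)$ is not a priori in $L^2$ when $a\in{\rm BMO}$ and $f\in L^2$; one must exploit cancellation in the full commutator (or regularize and pass to the limit) before the Bony splitting yields individually estimable pieces. Since the entire content of the lemma --- the specific $2^{-j\gamma/2}$ gain over the trivial $\ell^2$ sum --- is supposed to come from exactly the step you have not carried out, the proposal does not constitute a proof of the statement.
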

		Let
	$K_s^*(x,\,y)=\frac{\Omega(x-y)}{|x-y|^{d+1}}(A(x)-A(y)-\nabla A(x)(x-y))\phi_{s}(x-y).$
Let $\Lambda ^{A,*}$,  $\Lambda^{A,s_Q,*}_Q$, $\Lambda_Q^{A,*}$  and $\Lambda_{\mathcal{Q}}^{A,*}$ the same as in the definitions of $\Lambda ^{A}$, $\Lambda^{A,s_Q}_Q$, $\Lambda_Q^{A}$ and $\Lambda_{\mathcal{Q}}^{A}$ respectively, with $K_s$ replaced by $K_s^*$. Then
	\begin{eqnarray}\label{equation3.6}
		\frac{\big|\Lambda ^{A,*}(f_1,\,f_2)\big|}{\| f_1\|_{L^{r'}(\mathbb{R}^d)} \| f_2\|_{ L^{r}(\mathbb{R}^d)}}\lesssim r'^2  ,\,\,\,r\in (1,\,2].\label{CT}
	\end{eqnarray}
	Furthermore, for a  stopping collection $\mathcal{Q}$ with top $Q$, we have
	\begin{eqnarray*}
		|\Lambda_{\mathcal{\mathcal{Q}}}^{A,*}(f_1,f_2)|\lesssim   r'^2|Q|
		\| f_1\|_{\mathcal{Y}_{r'}(\mathcal{Q})}\| f_2\|_{ \mathcal{Y}_{r}(\mathcal{Q})}.
	\end{eqnarray*}
	
	\begin{lemma}\label{lem3.5}
		Let $\Omega$ be homogeneous of degree zero and $\Omega\in L^{\infty}(\mathbb{S}^{d-1})$. Then for any $u\in (1,\,\infty)$ and  $j\geq 1$, it holds that
		$$	\sum_s\sum_{\mathbb{L}\in\mathcal{Q},s_\mathbb{L}=s-j} \int_{\mathbb{R}^d}\int_{\mathbb{R}^d}|K_s^*(x,\,y)||b_\mathbb{L}(y)||h(x)|dydx\lesssim ju' \|\Omega\|_{L^{\infty}(\mathbb{S}^{d-1})}|Q|\|b\|_{\dot{\mathcal{X}}_1(\mathcal{Q})}\|h\|_{\mathcal{Y}_u(\mathcal{Q})}.$$
	\end{lemma}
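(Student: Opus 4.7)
The plan is to decompose the kernel using the algebraic identity
$$A(x) - A(y) - \nabla A(x)(x-y) = \bigl[A_\mathbb{L}(x) - A_\mathbb{L}(y)\bigr] - \bigl(\nabla A(x) - \langle \nabla A\rangle_\mathbb{L}\bigr)\cdot(x-y),$$
where $A_\mathbb{L}(z) = A(z) - \sum_{n=1}^d \langle \partial_n A\rangle_\mathbb{L} z_n$, so that the left-hand side of the desired inequality splits into two contributions. The first involves $|A_\mathbb{L}(x) - A_\mathbb{L}(y)|$ and produces a CZ-type kernel. The second has the factor $|\nabla A(x) - \langle\nabla A\rangle_\mathbb{L}|\,|x-y|$; one factor of $|x-y|$ cancels one factor in the denominator, yielding a kernel of type $|x-y|^{-d}$ modulated by $|\nabla A(x) - \langle\nabla A\rangle_\mathbb{L}|$.

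For the first contribution, I will invoke Cohen's lemma together with John--Nirenberg, exactly as in the proof of Lemma \ref{lemma t}, to obtain the pointwise bound $|A_\mathbb{L}(x) - A_\mathbb{L}(y)| \lesssim j|x-y|$ uniformly for $y\in\mathbb{L}$ and $x$ in the localized region $\{|x-y|\sim 2^{s}\}$ (the additional $j$ coming from $|\langle\nabla A\rangle_\mathbb{L}-\langle\nabla A\rangle_I|\lesssim j$ on a cube $I$ of size $\sim 2^{s}$). Integrating $|h(x)|$ first and dominating by the Hardy--Littlewood maximal function yields
$$\sum_\mathbb{L}\|b_\mathbb{L}\|_{L^1(\mathbb{R}^d)}\inf_{z\in\mathbb{L}^*}Mh(z)\lesssim |Q|\,\|b\|_{\dot{\mathcal{X}}_1(\mathcal{Q})}\,\|h\|_{\mathcal{Y}_1(\mathcal{Q})}\le |Q|\,\|b\|_{\dot{\mathcal{X}}_1(\mathcal{Q})}\,\|h\|_{\mathcal{Y}_u(\mathcal{Q})},$$
giving the bound $j\|\Omega\|_{L^\infty(\mathbb{S}^{d-1})}|Q|\,\|b\|_{\dot{\mathcal{X}}_1(\mathcal{Q})}\,\|h\|_{\mathcal{Y}_u(\mathcal{Q})}$.

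The main obstacle is the second contribution, where the BMO factor $\nabla A(x)-\langle\nabla A\rangle_\mathbb{L}$ depends on the variable $x$ (not $y$), so it cannot be absorbed into estimates on the $b$-side and must instead be controlled against $h$ via H\"older's inequality. Fixing $\mathbb{L}$, swapping the order of integration, and letting $\widetilde{\mathbb{L}}$ denote a cube of side length $\sim 2^{s}$ containing the relevant set $\{x: |x-y|\sim 2^{s}, y\in \mathbb{L}\}$, I apply H\"older with exponents $u'$ and $u$ on $\widetilde{\mathbb{L}}$. The John--Nirenberg inequality yields
$$\Bigl(\tfrac{1}{|\widetilde{\mathbb{L}}|}\int_{\widetilde{\mathbb{L}}}|\nabla A(x)-\langle\nabla A\rangle_\mathbb{L}|^{u'}dx\Bigr)^{1/u'}\lesssim u'\|\nabla A\|_{{\rm BMO}(\mathbb{R}^d)}+\bigl|\langle\nabla A\rangle_{\widetilde{\mathbb{L}}}-\langle\nabla A\rangle_\mathbb{L}\bigr|\lesssim u'+j,$$
while the inclusion $\mathbb{L}^*\subset\widetilde{\mathbb{L}}$ (for $j$ large; the finitely many small $j$ are handled by brute force) and the definition of $\mathcal{Y}_u(\mathcal{Q})$ give $\bigl(\tfrac{1}{|\widetilde{\mathbb{L}}|}\int_{\widetilde{\mathbb{L}}}|h|^u\bigr)^{1/u}\leq \inf_{z\in\mathbb{L}^*}M_uh(z)\leq\|h\|_{\mathcal{Y}_u(\mathcal{Q})}$. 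Combining these estimates bounds the inner $x$-integral by $\|\Omega\|_{L^\infty(\mathbb{S}^{d-1})}(u'+j)\|h\|_{\mathcal{Y}_u(\mathcal{Q})}$, so summing in $\mathbb{L}$ yields at most $(u'+j)\|\Omega\|_{L^\infty(\mathbb{S}^{d-1})}|Q|\,\|b\|_{\dot{\mathcal{X}}_1(\mathcal{Q})}\,\|h\|_{\mathcal{Y}_u(\mathcal{Q})}$. Adding both contributions and using the elementary inequality $j+u'\lesssim ju'$ valid for $j\ge 1$ and $u'\ge 1$ completes the proof.
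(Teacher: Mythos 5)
Your argument is essentially identical to the paper's proof of Lemma~\ref{lem3.5}: the same algebraic split of $A(x)-A(y)-\nabla A(x)(x-y)$ into the $A_\mathbb{L}$-difference (controlled via Cohen's lemma and $Mh$) and the BMO-modulated remainder (controlled via H\"older with exponents $u',u$, John--Nirenberg, and $M_u h$), followed by $j+u'\lesssim ju'$. Your version is in fact slightly more explicit than the paper's in flagging the shift from $\langle\nabla A\rangle_{\mathbb{L}}$ to $\langle\nabla A\rangle_{\widetilde{\mathbb{L}}}$ as the source of the additive $j$, which the paper folds in silently.
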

	\begin{proof}
		The arguement here follows from the proof of \cite[Lemma 3.1]{ccdo}. Let $\mathbb{L}\in\mathcal{Q}$ and  $s_\mathbb{L}=s-j$. We assume that $\|\Omega\|_{L^{\infty}(\mathbb{S}^{d-1})}=1$.  Lemma \ref{lem2.3}, together with  H\"older's inequality, yields that
		\begin{eqnarray*}
			&&\int_{\mathbb{R}^d}\int_{\mathbb{R}^d}|K^*_s(x,\,y)||b_\mathbb{L}(y)||h(x)|dydx\\
			&&\quad\lesssim\int_{\mathbb{R}^d}\int_{\mathbb{R}^d}
			\frac{|A_\mathbb{L}(x)-A_\mathbb{L}(y)|}{|x-y|^{d+1}}\phi_{s}(x-y)|b_\mathbb{L}(y)||h(x)|dx\\
			&&\qquad+\sum_{n=1}^d2^{-sd}\int_{\mathbb{R}^d}\int_{\mathbb{R}^d}|b_\mathbb{L}(y)||\partial_n A(x)-\langle \partial_n A\rangle_{\mathbb{L}}||h(x)|\chi_{{\mathbb{L}^s}}(x)\,dy\,dx\\
			&&\quad\lesssim j\|b_\mathbb{L}\|_{L^1(\mathbb{R}^d)}\inf_{x\in \mathbb{L}^*}Mh(x)+\sum_{n=1}^d\Big(\frac{1}{|\mathbb{L}^s|}\int_{\mathbb{L}^s}|\partial_n A(x)-\langle \nabla A\rangle_{\mathbb{L}^s}|^{u'}dx\Big)^{\frac{1}{u'}}\\
			&&\qquad \times\inf_{x\in \mathbb{L}^*}M_uh(x)\|b_\mathbb{L}\|_{L^1(\mathbb{R}^d)}\\
			&&\quad\lesssim ju'|\mathbb{L}|\|b\|_{\dot{\mathcal{X}_1}(\mathcal{Q})}\|h\|_{\mathcal{Y}_u(\mathcal{Q})},
		\end{eqnarray*}
		where $\mathbb{L}^s=B(y,2^{s+10})$, and here we have used the fact that
		$$\Big(\frac{1}{|\mathbb{L}^s|}\int_{\mathbb{L}^s}|\partial_n A(x)-\langle\partial_n A\rangle_{\mathbb{L}^s}|^{u'}dx\Big)^{\frac{1}{u'}}\lesssim u',
		$$
		see \cite[p.128]{gra}. This completes the proof of Lemma \ref{lem3.5}.
	\end{proof}
	The following lemma gives a decay estimate for $\Lambda_{\mathcal{Q}}^{A,\,*}(b,\,h)$.
	\begin{lemma}\label{dui}
		Let $r\in (1,\,2]$,  {$\mathcal{Q}\subset\mathcal{D}$}  be a stopping collection with top cube $Q$, $b$ and $h$ be functions such that ${\rm supp}\,b\subset Q$ and ${\rm supp}\,h\subset 3Q$. Under the hypothesis of Theorem \ref{thm1.2}, the following estimate holds
		\begin{eqnarray*}
			\big|\Lambda_{\mathcal{Q}}^{A,\,*}(b,\,h)\big|\lesssim r'^2\|\Omega\|_{L^{\infty}(\mathbb{S}^{d-1})}|Q|\|b\|_{\dot{\mathcal{X}}_1(\mathcal{Q})}\|h\|_{\mathcal{Y}_r(\mathcal{Q})}.
		\end{eqnarray*}
	\end{lemma}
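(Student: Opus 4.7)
The plan is to mirror the scheme used for $T_{\Omega,A}$ in Section \ref{Sec3} and Lemma \ref{diff}, but with the dual kernel $K_s^{\ast}$ and with the ``trivial'' $L^{1}$ endpoint taking the place of the $L\log L$ endpoint. Writing $A_\mathbb{L}(y)=A(y)-\sum_n\langle\partial_n A\rangle_\mathbb{L}\,y_n$, we first exploit the identity
\[
A(x)-A(y)-\nabla A(x)(x-y)=\bigl[A_\mathbb{L}(x)-A_\mathbb{L}(y)\bigr]-\sum_{n=1}^d\bigl[\partial_n A(x)-\langle\partial_n A\rangle_\mathbb{L}\bigr](x_n-y_n)
\]
to split, for each $\mathbb{L}\in\mathcal{Q}$ with $\ell(\mathbb{L})=2^{s-j}$,
\[
\sum_s\sum_{\mathbb{L}\in\mathcal{Q},\,s_\mathbb{L}=s-j}\widetilde{T}_{\Omega,A;s,j}b_\mathbb{L}=\sum_s\sum_{\mathbb{L}\in\mathcal{Q},\,s_\mathbb{L}=s-j}T_{\Omega,A_\mathbb{L};s,j}b_\mathbb{L}-\sum_{n=1}^d\sum_s\sum_{\mathbb{L}}\bigl(\partial_n A-\langle\partial_n A\rangle_\mathbb{L}\bigr)T^{n,j}_{\Omega;s}b_\mathbb{L},
\]
where $T^{n,j}_{\Omega;s}$ is the convolution operator whose kernel is $\phi_s(x-y)\Omega(x-y)(x_n-y_n)|x-y|^{-d-1}$.

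The first sum is exactly the object treated in Lemma \ref{diff}; its analysis (using Lemmas \ref{lem2.x}--\ref{lem2.6wzd} and Lemma \ref{lemma2.5}) already yields a $j$-decay estimate of the form $j\,2^{-\varepsilon j}\|\Omega\|_{L^\infty}|Q|\|b\|_{\dot{\mathcal X}_1(\mathcal Q)}\|h\|_{\mathcal Y_\infty(\mathcal Q)}$. For the commutator part, I would introduce the same smoothing $P_{s-j\kappa}$ and cone decomposition $\sum_\nu\Gamma_\nu^j$ used earlier, and split
\[
(\partial_n A-\langle\partial_n A\rangle_\mathbb{L})T^{n,j}_{\Omega;s,\nu}=(\partial_n A-\langle\partial_n A\rangle_\mathbb{L})P_{s-j\kappa}T^{n,j}_{\Omega;s,\nu}+G_\nu^j(\partial_n A-\langle\partial_n A\rangle_\mathbb{L})(I-P_{s-j\kappa})T^{n,j}_{\Omega;s,\nu}+\cdots
\]
Lemma \ref{lem7.1} supplies the $L^1$-bound for the $P_{s-j\kappa}$ piece (with gain $2^{-(1-\kappa)j}$ after choosing $q$ close to $1$), Lemma \ref{lem7.2} handles the $G_\nu^j$-projected piece with gain $2^{-j\gamma/2}$, and the complementary $(I-G_\nu^j)$ piece is controlled by Lemma \ref{lem7.3} combined with the Littlewood--Paley decomposition exactly as in the proof of Lemma \ref{lem2.6wzd}. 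Collecting these three gains produces an $\mathcal{Y}_\infty(\mathcal Q)$-decay estimate of the form $j\,2^{-\varepsilon j}\|\Omega\|_{L^\infty}|Q|\|b\|_{\dot{\mathcal X}_1(\mathcal Q)}\|h\|_{\mathcal Y_\infty(\mathcal Q)}$ for the commutator part as well.

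Having both pieces bounded on $\mathcal{Y}_\infty(\mathcal{Q})$ with geometric decay in $j$, I would next invoke Lemma \ref{lem3.5} as the dual replacement of Lemma \ref{lemma t}: it provides the ``trivial'' bound
\[
\Bigl|\Bigl\langle\sum_s\sum_{\mathbb{L}\in\mathcal{Q},\,s_\mathbb{L}=s-j}\widetilde{T}_{\Omega,A;s,j}b_\mathbb{L},h\Bigr\rangle\Bigr|\lesssim j\,u'\,\|\Omega\|_{L^\infty(\mathbb{S}^{d-1})}|Q|\|b\|_{\dot{\mathcal X}_1(\mathcal{Q})}\|h\|_{\mathcal Y_u(\mathcal{Q})}
\]
for any $u\in(1,\infty)$, crucially with $\dot{\mathcal X}_1$ rather than $\dot{\mathcal X}_{L\log L}$. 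Interpolating (in the target scale) between this inequality at $u=r$ and the $\mathcal{Y}_\infty(\mathcal Q)$-decay estimate above exactly as in the derivation of Lemma \ref{diff} yields, for each $j\ge 1$, the bound $2^{-\varepsilon j(r-1)/(2r)}r'\|\Omega\|_{L^\infty}|Q|\|b\|_{\dot{\mathcal X}_1(\mathcal Q)}\|h\|_{\mathcal Y_r(\mathcal Q)}$, and summing the geometric series in $j$ (which contributes an extra factor $\frac{r}{r-1}\simeq r'$) produces the desired overall constant $r'^2\|\Omega\|_{L^\infty}|Q|\|b\|_{\dot{\mathcal X}_1(\mathcal Q)}\|h\|_{\mathcal Y_r(\mathcal Q)}$.

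The principal obstacle is organizing the commutator term $(\partial_n A(x)-\langle\partial_n A\rangle_\mathbb{L})T^{n,j}_{\Omega;s}b_\mathbb{L}$: because the BMO factor is evaluated at the outer variable $x$, it cannot be absorbed into $R_{s,\mathbb{L}}^{j}$ as in Section \ref{Sec3}, and one must carry the John--Nirenberg gain outside the kernel through the $P_{s-j\kappa}$, $G_\nu^j$, and $(I-G_\nu^j)$ decompositions. This is exactly the role of Lemmas \ref{lem7.1}--\ref{lem7.3}, and the correct choice of the auxiliary exponent $q$ in Lemma \ref{lem7.1} (together with $0\ll\tau\ll\gamma\ll\varepsilon_0<\kappa<1$) is what forces the final gain to be geometric in $j$; the two extra $r'$ factors (compared with Lemma \ref{diff}) arise, one from interpolating against $\mathcal Y_r$ instead of $\mathcal Y_{L\log L}$, and one from summing the geometric series.
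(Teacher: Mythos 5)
Your proposal follows the paper's proof essentially step for step---the same splitting of $K_s^*$ into an $A_{\mathbb{L}}$ difference plus a commutator with $\partial_n A-\langle\partial_n A\rangle_{\mathbb{L}}$, the same use of Lemmas \ref{lem7.1}--\ref{lem7.3} together with a further Littlewood--Paley refinement for the $(I-G_\nu^j)$ tail, and the same interpolation of Lemma \ref{lem3.5} against the $\mathcal{Y}_\infty$-decay estimate followed by a geometric summation in $j$. Two small slips worth flagging: the $\mathcal{Y}_\infty$-decay estimate for the $A_{\mathbb{L}}$ piece is the content of Lemma \ref{e}, not Lemma \ref{diff}; and you must take $u$ strictly less than $r$ in Lemma \ref{lem3.5} (the paper takes $u=(1+r)/2$), not $u=r$, else the interpolation is degenerate and no decay is gained---indeed the decay exponent $(r-1)/(2r)$ you subsequently write already corresponds to the choice $u=(1+r)/2$.
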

	\begin{proof}Without loss of generality, we may assume that $\|\Omega\|_{L^{\infty}(\mathbb{S}^{d-1})}=\|b\|_{\dot{\mathcal{X}}_1(\mathcal{Q})}=1$. Let $\mathcal{Q}\subset \mathcal{D}$ be a stopping collection with top $Q$, $\{\epsilon_j\}\in \{-1,\,0,1\}^{\mathbb{Z}}$ be a choices of signs, $b\in \dot{\mathcal{X}}_1(\mathcal{Q})$. We define
		\begin{align*}
			K^*(b,\,h)&=\sum_{j\geq 1}\sum_s\sum_{\mathbb{L}\in\mathcal{Q},s_\mathbb{L}=s-j}\epsilon_s
			\int_{\mathbb{R}^{2d}}K_s^*(x,y)b_\mathbb{L}(y)h(x)\,dy\,dx:=\sum_{j\geq 1}	K_j^*(b,\,h).
		\end{align*}
		Obviously, it suffices to show the following inequality holds
		\begin{eqnarray*}
			|K^*(b,\,h)|\lesssim r'^2\|\Omega\|_{L^{\infty}(\mathbb{S}^{d-1})}|Q|\|b\|_{\dot{\mathcal{X}}_1(\mathcal{Q})}\|h\|_{\mathcal{Y}_r(\mathcal{Q})}.
		\end{eqnarray*}
		
		To this purpose, let
		$$
		G_j^{1,*}(b,\,h)=\sum_s\sum_{\mathbb{L}\in\mathcal{Q},s_\mathbb{L}=s-j}\epsilon_s
		\int_{\mathbb{R}^{2d}}\frac{(A_\mathbb{L}(x)-A_\mathbb{L}(y))}{|x-y|^{d+1}}\phi_{s}(x-y)b_\mathbb{L}(y)h(x)\,dy\,dx;
		$$
		
		$$G_j^{2,*}(b,\,h)=\sum_{n=1}^d\sum_{\nu}\sum_s\sum_{\mathbb{L}\in\mathcal{Q},s_\mathbb{L}=s-j}\epsilon_s\int_{\mathbb{R}^d}
		\Big((\partial_nA-\langle \partial_n A\rangle_\mathbb{L})P_{s-j\kappa}T_{s\nu}^{n,j}b_\mathbb{L}\Big)(x)h(x)dx;
		$$
		$$G_j^{3,*}(b,\,h)=\sum_{n=1}^d\sum_{\nu}\sum_s\sum_{\mathbb{L}\in\mathcal{Q},s_\mathbb{L}=s-j}\epsilon_s\int_{\mathbb{R}^d}
		G_{\nu}^j\Big((\partial_nA-\langle \partial_n A\rangle_\mathbb{L})(I-P_{s-j\kappa})T_{s\nu}^{n,j}b_\mathbb{L}\Big)(x)h(x)dx;
		$$
		$$G_j^{4,*}(b,\,h)=\sum_{n=1}^d \sum_{\nu}\sum_s\sum_{\mathbb{L}\in\mathcal{Q},s_\mathbb{L}=s-j}\epsilon_s\int_{\mathbb{R}^d}
		(I-G_{\nu}^j)\Big((\partial_nA-\langle \partial_n A\rangle_\mathbb{L})(I-P_{s-j\kappa})T_{s\nu}^{n,j}b_\mathbb{L}\Big)(x)h(x)dx.
		$$
		Then
		$$|K_j^*(b,\,h)|\leq \sum_{l=1}^4|G_j^{l,*}(b,\,h)|.$$

		We now estimate these four terms. It is easy to see that $ G_j^{1,*}(b,\,h)$ can be dealt in the same way as {in the proof of Lemma} \ref{e}. Hence there exists some $\varepsilon\in(0,1) $ such that
		\begin{eqnarray}\label{equation7.4}
			|G_j^{1,*}(b,\,h)| \lesssim   2^{-\varepsilon j/2} |Q|\|b\|_{\dot{\mathcal{X}}_{1}(\mathcal{Q})}\|{h}\|_{\mathcal{Y}_{\infty}(\mathcal{Q})}.\label{de}
		\end{eqnarray}
	Moreover, Lemma \ref{lem7.1} tells us that
		\begin{eqnarray}\label{equation7.5}
			|G_j^{2,*}(b,\,h)| \lesssim   2^{j\kappa d/q'}2^{-(1-\kappa)j} |Q|\|b\|_{\dot{\mathcal{X}}_{1}(\mathcal{Q})}\|{h}\|_{\mathcal{Y}_{\infty}(\mathcal{Q})}.\label{de}
		\end{eqnarray}		
		On the other hand, we obtain from Lemma \ref{lem7.2} that
		\begin{eqnarray}\label{equation7.5x}
			G_{j}^{3,*}(b,\,h)&\leq &\Big\|\sum_s\sum_{\mathbb{L}\in\mathcal{Q},s_\mathbb{L}=s-j}\sum_{\nu}{\epsilon_s		\big(\,G_{\nu}^j}\big(\partial_nA-\langle \partial_n A\rangle_\mathbb{L} )(I-P_{s-j\kappa})T_{j\nu}^{n,s}b_\mathbb{L}\big)\Big\|_{L^2(\mathbb{R}^d)}\|h\|_{L^2(\mathbb{R}^d)}\nonumber
			\\
			&&\lesssim 2^{-j\gamma/4}\Big(\sum_{\mathbb{L}\in\mathcal{Q}}\|b_{\mathbb{L}}\|_{L^1(\mathbb{R}^d)}\Big)^{1/2}\|h\|_{L^2(\mathbb{R}^d)}
			\lesssim 2^{-j\gamma/4}|Q|\|h\|_{\mathcal{Y}_{\infty}(\mathcal{Q})}.
		\end{eqnarray}

	Consider now $G_{j}^{4,\,*}$, we write
		\begin{eqnarray*}G_j^{4,*}(b,\,h)&=&\sum_{n=1}^d \sum_{\nu}\sum_s\sum_{\mathbb{L}\in\mathcal{Q},s_\mathbb{L}=s-j}\epsilon_s\int_{\mathbb{R}^d}
			G_{\nu,\partial_n A}^j(I-P_{s-j\kappa})T_{s\nu}^{n,j}b_\mathbb{L}(x)h(x)dx\\
			&&+\sum_{n=1}^d \sum_{\nu}\sum_s\sum_{\mathbb{L}\in\mathcal{Q},s_\mathbb{L}=s-j}\epsilon_s\int_{\mathbb{R}^d}(\partial_nA(x)-\langle \partial_n A\rangle_\mathbb{L})\Big((I-G_{\nu}^j)(I-P_{s-j\kappa})T_{s\nu}^{n,j}b_\mathbb{L}\Big)(x)\\
			&&\times h(x)dx:=\sum_{n=1}^d{\rm U}_{1,n}+\sum_{n=1}^d{\rm U}_{2,n}.
		\end{eqnarray*}
		Applying Lemma \ref{lem7.3}, we obtain that for each $n$ with $1\leq n\leq d$,
		\begin{eqnarray}\label{equation7.6x}
			|{\rm U}_{1,\,n}|&\le &\Big\|\Big(\sum_{\nu}|G_{\nu,\partial_n A}^jh|^2\Big)^{\frac{1}{2}}\Big\|_{L^2}
			\Big\|\Big(\sum_{\nu}\Big|
			\sum_s\sum_{\mathbb{L}\in\mathcal{Q},s_\mathbb{L}=s-j}\epsilon_s(I-P_{s-j\kappa})T_{s\nu}^{n,j}b_\mathbb{L}\Big|^2
			\Big)^{\frac{1}{2}}\Big\|_{L^2}\\
			&\lesssim&2^{j\gamma(d-1)/2-j\gamma/4}2^{-j\gamma(d-1)/2}|Q|\|h\|_{\mathcal{Y}_{\infty}(\mathcal{Q})},\nonumber
		\end{eqnarray}
		where in the last inequality, we have invoked the fact that
		$$\Big\|\sum_s\sum_{\mathbb{L}\in\mathcal{Q},s_\mathbb{L}=s-j}\epsilon_s(I-P_{s-j\kappa})T_{s\nu}^{n,j}b_\mathbb{L}
		\Big\|^2_{L^2(\mathbb{R}^d)}\lesssim2^{-2j\gamma(d-1)}\sum_{\mathbb{L}\in\mathcal{Q}}\|b_\mathbb{L}\|_{L^1(\mathbb{R}^d)}.
		$$
		To estimate ${\rm U}_{2,n}$, for $k\in\mathbb{Z}$, let
		\begin{eqnarray*}
			{\rm U}_{2,n}^k&=&  \sum_{\nu}\sum_s\sum_{\mathbb{L}\in\mathcal{Q},s_\mathbb{L}=s-j}\epsilon_s\int_{\mathbb{R}^d}
			(\partial_nA(x)-\langle \partial_n A\rangle_\mathbb{L})\Big(\Lambda_k\widetilde{\Lambda}_k(I-G_{\nu}^j)(I-P_{s-j\kappa})T_{s\nu}^{n,j}b_\mathbb{L}\Big)(x)h(x)dx.
		\end{eqnarray*}
		Let $\widetilde{T}^{n,j}_{s\nu}$ be the adjoint of $ {T}^{n,j}_{s\nu}$. Observe that
		$$\Lambda_k(I-G_{\nu}^j)(I-P_{s-j\kappa})\widetilde{\Lambda}_k\widetilde{T}_{s\nu}^{n,j}=\widetilde{T}_{s\nu}^{n,j}
		\Lambda_k\widetilde{\Lambda}_k(I-G_{\nu}^j)(I-P_{s-j\kappa}).$$
		It then follows that
		\begin{eqnarray*}
			|{\rm U}_{2,n}^k|&\leq&\sum_{\nu}\sum_s\sum_{\mathbb{L}\in\mathcal{Q},s_\mathbb{L}=s-j}\|b_\mathbb{L}\|_{L^1(\mathbb{R}^d)}
			\big\|\Lambda_k(I-G_{\nu}^j)(I-P_{s-j\kappa})\widetilde{\Lambda}_k\widetilde{T}_{s\nu}^{n,j}
			\big(\partial_nA-\langle \partial_n A\rangle_\mathbb{L})h\big)\big\|_{L^{\infty}(\mathbb{R}^d)}\nonumber\\
			&\leq&\sum_{\nu}\sum_s\sum_{\mathbb{L}\in\mathcal{Q},s_\mathbb{L}=s-j}\|b_\mathbb{L}\|_{L^1(\mathbb{R}^d)}\|\widetilde{T}_{s\nu}^{n,j}
			\big(\partial_nA-\langle \partial_n A\rangle_\mathbb{L})h\big)\big\|_{L^{\infty}(\mathbb{R}^d)}\nonumber\\
			&&\times\|(I-P_{s-j\kappa})\widetilde{\Lambda}_k\|_{L^1(\mathbb{R}^d)\rightarrow L^1(\mathbb{R}^d)}.\nonumber
		\end{eqnarray*}
For each fixed $x\in\mathbb{R}^d$, a trivial computation involving H\"older's inequality and the John-Nirenberg inequality yields that 
		\begin{eqnarray*}
			\big|\widetilde{T}_{s\nu}^{n,j}
			\big(\partial_nA-\langle \partial_n A\rangle_\mathbb{L})h\big)(x)\big|
			&\lesssim&\|K^{n,j}_{s\nu}\|_{L^q(\mathbb{R}^d)}\|\partial_nA(y)-\langle \partial_n A\rangle_\mathbb{L}\|_{L^{q'}(\mathbb{L}^{s})}\|h\|_{L^{\infty}(\mathbb{R}^d)}\\
			&\lesssim& j2^{-\gamma j(d-1)/q}\|h\|_{L^{\infty}(\mathbb{R}^d)}.
		\end{eqnarray*}
		This inequality, together with the inequality (\ref{equation2.wks}), gives that
		\begin{eqnarray}\label{equation7.6}
			|{\rm U}_{2,n}^k|&\lesssim&\sum_{\nu}\sum_s\sum_{\mathbb{L}\in\mathcal{Q},s_\mathbb{L}=s-j}
			j2^{-\gamma j(d-1)/q}2^{s-j\kappa-k}\|b_\mathbb{L}\|_{L^1(\mathbb{R}^d)}\|h\|_{L^{\infty}(\mathbb{R}^d)}\\
			&\leq&j\sum_s\sum_{\mathbb{L}\in\mathcal{Q},s_\mathbb{L}=s-j}
			2^{\gamma j(d-1)/q'}2^{s-j\kappa-k}\|b_\mathbb{L}\|_{L^1(\mathbb{R}^d)}\|h\|_{L^{\infty}(\mathbb{R}^d)} .\nonumber
		\end{eqnarray}
{For $N=[d/2]+1$, let
	$$	\aligned |\widetilde{D}_{k,s\nu}^j(y)|&=& 2^{j\gamma(N_1+2N)}2^{(-s+k)N_1}2^{-s}2^{-kd}\|\Omega\|_{L^{\infty}(\mathbb{S}^{d-1})} \int_{S^{d-1}}|\Gamma_{\nu}^j(\theta)|\int_{2^{s-1}}^{2^{s+1}}\frac{1}{(1+2^{-2k}|y-r\theta|^2)^N}dr d\theta.\endaligned$$
	It was proved in \cite[Lemma 5.3]{hutao} that \begin{eqnarray*}
		|(I-G_{\nu}^j)\Lambda_kT_{s\nu}^{n,j}b_\mathbb{L}(x)|\lesssim_{N_1} \int_{\mathbb{R}^d}|\widetilde{D}_{k,s,\nu}^j(x-y)||b_\mathbb{L}(y)|dy.
	\end{eqnarray*}
	}
		Let $V_{k,s,\nu}^j$ be the integral operator with kernel  $|\widetilde{D}_{k,s,\nu}|$,  $W^j_{k,s}$ be the kernel of $(I-P_{s-j\kappa})\widetilde{\Lambda}_k$, and $X^j_{k,s}$ be the convolution with kernel of $|W^j_{k,s}|$. It then follows that	
		\begin{eqnarray*}
			|{\rm U}_{2,n}^k|&\leq& \sum_{\nu}\sum_s\sum_{\mathbb{L}\in\mathcal{Q},s_\mathbb{L}=s-j}\int_{\mathbb{R}^d}|\partial_nA(x)-\langle \partial_n A\rangle_\mathbb{L}|X_{k,s}^jV_{k,s,\nu}^j(|b_\mathbb{L}|)(x)|h(x)|dx\\
			&=&\sum_{n=1}^d \sum_{\nu}\sum_s\sum_{\mathbb{L}\in\mathcal{Q},s_\mathbb{L}=s-j}\int_{\mathbb{R}^d}X_{k,s}^jV_{k,s,\nu}^j\big(|h||\partial_nA-\langle \partial_n A\rangle_\mathbb{L}|)(x)|b_\mathbb{L}(x)|dx.
		\end{eqnarray*}
	For each $\nu$, $s$, $k$ and $\mathbb{L}\in\mathcal{Q}$ with $s_\mathbb{L}=s-j$, a trivial computation leads to
		\begin{eqnarray*}
			\int_{\mathbb{R}^d}\frac{|b(y)-\langle b\rangle_Q|}{(1+2^{-2k}|x-y-r\theta|^2)^N}dy
			\lesssim 2^{kd}(|k-s|+j),\nonumber
		\end{eqnarray*}
		see \cite[Section 5]{hutao} for details. Therefore,
		\begin{eqnarray*}
			\|X_{k,s}^jV_{k,s,\nu}^j\big(|h||\partial_nA-\langle \partial_n A\rangle_\mathbb{L}|)\|_{L^{\infty}(\mathbb{R}^d)}&&\leq \|X_{k,s}^j\|_{L^{\infty}(\mathbb{R}^d)\rightarrow L^{\infty}(\mathbb{R}^d)}\|
			V_{k,s,\nu}^j\big(|h||\partial_nA-\langle \partial_n A\rangle_\mathbb{L}|)\|_{L^{\infty}(\mathbb{R}^d)}\\
			&&\quad\lesssim (|k-s|+j)2^{-s\gamma(d-1)}2^{j\gamma(N_1+2N)}2^{(-s+k)N_1} \|h\|_{L^{\infty}(\mathbb{R}^d)},
		\end{eqnarray*}
		which, in turn, implies that
		\begin{eqnarray}\label{equation7.7}
			|{\rm U}_{2,n}^k|&\lesssim &\sum_{n=1}^d \sum_s\sum_{\mathbb{L}\in\mathcal{Q},s_\mathbb{L}=s-j} (|k-s|+j)2^{j\gamma(N_1+2N)}
			2^{(-s+k)N_1} \|b_\mathbb{L}\|_{L^1(\mathbb{R}^d)}\|h\|_{L^{\infty}(\mathbb{R}^d)}.
		\end{eqnarray}
		Let $m=s-[j\kappa/2]$. Combining the estimates (\ref{equation7.6}) and (\ref{equation7.7}), it yields that
		\begin{eqnarray}\label{equation7.8}
			|{\rm U_{2,n}}|&\leq &\sum_{n=1}^d\Big(\sum_{k\geq m}|{\rm U}_{2,n}^k|+\sum_{k< m}|{\rm U}_{2,n}^k|\Big)\\
			&\lesssim &j2^{j(-\kappa/2+\gamma(d-1)/q)}+j2^{j(\gamma(N_1+2N)-\kappa N_1/3)}|Q|\|h\|_{\mathcal{Y}_{\infty}(\mathcal{Q})}.\nonumber
		\end{eqnarray}
		
		Then choose $N_1=d+1$, $q\in (1,\,\infty)$ close to $1$ sufficiently, $\gamma\in (0,\,1)$ and close to $0$ sufficiently, such that
		$-\kappa/2+\gamma(d-1)/q)<0,\,\gamma(N_1+2N)-\kappa N_1/3<0.$
		It follows from (\ref{equation7.4})--(\ref{equation7.6x}), (\ref{equation7.8}) that
		\begin{eqnarray}\label{zui}
			|K_j^*(b,\,h)|\lesssim 2^{-\varsigma j}|Q|\|\Omega\|_{L^{\infty}(\mathbb{S}^{d-1})}\|b\|_{\dot{\mathcal{X}}_1(\mathcal{Q})}\|h\|_{\mathcal{Y}_{\infty}(\mathcal{Q})},  \quad \varsigma\in (0,\,1)
		\end{eqnarray}
		
		Interpolating between Lemma \ref{lem3.5} (with  $u=\frac{1+r}{2}$)  and \eqref{zui} yields that
		\begin{eqnarray*}
			|K^*(b,\,h)|&\lesssim &
			\sum\limits_{j\geq 1} (ju')^{\frac{1+r}{2r}}(2^{-\varsigma j})^{\frac{r-1}{2r}}|Q|\|\Omega\|_{L^{\infty}(\mathbb{S}^{d-1})}\|b\|_{\dot{\mathcal{X}}_1(\mathcal{Q})}\|h\|_{\mathcal{Y}_{r}(\mathcal{Q})}\\
			&\lesssim & r'^2|Q|\|\Omega\|_{L^{\infty}(\mathbb{S}^{d-1})}\|b\|_{\dot{\mathcal{X}}_1(\mathcal{Q})}\|h\|_{\mathcal{Y}_{r}(\mathcal{Q})}.
		\end{eqnarray*}
		This finishes the proof of Lemma \ref{dui}.
	\end{proof}
	
	Now, we are in a position to prove the second bilinear sparse domination of $T_{\Omega,\,A}$.

	\begin{proof}[Proof of Theorem \ref{thm3.2}]
		Let	$\widetilde{T}_{\Omega,A}$ be defined as \eqref{equation1.dualoperator}. By duality, for $r\in (1,\,2]$, it suffices to prove
		$$\Big|\int_{\mathbb{R}^d}f_1(x)\widetilde{T}_{\Omega,A}f_2(x)dx\Big|\lesssim r'^2\sup_{\mathcal{S}}\mathcal{A}_{\mathcal{S},\,L^1,\,L^r}(f_1,\,f_2),
		$$
		holds for all bounded functions $f_1$ and $f_2$ with compact supports.
		
		Let $\mathbb{L}\subset \mathbb{R}^d$ be a dyadic cube and $\mathcal{Q}$ be a stopping collection with top $Q$.  We claim that
		\begin{equation}\label{equation3.8}
			|\Lambda^{A,s_Q,*}_Q(f_1\chi_{Q},f_2\chi_{3Q})|\le Cr'^2|Q|\|f_1\|_{\mathcal{Y}_1(\mathcal{Q})}\|f_2\|_{\mathcal{Y}_r(\mathcal{Q})}
			+\sum_{\mathbb{L}\in\mathcal{Q},\,\mathbb{L}\subset Q}|\Lambda^{A,s_\mathbb{L},*}_\mathbb{L}(f_1\chi_{\mathbb{L}},f_2\chi_{3\mathbb{L}})|.
		\end{equation}
		In fact, for fixed $r\in (1,\,2]$, $f_1$, $f_2$  with ${\rm supp}\,f_1\subset Q$, ${\rm supp}\, f_2\subset 3Q$, apply the Calder\'on-Zygmund decomposition on $f_j$ with respect to the family $\mathcal{Q}$ as described in Definition \ref{defn3.2}, and write
		$$f_j=g_j+b_j,\,\, b_j=\sum_{\mathbb{L}\in \mathcal{Q}}b_{j,\,\mathbb{L}},\,,\,b_{j,\mathbb{L}}=\big (f_j-\langle f_j\rangle_{\mathbb{L}}\big)\chi_{\mathbb{L}}(x).$$
		Then we have
		\begin{eqnarray*}
			\Lambda^{A,s_Q,*}_Q(f_1,f_2)=\Lambda^{A,*}_{\mathcal{Q}}(g_1,f_2)+\Lambda^{A,*}_{\mathcal{Q}}(b_1,f_2)+\sum_{\mathbb{L}\in\mathcal{Q},\,\mathbb{L}\subset Q}\Lambda^{A,s_\mathbb{L},*}_\mathbb{L}(f_1\chi_{\mathbb{L}},f_2\chi_{3\mathbb{L}}).
		\end{eqnarray*}
		Inequality (\ref{equation3.6}) tells us that
		$$|\Lambda^{A,*}_{\mathcal{Q}}(g_1,f_2)|\lesssim r'^2\|g_1\|_{L^{r'}(\mathbb{R}^d)}\|f_2\|_{L^r(\mathbb{R}^d)}\lesssim  r'^2|Q|
		\|f_1\|_{\mathcal{Y}_1(\mathcal{Q})}\|f_2\|_{\mathcal{Y}_r(\mathcal{Q})}.
		$$
		On the other hand, Lemma \ref{dui} indicates that
		$$|\Lambda^{A,*}_{\mathcal{Q}}(b_1,f_2)|\lesssim r'^2\|b_1\|_{\dot{\mathcal{X}}_1(\mathcal{Q})}\|f_2\|_{\mathcal{Y}_r(\mathcal{Q})}\lesssim  r'^2|Q|
		\|f_1\|_{\mathcal{Y}_1(\mathcal{Q})}\|f_2\|_{\mathcal{Y}_r(\mathcal{Q})}.
		$$
	Combining these two estimates may leads to (\ref{equation3.8}).
		
		With (\ref{equation3.8}) in hand, by repeating the proof of Theorem C in \cite{ccdo}, we obtain that for each bounded function $f_1$ and $f_2$, there exists a sparse family of cubes $\mathcal{S}$, such that
		$$\Big|\int_{\mathbb{R}^d}f_1(x)\widetilde{T}_{\Omega,\,A}f_2(x)dx\Big|\lesssim r'^2\mathcal{A}_{\mathcal{S},\,L^1,\,L^r}(f_1,\,f_2).
		$$
		This completes the proof of Theorem \ref{thm3.2}.
	\end{proof}
	\section{proofs of Theorems \ref{thm1.2}-\ref{thm1.3}}\label{sec5}
	
	We begin with a preliminary lemma.

	\begin{lemma}[\cite{hulaixue}]\label{lem4.4} Let $\beta\in [0,\,\infty)$, $r\in [1,\,\infty)$ and $w$ be a weight. Then for any $t\in (1,\,\infty)$ and $p\in(1,\,r')$ such that
		$t\frac{p'/r-1}{p'-1}>1$,
		\begin{equation*}\mathcal{A}_{\mathcal{S},L(\log L)^{\beta},L^r}(f,\,g)\lesssim p'^{1+
				\beta}\big(\frac{p'}{r}\big)'\big(t\frac{p'/r-1}{p'-1}\big)'^{\frac{1}{p'}}\|f\|_{L^p(\mathbb{R}^d,\,M_tw)}
			\|g\|_{L^{p'}(\mathbb{R}^d,w^{1-p'})}.
		\end{equation*}
	\end{lemma}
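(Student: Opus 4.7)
The plan is to combine sparseness with two weighted Carleson-embedding estimates tailored to the Orlicz norm on the $f$-side and the $L^r$ average on the $g$-side. By the sparseness of $\mathcal{S}$, there exist pairwise disjoint measurable sets $E_Q\subset Q$ with $|E_Q|\geq \tfrac{1}{2}|Q|$, so
$$\mathcal{A}_{\mathcal{S},L(\log L)^{\beta},L^r}(f,g)\lesssim \sum_{Q\in\mathcal{S}}\langle |f|\rangle_{L(\log L)^{\beta},Q}\langle |g|\rangle_{Q,r}|E_Q|.$$
Writing the right-hand side as $\int\sum_{Q}\chi_{E_Q}(x)\,\langle|f|\rangle_{L(\log L)^{\beta},Q}\langle|g|\rangle_{Q,r}\,dx$ and applying H\"older's inequality with exponents $p$ and $p'$ (for Lebesgue measure), the task splits into proving two separate discrete Carleson embeddings, one against $M_tw$ and one against $\sigma=w^{1-p'}$.

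For the $f$-side, I would establish
$$\Big(\sum_{Q\in\mathcal{S}}\langle|f|\rangle_{L(\log L)^{\beta},Q}^{p}|E_Q|\Big)^{1/p}\lesssim p'^{\,1+\beta}\|f\|_{L^p(M_tw)}.$$
This follows from the pointwise bound $\langle|f|\rangle_{L(\log L)^{\beta},Q}\chi_{E_Q}(x)\leq M_{L(\log L)^{\beta}}f(x)\chi_{E_Q}(x)$, the disjointness of the $E_Q$, and the sharp Fefferman-Stein-type inequality $\|M_{L(\log L)^{\beta}}f\|_{L^p(M_tw)}\lesssim p'^{\,1+\beta}\|f\|_{L^p(M_tw)}$, which is what allows one to replace $w$ by $M_tw$ and thus avoid any a priori $A_\infty$ hypothesis on $w$. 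The power $1+\beta$ is the standard sharp constant for this Orlicz maximal operator.

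For the $g$-side, the goal is
$$\Big(\sum_{Q\in\mathcal{S}}\langle|g|\rangle_{Q,r}^{p'}|E_Q|\Big)^{1/p'}\lesssim \Big(\tfrac{p'}{r}\Big)'\Big(t\tfrac{p'/r-1}{p'-1}\Big)'^{\,1/p'}\|g\|_{L^{p'}(\sigma)}.$$
The factor $(p'/r)'$ is the sharp constant for the Hardy-Littlewood maximal operator on $L^{p'/r}$, applied via $M_r g=(M|g|^r)^{1/r}$. The more delicate factor arises as follows: one writes $\langle |g|^r\rangle_Q$ using H\"older at exponent $p'/r$ against $\sigma$, producing an average of $\sigma^{-r/(p'-r)}$ which is then matched to $M_tw$. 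The condition $t(p'/r-1)/(p'-1)>1$ is precisely the threshold that makes the corresponding reverse-H\"older average of $M_tw$ comparable to its mean, and the sharp constant for that reverse-H\"older is exactly $(t(p'/r-1)/(p'-1))'^{\,1/p'}$.

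The main obstacle is the $g$-side embedding: one must choose exponents so that the H\"older split at level $p'/r$, combined with the sparse reverse-H\"older for $M_tw$, produces the correct sharp constant. Tracking these constants carefully through the decomposition is the principal technical work; once both embeddings are in hand, multiplying them and invoking the outer H\"older inequality produces the claimed bound.
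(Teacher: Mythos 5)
Your plan to split the sparse form by H\"older with exponents $p,p'$ \emph{for Lebesgue measure} and then prove two ``separate discrete Carleson embeddings'' cannot work as stated, because the two displayed embeddings mismatch a weight-free left-hand side with a weighted right-hand side. Concretely, your $f$-side claim
\[
\Big(\sum_{Q\in\mathcal{S}}\langle|f|\rangle_{L(\log L)^{\beta},Q}^{p}|E_Q|\Big)^{1/p}\lesssim p'^{\,1+\beta}\|f\|_{L^p(M_tw)}
\]
is false for general weights: replacing $w$ by $\varepsilon w$ sends the right-hand side to $0$ (since $M_t(\varepsilon w)=\varepsilon M_tw$, hence $\|f\|_{L^p(M_t(\varepsilon w))}=\varepsilon^{1/p}\|f\|_{L^p(M_tw)}$), while the left-hand side is unchanged. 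Exactly the same scaling objection kills the $g$-side embedding against $\sigma=w^{1-p'}$. An unweighted H\"older split only ever produces the unweighted bound $\mathcal{A}_{\mathcal{S}}(f,g)\lesssim\|f\|_{L^p}\|g\|_{L^{p'}}$; it cannot thread the weight through.

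The missing idea is that the weight has to be inserted inside the sum \emph{before} H\"older, e.g.\ writing
\[
\sum_{Q}\langle|f|\rangle_{L(\log L)^{\beta},Q}\langle|g|\rangle_{Q,r}|Q|
=\sum_{Q}\Big[\langle|f|\rangle_{L(\log L)^{\beta},Q}\,\langle M_tw\rangle_Q^{1/p}\Big]\Big[\langle|g|\rangle_{Q,r}\,\langle M_tw\rangle_Q^{-1/p}\Big]\,|Q|
\]
so that both Carleson sums produced by H\"older carry a weight. The $f$-side then sees $\sum_Q\langle|f|\rangle_{L(\log L)^{\beta},Q}^p\,M_tw(Q)$, which can be controlled via the sparseness, the Coifman--Rochberg fact $M_tw\in A_1$ with constant $\lesssim t'$, and the sharp $L(\log L)^\beta$-Fefferman--Stein bound; the $g$-side requires the H\"older split of $\langle|g|^r\rangle_Q$ at exponent $p'/r$ against $\sigma$, producing $\langle\sigma^{-r/(p'-r)}\rangle_Q=\langle w^{\alpha}\rangle_Q$ with $\alpha=\tfrac{r(p'-1)}{p'-r}$, which is matched against the inserted $\langle M_tw\rangle_Q^{-p'/p}$ precisely because the hypothesis $t\tfrac{p'/r-1}{p'-1}>1$ is the same as $t>\alpha$, making $w^\alpha\le(M_tw)^\alpha$ and the $A_1$/reverse-H\"older step applicable. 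Your sketch correctly identifies that the constant $\big(t\tfrac{p'/r-1}{p'-1}\big)'^{1/p'}$ comes from a reverse-H\"older step, but without inserting the weight inside the sum none of this machinery engages, so the argument as written has a genuine gap.
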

	\begin{theorem}\label{thm4.1}Let  $\alpha,\,\beta\in \mathbb{N}\cup\{0\}$, $t,\,r\in [1,\,\infty)$, $p_1\in (1,\,r')$ such that
		$ t{(p_1'/r-1)}>{p_1'-1}$. Let $U$ be a  sublinear operator which enjoys a $(L(\log L)^{\beta},\,L^r)$-bilinear sparse domination with bound $r'^{\alpha}$. Then for any weight $u$ and bounded function $f$ with compact support, it holds that
		\begin{eqnarray}\label{ine:4.1}
			&&u(\{x\in\mathbb{R}^d:\,|Uf(x)|>1\})\\
			&&\quad\lesssim \Big(1+ \Big\{Dp_1'^{1+\beta}\big(\frac{p_1'}{r}\big)'\big(t\frac{p_1'/r-1}{p_1'-1}\big)'^{\frac{1}{p_1'}}
			\Big\}^{p_1}\Big)\int_{\mathbb{R}^d}|f(y)|\log^{\beta}({\rm e}+|f(y)|)M_tu(y)dy.\nonumber
		\end{eqnarray}
	\end{theorem}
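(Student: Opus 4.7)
The plan is to combine a Calder\'on-Zygmund decomposition of $f$ with the weighted strong-type $L^{p_1}$ bound for $U$ that is implicit in its bilinear sparse domination together with Lemma \ref{lem4.4}. Concretely, I would first apply the CZ decomposition to $|f|\log^\beta(\mathrm{e}+|f|)$ at height $1$, obtaining a family $\{Q\}$ of pairwise-disjoint dyadic cubes with $\|f\|_{L^\infty(\mathbb{R}^d\setminus\cup Q)}\lesssim 1$, $\langle|f|\log^\beta(\mathrm{e}+|f|)\rangle_Q\lesssim 1$, and $\sum_Q|Q|\lesssim\int|f|\log^\beta(\mathrm{e}+|f|)\,dy$. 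This yields the canonical split $f=g+b$ with $g=f\chi_{\mathbb{R}^d\setminus\cup Q}+\sum_Q\langle f\rangle_Q\chi_Q$ and $b=\sum_Q b_Q$, $b_Q=(f-\langle f\rangle_Q)\chi_Q$; the stopping inequality $\langle|f|\rangle_Q\le\langle|f|\log^\beta(\mathrm{e}+|f|)\rangle_Q$ gives $\|g\|_\infty\lesssim 1$. Setting $E=\bigcup_Q 100dQ$, I would split
$$u(\{|Uf|>1\})\le u(E)+u(\{|Ug|>1/2\})+u(\{|Ub|>1/2\}\setminus E).$$

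For the first two terms the route is fairly standard. The bound $u(100dQ)\lesssim|Q|\inf_Q M_tu$ combined with $|Q|\lesssim\int_Q|f|\log^\beta(\mathrm{e}+|f|)$ produces $u(E)\lesssim\int|f|\log^\beta(\mathrm{e}+|f|)M_tu$, supplying the ``$1$'' summand of \eqref{ine:4.1}. For $u(\{|Ug|>1/2\})$, I would first convert the bilinear sparse domination into a weighted strong-type bound by the usual duality: for $\|h\|_{L^{p_1'}(u)}\le 1$, testing $\mathcal A_{\mathcal S,L(\log L)^\beta,L^r}(g,hu)$ with $w=u$ in Lemma \ref{lem4.4} (noting $\|hu\|_{L^{p_1'}(u^{1-p_1'})}=\|h\|_{L^{p_1'}(u)}$) yields
$$\|Ug\|_{L^{p_1}(u)}\le\mathcal C\,\|g\|_{L^{p_1}(M_tu)},\qquad \mathcal C=Dp_1'^{1+\beta}\left(\tfrac{p_1'}{r}\right)'\left(t\tfrac{p_1'/r-1}{p_1'-1}\right)'^{1/p_1'}.$$
Chebyshev and $\|g\|_\infty\lesssim 1$ then reduce the good-part term to $\lesssim\mathcal C^{p_1}\int|g|M_tu$, and the pointwise control $|g|\le|f|\chi_{\mathbb{R}^d\setminus\cup Q}+\sum_Q\langle|f|\rangle_Q\chi_Q$ together with the $A_1$-type estimate $\int_Q M_tu\lesssim|Q|\inf_Q M_tu$ (valid because $M_tu$ is essentially an $A_1$ weight when $t>1$ by Coifman-Rochberg) brings this inside $\mathcal C^{p_1}\int|f|\log^\beta(\mathrm{e}+|f|)M_tu$, the ``$\mathcal C^{p_1}$'' summand.

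The principal obstacle I anticipate is $u(\{|Ub|>1/2\}\setminus E)$, since the bilinear sparse form $\mathcal A_{\mathcal S,L(\log L)^\beta,L^r}$ does not visibly benefit from the cancellation $\int b_Q=0$. My plan would be to bound this term by testing the sparse domination against $h=\mathrm{sgn}(Ub)\chi_{\{|Ub|>1/2\}\setminus E}\,u$, exploiting that $\mathrm{supp}\,h\subset\mathbb{R}^d\setminus E$ while $\mathrm{supp}\,b\subset\bigcup_Q Q$: any sparse cube $Q'$ that makes a nontrivial contribution to $\mathcal A(b,h)$ and meets some CZ cube $Q$ must in fact contain $Q$ entirely and also reach outside $100dQ$, which forces $\ell(Q')\gtrsim\ell(Q)$. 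On such $Q'$ the stopping inequality permits $\|b\|_{L(\log L)^\beta,Q'}$ to be absorbed into $\|f\|_{L(\log L)^\beta,Q'}$ up to an admissible error controlled by the CZ averages, and the Carleson-like sparsity property, after a pigeonholing over the scales $\ell(Q')/\ell(Q)$, reduces the resulting sum to $\lesssim\mathcal C^{p_1}\int|f|\log^\beta(\mathrm{e}+|f|)M_tu$, the same shape as the good-part bound. Assembling the three estimates then yields \eqref{ine:4.1}.
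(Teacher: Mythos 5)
The paper itself offers no proof of Theorem \ref{thm4.1}; it simply defers to \cite{hulaixue}, so a line-by-line comparison is not possible. Your high-level framework, however — a Calder\'on--Zygmund decomposition of $\Phi(|f|)=|f|\log^\beta(\mathrm{e}+|f|)$ at height $1$, the exceptional set $E$ giving the ``$1$'' summand, and the good part $g$ handled through Lemma \ref{lem4.4} and duality together with Chebyshev at exponent $p_1$ to produce $\mathcal C^{p_1}$ — is the standard route and is almost certainly what the reference does.

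Two places in your proposal are, as written, real gaps.

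First, the bad part. Your description (``the stopping inequality permits $\|b\|_{L(\log L)^\beta,Q'}$ to be absorbed into $\|f\|_{L(\log L)^\beta,Q'}$ up to an admissible error \dots and the Carleson-like sparsity property, after a pigeonholing over scales, reduces the resulting sum \dots'') gestures at the right geometry but does not supply the estimate that actually closes the argument. The sparse form $\mathcal A_{\mathcal S}$ sees only $|b|$, so cancellation in $b_Q$ is not what saves you; what saves you is the decay of the Luxemburg norm with the aspect ratio. Concretely, one should argue for each CZ cube $Q$ separately: with $h_Q=\mathrm{sgn}(Ub_Q)\,u\,\chi_{E^c}$, any sparse cube $Q'$ contributing to $\mathcal A_{\mathcal S}(b_Q,h_Q)$ must properly contain $Q$ (it meets $Q$ and also $E^c\supset(100dQ)^c$, and dyadic cubes are nested), hence
\[
\|b_Q\|_{L(\log L)^\beta,Q'}\lesssim\frac{|Q|}{|Q'|}\,\log^\beta\Big(\mathrm e+\frac{|Q'|}{|Q|}\Big),
\]
because $\langle\Phi(|b_Q|)\rangle_{Q'}\lesssim|Q|/|Q'|$. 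Since the sparse cubes strictly containing $Q$ in a fixed dyadic lattice form a chain, summing the geometric series gives
\[
\mathcal A_{\mathcal S}(b_Q,h_Q)\lesssim |Q|\inf_{Q}M_r u,
\]
with a constant depending only on $d,\beta$. Summing over $Q$, using $|Q|\lesssim\int_Q\Phi(|f|)$ and $M_r u\leq M_t u$ (note the hypothesis $t(p_1'/r-1)>p_1'-1$ forces $t>r$), one obtains a bound of the form $A\cdot C_{d,\beta}\int\Phi(|f|)M_t u$, i.e.\ with the sparse bound $A\lesssim r'^\alpha$ as the constant. This is the piece producing the ``$1$'' in $(1+\mathcal C^{p_1})$, not a second copy of $\mathcal C^{p_1}$ as you claim; the latter is not wrong (since $\mathcal C^{p_1}\gtrsim 1$) but misreads the structure of the stated bound and obscures why no Lemma \ref{lem4.4} machinery is needed for $b$.

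Second, on the good part you invoke ``the $A_1$-type estimate $\int_Q M_tu\lesssim|Q|\inf_Q M_tu$ (valid because $M_tu$ is essentially an $A_1$ weight when $t>1$ by Coifman--Rochberg)'' to pass from $\int|g|\,M_tu$ to $\int\Phi(|f|)M_tu$. Coifman--Rochberg gives $[M_tu]_{A_1}\lesssim_d t'$, which blows up as $t\to1$; in the intended application (proof of Theorem \ref{thm1.3}) one chooses $t=1+c/[w]_{A_\infty}$, so $t'\sim[w]_{A_\infty}$ is large, and this factor would, if present, spoil the bound of Theorem \ref{thm1.3}. You must therefore either track the $t'$ coming from this step and show it is dominated by the constants already present in $\mathcal C^{p_1}$ (under the constraint $t(p_1'/r-1)>p_1'-1$ one has $t'<r'/p_1$, but this is not obviously enough), or replace this step by an argument that avoids the $A_1$ constant of $M_tu$ altogether. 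As stated, ``essentially an $A_1$ weight'' papers over a genuine issue in the constant accounting.
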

Theorem \ref{thm4.1} was proved essentially in \cite{hulaixue} (note that the definition of $(L(\log L)^{\beta},\,L^r)$-bilinear sparse domination in Definition \ref{defn3.1} is somewhat different from the  definition of $(L(\log L)^{\beta},\,L^r)$-bilinear sparse domination in \cite{hulaixue}). We omit the proof here.
			\medskip
				
	We are now ready to prove Theorem \ref{thm1.2}.
	
	\medskip
	
	{\it  Proof of Theorem \ref{thm1.2}}.  For $p\in (1,\,\infty)$ and $w\in A_{p}(\mathbb{R}^d)$, let $\tau_{w}=2^{11+d}[w]_{A_{\infty}}$ and $\tau_{\sigma}=2^{11+d}[\sigma]_{A_{\infty}}$, $\varepsilon_1=\frac{p-1}{2p\tau_{\sigma}+1}$, and $\varepsilon_2=\frac{p'-1}{2p'\tau_w+1}$. It was proved in \cite[Theorem 1.6]{lik} (see also \cite[pp.147-148]{hu1}) that \begin{eqnarray}\label{equation4.6}&&\ \mathcal{A}_{\mathcal{S},L^{1+\varepsilon_1},\,L^{1+\varepsilon_2}}(f,\,g)\lesssim [w]_{A_p}^{1/p}([\sigma]_{A_{\infty}}^{1/p}+[w]_{A_{\infty}}^{1/p'})\|f\|_{L^p(\mathbb{R}^d,\,w)}
		\|g\|_{L^{p'}(\mathbb{R}^d,\sigma)}.\end{eqnarray}
	Note that
	$$\mathcal{A}_{\mathcal{S},L\log L,\,L^{1+\varepsilon_2}}(f,\,g)\lesssim \frac{1}{\varepsilon_1}\mathcal{A}_{\mathcal{S},L^{1+\varepsilon_1},\,L^{1+\varepsilon_2}}(f,\,g).$$
	Let $f$ and $g$ be bounded functions with compact supports. Theorem \ref{thm3.1} and (\ref{equation4.6}) then give that
	\begin{eqnarray*}
		\Big|\int_{\mathbb{R}^d}g(x) T_{\Omega,\,A}f(x)dx\Big|&\lesssim & \frac{1}{\varepsilon_1}\frac{1}{\varepsilon_2}\sup_{\mathcal{S}}\mathcal{A}_{\mathcal{S},\,L^{1+\varepsilon_1},\,L^{1+\varepsilon_2}}(f,\,g)\\
		&\lesssim& [w]_{A_p}^{1/p}([\sigma]_{A_{\infty}}^{1/p}+[w]_{A_{\infty}}^{1/p'})[w]_{A_{\infty}}[\sigma]_{A_{\infty}}\|f\|_{L^p(\mathbb{R}^d,\,w)}
		\|g\|_{L^{p'}(\mathbb{R}^d,\sigma)}.\nonumber
	\end{eqnarray*}
	Therefore
	\begin{eqnarray}\label{equation4.7}\|T_{\Omega,\,A}f\|_{L^p(\mathbb{R}^d,\,w)}\lesssim [w]_{A_p}^{1/p}([\sigma]_{A_{\infty}}^{1/p}+[w]_{A_{\infty}}^{1/p'})[w]_{A_{\infty}}[\sigma]_{A_{\infty}}\|f\|_{L^p(\mathbb{R}^d,\,w)}.
	\end{eqnarray}
	On the other hand, by Theorem \ref{thm3.2} and estimate (\ref{equation4.6}), one may obtain
	\begin{eqnarray*}
		\Big|\int_{\mathbb{R}^d}g(x) T_{\Omega,\,A}f(x)dx\Big|
		&\lesssim & \frac{1}{\varepsilon_1^2}\sup_{\mathcal{S}}\mathcal{A}_{\mathcal{S},\,L^{1+\varepsilon_1},\,L^{1}}(f,\,g)\\
		&\lesssim& [w]_{A_p}^{1/p}([\sigma]_{A_{\infty}}^{1/p}+[w]_{A_{\infty}}^{1/p'})[\sigma]^2_{A_{\infty}}\|f\|_{L^p(\mathbb{R}^d,\,w)}
		\|g\|_{L^{p'}(\mathbb{R}^d,\sigma)},
	\end{eqnarray*}
	which implies that
	\begin{eqnarray}\label{equation4.8}\|T_{\Omega,\,A}f\|_{L^p(\mathbb{R}^d,\,w)}\lesssim [w]_{A_p}^{1/p}([\sigma]_{A_{\infty}}^{1/p}+[w]_{A_{\infty}}^{1/p'})[\sigma]^2_{A_{\infty}}\|f\|_{L^p(\mathbb{R}^d,\,w)}.\end{eqnarray}
	Theorem \ref{thm1.2} then follows from inequalities (\ref{equation4.7}) and (\ref{equation4.8}). \qed
	
	\medskip
	{\it Proof of Theorem \ref{thm1.3}}. We invoked the argument used in the proof of Corollary 3.3 in \cite{hulaixue}.  Let $w\in A_1(\mathbb{R}^d)$. We choose $t=1+\frac{1}{2^{11+d}[w]_{A_{\infty}}}$, $r=(1+t)/2$ and $p_1=1+\frac{1}{\log ({\rm e}+[w]_{A_{\infty}})}$.  Then by Theorem \ref{thm3.1}, $T_{\Omega,\,A}$ enjoys a $(L\log L,\,L^r)$-bilinear sparse domination with bound $r'$. Denote $ C_{r,p_1}= \Big(r'p_1'^2\big(\frac{p_1'}{r}\big)'\big(t\frac{p_1'/r-1}{p_1'-1}\big)'^{\frac{1}{p_1'}}
	\Big)^{p_1}$ and since $t\frac{p_1'/r-1}{p_1'-1}>1$, by Theorem \ref{thm4.1}, it holds that
	\begin{eqnarray}\label{equation5.7}
		&&u(\{x\in\mathbb{R}^d:\,|T_{\Omega,\,A}f(x)|>1\})\lesssim (1+C_{r,p_1})\int_{\mathbb{R}^d}|f(y)|\log ({\rm e}+|f(y)|)M_tu(y)dy.\nonumber
	\end{eqnarray}
	Note that  $M_tw(y)\lesssim [w]_{A_1}w(y)$, $\big(t\frac{p_1'/r-1}{p_1'-1}\big)'\leq 5t'
	$, $r'=\frac{t+1}{t-1}\le 2^{12+d} [w]_{A_{\infty}}$, $p_1'\lesssim \log ({\rm e}+[w]_{A_{\infty}})$. Then \begin{eqnarray*} C_{r,p_1}&\lesssim &\big(r'p_1'^2\big)^{p_1}t'^{p_1-1}\lesssim [w]_{A_{\infty}} \log ^2({\rm e}+[w]_{A_{\infty}}).
	\end{eqnarray*}
 Our  desired conclusion now follows from (\ref{equation5.7}) directly.
	\qed
	
	Theorem \ref{thm3.2}, along with Theorem \ref{thm4.1}, leads to the following weighted estimate for $\widetilde{T}_{\Omega,A}$.
	\begin{corollary}\label{thm4.2}
		Let $\Omega\in L^{\infty}(\mathbb{S}^{d-1})$ be homogeneous of degree zero, satisfy the vanishing moment (\ref{equa:1.1}), $A$ be a function  on $\mathbb{R}^d$ with derivatives of order one in ${\rm BMO}(\mathbb{R}^d)$. Then for $w\in A_1(\mathbb{R}^d)$ and $\lambda>0$, the following inequality holds
		\begin{eqnarray*}w\big(\{x\in\mathbb{R}^d:\,|\widetilde{T}_{\Omega,A}f(x)|>\lambda\}\big)\lesssim [w]_{A_1}[w]^2_{A_{\infty}}\log ({\rm e}+[w]_{A_{\infty}})\lambda^{-1}\int_{\mathbb{R}^d}|f(x)|w(x)dx.
		\end{eqnarray*}
	\end{corollary}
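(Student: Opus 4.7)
The plan is to argue exactly as in the proof of Theorem \ref{thm1.3}, but with the pair $(\alpha,\beta)=(2,0)$ replacing $(\alpha,\beta)=(1,1)$. The first step is to recognize that Theorem \ref{thm3.2} provides precisely the sparse domination needed as input to Theorem \ref{thm4.1}: its proof actually establishes that for all bounded compactly supported $f_1,f_2$,
$$\Big|\int_{\mathbb{R}^d}f_1(x)\widetilde{T}_{\Omega,A}f_2(x)\,dx\Big|\lesssim r'^2\sup_{\mathcal{S}}\mathcal{A}_{\mathcal{S},L^1,L^r}(f_1,f_2),$$
which, interpreted in the framework of Theorem \ref{thm4.1} (using $L^1=L(\log L)^0$), is a bilinear sparse domination for $\widetilde{T}_{\Omega,A}$ with $\beta=0$ and bound $r'^{\alpha}$ for $\alpha=2$.

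By homogeneity it suffices to treat the case $\lambda=1$. With the same auxiliary choices as in the proof of Theorem \ref{thm1.3}, namely $t=1+\frac{1}{2^{11+d}[w]_{A_{\infty}}}$, $r=(1+t)/2$ and $p_1=1+\frac{1}{\log(e+[w]_{A_{\infty}})}$, the application of Theorem \ref{thm4.1} to $U=\widetilde{T}_{\Omega,A}$ with $\beta=0$ and $\alpha=2$ yields
$$w\big(\{x\in\mathbb{R}^d:|\widetilde{T}_{\Omega,A}f(x)|>1\}\big)\lesssim (1+C_{r,p_1})\int_{\mathbb{R}^d}|f(y)|M_tw(y)\,dy,$$
where now $C_{r,p_1}=\Big(r'^2\,p_1'\,\big(\tfrac{p_1'}{r}\big)'\,\big(t\tfrac{p_1'/r-1}{p_1'-1}\big)'^{1/p_1'}\Big)^{p_1}$, and the integrand on the right is just $|f(y)|$ (rather than $|f(y)|\log(e+|f(y)|)$ as in Theorem \ref{thm1.3}) precisely because $\beta=0$.

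The final step is the quantitative estimate of $C_{r,p_1}$, which is carried out with the same ingredients used in the proof of Theorem \ref{thm1.3}: $r'\lesssim [w]_{A_{\infty}}$, $p_1'\lesssim \log(e+[w]_{A_{\infty}})$, $(p_1'/r)'\lesssim 1$, $\big(t(p_1'/r-1)/(p_1'-1)\big)'\lesssim t'$, and $(p_1-1)\log[w]_{A_{\infty}}\lesssim 1$, from which $C_{r,p_1}\lesssim (r'^2p_1')^{p_1}t'^{p_1-1}\lesssim [w]^2_{A_{\infty}}\log(e+[w]_{A_{\infty}})$. Combining this with $M_tw\lesssim [w]_{A_1}w$ (valid for $w\in A_1$ and our choice of $t$) and rescaling in $\lambda$ produces the claimed bound. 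No new analytic difficulty arises: the only work is careful bookkeeping of constants, and the swap $(\alpha,\beta)=(1,1)\rightsquigarrow(2,0)$ is precisely what trades a $\log(e+[w]_{A_{\infty}})$ factor for a $[w]_{A_{\infty}}$ factor and replaces the Orlicz term $\int\Phi(|f|/\lambda)w$ by the plain $\lambda^{-1}\|f\|_{L^1(w)}$.
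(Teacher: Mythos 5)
Your proposal is correct and reproduces exactly the argument the paper intends: the paper itself gives only the one-line remark that ``Theorem \ref{thm3.2}, along with Theorem \ref{thm4.1}, leads to'' the corollary, and your write-up is precisely the fill-in of those details. The key observation — that the proof of Theorem \ref{thm3.2} establishes, for $\widetilde{T}_{\Omega,A}$ directly, the bilinear sparse domination in the sense of Definition \ref{defn3.1} with $\beta=0$ and constant $r'^{2}$ (i.e.\ $\alpha=2$), before the duality step that restates it for $T_{\Omega,A}$ — is the right one, and the subsequent bookkeeping ($r'\lesssim[w]_{A_\infty}$, $p_1'\lesssim\log(\mathrm{e}+[w]_{A_\infty})$, $t'^{p_1-1}\lesssim1$, $M_t w\lesssim[w]_{A_1}w$) matches what is done in the proof of Theorem \ref{thm1.3} with $(\alpha,\beta)=(1,1)$ replaced by $(2,0)$.
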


	\textbf{Acknowledgements}. The authors would like to  thank Dr. Xudong Lai for  helpful discussion and   valuable suggestions.

\end{document}